\newtheorem{thm}{Theorem}[section]
\newtheorem{prop}[thm]{Proposition}
\newtheorem{define}[thm]{Definition}
\newtheorem{lem}[thm]{Lemma}
\newtheorem{cor}[thm]{Corollary}
\newtheorem{rem}[thm]{Remark}
\newtheorem{ex}[thm]{Example}
\numberwithin{equation}{section}
\newcommand{\A}{\mathcal{A}}
\newcommand{\Z}{\mathbb{Z}}
\newcommand{\R}{\mathbb{R}}
\newcommand{\C}{\mathbb{C}}
\newcommand{\B}{\mathcal{B}}
\newcommand{\F}{\mathscr{F}}
\newcommand{\Crit}{\operatorname{Crit}}
\newcommand{\supp}{\operatorname{supp}}
\newcommand{\Exp}{\operatorname{Exp}}
\newcommand{\Id}{\operatorname{Id}}
\newcommand{\reg}{\operatorname{reg}}
\newcommand{\ol}{\overline}
\newcommand{\wt}{\widetilde}
\newcommand{\om}{\omega}
\newcommand{\p}{\partial}
\newcommand{\one}
{{{\mathchoice \mathrm{ 1\mskip-4mu l} \mathrm{ 1\mskip-4mu l}
\mathrm{ 1\mskip-4.5mu l} \mathrm{ 1\mskip-5mu l}}}}
\newcommand{\LRFH}{{\rm LRFH}}
\begin{document}
\title{The two-boost problem and Lagrangian Rabinowitz Floer homology}

\author{Kai Cieliebak}
\address{Department of Mathematics, University of Augsburg}
\thanks{Kai Cieliebak is funded by the Deutsche Forschungsgemeinschaft (DFG, German
Research Foundation) – 541525489.}
\author{Urs Frauenfelder}
\address{Department of Mathematics, University of Augsburg}
\thanks{ Urs Frauenfelder is funded by the Deutsche Forschungsgemeinschaft (DFG, German
Research Foundation) – 541525489.}
\author{Eva Miranda} 
\address{Laboratory of Geometry and Dynamical Systems, Department of Mathematics, Universitat Politècnica de Catalunya-IMTech \& CRM}
\thanks{Eva Miranda is funded by the Catalan Institution for Research and Advanced Studies via an ICREA Academia Prize 2021 and by the Alexander Von Humboldt Foundation via a Friedrich Wilhelm Bessel Research Award. She is also supported by the Spanish State
Research Agency, through the Severo Ochoa and Mar\'{\i}a de Maeztu Program for Centers and Units
of Excellence in R\&D (project CEX2020-001084-M), by the AGAUR project 2021 SGR 00603 and by the project “Computational, dynamical and geometrical complexity in fluid dynamics”, Ayudas Fundación BBVA a Proyectos de Investigación Científica 2021. Eva Miranda and Jagna Wi\'sniewska are partially supported by the Spanish State Research Agency grants reference PID2019-103849GB-I00 of AEI / 10.13039/501100011033 and PID2023-146936NB-I00 funded by MICIU/AEI/
10.13039/501100011033 and, by ERDF/EU. }
\author{Jagna Wi\'sniewska}
\address{Laboratory of Geometry and Dynamical Systems, Department of Mathematics, Universitat Politècnica de Catalunya}\thanks{Jagna Wi\'sniewska's postdoctoral contract is financed under the project “Computational, dynamical and geometrical complexity in fluid dynamics”, Ayudas Fundación BBVA a Proyectos de Investigación Científica 2021.}

\maketitle

\begin{abstract}
\noindent
The two-boost problem in space mission design asks whether two points of phase space can be connected with the help of two boosts of given energy. We provide a positive answer for a class of systems related to the restricted three-body problem by defining and computing its Lagrangian Rabinowitz Floer homology. The main technical work goes into dealing with the noncompactness of the corresponding energy hypersurfaces. 
\end{abstract}
  
\section{Introduction}

The {\em two-boost problem} goes back to the classical work of W.\,Hohmann on the attainability of heavenly bodies \cite{Hohmann}. Although this work was written more than four decades before the first human being stepped on the moon, the Hohmann transfer is still one of the crucial ingredients in space mission design, see e.g.~\cite{Vallado}. The Hohmann transfer
is a transfer between two circular orbits in the Kepler problem with the help of a Kepler ellipse which is tangent to the two circles. It requires two tangential boosts, one to transfer from the first circle to the ellipse and a second one to transfer from the ellipse to the second circle.
Given two points in the plane different from the origin, there always exists a conic section through the two points with focus in the origin. This means that, for the Kepler problem, two points in phase space can always be connected with the help of two boosts. The motivating question for this paper is whether this continues to hold for more general systems.

The general setup for the two-boost problem is as follows: Consider the cotangent bundle $T^*Q$ of a manifold $Q$ with its canonical exact symplectic form $\om=d\lambda$, $\lambda=p\,dq$, and a Hamiltonian $H:T^*Q\to\R$. Given two points $q_0,q_1\in Q$ and an energy value $c$, the two-boost problem asks for the existence of a Hamiltonian orbit of energy $c$ connecting the cotangent fibres $T_{q_0}^*Q$ and $T_{q_1}^*Q$. 
Such orbits arise as critical points of the {\em Rabinowitz action functional}
\begin{align*}
\A^{H-c}_{q_0,q_1} & : \mathscr{H}_{q_0,q_1}\times \R \to \R,\\
\A^{H-c}_{q_0,q_1}(v,\eta) & := \int_0^1 \lambda(\partial_t v)dt - \eta \int_0^1 (H-c)(v(t))dt
\end{align*}
associated to the path space
$$
\mathscr{H}_{q_0,q_1}:= \left\lbrace v \in W^{1,2}([0,1], T^*Q)\ \big|\ v(i)\in T^*_{q_i}Q\quad \textrm{for}\quad i=0,1\right\rbrace.
$$
Thus the two-boost problem has a positive answer whenever the corresponding {\em Lagrangian Rabinowitz Floer homology} $\LRFH_*(\A^{H_0-h}_{q_0,q_1})$ is well-defined and nontrivial. 

As observed in~\cite{CieliebakFrauenfelder2009}, the well-definedness of (Lagrangian) Rabinowitz Floer homology requires some hypothesis on the Hamiltonian $H$. A suitable class is formed by {\em magnetic Hamiltonians} 
$$
  H(q,p) = \frac12|p-A(q)|^2 - V(q)
$$
for a magnetic potential $A\in\Omega^1(Q)$, a potential\footnote{
As is customary in celestial mechanics, our $V$ is {\em minus} the physical potential.}
$V:Q\to\R$, and a Riemannian metric on $Q$. Under the assumption that the underlying manifold $Q$ is {\em compact}, the Rabinowitz Floer homology of such Hamiltonians (with possibly noncompact magnetic field) has been studied in~\cite{CieliebakFrauenfelderPaternain2010} for the periodic case and in~\cite{Merry2014} for the Lagrangian case. An important dynamical quantity associated to such a Hamiltonian is its {\em Ma\~n\'e critical value}. By a Theorem of W.\,Merry~\cite{Merry2014}, for $c$ above the Ma\~n\'e critical value the Lagrangian Rabinowitz Floer homology $\LRFH_*(\A^{H-c}_{q_0,q_1})$ is well-defined and nontrivial, hence the two-boost problem is solvable. 

Returning to celestial mechanics, let us consider the {\em planar circular restricted 3-body problem}. Here two heavy bodies (the primaries) move under their mutual gravitational attraction on circles around their center of mass in a plane, and the third body (of negligible mass) moves in the same plane under the gravitational attraction by the primaries. In rotating coordinates, this system is described by the autonomous Hamiltonian
\begin{align*}
H(q_1,q_2,p_1,p_2) &=\frac{1}{2}(p_1^2+p_2^2)+ p_1q_2-p_2q_1 - V(q_1,q_2), 
\end{align*}
where $V$ is the sum of the (negative) Coulomb potentials of the two primaries (see e.g.~\cite{FrauenfeldervanKoert2018}). Writing it as
\begin{align*}
H(q_1,q_2,p_1,p_2) 
&= \frac{1}{2}(|p_1+q_2|^2+|p_2-q_1|^2)-V_{\rm eff}(q_1,q_2)
\end{align*}
with the effective potential $V_{\rm eff}(q_1,q_2)=V(q_1,q_2)+\frac12(q_1^2+q_2^2)$, we see that this is a magnetic Hamiltonian, with magnetic field generating the Coriolis force. 
In contrast to the previous paragraph, the configuration space $\R^2\setminus\{q_E,q_M\}$ is now noncompact due to possible collisions with the primaries at positions $q_E,q_M$ and possible escapes to infinity. 
Our goal is to define and compute Lagrangian Rabinowitz Floer homology in this situation. 
Since the noncompactness due to collisions can be removed by Moser or Levi-Civita regularization (see~\cite{FrauenfeldervanKoert2018}), we will focus our attention on the noncompactness at infinity in $\R^2$. For this, we consider the quadratic Hamiltonian
\begin{equation}\label{DefH0}
\begin{aligned}
H_0(q_1,q_2,p_1,p_2) & :=\frac{1}{2}(p_1^2+p_2^2)+ p_1q_2-p_2q_1 = \frac{p_r^2}{2}+\frac{p_\theta^2}{2r^2}+p_\theta,
\end{aligned}
\end{equation} 
where $(r,\theta)$ are polar coordinates on the plane and $(p_r,p_\theta)$ their conjugate momenta. 
We introduce the following set of smooth functions constant outside a compact set:
\begin{equation}\label{DefHset}
\mathcal{H}:=\left\lbrace h\in C^\infty(T^*\R^2)\ \Big|\ dh\in C_c^\infty(T^*\R^2),\quad h>0,\quad h-dh(p\partial_p)> 0\right\rbrace.
\end{equation}

For $q_0,q_1\in \R^2$ we define the Rabinowitz action functional $\A^{H_0-h}_{q_0,q_1} : \mathscr{H}_{q_0,q_1}\times \R \to \R$ as above, with $Q=\R^2$ and $H-c$ replaced by $H_0-h$. Our first result is

\begin{thm}\label{thm:LRFH}
Fix $q_0,q_1\in \R^2$ and let $H_0$ be as in \eqref{DefH0}. For every $h \in \mathcal{H}$ the Lagrangian Rabinowitz Floer homology of $\A^{H_0-h}_{q_0,q_1}$ is well defined.
Moreover, for any two $h_0,h_1\in \mathcal{H}$ the Lagrangian Rabinowitz Floer homology of $\A^{H_0-h_0}_{q_0,q_1}$ and $\A^{H_0-h_1}_{q_0,q_1}$ are isomorphic.
\end{thm}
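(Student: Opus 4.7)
The plan is to establish Theorem~\ref{thm:LRFH} via the standard three-step framework for Lagrangian Rabinowitz Floer homology. Formally the construction of $\LRFH_*(\A^{H_0-h}_{q_0,q_1})$ will proceed as in the compact-base setting of~\cite{Merry2014} once one has (i) a Morse or Morse--Bott structure on $\Crit(\A^{H_0-h}_{q_0,q_1})$, obtained after a generic $C^\infty$-small perturbation together with the transversality of the cotangent fibres as Lagrangians in $T^*\R^2$; (ii) uniform $L^\infty$ bounds on critical points in bounded action windows; and (iii) uniform $L^\infty$ bounds on negative gradient half-lines and flow-line cylinders with bounded action, enabling the usual Gromov--Floer compactness. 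The new content of the theorem is concentrated in steps (ii) and (iii), since the noncompactness of $T^*\R^2$ is the only genuine obstruction to defining $\LRFH$ in this setting.

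The core of the argument is to extract \emph{a priori} estimates from the two positivity conditions defining $\mathcal{H}$. First, since $dh \in C_c^\infty$ and $h > 0$ is constant outside a compact set, one obtains a uniform lower bound $h \ge \delta > 0$. Combined with the standard Rabinowitz action identity this controls $|\eta|$ in terms of the action and of $\|\partial_t v\|_{L^2}$. Second, the condition $h - dh(p\partial_p) > 0$ is designed precisely so that the radial Liouville vector field $p\partial_p$ points transversely outwards from the energy hypersurface $\{H_0 = h\}$ on the region where $dh \neq 0$; outside $\supp(dh)$ the Hamiltonian $H_0$ is explicit and the level set $\{H_0 = \mathrm{const}\}$ admits a direct confinement analysis in polar coordinates $(r,\theta,p_r,p_\theta)$. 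Coupling these ingredients with a maximum-principle argument applied along gradient flow lines, in the spirit of~\cite{CieliebakFrauenfelder2009,CieliebakFrauenfelderPaternain2010}, will then give the required $C^0$ bounds on $v$.

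For the invariance statement, the key observation is that $\mathcal{H}$ is convex: each of the conditions $h > 0$, $dh \in C_c^\infty$, and $h - dh(p\partial_p) > 0$ is preserved under convex combinations. Hence the straight-line homotopy $h_s := (1-s)h_0 + sh_1$ lies in $\mathcal{H}$ for every $s \in [0,1]$. The estimates from step (iii) then apply uniformly along this path---with a uniform choice of $\delta$ and of a compact set containing $\supp(dh_s)$ for all $s$---and the standard Floer continuation and chain-homotopy argument produces the desired isomorphism $\LRFH_*(\A^{H_0-h_0}_{q_0,q_1}) \cong \LRFH_*(\A^{H_0-h_1}_{q_0,q_1})$.

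The principal obstacle is the $L^\infty$ bound on $v$: in the noncompact setting, gradient trajectories could \emph{a priori} drift to $|q| \to \infty$ or $|p| \to \infty$, and neither the action bound nor the control on $\eta$ alone suffice to prevent this. The role of the definition of $\mathcal{H}$ is to rule out both escapes via the Liouville-monotonicity identity combined with the quadratic structure of $H_0$. Carrying this out rigorously---including the handling of the non-closed Lagrangian boundary conditions $v(i) \in T^*_{q_i}\R^2$, which break the loop-space structure exploited in the periodic case---will absorb most of the technical work.
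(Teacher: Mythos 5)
Your outline of the well-definedness part matches the paper's strategy: generic perturbation for the Morse property, confinement of critical points using the explicit polar-coordinate form of $H_0$ outside $\supp(dh)$, control of $\eta$ via the Liouville identity coming from $h-dh(p\partial_p)>0$, and a maximum-principle argument (the paper uses Aleksandrov's maximum principle applied to the plurisubharmonic functions $\tfrac12|q|^2$ and $\tfrac12|p|^2$ on the part of the Floer cylinder outside a compact set, with a doubling trick to handle the Lagrangian boundary condition for the $|p|^2$ bound). As an outline this is consistent with Theorem~\ref{thm:FloerBounds}.

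The invariance argument, however, has a genuine gap. You claim that because $\mathcal{H}$ is convex, the straight-line homotopy $h_s=(1-s)h_0+sh_1$ works directly and ``the estimates from step (iii) then apply uniformly along this path.'' They do not. For a non-constant homotopy the action is not monotone along Floer trajectories: it can drift by an amount of order $\|\eta\|_{L^\infty}\cdot\|h_1-h_0\|_{L^\infty}$, while $\|\eta\|_{L^\infty}$ is itself only controlled through a self-referential estimate of the form
$$
\|\eta\|_{L^{\infty}} \leq \|\eta\|_{L^{\infty}}\,\|\partial_{s}h_{s}\|_{L^{\infty}}\Bigl(\tfrac{1}{c}+\|J\|_{L^{\infty}}^2\Bigr)+(\text{terms depending on }a,b,c),
$$
which can only be closed when the coefficient of $\|\eta\|_{L^\infty}$ on the right is strictly less than $1$ (the paper demands $\leq 1/3$ in \eqref{inqGamma}). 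Since the homotopy is supported in $s\in[0,1]$, this is a smallness condition on $\|h_1-h_0\|_{L^\infty}$ relative to $c=\inf(h-dh(p\partial_p))$, and it fails for a single straight-line homotopy between two arbitrary elements of $\mathcal{H}$. Without it one gets neither the Novikov finiteness condition nor the $L^\infty$ bounds on $\mathcal{M}^\Gamma(a,b)$, so the continuation map is not defined. The paper's proof instead establishes local constancy of the homology on small open neighbourhoods $\mathcal{H}_{h_0}(K_{n,m})$ (on which \eqref{inqGamma} holds automatically) and then invokes path-connectedness of $\mathcal{H}$; equivalently, one must subdivide your straight-line path into sufficiently many short segments and compose the resulting isomorphisms. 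Convexity of $\mathcal{H}$ is useful for guaranteeing that the intermediate $h_s$ remain admissible with uniform constants, but it does not by itself address the action-drift problem, which is the actual obstruction here.
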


We denote the Lagrangian Rabinowitz Floer homology of $\A^{H_0-h}_{q_0,q_1}$ by $\LRFH_*(\A^{H_0-h}_{q_0,q_1})$. Here we use coefficients in $\Z_2=\Z/2\Z$ and a half-integer grading by Maslov indices (see~\S\ref{ss:grading}). 
Moreover, it comes with a canonical action filtration. Our next result computes the positive action part in the case $q_0\neq q_1$. 

\begin{thm}\label{thm:posLRFH}
For the Hamiltonian $H_0$ in \eqref{DefH0}, $h \in \mathcal{H}$, and any $q_0,q_1\in \R^2$ with $q_0\neq q_1$, the positive Lagrangian Rabinowitz homology of $\A^{H_0-h}_{q_0,q_1}$ is well defined
and equal to
$$
\LRFH_*^+\left(\A^{H_0-h}_{q_0,q_1}\right)=
\begin{cases}
\mathbb{Z}_2 & \textrm{for}\quad *=1/2,\\
0 & \textrm{otherwise}.
\end{cases}
$$ 
\end{thm}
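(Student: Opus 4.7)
The plan is to use Theorem~\ref{thm:LRFH} to reduce the computation to the most convenient choice of $h\in\mathcal{H}$: a large positive constant $h\equiv c$, which belongs to $\mathcal{H}$ since $dc=0$ and $c>0$ trivially satisfies the conditions in \eqref{DefHset}.

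\emph{Enumerating critical points.} For $h\equiv c$, critical points $(v,\eta)$ of $\A^{H_0-c}_{q_0,q_1}$ are $H_0$-chords on $\{H_0=c\}$ parametrized by $[0,1]$ with period $\eta$ and endpoints on the two cotangent fibres. The time-dependent symplectomorphism $(q,p)\mapsto(R_tq,R_tp)$ from rotating to inertial coordinates conjugates the $H_0$-flow to the free flow $(Q,P)\mapsto(Q+tP,P)$, so a chord of period $\eta$ is determined by the unique straight inertial line from $q_0$ to $R_\eta q_1$ with initial momentum $p_0=(R_\eta q_1-q_0)/\eta$. The energy constraint $H_0(q_0,p_0)=c$ rewrites as
\begin{equation*}
|R_\eta q_1-q_0|^2\;-\;2\eta\det(q_0,R_\eta q_1)\;=\;2c\eta^2,
\end{equation*}
whose left-hand side grows at most linearly in $|\eta|$, so all solutions lie in a bounded interval. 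The substitution $\eta=s/\sqrt{c}$ makes the equation converge uniformly on compact $s$-intervals to $|q_0-q_1|^2=2s^2$ as $c\to\infty$, which for $q_0\neq q_1$ has exactly two transverse roots $s=\pm|q_0-q_1|/\sqrt{2}$. Hence for $c$ sufficiently large there are precisely two non-degenerate critical points $(v^\pm,\eta^\pm)$ with $\eta^\pm=\pm|q_0-q_1|/\sqrt{2c}+O(1/c)$.

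\emph{Action, grading, and conclusion.} Using that the angular momentum $L=q_1p_2-q_2p_1$ is constant along $H_0$-orbits and that $p\cdot\partial_pH_0=|p|^2-L=2c+L$ on $\{H_0=c\}$, one obtains
\begin{equation*}
\A^{H_0-c}_{q_0,q_1}(v^\pm,\eta^\pm)\;=\;\eta^\pm(2c+L_0^\pm)\;=\;\pm|q_0-q_1|\sqrt{2c}\;+\;O(1),
\end{equation*}
so only $v^+$ lies in the positive subcomplex. The Maslov index of $v^+$ is then computed by tracking the Lagrangian path $t\mapsto d\phi^{H_0}_{\eta^+t}(T^*_{q_0}\R^2)$; in inertial coordinates this is the path $\{(tP,P):P\in\R^2\}$, which meets the reference fibre $T^*_{q_1}\R^2$ only at $t=0$, with a positive-definite Robbin--Salamon crossing form. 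Together with the half-integer shift from the Lagrange multiplier direction in the Rabinowitz functional (see~\S\ref{ss:grading}), this yields degree $1/2$. The positive subcomplex thus has a single generator in degree $1/2$, its differential vanishes for trivial degree reasons, and Theorem~\ref{thm:LRFH} extends the computation to arbitrary $h\in\mathcal{H}$.

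The main obstacle is the Maslov index computation at the endpoint crossing $t=0$: one must verify that the degenerate boundary crossing (with positive-definite crossing form and half-dimensional intersection) is treated with the correct Robbin--Salamon convention and combined with the Rabinowitz half-integer shift to produce exactly $1/2$, rather than a different value such as $-1/2$, $1$, or $3/2$.
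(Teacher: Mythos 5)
Your enumeration of critical points for $h\equiv c$ and your identification of the single positive generator are essentially correct and parallel the paper's Lemma~\ref{lem:bijection} and Proposition~\ref{prop:CritPM} (the paper works with the function $f(\eta)=-c\eta^2+\eta q_1^TR(\eta+\tfrac{\pi}{2})q_0+\tfrac12(|q_0|^2+|q_1|^2)-q_1^TR(\eta)q_0$ and shows directly that $f'(\eta)=-\eta(2c+q_1^TR(\eta)q_0)$ forces exactly one root on each half-line whenever $|q_0||q_1|\le 2c$, which is cleaner than your $c\to\infty$ limit but amounts to the same count). However, there is a genuine gap at the very first step: Theorem~\ref{thm:LRFH} gives invariance of the \emph{full} Lagrangian Rabinowitz Floer homology, and this does not imply invariance (or even a transfer of the computation) for the \emph{positive} homology, which is the homology of the quotient complex $CF_*/CF_*^{\leq 0}$. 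For an $s$-dependent homotopy the action is not monotone along continuation trajectories, so one must prove separately that the continuation map sends $CF^{\leq 0}$ into $CF^{\leq 0}$, equivalently that no trajectory runs from a positive-action critical point of $\A^{H_0-h_0}$ to a nonpositive-action critical point of $\A^{H_0-h_1}$. This is exactly what the paper establishes via the uniform positive lower bound $\delta(h_0)$ on positive actions (Lemma~\ref{lem:posAction}, which uses the exact contact type of the level set), the quantitative smallness condition \eqref{inq2Gamma} on the homotopy (Lemma~\ref{lem:posGamma}), Corollary~\ref{cor:posLRFH}, and the open-cover/path-connectedness patching in Proposition~\ref{prop:posLRFH}. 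Without this, both the well-definedness claimed in the theorem and the reduction to $h\equiv c$ are unproven.

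A secondary weakness is the index computation, which you yourself flag. In the paper's conventions the answer $1/2$ is entirely the transverse Maslov index of the path of Lagrangian lines in the $2$-dimensional contact hyperplane $\xi$: by the localization property of the Robbin--Salamon index the single boundary crossing at $t=0$, with positive-definite one-dimensional crossing form, contributes $\tfrac12\operatorname{sign}A(1)-\tfrac12\operatorname{sign}A(0)=\tfrac12$ (Example~\ref{ex:Maslov} with $n=2$); there is no additional ``half-integer shift from the Lagrange multiplier direction,'' so your bookkeeping would double-count. Moreover, passing to inertial coordinates conjugates by a \emph{time-dependent} rotation, which can in principle shift the Maslov index by a loop contribution and which also moves the reference fibre $T^*_{q_1}\R^2$; this needs justification. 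The paper avoids both issues by rescaling with the symplectomorphisms $\varphi_\delta$ (Lemma~\ref{lem:ChordsBijection}), proving $C^0$-convergence of the unique positive chord to the free-Hamiltonian chord as $\delta\to 0$ (Lemma~\ref{lem:Convergence}), and invoking continuity of the Maslov index together with the explicit computation of Example~\ref{ex:Maslov}.
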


This shows that the two-boost problem at energy $0$ is solvable for $H_0-h$ with $h\in\mathcal{H}$. Note that $h(q,p)=c+V(q)$ belongs to $\mathcal{H}$ for each constant $c>0$ and compactly supported potential $V\geq 0$, so the two-boost problem is solvable for $H_0-V$ with such $V$ at each energy $c>0$.

The potential in the planar circular restricted 3-body problem (after regularisation) satisfies $V\geq 0$, but it is not compactly supported. The following result allows us to deal with certain non-compactly supported potentials. 

\begin{prop}\label{prop:CritH=CritH1}
Let $H_0$ be the Hamiltonian in \eqref{DefH0}. Fix $q_0,q_1\in \R^2$ with $q_0\neq q_1$ and a positive constant $c>0$.
Let $V: \R^2 \to \R$ be a nonnegative potential for which there exist $r_0>0$ and
$(\alpha, a) \in \{2\}\times  \left(0 ,\frac{c^2}{4}\right)\cup (2, +\infty) \times \R_+$,
such that for $r>r_0$ we have 
 $$
 V(r,\theta)\leq \frac{a}{r^\alpha}\qquad \textrm{and} \qquad \partial_r V (r, \theta) \geq -\frac{a}{r^{\alpha+1}}.
 $$
Then there exists a function $V_0\in \mathcal{H}$ such that $\Crit \A^{H_0-V-c}_{q_0,q_1} = \Crit \A^{H_0-V_0}_{q_0,q_1}$.
\end{prop}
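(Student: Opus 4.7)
The plan is to construct $V_0$ as a radial cutoff of $V+c$ outside a sufficiently large ball, and to prove that the critical sets of both Rabinowitz functionals project into this ball, where the two Hamiltonians coincide. The decay hypotheses on $V$ are precisely what is needed for the required confinement of critical orbits.

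\textbf{Construction of $V_0$.} Fix radii $R_0<R_1$, with $R_0$ to be chosen below, and a smooth cutoff $\chi:[0,\infty)\to[0,1]$ equal to $1$ on $[0,R_0]$ and to $0$ on $[R_1,\infty)$. Set
\[ V_0(q):=c+\chi(|q|)V(q). \]
Then $V_0$ is smooth and $p$-independent, $V_0\geq c>0$, and $V_0\equiv c$ outside the compact set $\{|q|\leq R_1\}$, so $dV_0\in C^\infty_c(T^*\R^2)$. Since $V_0$ does not depend on $p$, $dV_0(p\partial_p)=0$ and the condition $V_0-dV_0(p\partial_p)>0$ in the definition of $\mathcal{H}$ reduces to $V_0>0$. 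Hence $V_0\in\mathcal{H}$. Crucially, on $\{|q|\leq R_0\}$ one has $V_0=V+c$, so $H_0-V_0=H_0-V-c$ and the critical-point equations of the two Rabinowitz functionals coincide on that region.

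\textbf{Confinement.} The core step is to choose $R_0$ so large that every critical point $(v,\eta)$ of both $\A^{H_0-V-c}_{q_0,q_1}$ and $\A^{H_0-V_0}_{q_0,q_1}$ has its image inside $\{|q|\leq R_0\}$. A critical point corresponds to a Hamiltonian orbit of the appropriate vector field on the appropriate level set, connecting $T^*_{q_0}\R^2$ to $T^*_{q_1}\R^2$. In polar coordinates the radial equation along such an orbit reads $\ddot r=p_\theta^2/r^3+\partial_r V$, subject to the energy constraint $\tfrac12 p_r^2+\tfrac12 p_\theta^2/r^2+p_\theta-V=c$. I would use the bounds $V\leq ar^{-\alpha}$ and $\partial_r V\geq -ar^{-\alpha-1}$ at $r\geq r_0$ to control the radial dynamics and rule out an outward excursion to arbitrarily large $r$ followed by a return within the finite physical time $|\eta|$ dictated by the boundary conditions. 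The borderline exponent is $\alpha=2$, where $V$ and the centrifugal term $p_\theta^2/(2r^2)$ decay at the same rate; the sharp quantitative hypothesis $a<c^2/4$ is precisely what keeps the effective radial barrier pointing the right way. For $\A^{H_0-V_0}_{q_0,q_1}$ the same argument applies on $\{|q|\leq R_1\}$ because $\chi V$ inherits the decay bounds of $V$; outside $\{|q|\leq R_1\}$ one has $H_0-V_0\equiv H_0-c$, so any orbit segment there is the restriction of an $H_0$-orbit at energy $c$, i.e., one of the explicit solutions $z(t)=e^{-it}(z(0)+w(0)t)$. Since $|z(t)|^2$ is a convex parabola in $t$, such an orbit cannot leave and re-enter the ball $\{|q|\leq R_1\}$, which together with the previous argument confines the critical points of $\A^{H_0-V_0}_{q_0,q_1}$ as well.

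\textbf{Matching and main obstacle.} Once confinement is established, both critical sets lie in $\{|q|\leq R_0\}$, where $H_0-V-c\equiv H_0-V_0$. Hence the critical-point equations and boundary conditions coincide on the relevant images, giving $\Crit\A^{H_0-V-c}_{q_0,q_1}=\Crit\A^{H_0-V_0}_{q_0,q_1}$. The hard part is the confinement estimate itself: in the non-axisymmetric case $p_\theta$ is not conserved, so the usual effective-potential picture must be replaced by a quantitative Gronwall-type estimate on the radial equation, and the borderline case $\alpha=2$ with its sharp constant $a<c^2/4$ reflects the delicate balance between the decaying potential and the centrifugal/magnetic terms in $H_0$, which is exactly where the decay hypothesis on $\partial_r V$ becomes indispensable.
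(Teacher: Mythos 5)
Your overall strategy (cut off the potential far out, then confine the critical points of both functionals to the region where the Hamiltonians agree) is the same as the paper's, but your construction of $V_0$ has a genuine defect. You set $V_0(q)=c+\chi(|q|)V(q)$ and claim $dV_0\in C_c^\infty(T^*\R^2)$ because $V_0\equiv c$ outside $\{|q|\le R_1\}$. But the definition \eqref{DefHset} of $\mathcal{H}$ requires $dV_0$ to have compact support in the \emph{cotangent bundle}, and the support of $dV_0$ is (generically) all of $\{|q|\le R_1\}\times\R^2_p$, which is unbounded in the momentum directions. So your $V_0\notin\mathcal{H}$, and the whole point of the proposition is to land in $\mathcal{H}$ so that Theorems \ref{thm:LRFH} and \ref{thm:posLRFH} apply. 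This is exactly why the paper's cutoff $\varphi=\chi_0(r)\,\chi_1(H_0-\sup V-c)$ in \eqref{defPhi} includes a second factor cutting off in the energy (hence momentum) direction; the price is that $\varphi$ then depends on $p$, so the condition $V_0-dV_0(p\partial_p)>0$ is no longer automatic and has to be verified separately (which the paper does using $dH_0(p\partial_p)=\frac12|p|^2+H_0$ on the region where $\chi_1'\neq 0$).

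The second soft spot is your assertion that "$\chi V$ inherits the decay bounds of $V$," which is where all the real work lives. One has $\partial_r(\chi V)=\chi'V+\chi\,\partial_rV$, and the new term is bounded below only by $-\|\chi'\|_\infty\,a\,r^{-\alpha}$, one power worse than the hypothesis unless the transition annulus has width comparable to $R_1$ itself; even then the effective constant in front of $r^{-\alpha-1}$ grows. For $\alpha>2$ this is harmless since any $a>0$ is admissible, but for $\alpha=2$ the threshold $a<c^2/4$ is sharp, and the margin $c^2/4-a$ is precisely what must absorb the $\chi'V$ contribution. The paper's proof spends Lemma \ref{lem:subsetvarphi(1)} on exactly this: the slope $\beta$ in \eqref{beta} and the radius $R_1$ in \eqref{R1} are tuned (differently for $\alpha>2$ and $\alpha=2$) so that $\frac{p_\theta^2}{r^3}+\chi_0\chi_1\partial_rV+V\beta\chi_0'\chi_1>0$ on the transition region, using the energy-constraint lower bound $|p_\theta|/r\ge\sqrt{r^2+2c}-r$. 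Your proposal acknowledges that $\alpha=2$ is delicate but does not carry out this estimate, and as written (with a cutoff of unspecified width) the inequality can fail. Finally, for the confinement of $\Crit\A^{H_0-V-c}$ itself, the clean mechanism is the second-derivative test at the maximum of $r\circ v$ (forcing $p_r=0$ and $p_\theta^2/r^3+\partial_rV\le 0$, then deriving a contradiction from energy conservation for large $r$), rather than a Gronwall argument; your sketch gestures at the right ODE but does not close it.
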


In view of Theorem~\ref{thm:posLRFH}, the two-boost problem is therefore solvable for $H_0-V$ at energy $c>0$ for any $(V,c)$ as in Proposition~\ref{prop:CritH=CritH1}. In particular, this holds for any $c>0$ and $V\geq 0$ which decays at infinity as $r^{-\alpha}$ with $\alpha>2$. Note that this does {\em not} cover the planar circular restricted 3-body problem, in which the potential decays as $r^{-1}$. We defer the extension to this case to future work.

\begin{rem}\label{rem:contact}
Note that for $h \in \mathcal{H}$ the Liouville vector field $p\partial_p$ is transverse to $(H_0-h)^{-1}(0)$, so the zero level set of $H_0-h$ is of restricted contact type. Thus the solutions of the two-boost problem resulting from Theorem~\ref{thm:posLRFH} and Proposition~\ref{prop:CritH=CritH1} can also be interpreted as solutions of Arnold's chord conjecture (existence of a Reeb chord connecting two Legendrians) for certain noncompact contact manifolds.  
\end{rem}

%

\begin{rem}
Work by R.\,Nicholls~\cite{Nicholls2021} provides some evidence that, if defined, the Ma\~n\'e critical value for the restricted three-body problem should be zero. It would be interesting to make this rigorous by developing a notion of Ma\~n\'e critical value for magnetic Hamiltonians over noncompact base manifolds. 
\end{rem}

\section{Lagrangian Rabinowitz Floer homology}

Lagrangian Rabinowitz Floer homology was first defined by Merry in \cite{Merry2014} for virtually exact Lagrangian submanifolds and compact hypersurfaces of virtual contact type in symplectically aspherical symplectic manifolds. In his setting, the pull-back of the symplectic form to the universal cover of the symplectic manifold had to be exact and the pre-image of the chosen hypersurface in the universal cover had to be of contact type. The symplectic manifold and the Lagrangian submanifold could be non-compact, but the chosen hypersurface of virtual contact type had to be compact.

Our aim is to use Lagrangian Rabinowitz Floer homology to analyse the Planar Circular Restricted Three Body Problem. Therefore, we are interested in the setting of a cotangent bundle $T^*Q$ with its standard symplectic form and a pair of fibres $T_{q_0}^*Q, T_{q_1}^*Q$ as the Lagrangians. In this section we will explain the definition of the Lagrangian Rabinowitz Floer homology in this setting. It is a far easier setting than the one in \cite{Merry2014}, as the cotangent bundle is an exact manifold and the fibres are exact Lagrangians. Moreover, the level sets above the 4th and 5th Lagrange point of the Hamiltonian corresponding to the Planar Circular Restricted Three Body Problem are all of exact contact type (see Remark~\ref{rem:contact}).
The only challenge of this setting is that all the energy level sets in the Restricted Three Body Problem are non-compact.
Rabinowitz Floer homology (for periodic orbits) of noncompact energy level sets of ``tentacular Hamiltonians'' on $\R^{2n}$ has been defined in~\cite{Pasquotto2017}. However, the Hamiltonian for the Planar Circular Restricted Three Body Problem does not belong to this class, so new arguments are needed for this setting.

\subsection{The Lagrangian Rabinowitz action functional}

Throughout this section, $Q$ denotes a smooth oriented $n$-dimensional manifold, and $T^*Q$ its cotangent bundle equipped with the exact symplectic form $\omega=d\lambda$ for the canonical $1$-form $\lambda:=p\,dq$. 
Fix $q_0,q_1 \in Q$ and define the space of paths
$$
\mathscr{H}_{q_0,q_1}:= \left\lbrace v \in W^{1,2}([0,1], T^*Q)\ \big|\ v(i)\in T^*_{q_i}Q\quad \textrm{for}\quad i=0,1\right\rbrace.
$$
Consider a Hamiltonian $H: T^*Q \to \R$ with regular level set $H^{-1}(0)$. 
The {\em Lagrangian Rabinowitz action functional} $\A^H_{q_0,q_1}:\mathscr{H}_{q_0,q_1}\times \R\to\R$ associated to $H$ is defined as
$$
\A^H_{q_0,q_1}(v,\eta):= \int_0^1 \lambda(\partial_t v)dt - \eta \int_0^1 H(v(t))dt.
$$

\begin{rem}
We have a natural bijection $\mathscr{H}_{q_0,q_1}\ni v \mapsto \ol v\in \mathscr{H}_{q_1,q_0}$ with $\ol v(t):=v(1-t)$. Then
$$
\A^H_{q_0,q_1}(v, \eta) = -\A^H_{q_1,q_0}(\ol v, -\eta)\qquad\forall\ (v, \eta) \in \mathscr{H}_{q_0,q_1}\times\R.
$$
\end{rem}

The derivative of $\A^H_{q_0,q_1}$ in direction $(\xi, \sigma) \in T_v \mathscr{H}_{q_0,q_1}\times T_\eta \R$ equals
\begin{equation}\label{eq:dAH}
d\A^H_{q_0,q_1}(v,\eta)[\xi,\sigma] = \int_0^1 \omega (\xi,\partial_tv -\eta X_H) - \sigma\int_0^1 H(v)dt.
\end{equation}
Here $X_H$ is the Hamiltonian vector field defined by $dH=-i_{X_H}\om$. 
Consequently, $(v,\eta) \in \Crit (\A^H_{q_0,q_1})$ if and only if it satisfies
$$
\partial_t v= \eta X_H(v)\quad \textrm{and}\quad v(t)\in H^{-1}(0)\quad \forall\ t\in [0,1].
$$
Thus we can have three types of critical points: 
\begin{itemize}
\item $\eta>0$ and $\wt v(t):=v(t/\eta)$ is a Hamiltonian chord (i.e., an integral curve of $X_H$) on $H^{-1}(0)$ from $T^*_{q_0}Q$ to $T_{q_1}^*Q$;
\item $\eta<0$ and $\wt v(t):=v(t/\eta)$ is a Hamiltonian chord on $H^{-1}(0)$ from $T^*_{q_1}Q$ to $T_{q_0}^*Q$; 
\item $\eta=0$ and $v$ is a constant path in $T^*_{q_0}Q\cap T_{q_1}^*Q\cap H^{-1}(0)$ (which can only occur if $q_0=q_1$). 
\end{itemize}
In particular, if $T^*_{q_0}Q \cap H^{-1}(0)=\emptyset$ or $T_{q_1}^*Q \cap H^{-1}(0)=\emptyset$, then $ \Crit (\A^H_{q_0,q_1})=\emptyset$. Therefore, from now on we will assume that 
\begin{equation}\label{T*QcapHnonempty}
T^*_{q_0}Q \cap H^{-1}(0)\neq\emptyset,\quad T^*_{q_1}Q \cap H^{-1}(0)\neq\emptyset,\quad\textrm{and } T^*_{q_0}Q\pitchfork H^{-1}(0) \text{ if }q_0=q_1.
\end{equation}
In order to construct Lagrangian Rabinowitz Floer homology we want the critical set of the action functional to be bounded in $L^\infty$. It is possible to define Lagrangian Rabinowitz Floer homology without this assumption, but the construction is much more challenging, so we postpone it for future projects. In fact, to construct Lagrangian Rabinowitz Floer homology we will need the boundedness of the critical set of the action functional to persist under compact perturbations of the Hamiltonian. We formalize this in the following definition. 

\begin{define}
Consider a Hamiltonian $H: T^*Q \to \R$, such that $H^{-1}(0)$ is noncompact. Fix $q_0,q_1\in T^*Q$ satisfying \eqref{T*QcapHnonempty}. Let $K\subseteq T^*Q$ be a compact set and let 
$$
\mathcal{H}\subseteq \{ h\in C^\infty(T^*Q)\ |\ dh\in C_0^\infty(K)\},
$$
be an open neighbourhood of $0$ in $\{ h\in C^\infty(T^*Q)\ |\ dh\in C_0^\infty(K)\}$.
We say that the associated Lagrangian Rabinowitz Floer functional $\A^H_{q_0,q_1}:\mathscr{H}_{q_0,q_1}\times \R\to \R$ has critical set \emph{continuously compact} in $(K,\mathcal{H})$ if
$$
\forall\ h\in\mathcal{H}\quad \textrm{and}\quad\forall\ (v,\eta) \in \Crit \A^{H+h}_{q_0,q_1} \quad \textrm{we have}\quad v([0,1])\subseteq K.
$$
\end{define}

\subsection{Grading}\label{ss:grading}

In~\cite{CieliebakFrauenfelder2009}, Rabinowitz Floer homology (for periodic orbits) is equipped with a shifted integer grading under the hypothesis that the ambient symplectic manifold has vanishing first Chern class. In this subsection we indicate how this carries over to the Lagrangian setting; see~\cite{Merry2014} for details. 

Consider an exact symplectic manifold $(V,\om=d\lambda)$ of dimension $2n$ and a Hamiltonian $H:V\to\R$ with regular level set $\Sigma=H^{-1}(0)$ such that $\xi=\ker(\lambda|_\Sigma)$ is a contact structure. Consider in addition two Lagrangian submanifolds $L_0,L_1\subset V$ transverse to $\Sigma$ with $\lambda|_{L_i}=0$. We assume that $L_0$ and $L_1$ are either disjoint or equal. 
The condition of vanishing first Chern class in~\cite{CieliebakFrauenfelder2009} gets replaced by vanishing of the relative first Chern class $c_1(V,L_0\cup L_1)$. The transversal Conley-Zehnder index in~\cite{CieliebakFrauenfelder2009} gets replaced by the {\em transverse Maslov index} defined a follows.
Let $(v,\eta)$ be a critical point of the Rabinowitz action functional $\A^H$ on the space of paths from $L_0$ to $L_1$. Pick a symplectic trivialization $v^*\xi\cong [0,1]\times\R^{2n-2}$ sending $T_{v(i)}L_i\cap\xi_{v(i)}$ to $\R^{n-1}$ for $i=0,1$. In this trivialization, the image of $T_{v(i)}L_i\cap\xi_{v(i)}$ under the linearized flow of $\eta X_H$ gives a path $\Lambda:[0,1]\to\mathcal{L}_{n-1}$ in the Grassmannian $\mathcal{L}_{n-1}$ of Lagrangian subspaces of $(\R^{2n-2},\om_{\rm std})$ with $\Lambda(0)=\R^{n-1}$. Let $\mu^{\rm tr}(v,\eta)\in\frac12\Z$ be the Maslov index of the path $\Lambda$ defined by Robbin and Salamon in~\cite{Robbin1993}. By vanishing of $c_1(V,L_0\cup L_1)$ this is independent of the choices. In the case $\eta=0$ (which occurs only if $L_0=L_1$), $v$ is a critical point of an auxiliary Morse function $f$ on $L_0\cap\Sigma$. Its {\em signature index} is ${\rm ind}^\sigma_f(v,0) := -\frac12{\rm sign}\,{\rm Hess}_f(v)$, and we set it to zero if $\eta\neq 0$. According to~\cite[Appendix A]{CieliebakFrauenfelder2009}, the {\em Maslov index}
$$
  \mu(v,\eta) := \mu^{\rm tr}(v,\eta) + {\rm ind}^\sigma_f(v,\eta) \in\frac12\Z
$$
defines a half-integer grading on the Rabinowitz Floer complex with respect to which the differential has degreee $-1$.
  
This discussion applies to the setting in this section where $V=T^*Q$ and $L_i=T_{q_i}^*Q$ for an oriented $n$-dimensional manifold $Q$. 

\begin{ex}\label{ex:Maslov}
Consider the free Hamiltonian $H_{\bullet}(q,p):=\frac{1}{2}|p|^2$ on $T^*\R^n$ and fix an energy $c>0$. Assume first that $q_0\neq q_1\in\R^n$. Then $\A^{H_{\bullet}-c}_{q_0,q_1}$ has exactly two critical points $(v_\pm,\eta_\pm)$ given by 
$$
  v_\pm(t)=\Bigl((1-t)q_0+tq_1,\frac{q_1-q_0}{\eta_\pm}\Bigr),\qquad \eta_\pm=\pm\frac{\sqrt{2c}}{|q_1-q_0|}.
$$
The linearized flow of $\eta_\pm X_{H_\bullet}$ along $v_\pm$ equals $\Phi_t=\begin{pmatrix}\one & t\eta_\pm\one \\ 0 & \one\end{pmatrix}$, so the corresponding path of Lagrangian subspaces $\Lambda:[0,1]\to\mathcal{L}_{n-1}$ is given by $\Lambda(t)=\Phi_t(\R^{n-1})={\rm graph}\,A(t)$ with $A(t)=t\eta_\pm\one$. Thus the (Localization) property in~\cite{Robbin1993} yields 
$$
  \mu(v_\pm,\eta_\pm) = \mu^{\rm tr}(v_\pm,\eta_\pm) = \frac12{\rm sign}\,A(1) - \frac12{\rm sign}\,A(0) = \pm\frac{n-1}{2}.
$$
In the case $q_0=q_1$, let $f$ be a Morse function on the $(n-1)$-sphere $T_{q_0}Q\cap H_\bullet^{-1}(c)$ with exaxctly two critical points, the maximum $v_+$ and the minimum $v_-$. Then $\A^{H_{\bullet}-c}_{q_0,q_0}$ has exactly two critical points $(v_\pm,0)$ of index
$$
  \mu(v_\pm,0) = {\rm ind}^\sigma_f(v_\pm,0) = \pm\frac{n-1}{2}.
$$
For $q_0\neq q_1$ this immediately implies
$$
\LRFH_*^\pm\left(H_\bullet^{-1}(c), T^*_{q_0}\R^n,T^*_{q_1}\R^n\right)=
\begin{cases}
\mathbb{Z}_2 & \textrm{for}\quad *=\pm\frac{n-1}{2},\\
0 & \textrm{otherwise}.
\end{cases}
$$
Moreover, for $n\neq 3$ and any $q_0,q_1$ (equal or not) we obtain
$$
\LRFH_*\left(H_\bullet^{-1}(c), T^*_{q_0}\R^n,T^*_{q_1}\R^n\right)=
\begin{cases}
\mathbb{Z}_2 & \textrm{for}\quad *=\frac{n-1}{2}\textrm{ and }*=-\frac{n-1}{2},\\
0 & \textrm{otherwise}.
\end{cases}
$$
(This also holds for $n=2$, but an additional argument is needed to show that the two critical points of index $\pm1/2$ do not cancel in homology).
\end{ex}

\begin{rem}\label{rem:Maslov}
The previous example generalizes to the free Hamiltonian $H_{\bullet}(q,p)=\frac{1}{2}|p|^2$ on the cotangent bundle of an $n$-dimensional Riemannian manifold $(Q,g)$. For $q_0\neq q_1$ and $(v,\eta)\in \Crit\A^{H_{\bullet}-c}_{q_0,q_1}$, the path $v$ projects onto a geodesic $\bar v$ from $q_0$ to $q_1$ and Proposition 6.38 in~\cite{RobbinSalamon1995} yields
\begin{equation*}
  \mu(v,\eta) = {\rm ind}(\bar v) + {\rm sign}(\eta)\frac{n-1}{2},
\end{equation*}
where ${\rm ind}(\bar v)$ is the Morse index of the geodesic $\bar v$ (i.e., the number of conjugate points along $\bar v$). 
\end{rem}

\subsection{Floer trajectories}\label{sec:FloerTrajec}

In this subsection we introduce the fundamental notion for constructing any Floer-type homology - the Floer trajectories. We start by equipping the space $\mathscr{H}_{q_0,q_1}$ with a metric. An \emph{almost complex structure} $J$ on a manifold $M$ is a bundle endomorphism $J: TM \to TM$ satisfying $J^2 = -\operatorname{Id}$. An almost complex structure $J$ on a symplectic manifold $(M, \omega)$ is called \emph{compatible} if $\omega(\ \cdot\ ,J \cdot\ )$ defines a Riemannian metric on $M$. Denote by $\mathcal{J}(M,\omega)$ the space of all compatible almost complex structures on $(M,\omega)$ with the $C^\infty$-topology.
A linear algebra argument \cite[Prop. 13.1]{Silva2001} shows that $\mathcal{J}(M,\omega)$ is contractible.

A smooth $2$-parameter family $\{J_{t,\eta}\}_{(t,\eta)\in[0,1]\times \R}$ of compatible almost complex structures on $(T^*Q, \omega)$ defines an $L^2$-metric on $\mathscr{H}_{q_0,q_1}\times \R$ by
$$
\langle (\xi_1, \sigma_1), (\xi_2, \sigma_2)\rangle := \int_0^1 \omega(\xi_1(t),J_{t,\eta}(v(t))\xi_2(t) )+\sigma_1\sigma_2.
$$
for $(\xi_i,\sigma_i)\in T_{(v,\eta)}(\mathscr{H}_{q_0,q_1}\times \R)$.
The gradient of the Lagrangian Rabinowitz action functional $\A^H_{q_0,q_1}$ with respect to this metric equals
$$
\nabla \A^H_{q_0,q_1} (v,\eta)= \left( \begin{array}{c}
-J_{t,\eta}(v(t)) (\partial_tv -\eta X_H) \\ - \int H(v)dt.
\end{array}
\right)
$$
Fix an open subset $\mathcal{V}\subseteq T^*Q$ and $\mathbb{J}\in \mathcal{J}(T^*Q,\omega)$.
We denote by $\mathcal{J}(\mathcal{V}, \mathbb{J})$ the set of all smooth maps
$$
[0,1]\times \mathbb{R}\to \mathcal{J}(T^*Q,\omega),\qquad (t,\eta)\mapsto J_{t,\eta}
$$
satisfying
\begin{align*}
J_{t,\eta}(x) = \mathbb{J}(x)\text{ for } x\notin \mathcal{V}
\quad\textrm{and}\quad \sup_{(t,\eta)\in [0,1]\times \mathbb{R}}\|J_{t,\eta}\|_{C^k}<+\infty \text{ for all } k\in \mathbb{N}.
\end{align*}
Fix Hamiltonians $H_\pm$ and $J_\pm\in\mathcal{J}(\mathcal{V},\mathbb{J})$. A {\em homotopy} from $(H_-,J_-)$ to $(H_+,J_+)$ is a smooth family $\Gamma = \{(H_s, J_s)\}_{s\in \R}$ of Hamiltonians and compatible almost complex structures $J_s\in \mathcal{J}(\mathcal{V},\mathbb{J})$ which agrees with $(H_-,J_-)$ for $s\leq s_-$ and with $(H_+,J_+)$ for $s\geq s_+$, with some $s_\pm\in\R$. 
A solution $u: \R \to \mathscr{H}_{q_0,q_1} \times \R$ to the gradient flow equation $\partial_s u = \nabla \A^{H_s}_{q_0,q_1}(u)$ is called a \emph{Floer trajectory}. In other words, a Floer trajectory $u=(v,\eta) \in W^{1,2}(\R\times[0,1], T^*Q)\times W^{1,2}(\R)$ is a solution to the equations
\begin{equation}\label{FloerEq}
\begin{aligned}
\partial_s v(s,t) & = - J_{s, t,\eta}(v(s,t)) (\partial_sv(s,t)-\eta(s)X_{H_s}(v(s,t))),\\
\partial_s \eta (s) & = - \int_0^1 H_s\circ v(s,t)dt,
\end{aligned}
\end{equation}
$$
v(s,0) \in T^*_{q_0}Q \quad \textrm{and} \quad v(s,1) \in T^*_{q_1}Q \quad \forall\ s\in \R.
$$
For $(x_-, x_+) \in \Crit(\A^{H_-}_{q_0,q_1})\times \Crit \A^{H_+}_{q_0,q_1}$ we denote the space of Floer trajectories from $x_-$ to $x_+$ by
$$
\F_\Gamma(x_-, x_+):=
\left\lbrace\begin{array}{c|c}
& \partial_s u = \nabla \A^{H_s}_{q_0,q_1}(u),\\
\hspace*{-0.2cm}\smash{\raisebox{.5\normalbaselineskip}{ $u: \R \to \mathscr{H}_{q_0,q_1} \times \R$}}& \lim_{s\to \pm \infty}u(s)=x_\pm.
\end{array}\right\rbrace
$$
In case the homotopy $\Gamma$ is constant in $s$, i.e. $H_s\equiv H$ and $J_{s,t,\eta}\equiv J \in \mathcal{J}(\mathcal{V},\mathbb{J})$ for some Hamiltonian $H$ and a compatible almost complex structure $J$, we denote $\F_{H,J}(x_-,x_+):=\F_\Gamma(x_-, x_+)$.

Moreover, for every pair $(a,b)\in \R^2$ we denote 
\begin{equation}\label{DefM(a,b)}
\mathcal{M}^\Gamma(a,b):=\left\lbrace\begin{array}{c|c}
& (x_-,x_+)\in \Crit\A^{H_-}_{q_0,q_1}\times \Crit\A^{H_+}_{q_0,q_1},\\
\hspace*{-0.2cm}\smash{\raisebox{.5\normalbaselineskip}{ $u\in \F_\Gamma(x_-,x_+)$}}& \A^{H_-}_{q_0,q_1}(x_-)\geq a, \quad \A^{H_+}_{q_0,q_1}(x_+)\leq b.
\end{array}\right\rbrace
\end{equation}
Analogously, we denote $\mathcal{M}^{H,J}(a,b):=\mathcal{M}^\Gamma(a,b)$ whenever the homotopy $\Gamma$ is constant in $s$ and equal to the pair $(H,J)$.

If s homotopy $\Gamma$ is constant in $s$, then the action is increasing along Floer trajectories. However, for a nonconstant homotopy $\Gamma$ this need not be the case. To deal with this phenomenon, we introduce a condition that ensures that the action cannot decrease indefinitely along a Floer trajectory.
We say a that homotopy $\Gamma:= \{(H_s, J_s)\}_{s\in \R}$ with $H_{\pm}:=\lim_{s\to \pm \infty}H_s$ satisfies the \emph{Novikov finiteness condition} if for all $a,b\in \R$ we have
\begin{equation}\label{Novikov}
\begin{aligned}
\inf\left\lbrace\begin{array}{c|c}
& y \in  \Crit\A^{H_+}_{q_0,q_1}, \quad \exists\ x\in \Crit  \A^{H_-}_{q_0,q_1},\\
\hspace*{-0.2cm}\smash{\raisebox{.5\normalbaselineskip}{ $\A^{H_+}_{q_0,q_1}(y)$}}& \A^{H_-}_{q_0,q_1}(x)\geq a, \quad \F_\Gamma(x,y)\neq \emptyset.
\end{array}\right\rbrace & > -\infty,\\
\sup\left\lbrace\begin{array}{c|c}
& x \in  \Crit\A^{H_-}_{q_0,q_1}, \quad \exists\ y\in \Crit  \A^{H_+}_{q_0,q_1},\\
\hspace*{-0.2cm}\smash{\raisebox{.5\normalbaselineskip}{ $\A^{H_-}_{q_0,q_1}(x)$}}& \A^{H_+}_{q_0,q_1}(x)\leq b, \quad \F_\Gamma(x,y)\neq \emptyset.
\end{array}\right\rbrace &< +\infty.
\end{aligned}
\end{equation}

\subsection{Defining Lagrangian Rabinowitz Floer homology}

In this subsection we will recall the construction of the Lagrangian Rabinowitz Floer homology from \cite{Merry2014}, using standard Floer techniques introduced in Floer's seminal paper \cite{Floer1989} and the techniques typical for the setting of the Rabinowitz action functional from the first two author's paper \cite{CieliebakFrauenfelder2009}. More precisely, we will prove the following theorem.

\begin{thm}\label{thm:DefLRFH}
Consider a cotangent bundle $(T^*Q,\omega)$ with its standard symplectic form and a Hamiltonian $H:T^*Q\to \R$ with regular level set $H^{-1}(0)$.
Fix a pair $q_0,q_1\in Q$ such that both sets $H^{-1}(0)\cap T^*_{q_0}Q$ and $H^{-1}(0)\cap T^*_{q_1}Q$ are compact and nonempty. Fix a compatible almost complex structure $\mathbb{J}\in \mathcal{J}(T^*Q,\omega)$.
Assume that there exists
a compact subset $K$, an open subset $\mathcal{V}$ satisfying $K\subseteq \mathcal{V}\subseteq T^*Q$, and an open neighbourhood $\mathcal{H}\subseteq \{h\in C^\infty(T^*Q)\ |\ dh\in C_0^\infty(K)\}$ of $0$
such that:
\begin{enumerate}
\item for all $h\in \mathcal{H}$ the Hamiltonian $H+h$ satisfies \eqref{T*QcapHnonempty};
\item  the Lagrangian Rabinowitz Floer functional $\A^H_{q_0,q_1}$ has critical set continuously compact in $(K,\mathcal{H})$;
\item for all $h_0,h_1 \in \mathcal{H}$ and $J_0, J_1 \in \mathcal{J}(\mathcal{V}, \mathbb{J})$ every homotopy $\Gamma=\{(H+h_s, J_s)\}_{s\in \R}$ 
from $(H+h_0,J_0)$ to $(H+h_1,J_1)$ satisfies the Novikov finiteness condition \eqref{Novikov}
and for all $a, b \in \R$ the space of Floer trajectories $\mathcal{M}^\Gamma(a,b)$ is bounded in the $L^\infty$-norm.
\end{enumerate}
Then for every $h\in\mathcal{H}$ the Lagrangian Rabinowitz Floer homology $\LRFH_*(\A^{H+h}_{q_0,q_1})$ is well defined and isomorphic to $\LRFH_*(\A^{H}_{q_0,q_1})$.
\end{thm}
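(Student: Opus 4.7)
The plan is to follow the standard Floer-theoretic construction, invoking hypotheses (1)--(3) exactly where the non-compactness of $T^*Q$ would otherwise cause problems. I would first treat a Morse $h\in\mathcal{H}$, meaning that $\A^{H+h}_{q_0,q_1}$ has only transversely non-degenerate critical points in the Rabinowitz--Floer sense; a standard genericity argument shows such $h$ form a dense subset of $\mathcal{H}$. By hypothesis~(2) every such critical point lies in the compact set $K$, so the Morse critical set is finite. I would grade these generators by the Maslov index from \S\ref{ss:grading} and form the $\Z_2$-chain complex
\begin{equation*}
  CF_*(\A^{H+h}_{q_0,q_1}) := \bigoplus_{x\in\Crit\A^{H+h}_{q_0,q_1}}\Z_2\cdot x.
\end{equation*}

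Next, for generic $J\in\mathcal{J}(\mathcal{V},\mathbb{J})$ the moduli spaces $\mathcal{M}^{H+h,J}(x_-,x_+)/\R$ are cut out transversely; this is standard since $J$ is unconstrained on an open neighbourhood of $K$. By hypothesis~(3) Floer trajectories are $L^\infty$-bounded, and since $\om=d\lambda$ is exact and the Lagrangians $T^*_{q_i}Q$ are exact, no sphere or disc bubbles can form. Standard Floer compactness then yields Gromov--Floer compactness modulo breaking, so the zero-dimensional components of $\mathcal{M}^{H+h,J}(x_-,x_+)/\R$ are finite sets. I would define the differential
\begin{equation*}
  \partial x_- := \sum_{x_+}\#_2\bigl(\mathcal{M}^{H+h,J}(x_-,x_+)/\R\bigr)_0\cdot x_+,
\end{equation*}
with the Novikov finiteness clause of hypothesis~(3) (applied to the constant homotopy $\Gamma\equiv(H+h,J)$) guaranteeing that each sum is finite. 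The identity $\partial^2=0$ follows from the usual compactification-and-boundary analysis of the one-dimensional components, whose broken-trajectory limits occur in cancelling pairs.

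For the invariance statement, given two Morse pairs $(h_0,J_0),(h_1,J_1)$ and any interpolating homotopy $\Gamma$ as in \S\ref{sec:FloerTrajec}, hypothesis~(3) again supplies the $L^\infty$ bound on $\mathcal{M}^\Gamma(x_-,x_+)$ and the Novikov finiteness that together make the continuation map
\begin{equation*}
  \Phi_\Gamma:CF_*(\A^{H+h_0}_{q_0,q_1})\to CF_*(\A^{H+h_1}_{q_0,q_1})
\end{equation*}
a well-defined $\Z_2$-linear chain map. A standard two-parameter homotopy-of-homotopies argument, together with the reverse homotopy $\ol\Gamma$, then shows $\Phi_{\ol\Gamma}\circ\Phi_\Gamma$ is chain-homotopic to the identity, giving an isomorphism on homology. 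For a non-Morse $h\in\mathcal{H}$ one defines $\LRFH_*(\A^{H+h}_{q_0,q_1})$ via a Morse perturbation, with well-definedness and the claimed isomorphism with $\LRFH_*(\A^H_{q_0,q_1})$ then both being instances of the invariance just established (since $0\in\mathcal{H}$).

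The main obstacle I anticipate is not the structure of the argument, which closely parallels~\cite{Merry2014} and~\cite{CieliebakFrauenfelder2009}, but making sure Novikov finiteness dovetails cleanly with the $L^\infty$ bounds. A Rabinowitz action functional typically has critical values unbounded both above and below, and continuation maps can increase or decrease action along a homotopy; hypothesis~(3) is designed precisely to rule this out, and once it is accepted the remainder of the argument is a routine adaptation of standard Floer-theoretic techniques to the Lagrangian Rabinowitz setting.
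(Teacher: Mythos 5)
For $q_0\neq q_1$ your proposal follows the paper's proof essentially step for step: a Sard--Smale argument makes $\A^{H+h}_{q_0,q_1}$ Morse for generic $h\in\mathcal{H}$, hypothesis (2) then forces the critical set to be finite, generic $J\in\mathcal{J}(\mathcal{V},\mathbb{J})$ gives transversality of the moduli spaces (using that every Floer trajectory meets $\mathcal{V}$ because its asymptotics lie in $K$), and hypothesis (3) supplies exactly the $L^\infty$ bounds and Novikov control needed for $\partial^2=0$ and for the continuation maps. All of that matches the paper.

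The gap is the case $q_0=q_1$, which the theorem explicitly covers (the clause $T^*_{q_0}Q\pitchfork H^{-1}(0)$ in \eqref{T*QcapHnonempty} exists precisely for this case). When $q_0=q_1$, for \emph{every} $h\in\mathcal{H}$ the critical set contains the manifold of constant solutions $\{(v,0)\ |\ v\in S_h:=T^*_{q_0}Q\cap(H+h)^{-1}(0)\}$, which is a positive-dimensional transversely cut out submanifold; no perturbation by a function $h$ with $dh\in C_0^\infty(K)$ can destroy it, since any such $h$ merely moves the level set. Hence your claim that Morse $h$ are dense in $\mathcal{H}$ fails here: the functional is at best Morse--Bott, and your ``define it via a Morse perturbation'' fallback is not available. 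The paper resolves this by choosing an auxiliary Morse function $f$ on $S_h$ and counting gradient flow lines with cascades, which is also where the signature index ${\rm ind}^\sigma_f$ in the grading of \S\ref{ss:grading} enters. Your argument needs this additional Morse--Bott/cascade ingredient to establish the full statement; the rest is a correct rendering of the paper's route.
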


\begin{proof}
We will define Lagrangian Rabinowitz Floer homology in the case $q_0\neq q_1$, and then briefly explain the case $q_0=q_1$. Therefore, we first assume $q_0\neq q_1$.

The first step is to show that for $q_0\neq q_1$ and generic $h\in \mathcal{H}$ the Lagrangian Rabinowitz action functional $\A^{H+h}_{q_0,q_1}$ is Morse, i.e., its Hessian $\nabla^2\A^{H+h}_{q_0,q_1}(x)$ has trivial kernel for all $x \in \Crit \A^{H+h}_{q_0,q_1}$. 
Observe that for $q_0\neq q_1$ we we have $\Crit \A^{H+h}_{q_0,q_1} \cap \left( \mathscr{H}_{q_0,q_1}\times\{0\}\right)=\emptyset$. By a standard Sard-Smale argument \cite[Theorem A.51]{McDuff2012}, there exists a residual subset $\mathcal{H}^{\reg}\subseteq \mathcal{H}\subseteq C_0^\infty(K)$ such that for all $h \in \mathcal{H}^{\reg}$ we have
\begin{equation*}
D \phi^\eta_{H+h} (T^*_{q_0}Q) \pitchfork T^*_{q_1}Q\qquad\forall\ (v, \eta)\in \Crit\A^{H+h}_{q_0,q_1},
\end{equation*}
where $\phi^t_{H+h}$ denotes the Hamiltonian flow of $X_{H+h}$.
The condition is equivalent to the triviality of the kernel of the Hessian $\nabla^2 \A^{H+h}_{q_0,q_1}(x)$ for all $x \in \Crit \A^{H+h}_{q_0,q_1}$.
A straightforward consequence of the Morse property of the Lagrangian Rabinowitz action functional and continuous compactness of the critical set of $\A^{H}_{q_0,q_1}$ is that the critical set of $\A^{H+h}_{q_0,q_1}$ is finite for each $h \in \mathcal{H}^{\reg}$.

Fix $h\in \mathcal{H}^{\reg}$. Since $K \subseteq \mathcal{V}$, by continuous compactness of the critical set of $\A^{H}_{q_0,q_1}$ in $(K,\mathcal{H})$ for every $J\in \mathcal{J}(\mathcal{V}, \mathbb{J})$, we have
\begin{equation}\label{nonEmpty}
v(\R\times[0,1])\cap \mathcal{V}\neq \emptyset\quad \forall\ u=(v,\eta)\in \mathscr{F}_{H+h,J}(x,y)\quad\forall\ x,y \in \Crit\A^{H+h}_{q_0,q_1}.
\end{equation}
Therefore, we can apply the Sard-Smale argument in \cite[Thm. A.51]{McDuff2012} to conclude that there exists a residual set $\mathcal{J}^{\reg}_h\subseteq \mathcal{J}(T^*Q,\omega,\mathcal{V})$ such that for all $x,y\in \Crit\A^{H+h}_{q_0,q_1}$ and all $J \in \mathcal{J}^{\reg}_h$ the space of Floer trajectories $\mathscr{F}_{H+h,J}(x,y)$ is a transversely cut out smooth manifold. 

Let us now fix $h\in \mathcal{H}^{\reg}$ and $J\in \mathcal{J}^{\reg}_h$.
For every $x,y\in \Crit\A^{H+h}_{q_0,q_1}$ there is a natural $\R$-action on $\F_{H+h,J}(x,y)$ given by
$$
\F_{H+h,J}(x,y) \times \R \ni (u, s) \longmapsto u(s + \cdot) \in \F_{H+h,J}(x,y).
$$
Denote the quotient space $\overline{\F}_{H+h,J}(x,y):=\F_{H+h,J}(x,y)/ \R$.
By \cite[Thm. 2.23]{Merry2014}, for all $x,y\in \Crit\A^{H+h}_{q_0,q_1}$ with $x\neq y$ the quotient $\overline{\F}_{H+h,J}(x,y)$ is also a smooth manifold and its dimension is given by
$$
  \dim \overline{\F}_{H+h,J}(x,y)= \mu (y) - \mu (x) -1.
$$
By condition 3 in Theorem~\ref{thm:DefLRFH}, for all $x,y\in \Crit\A^{H+h}_{q_0,q_1}$ the space of Floer trajectories $\mathscr{F}_{H+h,J}(x,y)$ is bounded in the $L^\infty$-norm. Now standard compactness arguments (see \cite[Prop. 3b]{Floer1989} or \cite[Thm. 9.1.7]{Audin2014})
imply that $\overline{\F}_{H+h,J}(x,y)$ is compact up to breaking in the sense of Floer. 
If $\mu(y)-\mu(x)=1$ this means that $\overline{\F}_{H+h,J}(x,y)$ is a finite set of points \cite[Thm. 9.2.1]{Audin2014}. 

Now we are ready to define Lagrangian Rabinowitz Floer homology. For $h\in \mathcal{H}^{\reg}$ and $k\in\Z$ let $CF_k(\A^{H+h}_{q_0,q_1})$ be the $\mathbb{Z}_2$-vector space of formal sums of the form $\sum_{x\in S}x$, where $S\subseteq \Crit\A^{H+h}_{q_0,q_1}$ is a (possibly infinite) set satisfying $\mu(x)=k$ for all $x\in S$ and the \emph{Novikov finiteness condition}
$$
\# \{ x\in S\ |\ \A^{H+h}_{q_0,q_1}(x)>a\}<+\infty\qquad\forall\ a\in\mathbb{R}.
$$
Now we fix an almost complex structure $J\in \mathcal{J}^{\reg}_h$ and turn $CF(\A^{H+h}_{q_0,q_1})$ into a chain complex. For $x,y\in \Crit\A^{H+h}_{q_0,q_1}$ with $\mu(y)-\mu(x)=1$ we set
$$
  n(x,y):=\# \overline{\F}_{H+h,J}(x,y) \operatorname{mod}2\in \mathbb{Z}_2.
$$
The Floer boundary operator $\partial_k^J: CF_k(\A^{H+h}_{q_0,q_1})\to CF_{k-1}(\A^{H+h}_{q_0,q_1})$ is the linear map defined on generators by 
$$
  \partial_k^J y := \sum_x  n(x,y)x.
$$
Compactness up to breaking implies that $\partial_k^J\circ\partial_{k+1}^J=0$, so we can define the {\em Lagrangian Rabinowitz Floer homology}
$$
  \LRFH_k(\A^{H+h}_{q_0,q_1};J) := \ker \partial_k^J / \operatorname{im}\partial_{k+1}^J.
$$
A standard continuation argument shows that the Lagrangian Rabinowitz Floer homology does not depend on the choice of $h\in \mathcal{H}^{\reg}$ and $J\in \mathcal{J}^{\reg}_h$. For this, let $h_i\in \mathcal{H}^{\reg}$ and $J_i\in \mathcal{J}^{\reg}_{h_i}$ for $i=0,1$ be given. Pick a homotopy $\Gamma = \{(H+h_s, J_s)\}_{s\in \R}$ connecting $(H+h_0,J_0)$ to $(H+h_1,J_1)$ as in condition 3 of Theorem~\ref{thm:DefLRFH}. 
As before, it follows that for generic $\Gamma$ and any pair $(x,y)\in \Crit \A^{H+h_0}_{q_0,q_1} \times \Crit \A^{H+h_1}_{q_0,q_1}$ the associated space of Floer trajectories $\F_\Gamma(x,y)$ is a smooth manifold of dimension $\mu(y)-\mu(x)$ which is compact up to breaking. In particular, if $\mu(x)=\mu(y)$ then $\F_\Gamma(x,y)$ is a finite set of points and we set $m(x,y):= \# \F_\Gamma(x,y)\mod 2$. We define a linear map $\phi^\Gamma : (CF_*(\A^{H+h_1}_{q_0,q_1}),\p^{J_1})\to (CF(\A^{H+h_0}_{q_0,q_1}),\p^{J_0})$ on generators by
$$
  \phi^\Gamma(y) := \sum_{x\in \Crit \A^{H+h_0}_{q_0,q_1}}m(x,y)x.
$$
Here the Novikov finiteness condition \eqref{Novikov} on $\Gamma$ ensures that $\phi^\Gamma\left( CF(\A^{H+h_1}_{q_0,q_1})\right)\subseteq CF(\A^{H+h_0}_{q_0,q_1})$. Compactness up to breaking implies that $\phi^\Gamma$ is a chain map, and composition with a homotopy in the opposite direction shows that $\phi^\Gamma$ induces an isomorphism on homology (see \cite[Chapter 11]{Audin2014} for details on this standard argument).

Since $\LRFH_k(\A^{H+h}_{q_0,q_1};J)$ does not depend on $J\in \mathcal{J}^{\reg}_h$, we can drop $J$ from the notation and write is as $\LRFH_k(\A^{H+h}_{q_0,q_1})$. Since this homology is the same vector space for all $h\in \mathcal{H}^{\reg}$, we can unambiguously extend it as this vector space to all $h\in \mathcal{H}^{\reg}$. This proves the theorem in the case $q_0\neq q_1$. 

In the case $q_0=q_1$, by condition 1 for each $h\in\mathcal{H}$ we have critical points $(v,0)\in\Crit\A^{H+h}_{q_0,q_0}$ where $v$ is a constant path in the nonempty transverse intersection $S_h:=T^*_{q_0}Q\cap (H+h)^{-1}(0)$. If $S_h$ has positive dimension, then the functional $\A^{H+h}_{q_0,q_0}$ will only be Morse-Bott for generic $h\in\mathcal{H}$. This phenomenon is well known in Rabinowitz Floer theory and can be dealt with by picking a Morse function on $S_h$ and counting gradient flow lines with cascades, see~\cite{CieliebakFrauenfelder2009}. With this understood, the rest of the argument works as in the previous case. 
\end{proof}

\begin{cor}
Consider a cotangent bundle $(T^*Q,\omega)$ with its standard symplectic form and a Hamiltonian $H:T^*Q\to \R$ such that $H^{-1}(0)$ is of exact contact type. Assume that there exists an exhausting sequence of compact sets $\{K_n\}_{n\in\mathbb{N}}$, $K_n\subseteq K_{n+1}\subseteq T^*Q$, $\bigcup_{n\in\mathbb{N}}K_n=T^*Q$ and an open neighbourhood $\mathcal{H}$ of $0$ in $\{h\in C^\infty(T^*Q)\ |\ dh\in C_c^\infty(T^*Q)\}$ such that for every $n\in \mathbb{N}$ the sets $K_n$ and $\mathcal{H}_n=\{h\in\mathcal{H}\ |\  dh\in C_0^\infty(K_n)\}$ satisfy the assumptions of Theorem \ref{thm:DefLRFH}. Then for any $h\in\mathcal{H}$ the Lagrangian Rabinowitz Floer homology $\LRFH_*(\A^{H+h}_{q_0,q_1})$ is well defined and isomorphic to $\LRFH_*(\A^{H}_{q_0,q_1})$.
\end{cor}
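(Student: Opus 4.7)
The plan is to reduce the corollary to Theorem~\ref{thm:DefLRFH} by exploiting the fact that the exhausting sequence $\{K_n\}_{n\in\N}$ lets us ``absorb'' any compact perturbation into some level $\mathcal{H}_n$. The key elementary observation is that, for any $h\in\mathcal{H}$, the $1$-form $dh$ is compactly supported in $T^*Q$ and $\{K_n\}$ is exhausting, so there exists $n_0\in\N$ with $\supp(dh)\subseteq K_{n_0}$, i.e., $h\in\mathcal{H}_{n_0}$. Applying Theorem~\ref{thm:DefLRFH} to the data $(K_{n_0},\mathcal{H}_{n_0})$ then immediately yields that $\LRFH_*(\A^{H+h}_{q_0,q_1})$ is well defined and isomorphic to $\LRFH_*(\A^{H}_{q_0,q_1})$.

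Before this reduction can be made clean, I would first verify that the unperturbed homology $\LRFH_*(\A^H_{q_0,q_1})$ is unambiguously defined independently of the level $n$ used to compute it. Note that $0\in\mathcal{H}_n$ for every $n$, and for $n\leq m$ we have $\mathcal{H}_n\subseteq\mathcal{H}_m$. Pick any Morse-regular perturbation $h_0\in\mathcal{H}_{n}^{\reg}$; then $h_0\in\mathcal{H}_m$ as well, and the isomorphism produced by Theorem~\ref{thm:DefLRFH} at level $m$ identifies the chain-level homologies computed inside $\mathcal{H}_n$ and $\mathcal{H}_m$. This gives a canonical vector space $\LRFH_*(\A^H_{q_0,q_1})$ across all levels.

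Returning to a general $h\in\mathcal{H}$, I would pick $n_0$ with $h\in\mathcal{H}_{n_0}$ and argue as follows. Since $\mathcal{H}_{n_0}$ is by hypothesis an open neighbourhood of $0$ in $\{\,f\in C^\infty(T^*Q)\mid df\in C_0^\infty(K_{n_0})\,\}$, a straight-line homotopy $h_s=\chi(s)h$ (with a suitable cut-off $\chi$ going from $0$ to $1$) stays inside $\mathcal{H}_{n_0}$. Combining this with an interpolating homotopy of almost complex structures in $\mathcal{J}(\mathcal{V},\mathbb{J})$ yields a homotopy $\Gamma$ satisfying hypothesis~(3) of Theorem~\ref{thm:DefLRFH}: Novikov finiteness and $L^\infty$-boundedness of Floer trajectories. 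The induced continuation map furnishes the isomorphism $\LRFH_*(\A^{H+h}_{q_0,q_1})\cong \LRFH_*(\A^{H}_{q_0,q_1})$.

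The only real obstacle I foresee is bookkeeping: checking that the isomorphism is independent of the choice of $n_0$, so that the statement makes sense globally on $\mathcal{H}$. This is handled by the usual ``homotopy of homotopies'' argument showing that two continuation maps arising from different enveloping levels are chain homotopic; since both the Novikov finiteness condition and the $L^\infty$-bounds on parametric moduli spaces are already built into the hypotheses of Theorem~\ref{thm:DefLRFH} at each level, this step is routine and does not require new input beyond what the corollary already assumes.
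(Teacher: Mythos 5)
Your argument is correct and is essentially the intended one (the paper states this corollary without proof): since $dh$ has compact support and $\{K_n\}$ is exhausting, $h$ lies in some $\mathcal{H}_{n_0}$, and Theorem~\ref{thm:DefLRFH} applied to $(K_{n_0},\mathcal{H}_{n_0})$ gives the conclusion, with your check that $\LRFH_*(\A^{H}_{q_0,q_1})$ is independent of the level $n$ being exactly the right extra point to address. Your third paragraph is superfluous --- the isomorphism for every $h\in\mathcal{H}_{n_0}$ is already the \emph{conclusion} of the theorem, and in any case the claim that the straight-line homotopy $\chi(s)h$ stays inside $\mathcal{H}_{n_0}$ does not follow from openness alone (a neighbourhood of $0$ need not be star-shaped) --- but since that step is not needed, this does not affect the validity of the reduction.
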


\subsection{Positive Lagrangian Rabinowitz Floer homology}

The action functional $\A^{H}_{q_0,q_1}$ provides an $\R$-filtration on $CF_*(\A^{H}_{q_0,q_1})$ by
$$
CF_*^{\leq a}\left(\A^{H}_{q_0,q_1}\right)\coloneqq \left\lbrace {\textstyle \sum_{x\in S}x}\in CF_*\left(\A^{H}_{q_0,q_1}\right) \left|\ \sup_{x\in S}\A^{H}_{q_0,q_1}(x) \leq a\right.\right\rbrace.
$$
Since Floer trajectories are defined by the $L^2$-gradient of the action functional, the boundary operator does not increase the action, i.e.\
\begin{equation}\label{filtration}
\partial\left( CF^{\leq a}_{*+1}\left(\A^{H}_{q_0,q_1}\right)\right)\subseteq CF^{\leq a}_*\left(\A^{H}_{q_0,q_1}\right).
\end{equation}
The {\em positive Rabinowitz Floer homology} $\LRFH_*^+(\A^{H}_{q_0,q_1})$ is the homology of the quotient complex
$$
  CF_*^+\left(\A^{H}_{q_0,q_1}\right) \coloneqq CF_*\left(\A^{H}_{q_0,q_1}\right)\Big/CF_*^{\leq 0}\left(\A^{H}_{q_0,q_1}\right) 
$$
with boundary operator $\partial^{\scriptscriptstyle +}$ induced by $\partial$ on the quotient.

In order to get the required transversality in the preceding discussion, we need to replace $H$ by $H+h$ with $h\in\mathcal{H}^{reg}$ as in the previous subsection.  
For independence of positive Lagrangian Rabinowitz Floer homology of the choice of $h$ we need a further condition. The regular level set $H^{-1}(0)$ is said to be of {\em exact contact type} if there exists a Liouville vector field $Y$ on $T^*Q$ (satisfying $L_Y\om=\om$) such that $dH(Y)>0$ along $H^{-1}(0)$. 
Setting $(\xi,\sigma)=(Y,\eta)$ in equation~\eqref{eq:dAH} we get
\begin{equation}\label{dA^H(Y)}
  \A^H (v,\eta) - d\A^H(v,\eta)[Y, \eta] = \eta\int_0^1 dH(Y)(v(t))dt.
\end{equation}
At critical points the second term on the left-hand side vanishes and we conclude
\begin{equation}\label{Crit+}
\begin{aligned}
  \Crit^\pm\A^{H}_{q_0,q_1} &:= \left\lbrace x\in \Crit\A^{H}_{q_0,q_1}\ |\ \pm\A^{H}_{q_0,q_1}(x) >0\right\rbrace \cr
  &= \left\lbrace (v,\eta)\in \Crit\A^{H}_{q_0,q_1}\ |\ \pm \eta >0\right\rbrace.
\end{aligned}
\end{equation}
The following lemma shows that the exact contact type property provides a set of compactly supported perturbations for which the set of positive action values is bounded away from zero.

\begin{lem}\label{lem:posAction}
Consider a cotangent bundle $(T^*Q,\omega)$ with its standard symplectic form and a Hamiltonian $H:T^*Q\to \R$ such that $H^{-1}(0)$ is of exact contact type. Fix $q_0,q_1\in Q$ with $q_0\neq q_1$.
Assume that there exists a compact subset $K\subseteq T^*Q$ and an open neighbourhood $\mathcal{H}$ of $0$ in $\{h\in C^\infty(T^*Q)\ |\ dh\in C_0^\infty(K)\}$ such that for all $h\in \mathcal{H}$ the following holds:
\begin{equation}
\forall\ (v,\eta) \in \Crit \A^{H+h}_{q_0,q_1} \quad \textrm{we have}\quad v([0,1]) \subseteq K.\label{PO_K}
\end{equation}
Then there exists an open neighbourhood $\mathcal{O}(K)$ of $0$ in $\mathcal{H}$ such that
\begin{equation}
\inf\left\lbrace\begin{array}{c|c}
\A^{H+h}_{q_0,q_1}(x) & x\in \Crit^+ \A^{H+h}_{q_0,q_1}, \quad h\in\mathcal{O}(K)\end{array} \right\rbrace >0.\label{infA+}
\end{equation}
\end{lem}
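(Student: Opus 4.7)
The plan is to apply the identity~\eqref{dA^H(Y)} (which by the same derivation holds with $H$ replaced by $H+h$) at a critical point of $\A^{H+h}_{q_0,q_1}$, where the $d\A$ term vanishes, giving
\[
\A^{H+h}_{q_0,q_1}(v,\eta) \;=\; \eta\int_0^1 d(H+h)(Y)(v(t))\,dt.
\]
It then suffices to bound each factor on the right-hand side below by a positive constant uniformly for $h\in\mathcal{O}(K)$ and $\eta>0$, where $\mathcal{O}(K)$ is a sufficiently small open neighbourhood of $0$ in $\mathcal{H}$.

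For the integrand, I would argue as follows. Because $dH(Y)$ is continuous and strictly positive on the compact set $K\cap H^{-1}(0)$, there is a constant $c_0>0$ with $dH(Y)\geq 2c_0$ on $K\cap H^{-1}(0)$, and by continuity an open neighbourhood $U$ of $K\cap H^{-1}(0)$ on which $dH(Y)\geq c_0$. The set $K\setminus U$ is compact and disjoint from $H^{-1}(0)$, so $|H|\geq\varepsilon$ on $K\setminus U$ for some $\varepsilon>0$. Choosing $\mathcal{O}(K)$ small enough in the $C^1$-topology so that $\|h\|_{C^0(K)}<\varepsilon$ and $|dh(Y)|\leq c_0/2$ on $K$ for every $h\in\mathcal{O}(K)$ forces $(H+h)^{-1}(0)\cap K\subseteq U$, and hence $d(H+h)(Y)\geq c_0/2$ on $(H+h)^{-1}(0)\cap K$. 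Since~\eqref{PO_K} places every critical point inside $K$ and on the perturbed zero level set, the integrand is bounded below by $c_0/2$ pointwise. I expect this passage from the unperturbed level set $H^{-1}(0)$ to the perturbed one $(H+h)^{-1}(0)$ to be the main technical step, as it simultaneously uses the regularity of $0$ as a value of $H$ and the compactness of $K$.

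To bound $\eta$ away from zero, fix an auxiliary Riemannian metric $g$ on $Q$, so that $d_g(q_0,q_1)>0$ thanks to the hypothesis $q_0\neq q_1$. The critical point equation $\partial_t v=\eta X_{H+h}(v)$ together with $v([0,1])\subseteq K$ yields
\[
d_g(q_0,q_1) \;\leq\; \operatorname{length}_g(\pi\circ v) \;=\; \eta\int_0^1 |\pi_* X_{H+h}(v(t))|_g\,dt \;\leq\; \eta\,C,
\]
where $C$ is an upper bound on $\sup_K|\pi_* X_{H+h}|_g$ that stays finite as $h$ ranges over a $C^1$-small neighbourhood of $0$. Hence $\eta\geq d_g(q_0,q_1)/C>0$ for every positive critical point. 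Combining the two estimates gives
\[
\A^{H+h}_{q_0,q_1}(v,\eta) \;\geq\; \eta\cdot\tfrac{c_0}{2} \;\geq\; \frac{c_0\,d_g(q_0,q_1)}{2C} \;>\;0
\]
uniformly for $(v,\eta)\in\Crit^+\A^{H+h}_{q_0,q_1}$ and $h\in\mathcal{O}(K)$, which is precisely~\eqref{infA+}.
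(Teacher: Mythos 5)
Your proposal is correct and follows essentially the same route as the paper: apply the identity~\eqref{dA^H(Y)} at critical points, use compactness of $K$ and $C^1$-smallness of $h$ to bound $d(H+h)(Y)$ below on $(H+h)^{-1}(0)\cap K$, and bound $\eta$ away from zero via the positive separation of the two fibres together with a uniform bound on the Hamiltonian vector field over $K$. The only cosmetic differences are that you measure the separation by the Riemannian distance $d_g(q_0,q_1)$ of the projected curve rather than the distance between $K\cap T_{q_0}^*Q$ and $K\cap T_{q_1}^*Q$ in $T^*Q$, and that you spell out the neighbourhood argument for~\eqref{nbhd} which the paper only asserts.
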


\begin{proof}
By assumption there exists a Liouville vector field $Y$ such that $dH(Y)>0$ along $H^{-1}(0)$. Since $K$ is compact,
$$
\delta:=\inf\{dH(Y)(x)\ |\ x\in K\cap H^{-1}(0)\} > 0.
$$
For $h\in \mathcal{H}$ small enough in the $C^1$-norm we then have
\begin{equation}\label{nbhd}
(H+h)^{-1}(0)\cap K \subseteq \left\lbrace
\begin{array}{c|c}
x\in K& d(H+h)(Y)(x)\geq\delta/2
\end{array} \right\rbrace.
\end{equation}
Denote by $\mathcal{O}(K)$ the set of all $h\in \mathcal{H}$ satisfying \eqref{nbhd}. Then $\mathcal{O}(K)$ is an open neighbourhood of $0$ in $\{h\in C^\infty(T^*Q)\ |\ dh\in C_0^\infty(K)\}$.
Consider $h\in\mathcal{O}(K)$ and $(v,\eta)\in \Crit^+ \A^{H+h}_{q_0,q_1}$, so $\eta>0$ in view of~\eqref{Crit+} applied to $H+h$.  
By~\eqref{PO_K} we have $v([0,1])\subset K\cap(H+h)^{-1}(0)$, so from~\eqref{dA^H(Y)} and~\eqref{nbhd} we obtain
\begin{equation*}
  |\A^{H+h}_{q_0,q_1}(v,\eta)| \geq \delta\eta/2.
\end{equation*}
On the other hand, from $v(i)\in T_{q_i}^*Q$ for $i=0,1$ and $\p_sv=\eta X_{H+h}(v)$ we deduce
$$
   0 < \varepsilon \leq |v(1)-v(0)| \leq \eta\int_0^1|X_{H+h}(v(t))|dt \leq C\eta  
$$
with the positive constants
$$
  \varepsilon := \textrm{dist}(K\cap T_{q_0}^*Q,K\cap T_{q_1}^*Q),\qquad C := \max\{|X_{H+h}(x)|\;\bigl|\;x\in K\}.
$$
The two estimates combine to $|\A^{H+h}_{q_0,q_1}(v,\eta)| \geq \delta\varepsilon/2C>0$. 
\end{proof}

The next result provides conditions under which positive Lagrangian Rabinowitz Floer homology is not only well defined, but also independent of the auxiliary choices and invariant under compact perturbations.

\begin{cor}\label{cor:posLRFH}
Consider the setting as in Theorem \ref{thm:DefLRFH} with sets $K\subseteq\mathcal{V}\subseteq T^*Q$ and $\mathcal{H}\subseteq \{C^\infty(T^*Q)\ |\ dh\in C_0^\infty(K)\}$. Assume $q_0\neq q_1$. Let $\mathcal{O}\subseteq\mathcal{H}$ be an open neighbourhood of $0$ such that for every pair $h_0,h_1 \in \mathcal{O}$ there exists a homotopy $\Gamma:= \{(H+h_s, J_s)\}_{s\in \R}$ satisfying Condition 3 of Theorem \ref{thm:DefLRFH}, and such that for every $x\in \Crit^+\A^{H+h_0}_{q_0,q_1}$ and every $y\in \Crit\A^{H+h_1}_{q_0,q_1}$ for which $\F_\Gamma(x,y)\neq \emptyset$ we have $\A^{H+h_1}_{q_0,q_1}(y)>0$.

Then for every $h\in\mathcal{O}$ its positive Lagrangian Rabinowitz Floer homology is well defined, and for every pair $h_0,h_1 \in \mathcal{O}$, $\LRFH_*^+(\A^{H+h_0}_{q_0,q_1})$ is isomorphic to $\LRFH_*^+(\A^{H+h_1}_{q_0,q_1})$.
\end{cor}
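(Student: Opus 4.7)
The plan is to deduce the corollary by adapting the continuation argument of Theorem~\ref{thm:DefLRFH} to the action-filtered setting.

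For well-definedness, fix $h \in \mathcal{O}$ and, by genericity, approximate by $h' \in \mathcal{O} \cap \mathcal{H}^{\reg}$. Equation~\eqref{filtration} shows that $CF_*^{\leq 0}(\A^{H+h'}_{q_0,q_1})$ is a subcomplex of $CF_*(\A^{H+h'}_{q_0,q_1})$, so the quotient $CF_*^+$ inherits a boundary $\partial^+$, yielding $\LRFH_*^+(\A^{H+h'}_{q_0,q_1})$. Independence of $h' \in \mathcal{O}\cap\mathcal{H}^{\reg}$ and of the auxiliary almost complex structure will follow from the invariance statement below.

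For invariance between $h_0, h_1 \in \mathcal{O} \cap \mathcal{H}^{\reg}$, the key observation is that the corollary's hypothesis is logically equivalent to the filtration-preservation statement
$$\phi^\Gamma\bigl(CF_*^{\leq 0}(\A^{H+h_1}_{q_0,q_1})\bigr) \subseteq CF_*^{\leq 0}(\A^{H+h_0}_{q_0,q_1})$$
for the continuation chain map $\phi^\Gamma$ constructed in Theorem~\ref{thm:DefLRFH}; indeed, the contrapositive of the hypothesis says that if $y$ has non-positive action then no $x$ of positive action contributes to $\phi^\Gamma(y)$. Hence $\phi^\Gamma$ descends to a chain map $\phi^\Gamma_+: CF_*^+(\A^{H+h_1}_{q_0,q_1}) \to CF_*^+(\A^{H+h_0}_{q_0,q_1})$. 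Applying the hypothesis with $h_0$ and $h_1$ swapped produces a reverse homotopy $\bar\Gamma$ whose continuation $\phi^{\bar\Gamma}$ likewise preserves the filtration, yielding $\phi^{\bar\Gamma}_+$.

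To prove that $\phi^\Gamma_+$ induces an isomorphism on homology, I would invoke the standard fact from the proof of Theorem~\ref{thm:DefLRFH} that $\phi^{\bar\Gamma}\circ\phi^\Gamma$ is chain-homotopic to $\mathrm{id}_{CF_*(\A^{H+h_1}_{q_0,q_1})}$ via a chain homotopy $\Theta$ arising from a $1$-parameter family of homotopies of homotopies interpolating between the concatenation of $\bar\Gamma$ and $\Gamma$ and the constant homotopy at $(H+h_1, J_1)$. The main technical step---and the principal obstacle---is to arrange this interpolating family so that $\Theta$ itself preserves the filtration $CF_*^{\leq 0}$: only then does $\Theta$ descend to $\Theta_+$ and the relation $\phi^{\bar\Gamma}_+\circ\phi^\Gamma_+ - \mathrm{id} = \partial^+\Theta_+ + \Theta_+\partial^+$ hold on the quotient. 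After possibly shrinking $\mathcal{O}$ to lie inside the neighbourhood supplied by Lemma~\ref{lem:posAction}, all positive critical values stay uniformly bounded below by a fixed $\delta > 0$; combined with the $L^\infty$-boundedness of trajectories from condition~(3) of Theorem~\ref{thm:DefLRFH}, this turns filtration preservation into an open and persistent property that can be maintained throughout the interpolating family, thus preventing any Floer trajectory in the family from joining a positive-action endpoint to a non-positive-action one. A symmetric argument for $\phi^\Gamma_+\circ\phi^{\bar\Gamma}_+$ then completes the proof.
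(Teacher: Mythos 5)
Your proposal is correct and follows essentially the same route as the paper: use the hypothesis to show the continuation map $\phi^\Gamma$ preserves the subcomplex $CF_*^{\leq 0}$, descend to the quotient, and conclude via the reverse homotopy. You are in fact more explicit than the paper about the need for the chain homotopy $\Theta$ to preserve the filtration (the paper delegates this to the standard reference), and your sketch of how to arrange it — using the uniform positive lower bound on actions from Lemma~\ref{lem:posAction} together with the bounds of Condition~3 — is the right way to fill that in.
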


\begin{proof}
By assumption the critical set of $\A^H_{q_0,q_1}$ is continuously compact in $(K,\mathcal{O})$. Denote by $\mathcal{O}^{\reg}$ the subset of $\mathcal{O}$ consisting of all Hamiltonians $h\in \mathcal{O}$, such that $\A^{H+h}_{q_0,q_1}$ is Morse. By the standard Sard-Smale argument $\mathcal{O}^{\reg}$ is dense in $\mathcal{O}$.
Fix $h_0,h_1 \in \mathcal{O}^{\reg}$. By assumption, there exist $J_i \in \mathcal{J}_{h_i}^{\reg}\subset \mathcal{J}(\mathcal{V}, \mathbb{J})$ for $i=0,1$ and a homotopy $\Gamma= \{(H+h_s, J_s)\}_{s\in \R}$ with $h_s\in \mathcal{O}$ and $J_s\in \mathcal{J}(\mathcal{V}, \mathbb{J})$ from $(H+h_0,J_0)$ to $(H+h_1,J_1)$ with the following properties:
\begin{enumerate}[label=\alph*)]
\item the homotopy $\Gamma$ satisfies the Novikov finiteness condition \eqref{Novikov};
\item for any pair $a,b\in \R$ the space of Floer trajectories $\mathcal{M}^\Gamma(a,b)$ is bounded in the $L^\infty$-norm;
\item for every $x\in \Crit^+\A^{H+h_0}_{q_0,q_1}$ and every $y\in \Crit\A^{H+h_1}_{q_0,q_1}$ such that $\F_\Gamma(x,y)\neq \emptyset$ we have $\A^{H+h_1}_{q_0,q_1}(y)>0$.
\end{enumerate}
From properties a) and b) we get a chain map $\phi^\Gamma : CF_*(\A^{H+h_1}_{q_0,q_1})\to CF_*(\A^{H+h_0}_{q_0,q_1})$.
It satisfies $\phi^\Gamma \left(CF_*^{\leq 0}(\A^{H+h_1}_{q_0,q_1})\right) \subseteq CF_*^{\leq 0}(\A^{H+h_0}_{q_0,q_1})$, 
since otherwise there would exist $x\in \Crit^+\A^{H+h_0}_{q_0,q_1}$ and $y\in \Crit\A^{H+h_1}_{q_0,q_1}$ such that $\F_\Gamma(x,y)\neq \emptyset$ and $\A^{H+h_1}_{q_0,q_1}(y)\leq 0$, contradicting property c). 
Thus $\phi^\Gamma$ descends to a chain map $\phi^\Gamma_+ : CF_*^+(\A^{H+h_1}_{q_0,q_1})\to CF_*^+(\A^{H+h_0}_{q_0,q_1})$, and the usual argument using a homotopy in the opposite direction (see~\cite[Proposition 11.2.9]{Audin2014}) shows that the induced map on homology\linebreak $\Phi^\Gamma_+ \colon  \LRFH_*^+(\A^{H+h_1}_{q_0,q_1})\to \LRFH_*^+(\A^{H+h_0}_{q_0,q_1})$ is an isomorphism.
\end{proof}

\section{Bounds on the Floer trajectories}

The aim of this section is to show that the Copernican Hamiltonian $H_0$ defined in \eqref{DefH0} together with the set of compactly supported perturbations defined in \eqref{DefHset} satisfy the assumptions of Theorem \ref{thm:DefLRFH}, in order to apply it and prove Theorem \ref{thm:LRFH}.

We will start by showing that the necessary condition for the existence of Reeb chords on $(H_0-h)^{-1}(0)$ between any two cotangent fibres $T_{q_0}\R^2$ and $T_{q_1}\R^2$, assumption 1 of Theorem \ref{thm:DefLRFH}, holds true in our setting.
Next, we will prove that the second assumption of Theorem \ref{thm:DefLRFH} holds true in our setting, i.e., the critical set of the Lagrangian Rabinowitz action functional is bounded in $L^\infty$. It is possible to define the Lagrangian Rabinowitz Floer homology without this assumption as a limit of homologies defined in an action window, but this is more involved and we postpone it to a future paper.

The most challenging part, which will occupy most of this section, is to prove that the Floer trajectories are uniformly bounded in the $L^\infty$-norm. This is essential in first defining the Lagrangian Rabinowitz homology and then constructing the isomorphism from Theorem \ref{thm:LRFH}.

\begin{lem}\label{lem:nonEmpty}
Let $H_0$ be the Copernican Hamiltonian defined in \eqref{DefH0} and let $\mathcal{H}$ be the set of perturbations defined in \eqref{DefHset}. Then the set $(H_0-h)^{-1}(0)\cap T_q\R^2$ is compact and nonempty for every $h\in \mathcal{H}$ and every $q\in \R^2$.
\end{lem}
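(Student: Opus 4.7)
The plan is to exploit the fact that, despite appearances, $H_0$ has compact intersection with each cotangent fibre because it is a perfect square plus a constant (depending on $q$). Specifically, I would begin with the algebraic identity
\begin{equation*}
H_0(q,p)=\tfrac12|p+q^\perp|^2-\tfrac12|q|^2,\qquad q^\perp=(q_2,-q_1),
\end{equation*}
obtained by completing the square. Fixing $q\in\R^2$, the restriction $H_0(q,\cdot)$ is therefore a proper function of $p$ whose sublevel sets are discs centred at $-q^\perp$.

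For nonemptiness, I would apply the intermediate value theorem to the continuous function $f(p):=(H_0-h)(q,p)$ on the connected space $T_q\R^2\cong\R^2$. At $p=-q^\perp$, the identity gives $f(-q^\perp)=-\tfrac12|q|^2-h(q,-q^\perp)<0$ because $h>0$ by definition of $\mathcal{H}$. On the other hand, since $dh$ is compactly supported, $h$ is bounded above on $T^*\R^2$; combined with $\tfrac12|p+q^\perp|^2\to+\infty$ this forces $f(p)\to+\infty$ as $|p|\to\infty$. Continuity and connectedness of $\R^2$ then yield a zero.

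For compactness, closedness is automatic (preimage of a point under a continuous map), and boundedness is immediate from the identity: if $f(p)=0$ then
\begin{equation*}
\tfrac12|p+q^\perp|^2 = h(q,p)+\tfrac12|q|^2 \leq \sup h + \tfrac12|q|^2,
\end{equation*}
which is finite and independent of $p$. Hence $|p|$ is bounded in terms of $q$ and $\sup h$, proving compactness.

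There is no real obstacle here; the two inputs needed are positivity of $h$ (for the strict inequality at $p=-q^\perp$) and boundedness of $h$ (to kill the $h$ term against the quadratic growth of $H_0$ in $p$), both of which are built into the definition of $\mathcal{H}$. The extra condition $h-dh(p\partial_p)>0$ in \eqref{DefHset} is not required at this stage; it will become relevant later when compactness of critical sets and Floer trajectories is addressed.
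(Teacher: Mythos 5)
Your proof is correct and follows essentially the same route as the paper: the intermediate value theorem applied to $(H_0-h)(q,\cdot)$, using $h>0$ for a negative value (the paper evaluates at $p=0$ where $H_0(q,0)=0$, you at $p=-q^\perp$) and boundedness of $h$ against the quadratic growth of $H_0$ in $p$ for positivity at infinity and for boundedness of the zero set. Your explicit completion of the square and the spelled-out compactness estimate are slightly more detailed than the paper's proof, but the argument is the same.
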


\begin{proof}
Fix $q\in \R^2$. Since $h$ is constant outside a compact set,
we have
$$
\lim_{|p|\to\infty} \left(H_0(q,p)-h(q,p)\right)=+\infty.
$$
On the other hand, we have $h>0$ by assumption and $H_0(q,0)=0$, hence
$$
H_0(q,0)-h(q,0)=-h(q,0)<0.
$$
Consequently, by the intermediate value theorem there exists $p\in T_q\R^2$, such that $H_0(q,p)-h(q,p)=0$.
\end{proof}

The following lemma proves that the Copernican Hamiltonian $H_0$ together with the set of compactly supported perturbations defined in \eqref{DefHset} satisfies the second assumption of Theorem \ref{thm:DefLRFH}.

\begin{lem}\label{lem:H0Chord}
Let $H_0:T^*\R^2 \to \R$ be the Copernican Hamiltonian defined in \eqref{DefH0}. Fix two points $q_0,q_1\in \R^2$. For $n\in\mathbb{N}$ and $m>0$ denote $r_n := \max\{|q_0|,|q_1|\}+n$ and define 
the compact set $K_{n,m} \subseteq T^*\R^2$ by
$$
K_{n,m}:=
\left\lbrace\begin{array}{c|c}
& r\leq r_n, \quad p_r \leq \sqrt{r_n^2+2m},\\
\hspace*{-0.2cm}\smash{\raisebox{.5\normalbaselineskip}{ $(r, p_r,\theta,p_\theta)\in T^*\R_+\times T^*S^1$}}& p_\theta \leq r_n\sqrt{ 2m}.
\end{array}\right\rbrace\,.
$$
Then for every $h \in \mathcal{H}$ such that $dh \in C_0^\infty( K_{n,m})$ and $\|h\|_{L^\infty}<m$ and any 
$(v,\eta)\in \Crit \A^{H_0-h}_{q_0,q_1}$ we have $v([0,1])\subseteq K_{n,m}$.
\end{lem}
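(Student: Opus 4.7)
Fix $(v,\eta)\in\Crit\A^{H_0-h}_{q_0,q_1}$, so that $(H_0-h)(v(t))=0$ and $\partial_t v=\eta X_{H_0-h}(v)$ for all $t\in[0,1]$, with $v(i)\in T^*_{q_i}\R^2$ for $i=0,1$. In particular $r(0)=|q_0|$ and $r(1)=|q_1|$ are each at most $r_n$. The pivotal structural observation is that $dh$ vanishes on the open complement of $K_{n,m}$, so whenever $v(t)\notin K_{n,m}$ the trajectory locally satisfies the unperturbed equations of $X_{H_0}$, which in polar coordinates read
$$
\dot r=\eta p_r,\qquad \dot p_r=\eta\,p_\theta^2/r^3,\qquad \dot p_\theta=0.
$$

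The plan is to verify the three inequalities defining $K_{n,m}$ in turn, beginning with the $r$-bound, which is the only non-algebraic step. Suppose for contradiction that $\max_t r(t)>r_n$; since $r(0),r(1)\leq r_n$, the maximum is attained at some interior $t^*$ with $r(t^*)>r_n$, hence $v(t^*)\notin K_{n,m}$ and the $X_{H_0}$ equations hold in a neighborhood of $t^*$. Assuming $\eta\neq 0$ (the case $\eta=0$ is immediate since then $v$ is a constant path with $r=|q_0|\leq r_n$), the vanishing $\dot r(t^*)=0$ forces $p_r(t^*)=0$, and the second-derivative test combined with $\ddot r(t^*)=\eta^2\,p_\theta(t^*)^2/r(t^*)^3\geq 0$ forces $p_\theta(t^*)=0$. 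Substituting $p_r=p_\theta=0$ into $H_0=\frac{p_r^2}{2}+\frac{p_\theta^2}{2r^2}+p_\theta$ yields $H_0(v(t^*))=0$, contradicting $H_0(v(t^*))=h(v(t^*))>0$ in view of the defining condition $h>0$ of $\mathcal{H}$.

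With $r\leq r_n$ established, the remaining bounds follow algebraically from the energy equation $H_0=h$. Completing the square in $p_\theta$ gives
$$
\frac{p_r^2}{2}+\frac{(p_\theta+r^2)^2}{2r^2}=h+\frac{r^2}{2}\leq m+\frac{r_n^2}{2},
$$
using $\|h\|_{L^\infty}<m$ and $r\leq r_n$; dropping the nonnegative second summand yields $p_r\leq\sqrt{r_n^2+2m}$. For the bound $p_\theta\leq r_n\sqrt{2m}$ I would split by sign: the inequality is trivial when $p_\theta\leq 0$, and when $p_\theta\geq 0$ the estimate $\frac{p_\theta^2}{2r^2}+p_\theta\leq h<m$ combined with $p_\theta\geq 0$ forces $\frac{p_\theta^2}{2r^2}\leq m$, hence $p_\theta\leq r\sqrt{2m}\leq r_n\sqrt{2m}$.

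The substantive step is the $r$-bound: it is the only place where the compactly supported nature of the perturbation, the specific algebraic form of $X_{H_0}$ (conservation of $p_\theta$ together with the definite sign of $\dot p_r$), and the strict positivity $h>0$ built into $\mathcal{H}$ all conspire to produce the contradiction. If $h$ were merely nonnegative, the argument would break at precisely the step forcing $p_r=p_\theta=0$, so the strict inequality in the definition of $\mathcal{H}$ is essential here rather than cosmetic; by contrast, the remaining two bounds are purely algebraic consequences of the energy constraint, the sup-norm hypothesis on $h$, and the $r$-bound from the first step.
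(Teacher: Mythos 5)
Your proof is correct and follows essentially the same route as the paper: the $r$-bound via an interior-maximum argument (first and second derivative of $r\circ v$, using that $dh$ vanishes outside $K_{n,m}$ so the unperturbed flow of $X_{H_0}$ applies near the maximum, forcing $p_r=p_\theta=0$ and hence $H_0=0$, contradicting $h>0$), followed by algebraic bounds from the energy relation $H_0=h$; the paper phrases the first step with Poisson brackets $\{H_0,r\}=p_r$, $\{H_0,\{H_0,r\}\}=p_\theta^2/r^3$, which is the same computation, and you additionally dispose of the $\eta=0$ case explicitly. One caveat, directed at the statement rather than at your argument: you prove only the one-sided bounds literally written in the definition of $K_{n,m}$, under which $K_{n,m}$ is not compact; for the two-sided reading that the lemma's compactness claim requires, the estimate $\bigl|\tfrac{p_\theta}{r}+r\bigr|\leq\sqrt{r_n^2+2m}$ yields only $p_\theta\geq -r_n\bigl(r_n+\sqrt{r_n^2+2m}\bigr)$, not $p_\theta\geq -r_n\sqrt{2m}$, so the momentum box should be enlarged accordingly (this harmless correction is needed in the paper's own proof as well).
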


\begin{proof}
We will use the Poisson bracket defined by $\{f,g\}=\om(X_f,X_g)=dg(X_f)$.
Using \eqref{DefH0} we calculate
$$
\{H_0,r\} = p_r, \quad \textrm{and}\quad \{H_0,\{H_0, r\}\}=\frac{p_\theta^2}{r^3}.
$$
Thus at a point where $\{\{H_0, r\}=0\}$ and $\{\{H_0,\{H_0, r\}\}\leq 0$ we have $p_r=p_\theta=0$. Since $H_0(r,\theta, 0, 0) = 0$, this implies that for each constant $c>0$ we have
\begin{equation}\label{supp_h}
H_0^{-1}(c)\cap \{\{H_0, r\}=0\}\cap \{\{H_0,\{H_0, r\}\}\leq 0\} = \emptyset.
\end{equation}
Let now $(v, \eta) \in \Crit \A^{H_0-h}_{q_0,q_1}$. We will first show that
$$
  \max r \circ v \leq r_n.
$$
Arguing by contradiction, suppose that $r\circ v(t_0)=\max r \circ v > r_n$ for some $t_0\in [0,1]$.  
Since $r\circ v(i)=|q_i|<r_n$ for $i=0,1$, we have $t_0\in (0,1)$ and the condition that $r\circ v$ attains its maximum at $t_0$ gives  
\begin{gather*}
\frac{d}{dt}r\circ v (t_{0}) = \eta\{H_0-h, r\}\circ  v (t_{0})= 0, \\
 \textrm{and} \quad \frac{d^{2}}{dt^{2}}r\circ v (t_{0})=\eta^2\{H_0-h, \{H_0-h, r\}\} \circ v (t_{0})\leq 0.
\end{gather*}
The definition of $K_{n,m}$ and $dh \in C_0^\infty( K_{n,m})$ imply that $h$ is equal to a constant $c>0$ near $v(t_0)$ and can therefore be ignored in the Poisson brackets, and we obtain a contradiction to~\eqref{supp_h}. 

For the bounds on $p_r\circ v$ and $p_\theta\circ v$, recall that $v(t)\in (H_0-h)^{-1}(0)$ for all $t\in [0, 1]$. Consequently, we have
\begin{align*}
\frac{p_r^2}{2}+\frac{p_\theta^2}{2r^2} +p_\theta & =h(r,\theta, p_r, p_\theta),\\
p_r^2+\left(\frac{p_\theta}{r}+r\right)^2 & =r^2+2h  \leq r_n^2+2m,\\
|p_r \circ v|  & \leq \sqrt{r_n^2+2m},\\
|p_\theta \circ v| & \leq r_n\left(\sqrt{r_n^2+2m}-r_n\right)\leq r_n\sqrt{2m}.
\end{align*}
\end{proof}

Before formulating the main theorem of this section we will first introduce the following notation. 
First, to avoid unnecessary clutter we will abbreviate $\A^{H_0-h}:=\A^{H_0-h}_{q_0,q_1}$ as in this section dependence on $q_0,q_1\in \R^2$ is not very relevant.
Next, recall the set $\mathcal{H}$ from \eqref{DefHset}. For a compact subset of $K\subseteq T^*\R^2$ we define
$$\mathcal{H}(K):= \{h\in \mathcal{H}\ |\ dh\in C_0^\infty(K)\}.$$
Moreover, for an open precompact subset $\mathcal{V}\subseteq T^*\R^2$ we denote by $\mathcal{J}(\mathcal{V},\mathbb{J})$ 
the set of all the $2$-parameter families of $\omega_0$-compatible almost complex structures on $(T^*\R^2, \omega_0)$ which are equal to the standard complex structure $\mathbb{J}$ outside of $\mathcal{V}$ (see the definition in Section \ref{sec:FloerTrajec}).

\begin{thm}\label{thm:FloerBounds}
Let $H_0$ be the Hamiltonian defined in \eqref{DefH0} and let $h_0,h_1 \in \mathcal{H}$. Fix two points $q_0,q_1 \in \R^2$. Let $K$ be a compact subset of $T^*\R^2$, such that for $i=0,1$ it satisfies
$$
\supp h_i \subseteq K\qquad \textrm{and}\qquad v([0,1])\subseteq K\qquad \textrm{for all}\quad (v,\eta)\in \Crit \A^{H_0-h_i}.
$$
Let $\mathcal{V}\subseteq T^*\R^2$ be an open, but precompact subset, such that $K\subseteq \mathcal{V}$. Let $\Gamma:=\{(h_s, J_s)\}_{s\in \R}$ be a smooth homotopy of Hamiltonians $h_s\in \mathcal{H}(K)$ and $2$-parameter families of almost complex structures $J_s \in C^\infty([0,1]\times \R,\mathcal{J}(\mathcal{V},\mathbb{J}))$ constant in $s$ outside $[0,1]$, such that
\begin{gather}
\|\partial_{s}h_{s}\|_{L^{\infty}}\left(\frac{1}{c}+\|J\|_{L^{\infty}}^2\right) \leq \frac{1}{3},\label{inqGamma}\\
\textrm{where}\qquad c:=\inf_{s\in[0,1]}\{h_s-dh_s(p\partial_p)\}.\label{Defc}
\end{gather}
Then for every pair $(a,b)\in \R^2$ the corresponding space $\mathcal{M}^\Gamma(a,b)$ of Floer trajectories defined in \eqref{DefM(a,b)} is bounded in the $L^\infty$-norm.
\end{thm}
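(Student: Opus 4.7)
The plan is to prove uniform $L^\infty$ bounds on both $\eta$ and $v$ for Floer trajectories $u=(v,\eta)\in\mathcal{M}^\Gamma(a,b)$, by a two-stage bootstrap that first controls $\eta$ together with the total energy, and then $v$ via a maximum principle.

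First I would combine two identities. The \emph{energy identity} $\tfrac{d}{ds}\mathcal A^{H_0-h_s}(u(s)) = \|\partial_s u(s)\|^2+\eta(s)\int_0^1 \partial_s h_s(v)\,dt$, integrated over $s\in\R$, bounds the total energy $E(u):=\int_\R\|\partial_s u\|^2 ds$ by the action window $b-a$ plus an error of the form $\|\partial_s h_s\|_{L^\infty}\int_\R |\eta(s)|\,ds$. The \emph{Liouville identity}, obtained by testing $d\mathcal A^{H_0-h_s}$ against $(Y\circ v,\eta)$ with $Y=p\,\partial_p$ (which is tangent to the cotangent fibres and hence lies in $T_v\mathscr H_{q_0,q_1}$), gives
\begin{equation*}
\mathcal A^{H_0-h_s}(u) -\langle\partial_s u,(Y\circ v,\eta)\rangle_J = \eta\int_0^1 d(H_0-h_s)(Y)(v)\,dt.
\end{equation*}
Using $dH_0(Y)=2H_0-p_\theta$ one computes
\begin{equation*}
d(H_0-h_s)(Y)=(H_0-h_s)+\tfrac12 p_r^2+\tfrac{p_\theta^2}{2r^2}+\bigl(h_s-dh_s(Y)\bigr)\geq (H_0-h_s)+c,
\end{equation*}
where $c>0$ is the constant from \eqref{Defc}. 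Substituting the Floer equation $\partial_s\eta=-\int_0^1(H_0-h_s)(v)\,dt$ and estimating the inner product by Cauchy-Schwarz, where the $J$-norm of $Y\circ v$ is controlled by $\|J\|_{L^\infty}$ times the Euclidean $|p|$, yields a pointwise bound of the schematic form
\begin{equation*}
c\,|\eta(s)|\;\lesssim\; |\mathcal A^{H_0-h_s}(u(s))|+|\eta(s)\,\partial_s\eta(s)|+\|\partial_s u(s)\|\cdot\bigl(\|Y\circ v\|_J+|\eta(s)|\bigr).
\end{equation*}
Integrating in $s$ and combining with the energy identity gives an inequality of the form $\|\eta\|_{L^\infty}\leq C_1+C_2\,\|\partial_s h_s\|_{L^\infty}\bigl(\tfrac1c+\|J\|_{L^\infty}^2\bigr)\|\eta\|_{L^\infty}$. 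The hypothesis \eqref{inqGamma} is exactly what forces the coefficient on the right to be at most $1/3$, so $\|\eta\|_{L^\infty}$ absorbs into the left and is bounded uniformly; the energy identity then also bounds $E(u)$.

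With $\eta$ and $E$ bounded, I would bound $v$ by a maximum principle on the radial coordinate $r\circ v$, extending the bracket computation in Lemma \ref{lem:H0Chord} from critical points to Floer trajectories. Outside the precompact set $\mathcal V$ the homotopy is constant in $s$, $J_s\equiv\mathbb J$ is the standard complex structure, and the Floer equation reduces to an inhomogeneous pseudo-holomorphic equation with inhomogeneity $\eta X_{H_0}$. Using $\{H_0,r\}=p_r$ and $\{H_0,\{H_0,r\}\}=p_\theta^2/r^3\geq 0$, together with the boundary conditions $v(s,i)\in T^*_{q_i}\R^2$ and the asymptotic condition that $v(\pm\infty,\cdot)$ lies in the compact set $K$, one obtains a subharmonic-type inequality for $r\circ v$ that precludes an interior maximum outside $K$; analogous computations (or the explicit contact form $p_r^2+(p_\theta/r+r)^2=r^2+2h_s$ as in Lemma \ref{lem:H0Chord}) bound $p_r$ and $p_\theta$.

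The main obstacle is the bootstrap: the coupling between $\|\eta\|_{L^\infty}$ and the $J$-weighted norm $\|Y\circ v\|_J$ is the precise reason for the two factors $1/c$ and $\|J\|_{L^\infty}^2$ in \eqref{inqGamma}, and closing the estimate requires splitting the inner product $\langle\partial_s u,(Y\circ v,\eta)\rangle_J$ into a piece on the compact set $K$ (handled by compactness and the $L^\infty$-norm of $J$) and a piece where $J=\mathbb J$ (handled by the Euclidean structure together with the already established energy bound). The explicit factor $1/3$ in \eqref{inqGamma} leaves exactly the slack needed to absorb $\|\eta\|_{L^\infty}$ into itself and is the numerical anchor of the whole argument.
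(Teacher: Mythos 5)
Your first stage -- bounding $\|\eta\|_{L^\infty}$ and the energy by combining the energy identity with the Liouville identity for $Y=p\,\partial_p$, and absorbing $\|\eta\|_{L^\infty}$ using \eqref{inqGamma} -- is essentially the paper's argument (Lemmas \ref{lem:inqEtaAct} and \ref{lem:ActBound}). One imprecision: the pointwise Liouville inequality at a given $s$ contains $\|\partial_s u(s)\|$, which is not small pointwise, and literally ``integrating in $s$'' over all of $\R$ cannot produce an $L^\infty$ bound on $\eta$. The working version of the argument fixes $s$, uses the finiteness of the total energy to locate a nearby $\tau_0(s)$ at which $\|\nabla\A^{H_0-h_{\tau_0}}(u(\tau_0))\|<1$, applies the Liouville estimate there, and then bounds the drift $|\eta(s)-\eta(\tau_0(s))|$ by the energy; this is recoverable from your setup but is a step you need to make explicit.

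The genuine gap is in the second stage. For a Floer trajectory the function $r\circ v$ is \emph{not} subharmonic: the computation $\{H_0,\{H_0,r\}\}=p_\theta^2/r^3\ge 0$ controls $\tfrac{d^2}{dt^2}(r\circ v)$ only along Hamiltonian orbits, whereas $\triangle(r\circ v)=\partial_s^2+\partial_t^2$ applied along a solution of \eqref{FloerEq} produces, via \eqref{Laplace}, error terms proportional to $\eta$, $\eta^2$ and $\partial_s\eta$ with no sign. This is why the paper works with the plurisubharmonic functions $F_1=\tfrac12|q|^2$ and $F_2=\tfrac12|p|^2$ and invokes \emph{Aleksandrov's} maximum principle, which tolerates a right-hand side $f$ that is merely in $L^2(\Omega)$; establishing that $f_1,f_2\in L^2$ uniformly is itself a substantial piece of the proof, requiring prior $L^2$ bounds on $q$ and $p$ along the trajectory (the analysis of the set $\mathcal{B}_h(\mathfrak{a},\mathfrak{y},\varepsilon)$ in Proposition \ref{prop:smallDerivSetBound} and the $L^2$ bounds of Proposition \ref{prop:L2bounds}, followed by the $W^{1,1}\hookrightarrow L^2$ estimates of Lemmas \ref{lem:f1W11Bound} and \ref{lem:f2W11Bound}). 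Your proposal does not supply any of this. Separately, your phrase ``analogous computations bound $p_r$ and $p_\theta$'' hides the second real obstruction: a component $\Omega$ of the complement of $v^{-1}(K_\varepsilon)$ has boundary pieces on $\R\times\{0,1\}$ where only the base point of $v$ is pinned to $q_0$ or $q_1$, so $\sup_{\partial\Omega}|p|$ is not a priori controlled and a maximum principle for $|p|^2$ cannot be applied directly; the paper circumvents this by doubling the strip to a cylinder (equations \eqref{barv}--\eqref{Theta}) and verifying that $F_2\circ\bar v$ is $C^2$ across the seam (Lemma \ref{lem:F2smooth}). Without these ingredients the second half of your argument does not close.
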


\subsection{The bounds on the action} 

If the family of Hamiltonians $\{H_s\}_{s\in \R}$ is constant in $s$, i.e. $H_s = H$ for all $s\in \R$ then $\mathcal{M}^\Gamma(a,b)\neq \emptyset$ implies $b>a$. However, since in the parametric case the action along the Floer trajectories may not be monotonically increasing, we need to ensure that we have a bound on how much the action can decrease along a Floer trajectory.
Proving the bounds on the action conditions will be the main aim of this section. 

We will start by proving the following lemma:

\begin{lem}\label{lem:inqEtaAct}
For every $h\in\mathcal{H}$
\begin{gather*}
\textrm{if}\qquad \|\nabla \A^{H_0-h}(v,\eta)\|_{L^2\times\R}  <1,\\
\textrm{then}\qquad |\eta | \leq \frac{1}{c}|\A^{H_0-h}(v,\eta)|+\frac{1}{\sqrt{2c}},
\end{gather*}
where $c:=\inf\{h-dh(p\partial_p\}>0$.
\end{lem}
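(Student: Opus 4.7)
The key geometric input is that $Y = p\partial_p$ is a Liouville vector field on $T^*\R^2$ (so $\iota_Y\omega = \lambda$), and a direct computation for the quadratic Hamiltonian $H_0$ gives the pointwise identity $dH_0(Y) = H_0 + \frac{1}{2}|p|^2$. Since $\lambda(X_f) = \omega(Y,X_f) = df(Y)$ for any smooth $f$, this translates into
$$\lambda(X_{H_0-h}) - (H_0-h) \;=\; \bigl(h - dh(Y)\bigr) + \tfrac{1}{2}|p|^2 \;\geq\; c + \tfrac{1}{2}|p|^2$$
pointwise. Setting $R := \partial_t v - \eta X_{H_0-h}(v)$, so that $\|R\|_{L^2} \leq \|\nabla\A^{H_0-h}(v,\eta)\|_{L^2\times\R} < 1$, I would substitute $\partial_t v = \eta X_{H_0-h}(v) + R$ into $\A^{H_0-h}(v,\eta) = \int_0^1\lambda(\partial_t v)\,dt - \eta\int_0^1(H_0-h)(v)\,dt$. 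The identity above collapses the result into
$$\A^{H_0-h}(v,\eta) - \int_0^1\lambda(R)\,dt \;=\; \eta\int_0^1\bigl[(h - dh(Y))(v) + \tfrac{1}{2}|p(v)|^2\bigr]\,dt.$$

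Both summands on the right-hand side carry the sign of $\eta$, so taking absolute values yields
$$|\eta|\,c + \frac{|\eta|}{2}\int_0^1|p|^2\,dt \;\leq\; \bigl|\A^{H_0-h}(v,\eta)\bigr| + \left|\int_0^1\lambda(R)\,dt\right|.$$
The crucial step is to control the error term. Since $\lambda = p\,dq$ we have $|\lambda(R)| \leq |p||R|$ (in the standard Euclidean metric, which coincides with $g_J$ for the standard $J$), so Cauchy–Schwarz followed by weighted AM–GM with weight $|\eta|$ produces
$$\left|\int_0^1\lambda(R)\,dt\right| \;\leq\; \|p\|_{L^2}\|R\|_{L^2} \;\leq\; \frac{|\eta|}{2}\int_0^1|p|^2\,dt + \frac{\|R\|_{L^2}^2}{2|\eta|}.$$
The $|p|^2$-term on the right now cancels against its counterpart on the left, and $\|R\|_{L^2}^2 < 1$ reduces the problem to the scalar inequality $c|\eta|^2 - |\A^{H_0-h}(v,\eta)|\,|\eta| - \tfrac{1}{2} \leq 0$.

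Solving this quadratic in $|\eta|$ and applying the subadditivity $\sqrt{|A|^2 + 2c} \leq |A| + \sqrt{2c}$ then delivers the claimed bound $|\eta| \leq \tfrac{1}{c}|\A^{H_0-h}(v,\eta)| + \tfrac{1}{\sqrt{2c}}$; the case $\eta = 0$ is trivial. The only real subtlety is the specific choice of weight $|\eta|$ in AM–GM: this is precisely what absorbs the a priori uncontrolled quantity $\int_0^1|p|^2\,dt$ into the left-hand side, so that it does not appear in the final estimate. Any other weight would leave behind a $|p|^2$-tail that cannot be bounded without additional information on $v$.
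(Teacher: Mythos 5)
Your proposal is correct and follows essentially the same route as the paper: both hinge on pairing the gradient with the Liouville vector field $p\partial_p$ via the identity $d(H_0-h)(p\partial_p)-(H_0-h)=(h-dh(p\partial_p))+\tfrac12|p|^2\geq c+\tfrac12|p|^2$, which is exactly the paper's estimate \eqref{IntdH(Y)}. The only difference is the closing algebra — you absorb the $\|p\|_{L^2}^2$ term by a weighted AM--GM and solve a quadratic in $|\eta|$, whereas the paper keeps $\|p\|_{L^2}$ as a free variable and maximizes the rational function $x\mapsto(\mathfrak{|A|}+x)/(\tfrac12x^2+c)$ term by term; both yield the identical constants $\tfrac1c$ and $\tfrac1{\sqrt{2c}}$.
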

\begin{proof}
Using \eqref{DefH0} we can compute the Hamiltonian $H_0$ satisfies the following equality:
\begin{equation}\label{H0pdp}
dH_0(p\partial_p) =  p_1^2+p_2^2+q_2p_1-q_1p_2 =\frac{1}{2}|p|^2+H_0(p,q).
\end{equation}
Combining that with equality \eqref{dA^H(Y)} for all $(q,p,\eta)=(v,\eta)\in \mathscr{H}_{q_0,q_1}$ we obtain
\begin{align}
\A^{H_0-h}(v,\eta) -d\A^{H_0-h}[p\partial_p,0] & =\eta\int_0^1\left(d(H_0-h)(p\partial_p)-H_0(v)+h(v)\right)\nonumber \\
& = \eta \left(\frac{1}{2}\|p\|_{L^2} +\int_0^1 \left(h(v)-dh(p\partial p)\right)\right)\\
& \geq \eta \left(\frac{1}{2}\|p\|_{L^2}+c\right), \label{IntdH(Y)}
\end{align}
where the last inequality comes from the fact that
$c:=\inf\{h-dh(p\partial_p\}>0$.

Consequently, for all $(q,p,\eta)=(v,\eta)\in \mathscr{H}_{q_0,q_1}$, such that $\|\nabla \A^{H_0-h}(v,\eta)\|_{L^2\times\R}  <1$, we obtain
$$
|\eta| < \frac{|\A^{H_0-h}(v,\eta)|+\|p\|_{L^2}}{\frac{1}{2}\|p\|_{L^2}^2+c}.
$$
Now let us analyze the right-hand side as functions of $\|p\|_{L^2}$:
\begin{align*}
\max_{x\geq 0}\left(\frac{1}{\frac{1}{2} x^2+c}\right) & =\frac{1}{c},\\
\frac{d}{dx}\left( \frac{x}{\frac{1}{2}x^2+c}\right) & = \frac{c-\frac{1}{2}x^2}{\left(\frac{1}{2}x^2+c\right)^2},\\
\max_{x\geq 0} \left( \frac{x}{\frac{1}{2}x^2+c}\right) & = \frac{1}{\sqrt{2c}}.
\end{align*}
These estimates give the desired inequality.
\end{proof}
\begin{lem}\label{lem:ActBound}
Consider the setting as in Theorem \ref{thm:FloerBounds}. Fix $a, b\in \mathbb{R}$. Then $\|\eta\|_{L^\infty}$, $\|\A^{H_0-h_s}\circ u(s)\|_{L^\infty}$ and $\|\nabla^{J_s}\A^{H_0-h_s}\circ u(s)\|_{L^2([0,1])\times\R}$ are uniformly bounded for all $u \in \mathcal{M}^\Gamma(a,b)$.
\end{lem}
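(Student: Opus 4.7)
Set $\A(s) := \A^{H_0-h_s}(u(s))$ and $g(s) := \|\nabla^{J_s}\A^{H_0-h_s}(u(s))\|_{L^2([0,1])\times\R}^2$ for a Floer trajectory $u = (v,\eta) \in \mathcal{M}^\Gamma(a,b)$. The strategy is first to bound $\A(s)$ uniformly in $s$ and $u$, and then to deduce the bounds on $\eta$ and on the gradient from it.

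The starting point is the $s$-derivative of the action,
\begin{equation*}
\A'(s) \;=\; g(s) \;+\; \eta(s)\int_0^1 \partial_s h_s(v(s,t))\,dt,
\end{equation*}
obtained by differentiating $\A^{H_0-h_s}(u(s))$ in $s$ and using $\partial_s u = \nabla^{J_s}\A^{H_0-h_s}(u)$. Since $\partial_s h_s \equiv 0$ for $s\notin[0,1]$, this shows that $\A$ is non-decreasing on $(-\infty,0]$ and on $[1,+\infty)$. Combined with the limits $\lim_{s\to-\infty}\A(s) \geq a$ and $\lim_{s\to+\infty}\A(s) \leq b$ built into $\mathcal{M}^\Gamma(a,b)$, this gives $a\leq \A(s)\leq \A(0)$ on $(-\infty,0]$ and $\A(1)\leq \A(s)\leq b$ on $[1,+\infty)$, reducing the task to a uniform bound on $|\A|$ over the compact interval $[0,1]$.

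On $[0,1]$ I adapt the argument of Lemma~\ref{lem:inqEtaAct}. The identity~\eqref{H0pdp} together with \eqref{dA^H(Y)} yields
\begin{equation*}
\A^{H_0-h_s}(v,\eta) - d\A^{H_0-h_s}(v,\eta)[p\partial_p,0] \;=\; \eta\!\left(\tfrac12\|p\|_{L^2}^2 + \int_0^1(h_s-dh_s(p\partial_p))\,dt\right),
\end{equation*}
whose right-hand side dominates $c|\eta|$ by the definition of $c$ in~\eqref{Defc}. Estimating $|d\A[p\partial_p,0]|\leq \|J_s\|_{L^\infty}\sqrt{g(s)}\,\|p\|_{L^2}$ via Cauchy--Schwarz in the $J_s$-metric and then using Young's inequality to balance the extra $\|p\|_{L^2}$ against the $\tfrac12\eta\|p\|_{L^2}^2$ term on the right produces a pointwise estimate of the schematic form
\begin{equation*}
|\eta(s)| \;\leq\; \tfrac{1}{c}|\A(s)| \;+\; \tfrac{\|J_s\|_{L^\infty}}{\sqrt{2c}}\sqrt{g(s)}.
\end{equation*}
Plugging this into $|\A'(s)-g(s)| \leq \|\partial_s h_s\|_{L^\infty}|\eta(s)|$ and invoking the normalisation~\eqref{inqGamma}---whose purpose is precisely to enforce $\|\partial_s h_s\|_{L^\infty}(1/c+\|J\|_{L^\infty}^2)\leq 1/3$---yields a closed differential inequality in $(|\A|,g)$ in which the coefficient of $g(s)$ is strictly less than $1$. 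Isolating $g$ then gives a Gronwall-type inequality for $|\A|$ on $[0,1]$ whose boundary data are furnished by the previous step, yielding the desired uniform $L^\infty$-bound on $\A$.

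The $L^\infty$-bound on $\A$ feeds back into the displayed estimate to bound $\|\eta\|_{L^\infty}$; integrating the $\A'$-identity over $\R$ then gives $\int_\R g(s)\,ds < \infty$, and the pointwise-in-$s$ bound on $\|\nabla^{J_s}\A(u(s))\|_{L^2\times\R}$ follows either by reading off $g(s) = \A'(s) - \eta(s)\!\int_0^1\partial_s h_s\circ v\,dt$ with all quantities on the right now uniformly controlled, or by a short-interval elliptic estimate for the Floer equation around each $s$. I expect the main obstacle to be the Gronwall step: the numerical constant $1/3$ in~\eqref{inqGamma} is tuned precisely so that the coefficient of $g(s)$ in the differential inequality stays strictly below $1$, and the Young's inequality splittings must be executed carefully enough not to erode this margin.
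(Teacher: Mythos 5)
Your reduction to $s\in[0,1]$ (monotonicity of the action where $\partial_s h_s\equiv 0$, together with the asymptotic conditions in \eqref{DefM(a,b)}) and your absorption argument for $\sup_{[0,1]}|\A(s)|$ are sound: after the Young splitting of the $\sqrt{g(s)}$ term into the $g(s)$ already present in $\A'(s)$, condition \eqref{inqGamma} keeps the coefficient of $g$ positive and the coefficient of $|\A|$ at $\|\partial_s h_s\|_{L^\infty}/c\le 1/3<1$, so the action bound closes. This is a genuinely different ordering from the paper, which (following \cite{Pasquotto2017}) first expresses both $\|\A\|_{L^\infty}$ and the energy in terms of $\|\eta\|_{L^\infty}$ and absorbs $\|\eta\|_{L^\infty}$ only at the very end.

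The gap is the step ``the $L^\infty$-bound on $\A$ feeds back into the displayed estimate to bound $\|\eta\|_{L^\infty}$''. Your pointwise estimate is $|\eta(s)|\le \tfrac1c|\A(s)|+\tfrac{\|J\|_{L^\infty}}{\sqrt{2c}}\sqrt{g(s)}$, and at that stage $g$ is controlled only after integration in $s$ (total energy), not pointwise; at values of $s$ where $g(s)$ is large the estimate gives nothing. Your first fallback, reading off $g(s)=\A'(s)-\eta(s)\int_0^1\partial_s h_s\circ v\,dt$, is circular because $\A'(s)$ contains $g(s)$ and is not controlled by the $L^\infty$-bound on $\A$; the second fallback (a short-interval elliptic estimate) would itself require the $C^0$-bounds that Theorem~\ref{thm:FloerBounds} is in the business of proving. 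The missing ingredient is the device the paper takes from \cite{Pasquotto2017}: for each $s$ set $\tau_{0}(s)=\sup\{\tau\le s\ |\ \|\nabla\A^{H_0-h_\tau}(u(\tau))\|_{L^2\times\R}<1\}$; the total energy bound forces $|s-\tau_0(s)|$ to be uniformly bounded (the gradient is $\ge 1$ on $[\tau_0(s),s]$), Lemma~\ref{lem:inqEtaAct} bounds $|\eta(\tau_0(s))|$ by the action alone, and $|\eta(s)-\eta(\tau_0(s))|\le\sqrt{|s-\tau_0(s)|}\,\|\partial_s\eta\|_{L^2}$ is again controlled by the energy. With that insertion your argument closes; note finally that the third assertion is used (and proved) in the integrated form $\|\nabla^{J_s}\A^{H_0-h_s}(u)\|^2_{L^2(\R\times[0,1])}\le\mathfrak e$, which your integration of the $\A'$-identity over $\R$ does deliver once $\|\eta\|_{L^\infty}$ is in hand.
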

The following proof follows closely the arguments presented in \cite[Prop. 3.3]{Pasquotto2017}. We will not present all of them here, but we encourage a curious reader to see the details in \cite{Pasquotto2017}.

\begin{proof}
Using the fact that $u\in \mathcal{M}^\Gamma(a,b)$ is a Floer trajectory one can calculate the derivative of the action functional over $s$ and obtain the following inequalities (see  \cite[Prop. 3.3]{Pasquotto2017}):
\begin{align}
\|\mathcal{A}^{H_0-h_{s}}( u)\|_{L^{\infty}} & \leq \max\{|a|,|b|\} + \|\eta\|_{L^{\infty}}\|\partial_{s}h_{s}\|_{L^{\infty}},\label{inqAct}\\
\|\nabla^{J_{s}} \mathcal{A}^{H_0-h_{s}}(u)\|_{L^{2}(\mathbb{R}\times [0,1])}^{2}
& \leq \|J\|_{L^{\infty}} (b-a + \|\eta\|_{L^{\infty}}\|\partial_{s}h_{s}\|_{L^{\infty}}). \label{inqEnergy}
\end{align}

In particular the convergence of the integral
$$
\|\nabla \mathcal{A}^{H_0-h_{s}}(u)\|_{L^{2}(\mathbb{R}\times [0,1])} \leq \|J\|_{L^{\infty}}  \|\nabla^{J_{s}} \mathcal{A}^{H_0-h_{s}}(u)\|_{L^{2}(\mathbb{R}\times [0,1])},
$$
implies that for small enough $s$ we have $\|\nabla \mathcal{A}^{H_0-h_{s}}(u(s))\|_{L^{2}\times\mathbb{R}}<1$.
This ensures that for all $s \in \mathbb{R}$ the following value $\tau_{0}(s)$ is well defined and finite
$$
\tau_{0}(s): = \sup\{ \tau \leq s\ |\ \|\nabla \mathcal{A}^{H_0-h_{\tau}}(u(\tau))\|_{L^{2}\times\mathbb{R}}<1\}.
$$
For $\tau \in [\tau_{0}(s),s]$ we have $\|\nabla \mathcal{A}^{H_0-h_{\tau}}(u(\tau))\|_{L^{2}\times\mathbb{R}}\geq 1$, which allows us to estimate (see  \cite[Prop. 3.3]{Pasquotto2017}):
\begin{align}
|s -\tau_0(s)| & \leq \|J\|_{L^{\infty}}^3 (b-a + \|\eta\|_{L^{\infty}}\|\partial_{s}h_{s}\|_{L^{\infty}}),\\
|\eta(s) -\eta(\tau_0(s))| & \leq \|J\|_{L^{\infty}}^2 (b-a + \|\eta\|_{L^{\infty}}\|\partial_{s}h_{s}\|_{L^{\infty}}). \label{inqeta}
\end{align}

Now using 
the constant $c>0$ as in \eqref{Defc} together with the
the estimates from Lemma \ref{lem:inqEtaAct} and \eqref{inqAct} we can calculate:
\begin{align*}
|\eta(s)|& \leq |\eta(\tau_0(s))|+|\eta(s) -\eta(\tau_0(s))|\\
& \leq \frac{1}{c}|\A^{H_0-h_s}(u(s))|+\frac{1}{\sqrt{2c}}+\|J\|_{L^{\infty}}^2 (b-a + \|\eta\|_{L^{\infty}}\|\partial_{s}h_{s}\|_{L^{\infty}})\\
& \leq \|\eta\|_{L^{\infty}}\|\partial_{s}h_{s}\|_{L^{\infty}}\left(\frac{1}{c}+\|J\|_{L^{\infty}}^2\right)+\frac{1}{c}\max\{|a|,|b|\}+\frac{1}{\sqrt{2c}}+\|J\|_{L^{\infty}}^2 (b-a).
\end{align*}
Since this inequality has to hold for all $s\in \R$ it also has to hold for $\|\eta\|_{L^\infty}$. Using \eqref{inqGamma} we obtain the following estimate:
\begin{align}
\|\eta\|_{L^{\infty}} & \leq \frac{\frac{1}{c}\max\{|a|,|b|\}+\frac{1}{\sqrt{2c}}+\|J\|_{L^{\infty}}^2 (b-a)}{1-\|\partial_{s}h_{s}\|_{L^{\infty}}\left(\frac{1}{c}+\|J\|_{L^{\infty}}^2\right)}\nonumber\\
&\leq \frac{3}{2} \left( \frac{1}{c}\max\{|a|,|b|\}+\frac{1}{\sqrt{2c}}+\|J\|_{L^{\infty}}^2 (b-a)\right)=:\mathfrak{y},\label{eqEta}
\end{align}
Now using \eqref{inqGamma}, \eqref{inqAct}, \eqref{inqEnergy} and \eqref{eqEta} we obtain the desired uniform bounds:
\begin{align}
\|\mathcal{A}^{H_0-h_{s}-c}( u)\|_{L^{\infty}} &\leq \max\{|a|,|b|\} + \frac{c}{3}\mathfrak{y}=:\mathfrak{a},\label{DefA}\\ 
\|\nabla^{J_{s}} \mathcal{A}^{H_0-h_{s}-c}(u)\|_{L^{2}(\mathbb{R}\times [0,1])}^{2} &  \leq \|J\|_{L^{\infty}} \left(b-a + \frac{c}{3}\mathfrak{y}\right)=:\mathfrak{e}.\label{DefE}
\end{align}
\end{proof}
Having obtained the bounds on the action we are ready to prove the Novikov finiteness condition:
\begin{lem}\label{lem:Novikov}
Consider the setting as in Theorem \ref{thm:FloerBounds}. Then $\mathcal{M}_\Gamma(a,b)\neq \emptyset$ implies
$$
a \leq \max\left\lbrace 3 b, 3\sqrt{\frac{c}{2}}\right\rbrace \qquad \textrm{and} \qquad b \geq \min\left\lbrace 3a, -3\sqrt{\frac{c}{2}}\right\rbrace.
$$
\end{lem}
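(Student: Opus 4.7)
The plan is to combine the bounds from Lemma~\ref{lem:ActBound} with a sign reduction to arrive at a single scalar inequality. First I would dispose of the easy regime $b \geq a$: if $a \leq 0$ then $a \leq 0 \leq 3\sqrt{\frac{c}{2}}$, while if $a > 0$ then $a \leq b \leq 3b$, so the first stated inequality is automatic; the second is handled identically by splitting on the sign of $b$.

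The content therefore lies in the regime $a > b$. Since the left-hand side of \eqref{inqEnergy} is an $L^{2}$-norm squared, its right-hand side must be non-negative, which forces
$$
a - b \;\leq\; \|\eta\|_{L^\infty}\,\|\partial_s h_s\|_{L^\infty} \;\leq\; \frac{c}{3}\|\eta\|_{L^\infty},
$$
where the second step uses \eqref{inqGamma}. Substituting the explicit bound \eqref{eqEta} for $\|\eta\|_{L^\infty}$ and using that $\|J\|_{L^\infty}^{2}(b-a) \leq 0$ in this regime, one obtains the key estimate
$$
a - b \;\leq\; \frac{1}{2}\max\{|a|,|b|\} + \frac{1}{2}\sqrt{\frac{c}{2}}.
$$

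Both stated inequalities now follow by a case split on whether $\max\{|a|,|b|\}$ equals $|a|$ or $|b|$ and on the signs of $a$ and $b$. For the first, the decisive sub-case is $|a| \geq |b|$ with $a \geq 0$: the key estimate becomes $a \leq 2b + \sqrt{\frac{c}{2}}$, which yields $a \leq 3b$ when $b \geq \sqrt{\frac{c}{2}}$ and $a < 3\sqrt{\frac{c}{2}}$ otherwise; the remaining sub-cases either contradict $a > b$ or give $a < 3\sqrt{\frac{c}{2}}$ directly. The second inequality is symmetric, with delicate sub-case $|b| > |a|$ and $a < 0$, producing $b \geq 2a - \sqrt{\frac{c}{2}}$; splitting at $a = -\sqrt{\frac{c}{2}}$ gives $b \geq 3a$ or $b \geq -3\sqrt{\frac{c}{2}}$. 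The only real subtlety, rather than a genuine obstacle, is to exploit the sign $b - a < 0$ at exactly the right step so that the $\|J\|_{L^\infty}$-dependent term in \eqref{eqEta} drops out, making the final bound independent of $\|J\|_{L^\infty}$ and matching the clean universal form in the statement.
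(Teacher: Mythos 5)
Your proof is correct and follows essentially the same route as the paper's: both extract $a-b \leq \|\eta\|_{L^\infty}\|\partial_s h_s\|_{L^\infty}$ from the non-negativity of the right-hand side of \eqref{inqEnergy}, bound $\|\partial_s h_s\|_{L^\infty}$ by $c/3$ via \eqref{inqGamma}, and insert \eqref{eqEta} with the $\|J\|_{L^\infty}^2(b-a)$ term dropped. The only difference is organizational: you package everything into the single estimate $a-b \leq \tfrac12\max\{|a|,|b|\}+\tfrac12\sqrt{c/2}$ before the elementary case analysis, whereas the paper assumes $a \geq 3\sqrt{c/2}$ at the outset and runs the two sub-cases $|b|\leq a$ and $b<-a$ directly.
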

\begin{proof}
This proof follows arguments presented in \cite[Cor. 3.8]{CieliebakFrauenfelder2009}. We will prove the first inequality since the second is completely analogous.
\begin{align}
\textrm{First assume}\ \qquad 3\sqrt{\frac{c}{2}} & \leq a \quad \textrm{and} \quad |b|\leq a.\label{aBound}\\
\textrm{Then} \qquad\max\{|a|,|b|\} & = a \quad \textrm{and} \quad b-a\leq 0.\nonumber
\end{align}
By \eqref{eqEta} and \eqref{aBound} we get:
$$
\|\eta\|_{L^\infty}\leq \frac{3}{2}\left(\frac{a}{c}+\frac{1}{\sqrt{2c}}\right)=2\frac{a}{c}.
$$
On the other hand, by \eqref{inqGamma} we have $\|\partial_sh_s\|_{L^\infty}\leq \frac{c}{3}$. This, together with \eqref{inqEnergy}, implies
$$
b \geq a- \|\eta\|_{L^\infty}\|\partial_sh_s\|_{L^\infty} \geq a -2 \frac{a}{c}\cdot \frac{c}{3}=\frac{1}{3}a.
$$

Now assume that  $ 3\sqrt{\frac{c}{2}} \leq a < |b|$. To finish the proof it suffices to exclude the case $a< - b$.
$$
\textrm{Then} \qquad\max\{|a|,|b|\} = -b \quad \textrm{and} \quad b-a\leq 0.
$$
By \eqref{eqEta} and \eqref{aBound} we get:
$$
\|\eta\|_{L^\infty}\leq \frac{3}{2}\left(\frac{-b}{c}+\frac{1}{\sqrt{2c}}\right)\leq \frac{3}{2}\left(\frac{-b}{c}+\frac{a}{3c}\right)\leq -2 \frac{b}{c}.
$$
This, together with \eqref{inqGamma} and \eqref{inqEnergy}, gives us a contradiction:
\begin{gather*}
b \geq a- \|\eta\|_{L^\infty}\|\partial_sh_s\|_{L^\infty}\geq a + 2 \frac{b}{c} \cdot \frac{c}{3}=a+\frac{2}{3}b,\\
0 \geq \frac{1}{3} b \geq a \geq 3\sqrt{\frac{c}{2}}>0.
\end{gather*}
\end{proof}

\subsection{The set of infinitesimally small action derivation}

Let $H_0$ be the Hamiltonian defined in \eqref{DefH0} and let $\mathcal{H}$ be the set of Hamiltonians as in \eqref{DefHset}. For a Hamiltonian $h \in \mathcal{H}$ and fixed constants $\mathfrak{a}, \mathfrak{y}, \varepsilon>0$ we define the following set:
\begin{equation}
\mathcal{B}_h(\mathfrak{a}, \mathfrak{y}, \varepsilon):= \left\lbrace\begin{array}{c|c}
 & |\nabla\A^{H_0-h}(v,\eta)|_{L^2\times\R}<\varepsilon,\\
{\smash{\raisebox{.5\normalbaselineskip}{ $(v, \eta) \in\mathscr{H}_{q_0,q_1}\times\R$}}}& |\A^{H_0-h}(v,\eta)|\leq \mathfrak{a},\ |\eta|\leq \mathfrak{y}.
\end{array}\right\rbrace
\end{equation}
We will call this set the set of infinitesimally small action derivation.

The main aim of this subsection will be to prove the following lemma:

\begin{prop}\label{prop:smallDerivSetBound}
For fixed constants $\mathfrak{a}, \mathfrak{y}>0$ and $\varepsilon>0$ small enough, the corresponding set $\mathcal{B}_h(\mathfrak{a}, \mathfrak{y}, \varepsilon)$ is bounded both in the $L^{\infty}\times\R$-norm and in the $L^2\times \R$-norm.
\end{prop}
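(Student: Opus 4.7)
The plan is to produce a uniform $L^\infty$ bound on the path component $v$; since $|\eta|\leq\mathfrak{y}$ is built into $\mathcal{B}_h(\mathfrak{a},\mathfrak{y},\varepsilon)$ and $[0,1]$ is compact, such a bound automatically yields both the $L^\infty\times\R$ and $L^2\times\R$ bounds in the statement. The argument combines three inputs: the Liouville-field identity used in the proof of Lemma~\ref{lem:inqEtaAct}, the approximate Floer equation $\partial_t v=\eta X_{H_0-h}(v)+\delta$ with $\|\delta\|_{L^2}<\varepsilon$, and the coercivity of $H_0$ in $p$ once $|q|$ is bounded.

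\textbf{Step 1 (control $|\eta|\,\|p\|_{L^2}$).} Applying
\begin{equation*}
\A^{H_0-h}(v,\eta)-d\A^{H_0-h}(v,\eta)[p\partial_p,0]=\eta\Bigl(\tfrac{1}{2}\|p\|_{L^2}^2+\textstyle\int_0^1(h-dh(p\partial_p))(v)\,dt\Bigr),
\end{equation*}
using $h-dh(p\partial_p)\geq c>0$ (from $h\in\mathcal{H}$), $|d\A^{H_0-h}(v,\eta)[p\partial_p,0]|\leq\varepsilon\|p\|_{L^2}$, and $|\A^{H_0-h}(v,\eta)|\leq\mathfrak{a}$ yields
\begin{equation*}
|\eta|\bigl(\tfrac{1}{2}\|p\|_{L^2}^2+c\bigr)\leq\mathfrak{a}+\varepsilon\|p\|_{L^2}.
\end{equation*}
A dichotomy on the size of $\|p\|_{L^2}$ relative to $\mathfrak{a}/\varepsilon$ gives a uniform bound $|\eta|\|p\|_{L^2}\leq C_1(\mathfrak{a},\mathfrak{y},\varepsilon,c)$, although $\|p\|_{L^2}$ alone is not yet controlled.

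\textbf{Step 2 (control $\|q\|_{L^\infty}$, then $\|p\|_{L^2}$).} From $|\partial_pH_0|\leq\sqrt{2}(|p|+|q|)$, $|\partial_qH_0|\leq|p|$, and the uniform $C^1$-bound on $h$ (from compact support of $dh$), the approximate equation gives $|\partial_t q(t)|\leq|\eta|C(|p(t)|+|q(t)|+1)+|\delta(t)|$. Integrating from $q(0)=q_0$, and bounding $|\eta|\|p\|_{L^1}\leq|\eta|\|p\|_{L^2}\leq C_1$ and $\|\delta\|_{L^1}\leq\|\delta\|_{L^2}<\varepsilon$, Gronwall yields $\|q\|_{L^\infty}\leq Q$ with $Q=Q(|q_0|,\mathfrak{y},C_1,\varepsilon,h)$. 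The AM-GM inequality $|p_1q_2-p_2q_1|\leq\tfrac{1}{4}|p|^2+Q^2$ then gives the pointwise lower bound $(H_0-h)(q,p)\geq\tfrac{1}{4}|p|^2-Q^2-\|h\|_\infty$ whenever $|q|\leq Q$, and pairing this with the near-energy estimate $|\int_0^1(H_0-h)(v)\,dt|<\varepsilon$ (the $\R$-component of the gradient bound) delivers $\|p\|_{L^2}^2\leq 4(Q^2+\|h\|_\infty+\varepsilon)$. Finally, $\|\partial_tp\|_{L^2}\leq\varepsilon+C|\eta|(\|p\|_{L^2}+1)$ is now bounded, and the 1D Sobolev embedding $W^{1,2}([0,1])\hookrightarrow L^\infty([0,1])$ upgrades the $L^2$-bound on $p$ to $L^\infty$, completing the $v$-bound.

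The main obstacle is an apparent circularity: the Gronwall bound on $\|q\|_{L^\infty}$ naively wants an a priori bound on $\|p\|_{L^2}$, while the energy-coercivity bound on $\|p\|_{L^2}$ wants an a priori bound on $\|q\|_{L^\infty}$. Step 1 breaks this deadlock by controlling the weighted product $|\eta|\|p\|_{L^2}$, which is precisely the quantity the $q$-Gronwall argument actually requires, since the Hamiltonian vector field enters the Floer equation multiplied by $\eta$. The smallness of $\varepsilon$ is used in Step 1's dichotomy so that both cases deliver usable bounds, while the exponential Gronwall factor $e^{C\mathfrak{y}}$ is harmless because $\mathfrak{y}$ is fixed.
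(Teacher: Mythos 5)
Your proposal is correct and follows essentially the same three-step architecture as the paper's proof (Lemmas~\ref{lem:partial_p}--\ref{lem:pBound}): first control the product $|\eta|\,\|p\|_{L^2}$ via the Liouville identity \eqref{IntdH(Y)} to break the $q$--$p$ circularity, then bound $q$, then recover $\|p\|_{L^2}$ from the near-vanishing of $\int_0^1(H_0-h)(v)\,dt$ and upgrade to $L^\infty$. The only real deviation is cosmetic: for the $q$-bound the paper integrates $\tfrac{d}{dt}|q|^2$ and exploits the identity $q\,dq(X_{H_0})=q\cdot p$, which kills the rotational term and avoids your Gronwall exponential, but both routes yield the required uniform bound since $\mathfrak{y}$ is fixed.
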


We will prove Proposition \ref{prop:smallDerivSetBound} in a series of smaller lemmas:

\begin{lem}\label{lem:partial_p}
Let $h\in \mathcal{H}$ and let $c:=\inf\{h-dh(p\partial_p)\}>0$.
If we fix $\mathfrak{a}, \mathfrak{y}, \varepsilon>0$ then for every $(q,p,\eta) \in \mathcal{B}_h (\mathfrak{a}, \mathfrak{y}, \varepsilon)$ we have:
$$
|\eta|\|p\|_{L^2}\leq  2\varepsilon +\frac{\mathfrak{a}}{\sqrt{2c}}\qquad\textrm{and}\qquad \|\partial_t p\|_{L^2}\leq  3\varepsilon +\frac{\mathfrak{a}}{\sqrt{2c}} + \mathfrak{y}\|\nabla h\|_{L^\infty}
$$
\end{lem}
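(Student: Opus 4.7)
The plan is to sharpen the argument behind Lemma \ref{lem:inqEtaAct} by exploiting the three constraints $|\A^{H_0-h}(v,\eta)|\le\mathfrak{a}$, $|\eta|\le\mathfrak{y}$, and $\|\nabla\A^{H_0-h}(v,\eta)\|_{L^2\times\R}<\varepsilon$ that define $\mathcal{B}_h(\mathfrak{a},\mathfrak{y},\varepsilon)$.

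For the first inequality I would start from the identity already derived in the proof of Lemma \ref{lem:inqEtaAct},
$$
\A^{H_0-h}(v,\eta)-d\A^{H_0-h}(v,\eta)[p\p_p,0]=\eta\Bigl(\tfrac12\|p\|_{L^2}^2+\int_0^1\bigl(h(v)-dh(p\p_p)\bigr)dt\Bigr),
$$
whose right-hand integrand is $\ge c$ by the definition of $c$. Taking absolute values and applying Cauchy--Schwarz to estimate $|d\A^{H_0-h}(v,\eta)[p\p_p,0]|\le\|\nabla\A^{H_0-h}\|_{L^2\times\R}\cdot\|(p\p_p,0)\|_{L^2\times\R}=\|\nabla\A^{H_0-h}\|_{L^2\times\R}\cdot\|p\|_{L^2}$ gives
$$
|\eta|\Bigl(\tfrac12\|p\|_{L^2}^2+c\Bigr)\le \mathfrak{a}+\varepsilon\|p\|_{L^2}.
$$
Multiplying through by $\|p\|_{L^2}$ and using the pointwise maxima $\sup_{x\ge 0}\frac{x}{x^2/2+c}=\frac{1}{\sqrt{2c}}$ (already computed in Lemma \ref{lem:inqEtaAct}) together with $\sup_{x\ge 0}\frac{x^2}{x^2/2+c}=2$ (elementary, as the limit at infinity) then yields the first claim.

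For the second inequality I would unpack the $v$-component of the gradient, $-J_{t,\eta}(v)(\p_tv-\eta X_{H_0-h}(v))$. Since $J$ is $\omega$-compatible it is a pointwise isometry, so the gradient bound promotes to $\|\p_tv-\eta X_{H_0-h}(v)\|_{L^2}<\varepsilon$, and projecting to the momentum factor gives $\|\p_tp-\eta(X_{H_0-h})_p\|_{L^2}<\varepsilon$. A direct computation with $H_0$ from \eqref{DefH0} yields $(X_{H_0})_p=-\p_qH_0=(p_2,-p_1)$, whose pointwise length is exactly $|p|$, while $\|(X_h)_p\|_{L^\infty}\le\|\nabla h\|_{L^\infty}$. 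A triangle inequality combined with the first bound and $|\eta|\le\mathfrak{y}$ then delivers
$$
\|\p_tp\|_{L^2}\le\varepsilon+|\eta|\|p\|_{L^2}+|\eta|\|\nabla h\|_{L^\infty}\le 3\varepsilon+\frac{\mathfrak{a}}{\sqrt{2c}}+\mathfrak{y}\|\nabla h\|_{L^\infty}.
$$

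The key new ingredient, and hence the step requiring most care, is the optimization that produces $|\eta|\|p\|_{L^2}\le 2\varepsilon+\mathfrak{a}/\sqrt{2c}$: whereas Lemma \ref{lem:inqEtaAct} extracted only $|\eta|$ from the identity by maximizing $x\mapsto(x^2/2+c)^{-1}$, here one must multiply by $x=\|p\|_{L^2}$ first and then observe that $x^2/(x^2/2+c)$ is uniformly bounded as $x\to\infty$. Everything else is triangle-inequality bookkeeping that exploits the quadratic-in-$p$ form of $H_0$ and the fact that the vertical Liouville vector field $p\p_p$ has pointwise norm $|p|$.
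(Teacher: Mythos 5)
Your proposal is correct and follows essentially the same route as the paper: both parts rest on the identity \eqref{IntdH(Y)} with the test vector $(p\partial_p,0)$, the lower bound $h-dh(p\partial_p)\ge c$, and then a triangle inequality on the momentum component of the gradient using $|(X_{H_0})_p|=|p|$. The only cosmetic difference is that you bound $\frac{\mathfrak{a}x+\varepsilon x^2}{x^2/2+c}$ by splitting it into two terms and maximizing each separately, whereas the paper computes the maximum of the combined function; both yield the same constant $2\varepsilon+\mathfrak{a}/\sqrt{2c}$.
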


\begin{proof}
Using \eqref{IntdH(Y)} for $(q,p,\eta) \in \mathcal{B}_h (\mathfrak{a}, \mathfrak{y}, \varepsilon)$ we obtain
\begin{gather*}
\eta \left(\frac{1}{2}\|p\|_{L^2}^2+c\right) \leq \A^{H_0-h}(v,\eta)-d\A^{H_0-h}[p\partial_p,0] \leq \mathfrak{a}+\varepsilon\|p\|_{L^2},\\
|\eta| \|p\|_{L^2}\leq \frac{\mathfrak{a}\|p\|_{L^2}+\varepsilon\|p\|_{L^2}^2}{\frac{1}{2}\|p\|_{L^2}^2+c}.
\end{gather*}
Now we would like to estimate the maximum of the function on the right-hand side of the inequality:
\begin{gather*}
\frac{d}{dx}\left(\frac{\mathfrak{a}x+\varepsilon x^2}{\frac{1}{2}x^2+c}\right) = -\frac{\frac{1}{2}\mathfrak{a}x^2-2\varepsilon c x -\mathfrak{a} c}{\left(\frac{1}{2}x^2+c\right)^2},\\
\frac{d}{dx}\left(\frac{\mathfrak{a}x+\varepsilon x^2}{\frac{1}{2}x^2+c}\right) = 0 \quad \iff \quad x=\frac{1}{\mathfrak{a}}\left(2\varepsilon c \pm \sqrt{4 \varepsilon^2 c^2 + 2 \mathfrak{a}^2 c}\right),\\
\max_{x\geq 0} \left(\frac{\mathfrak{a}x+\varepsilon x^2}{\frac{1}{2}x^2+c}\right)= \varepsilon+ \sqrt{\frac{\mathfrak{a}^2}{2c}+\varepsilon^2} \leq 2\varepsilon +\frac{\mathfrak{a}}{\sqrt{2c}}.
\end{gather*}
This gives us the bound on $|\eta| \|p\|_{L^2}$, i.e. the first inequality.

Let us denote by 
$$
(X_{H_0-h})_p:= \left(p_2 +\frac{\partial h}{\partial_{q_1}}\right)\partial_{p_1}-\left(p_1-\frac{\partial h}{\partial_{q_2}}\right)\partial_{p_2}.
$$
Then we can estimate
\begin{align*}
\|\partial_tp\|_{L^2} & \leq \| \partial_t p-\eta (X_{H_0-h})_p\|_{L^2}+|\eta| \|(X_{H_0-h})_p\|_{L^2}\nonumber\\
& \leq \|\nabla\A^H(q,p,\eta)\|_{L^2\times \R}+|\eta|(\|p\|_{L^2}+\|\nabla h \|_{L^\infty}),\\
& \leq  3\varepsilon +\frac{\mathfrak{a}}{\sqrt{2c}} + \mathfrak{y}\|\nabla h \|_{L^\infty}.
\end{align*}
\end{proof}

\begin{lem}\label{lem:qBound}
If we fix $\mathfrak{a}, \mathfrak{y}, \varepsilon>0$ then for every $(q,p,\eta) \in \mathcal{B}_h (\mathfrak{a}, \mathfrak{y}, \varepsilon)$ we have:
$$
\|q\|_{L^2}, \| q\|_{L^\infty} \leq \min\{|q_0|,|q_1|\}+2 \left(3\varepsilon + \frac{\mathfrak{a}}{\sqrt{2c}}+\mathfrak{y}\|\nabla h\|_{L^\infty}\right).
$$
\end{lem}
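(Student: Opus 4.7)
The key observation is that the Coriolis piece of $H_0$ in \eqref{DefH0} contributes to the $q$-component of the Hamiltonian vector field in the form $Jq$ with $J=\bigl(\begin{smallmatrix}0 & 1\\ -1 & 0\end{smallmatrix}\bigr)$, and that $q\cdot Jq\equiv 0$. This antisymmetry is exactly what will let me close a differential inequality for $|q|^2$, so the plan is to differentiate $|q|^2$ along the path and solve the resulting quadratic inequality.

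A direct computation from \eqref{DefH0} gives
$(X_{H_0-h})_q = p + Jq - \nabla_p h.$
The hypothesis $|\nabla\A^{H_0-h}(v,\eta)|_{L^2\times\R}<\varepsilon$ implies that the $q$-component $\xi := \partial_t q-\eta(X_{H_0-h})_q$ of the action gradient satisfies $\|\xi\|_{L^2}<\varepsilon$. Using $q\cdot Jq=0$, differentiation of $|q|^2$ yields
$$\tfrac{1}{2}\tfrac{d}{dt}|q|^2 \;=\; q\cdot\partial_t q \;=\; \eta\, q\cdot p - \eta\, q\cdot\nabla_p h + q\cdot\xi.$$

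Next I would integrate from $0$ to $t$, apply Cauchy--Schwarz together with $\|\cdot\|_{L^1}\le\|\cdot\|_{L^2}$, and plug in the bounds $|\eta|\|p\|_{L^2}\le 2\varepsilon+\mathfrak{a}/\sqrt{2c}$ from Lemma \ref{lem:partial_p} as well as $|\eta|\le\mathfrak{y}$ and $|\nabla_p h|\le\|\nabla h\|_{L^\infty}$. Setting $M:=\|q\|_{L^\infty}$ and $K:=3\varepsilon + \mathfrak{a}/\sqrt{2c}+\mathfrak{y}\|\nabla h\|_{L^\infty}$, this gives $|q(t)|^2\le |q_0|^2 + 2MK$. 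Taking the supremum in $t$ produces the quadratic inequality $M^2 - 2KM - |q_0|^2\le 0$, whence $M \le K+\sqrt{K^2+|q_0|^2}\le 2K + |q_0|$. Running the same computation from $t=1$ instead replaces $|q_0|$ by $|q_1|$, and taking the smaller of the two yields the claimed bound on $\|q\|_{L^\infty}$; the $L^2$-bound follows at once from $\|q\|_{L^2}\le\|q\|_{L^\infty}$.

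The only delicate point is the identity $q\cdot Jq=0$. Without it, the right-hand side of the differential inequality would carry an additional term of size $\mathfrak{y}\|q\|_{L^2}\cdot M$, and since $\mathfrak{y}$ is not assumed to be small one cannot absorb it into the left-hand side. It is thus the antisymmetric Coriolis structure of $H_0$ that makes this $L^\infty$-control of the configuration variable possible at all; the rest of the argument is a routine application of Cauchy--Schwarz and the quadratic formula.
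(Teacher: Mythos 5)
Your proof is correct and follows essentially the same route as the paper's: the identity $q\cdot Jq=0$ kills the Coriolis contribution to $\tfrac{d}{dt}|q|^2$, and Cauchy--Schwarz plus Lemma~\ref{lem:partial_p} yields the same quadratic inequality with the same constant $K$. The only (harmless) difference is order: you solve for $\|q\|_{L^\infty}$ directly and deduce the $L^2$ bound from $\|q\|_{L^2}\le\|q\|_{L^\infty}$ on $[0,1]$, whereas the paper first closes the inequality in $L^2$ and then feeds that bound back into the pointwise estimate.
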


\begin{proof}
Begin first observe that
$$
qdq(X_{H_0})=q_1(p_1+q_2)+q_2(p_2-q_1)=q_1 p_1 + q_2 p_2.
$$

Therefore, for all $t\in [0,1]$ we have
\begin{align}
|q(t)|^2 - |q_0|^2& 
 =2\int_0^t qdq\left(\partial_tv - \eta X_{H_0-h}\right)+2\eta \int_0^t \left(qdq(X_{H_0})-qdq(X_h)\right)\nonumber\\
& \leq 2\|q\|_{L^2}\|\nabla \A^H(q,p,\eta)\|_{L^2\times \R}+2|\eta|\|q\|_{L^2}\left(\|p\|_{L^2} + \|\nabla h\|_{L^\infty}\right),\nonumber\\
& \leq 2 \|q\|_{L^2}\left(3\varepsilon + \frac{\mathfrak{a}}{\sqrt{2c}}+\mathfrak{y}\|\nabla h\|_{L^\infty}\right), \label{ineq1}
\end{align}
where the last inequality we obtained using the result form Lemma \ref{lem:partial_p}. Integrating both sides, we obtain
$$
\|q\|_{L^2}^2\leq |q_0|^2+2 \|q\|_{L^2}\left(3\varepsilon + \frac{\mathfrak{a}}{\sqrt{2c}}+\mathfrak{y}\|\nabla h\|_{L^\infty}\right).
$$
By solving this quadratic inequality we obtain the following bound:
\begin{align*}
\|q\|_{L^2} & \leq 3\varepsilon + \frac{\mathfrak{a}}{\sqrt{2c}}+\mathfrak{y}\|\nabla h\|_{L^\infty} + \sqrt{\left(3\varepsilon + \frac{\mathfrak{a}}{\sqrt{2c}}+\mathfrak{y}\|\nabla h\|_{L^\infty}\right)^2+|q_0|^2}\\
& \leq |q_0|+2 \left(3\varepsilon + \frac{\mathfrak{a}}{\sqrt{2c}}+\mathfrak{y}\|\nabla h\|_{L^\infty}\right)
\end{align*}
By repeating this procedure with the equation $|q(t)|^2=|q_1|^2-2\int_t^1 q\partial_tq$, we obtain the bound on $\|q\|_{L^2}$ we were looking for.

To obtain the bound for $\|q\|_{L^\infty}$ we will use \eqref{ineq1} again:
\begin{align*}
\|q\|_{L^\infty} & \leq \sqrt{|q_0|^2+ 2 \|q\|_{L^2}\left(3\varepsilon + \frac{\mathfrak{a}}{\sqrt{2c}}+\mathfrak{y}\|\nabla h\|_{L^\infty}\right)}\\
& \leq \sqrt{|q_0|^2+ 2 \left( |q_0|+2 \left(3\varepsilon + \frac{\mathfrak{a}}{\sqrt{2c}}+\mathfrak{y}\|\nabla h\|_{L^\infty}\right)\right)\left(3\varepsilon + \frac{\mathfrak{a}}{\sqrt{2c}}+\mathfrak{y}\|\nabla h\|_{L^\infty}\right)}\\
& \leq |q_0|+ 2\left(3\varepsilon + \frac{\mathfrak{a}}{\sqrt{2c}}+\mathfrak{y}\|\nabla h\|_{L^\infty}\right)
\end{align*}
Analogously as before, we repeat this procedure with the equation\linebreak $|q(t)|^2=|q_1|^2-2\int_t^1 q\partial_tq$ to obtain the bound on $\|q\|_{L^\infty}$ we were looking for.
\end{proof}
\begin{lem}\label{lem:pBound}
If we fix $\mathfrak{a}, \mathfrak{y}, \varepsilon>0$ then for every $(q,p,\eta) \in \mathcal{B}_h (\mathfrak{a}, \mathfrak{y}, \varepsilon)$ we have:
\begin{align*}
\|p\|_{L^2} & \leq 2\mathfrak{q}+\sqrt{2 \left( \|h\|_{L^\infty}+c+\varepsilon\right)}=:\mathfrak{p},\\
\|p\|_{L^\infty} & \leq \mathfrak{p}+\left( \varepsilon+\|\nabla h\|_{L^\infty}\right),\\
\textrm{where}\qquad \mathfrak{q} & :=\min\{|q_0|,|q_1|\}+2 \left(3\varepsilon + \frac{\mathfrak{a}}{\sqrt{2c}}+\mathfrak{y}\|\nabla h\|_{L^\infty}\right).
\end{align*}
\end{lem}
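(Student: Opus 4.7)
The plan is to exploit that every $(v,\eta)\in\mathcal{B}_h(\mathfrak{a},\mathfrak{y},\varepsilon)$ satisfies the Hamiltonian equation up to a small $L^{2}$-error together with the approximate vanishing of the average of $H_0-h$ along $v$. This yields the $L^2$-bound by solving a quadratic inequality, and the $L^\infty$-bound by a mean-value-plus-energy-estimate argument that exploits a key cancellation in $X_{H_0}$.

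\textbf{Bound on $\|p\|_{L^2}$.} Rewriting the defining identity~\eqref{DefH0} pointwise as
\begin{equation*}
\tfrac12|p|^2 = (H_0-h)(v) + h(v) + p_2 q_1 - p_1 q_2
\end{equation*}
and integrating over $[0,1]$, the first term contributes at most $\varepsilon$ since $\int_0^1(H_0-h)(v)dt$ is exactly the second component of $\nabla\A^{H_0-h}(v,\eta)$; the second is bounded by $\|h\|_{L^\infty}$; and the last, by Cauchy--Schwarz and Lemma~\ref{lem:qBound}, is at most $\|p\|_{L^2}\|q\|_{L^2}\leq\mathfrak{q}\|p\|_{L^2}$. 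The resulting quadratic inequality
\begin{equation*}
\tfrac12\|p\|_{L^2}^2 - \mathfrak{q}\|p\|_{L^2} - (\|h\|_{L^\infty}+\varepsilon) \leq 0
\end{equation*}
is solved using $\sqrt{a+b}\leq\sqrt a+\sqrt b$ to give $\|p\|_{L^2}\leq\mathfrak{p}$, with the positive constant $c$ inside the square root absorbing a harmless slack.

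\textbf{Bound on $\|p\|_{L^\infty}$.} By the mean value theorem for integrals applied to $\|p\|_{L^2}^2=\int_0^1|p|^2dt$, I pick $t_0\in[0,1]$ with $|p(t_0)|\leq\|p\|_{L^2}\leq\mathfrak{p}$. For any $t\in[0,1]$ I write
\begin{equation*}
\tfrac12|p(t)|^2-\tfrac12|p(t_0)|^2 = \int_{t_0}^t p\cdot\partial_s p\,ds
\end{equation*}
and split $\partial_s p=\bigl(\partial_s p-\eta(X_{H_0-h})_p\bigr)+\eta(X_{H_0-h})_p$. The decisive observation is the cancellation $p\cdot (X_{H_0})_p=p_1p_2-p_2p_1=0$, which leaves $\eta\,p\cdot(X_{H_0-h})_p=\eta\,p\cdot\nabla_q h$, pointwise bounded by $|\eta|\,|p|\,\|\nabla h\|_{L^\infty}$. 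A Cauchy--Schwarz estimate on the error term, together with $|\eta|\leq\mathfrak{y}$, then yields
\begin{equation*}
\tfrac12|p(t)|^2 \leq \tfrac12\mathfrak{p}^2 + \|p\|_{L^\infty}\bigl(\varepsilon + \mathfrak{y}\|\nabla h\|_{L^\infty}\bigr).
\end{equation*}
Taking the supremum over $t$ and solving the resulting quadratic in $\|p\|_{L^\infty}$ delivers the stated bound.

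\textbf{Main obstacle.} The crucial step is the identification of the cancellation $p\cdot (X_{H_0})_p=0$, which reflects that the kinetic piece of $H_0$ acts by a rotation in the momentum plane. Without it, a direct Sobolev-type estimate from $\|\partial_tp\|_{L^2}$ in Lemma~\ref{lem:partial_p} produces an unwanted $|\eta|\,\|p\|_{L^2}$ contribution that cannot be closed against the left-hand side and would spoil the linear dependence on $\varepsilon$ and on $\|\nabla h\|_{L^\infty}$ that the final estimate requires.
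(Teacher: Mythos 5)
Your $L^2$ estimate is exactly the paper's: the identity $\tfrac12|p|^2=(H_0-h)(v)+h(v)+p_2q_1-p_1q_2$ coming from \eqref{H0pdp}, the bound $\bigl|\int_0^1(H_0-h)(v)\,dt\bigr|<\varepsilon$, Lemma~\ref{lem:qBound}, and the quadratic inequality give $\|p\|_{L^2}\le\mathfrak{q}+\sqrt{\mathfrak{q}^2+2(\|h\|_{L^\infty}+\varepsilon)}\le\mathfrak{p}$, with the $+c$ under the root indeed pure slack. Nothing to add there.

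For the $L^\infty$ bound you take a genuinely different route. The paper picks $t_0$ with $|p(t_0)|\le\|p\|_{L^2}$ and integrates $|\partial_t p|$ directly, writing $|p(t)|\le|p(t_0)|+\int|\partial_tv-X_{H_0-h}|+\int|X_h|$; as printed this silently discards the contribution of $\eta(X_{H_0})_p=\eta(p_2,-p_1)$ (and the factor $|\eta|$ in front of $X_h$), which is exactly the term your cancellation $p\cdot(X_{H_0})_p=0$ disposes of honestly. In that sense your argument is tighter than the published one. The one genuine discrepancy is the final constant: your quadratic $\tfrac12P^2\le\tfrac12\mathfrak{p}^2+PA$ with $A=\varepsilon+\mathfrak{y}\|\nabla h\|_{L^\infty}$ yields $P\le A+\sqrt{A^2+\mathfrak{p}^2}\le\mathfrak{p}+2A=\mathfrak{p}+2\bigl(\varepsilon+\mathfrak{y}\|\nabla h\|_{L^\infty}\bigr)$, not the stated $\mathfrak{p}+\varepsilon+\|\nabla h\|_{L^\infty}$; the factor $2$ and, more substantially, the factor $\mathfrak{y}$ survive, so ``delivers the stated bound'' is not literally true. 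This is immaterial for the only use of the lemma (boundedness of $\mathcal{B}_h(\mathfrak{a},\mathfrak{y},\varepsilon)$ in Proposition~\ref{prop:smallDerivSetBound}), and the stated constant is itself suspect: a repaired version of the paper's own argument, going through Lemma~\ref{lem:partial_p}, gives $\|p\|_{L^\infty}\le\mathfrak{p}+3\varepsilon+\mathfrak{a}/\sqrt{2c}+\mathfrak{y}\|\nabla h\|_{L^\infty}$, which also carries the $\mathfrak{y}$. Still, you should record the bound you actually prove rather than assert the one in the statement.
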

\begin{proof}
By \eqref{H0pdp} we have
\begin{align*}
\frac{1}{2}\|p\|_{L^2}^2 & = \int_0^1 (H_0-h)(q,p)dt -\int_0^1\left(p_1q_2-p_2q_1\right)dt +\int_0^1 h(q,p)dt\\
& \leq \|\nabla \A^{H_0-h-c}\|_{L^2\times \R}+\|p\|_{L^2}\|q\|_{L^2}+\|h\|_{L^\infty}+c\\
& \leq \varepsilon + \mathfrak{q}\|p\|_{L^2} +\|h\|_{L^\infty}+c
\end{align*}
where $\mathfrak{q}:=\min\{|q_0|,|q_1|\}+2 \left(3\varepsilon + \frac{\mathfrak{a}}{\sqrt{2c}}+\mathfrak{y}\|\nabla h\|_{L^\infty}\right)$ as in Lemma \ref{lem:qBound}. By solving this quadratic inequality we obtain the following bound:
\begin{align*}
\|p\|_{L^2} & \leq \mathfrak{q} + \sqrt{\mathfrak{q}^2 + 2 \left( \|h\|_{L^\infty}+c+\varepsilon\right)} \leq 2 \mathfrak{q}+\sqrt{2 \left( \|h\|_{L^\infty}+c+\varepsilon\right)}\\
& = 2\min\{|q_0|,|q_1|\}+4 \left(3\varepsilon + \frac{\mathfrak{a}}{\sqrt{2c}}+\mathfrak{y}\|\nabla h\|_{L^\infty}\right)+\sqrt{2 \left( \|h\|_{L^\infty}+c+\varepsilon\right)}.
\end{align*}

To prove the second bound first observe that there exists $t_0\in [0,1]$, such that $|p(t_0)|\leq  2 \mathfrak{q}+\sqrt{2 \left( \|h\|_{L^\infty}+c+\varepsilon\right)}$. Thus can estimate:
\begin{align*}
|p(t)| & \leq |p(t_0)| + \int_{t_0}^t |\partial_tv|= |p(t_0)| + \int_{t_0}^t |\partial_tv- X_{H_0-h}|+\int_{t_0}^t |X_h|\\
& \leq |p(t_0)| +\|\nabla\A^{H_0-h-c}\|_{L^2\times \R} + \|\nabla h\|_{L^\infty} \leq |p(t_0)| +\varepsilon+\|\nabla h\|_{L^\infty}\\
& \leq 2\min\{|q_0|,|q_1|\}+13\varepsilon + 4\frac{\mathfrak{a}}{\sqrt{2c}}+(4\mathfrak{y}+1)\|\nabla h\|_{L^\infty} +\sqrt{2 \left( \|h\|_{L^\infty}+c+\varepsilon\right)}.
\end{align*}
\end{proof}

\begin{rem}
Note that in the proof above we did not use the assumption that $dh \in C_c^\infty(T^*\R^2)$. Thus we can conclude that for any $\mathfrak{a}$, $\mathfrak{y}$, $\varepsilon>0$ and any Hamiltonian $h\in C^\infty(T^*\R^2)\cap W^{1,\infty}(T^*\R^2)$, such that $h>0$ and $h-dh(p\partial_p)>0$ the corresponding set $\mathcal{B}_h(\mathfrak{a},\mathfrak{y},\varepsilon)$ is bounded in the $L^{\infty}\times \R$-norm and the $L^2\times \R$-norm.
\end{rem}

\begin{rem}\label{rem:smallDeriv}
Note that for $h \in \mathcal{H}$ the bounds on $\mathcal{B}_h(\mathfrak{a},\mathfrak{y}, \varepsilon)$ depend smoothly on the constants $\mathfrak{a}$, $\mathfrak{y}$, $\varepsilon$, $c:=\inf\{h-dh(p\partial_p)\}$ and $\|h\|_{W^{1,\infty}}$. Thus if we take a subset $\mathcal{H}'\subseteq \mathcal{H}$ bounded in the $W^{1,\infty}$-norm such that $\inf_{h\in\mathcal{H}'}\inf\{h-dh)p\partial_p)\}>0$, then the corresponding set
$$
\bigcup_{h\in\mathcal{H}'}\mathcal{B}_h(\mathfrak{a},\mathfrak{y},\varepsilon)\subseteq \mathscr{H}_{q_0,q_1}\times \R,
$$
will be also bounded in the $L^{\infty}\times \R$-norm and the $L^2\times \R$-norm.
\end{rem}

\subsection{The $L^2$ bounds}
The aim of this subsection will be to prove the following Proposition:
\begin{prop}\label{prop:L2bounds}
Consider a setting as in Theorem \ref{thm:FloerBounds}. Then for every pair $a,b\in \R$ the corresponding space $\mathcal{M}^\Gamma(a,b)$ is bounded in the $L^2\times\R$-norm.
\end{prop}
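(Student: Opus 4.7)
The plan is to combine the uniform action and energy estimates from Lemma \ref{lem:ActBound} with the bounds on the set of infinitesimally small action derivation from Proposition \ref{prop:smallDerivSetBound}, interpolating in the $s$-direction via the Floer equation. Since Lemma \ref{lem:ActBound} already provides $\|\eta\|_{L^\infty}\leq\mathfrak{y}$ with $\mathfrak{y}$ depending only on $a,b,\Gamma$, the task reduces to producing a uniform bound on $\|v(s,\cdot)\|_{L^2([0,1])}$ over all $s\in\R$ and all $u=(v,\eta)\in\mathcal{M}^\Gamma(a,b)$.

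First I would fix $\varepsilon>0$ small enough for Proposition \ref{prop:smallDerivSetBound} to apply with the constants $\mathfrak{a},\mathfrak{y}$ coming from Lemma \ref{lem:ActBound}, and for each $u\in\mathcal{M}^\Gamma(a,b)$ define
\[
S_\varepsilon(u):=\{s\in\R : \|\nabla\A^{H_0-h_s}(u(s))\|_{L^2\times\R}\geq\varepsilon\}.
\]
Since $J_s\equiv\mathbb{J}$ outside the precompact set $\mathcal{V}$, the $J_s$-metric and the standard $L^2\times\R$-metric on $T_u(\mathscr{H}_{q_0,q_1}\times\R)$ are equivalent up to a constant $C_J$ depending only on $\|J\|_{L^\infty}$. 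Combined with the energy bound $\int_\R \|\nabla^{J_s}\A^{H_0-h_s}(u(s))\|^2\,ds\leq\mathfrak{e}$ from Lemma \ref{lem:ActBound}, Chebyshev's inequality yields $|S_\varepsilon(u)|\leq T:=C_J^2\mathfrak{e}/\varepsilon^2$, uniformly in $u$. For $s\notin S_\varepsilon(u)$ we have $u(s)\in\mathcal{B}_{h_s}(\mathfrak{a},\mathfrak{y},\varepsilon)$; applying Remark \ref{rem:smallDeriv} to $\mathcal{H}'=\{h_s\}_{s\in\R}$, which is uniformly $W^{1,\infty}$-bounded (each $h_s\in\mathcal{H}(K)$ with $K$ compact) and satisfies $\inf_s\inf(h_s-dh_s(p\partial_p))=c>0$ by \eqref{Defc}, yields a constant $M$ with $\|(v(s),\eta(s))\|_{L^2\times\R}\leq M$.

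Next, for arbitrary $s\in\R$ the interval $I_s:=[s-T-1,s+T+1]$ has length $2T+2>T\geq|S_\varepsilon(u)|$, so I can pick $s_0\in I_s\setminus S_\varepsilon(u)$ with $|s-s_0|\leq T+1$ and $\|u(s_0)\|_{L^2\times\R}\leq M$. The Floer equation $\partial_s u=\nabla^{J_s}\A^{H_0-h_s}(u)$ together with Cauchy--Schwarz then gives
\[
\|u(s)-u(s_0)\|_{L^2\times\R}\leq \int_{s_0}^s\|\partial_s u(\tau)\|_{L^2\times\R}\,d\tau\leq C_J\sqrt{|s-s_0|\,\mathfrak{e}}\leq C_J\sqrt{(T+1)\mathfrak{e}},
\]
so $\|u(s)\|_{L^2\times\R}\leq M+C_J\sqrt{(T+1)\mathfrak{e}}$, a bound depending only on $a,b$ and $\Gamma$.

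The only delicate step is the conversion between the $J_s$-dependent gradient (controlled by the $L^2$-energy identity) and the standard $L^2\times\R$-norm invoked in Proposition \ref{prop:smallDerivSetBound}, which is handled cleanly by the uniform $L^\infty$-control of $J_s$ inherited from $J_s\in\mathcal{J}(\mathcal{V},\mathbb{J})$ with $\mathcal{V}$ precompact. All remaining uniformities—across $s$, across $u\in\mathcal{M}^\Gamma(a,b)$, and across the family $\{h_s\}$—follow from compactness of $K$, continuous compactness of the critical set, and the standing hypothesis \eqref{inqGamma} on the homotopy.
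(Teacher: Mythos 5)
Your argument is correct and follows essentially the same route as the paper: both proofs use Lemma \ref{lem:ActBound} to control $\eta$, the action and the energy, invoke Proposition \ref{prop:smallDerivSetBound} (via Remark \ref{rem:smallDeriv}) to bound $u$ at times where the gradient is small, and then interpolate to arbitrary $s$ via Cauchy--Schwarz along the Floer equation. The only cosmetic difference is that you locate a nearby ``good'' time by a Chebyshev bound on the measure of the bad set, whereas the paper uses the last-exit time $\tau_\varepsilon(s)$ and bounds $|s-\tau_\varepsilon(s)|$ directly by the same energy estimate.
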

\begin{proof}
By Lemma \ref{lem:ActBound} there exist $\mathfrak{a,e, y}>0$, such that for every $u:=(v,\eta)\in \mathcal{M}^\Gamma(a,b)$ we have
\begin{align*}
\|\eta\|_{L^\infty} \leq \mathfrak{y}, \qquad\|\mathcal{A}^{H_0-h_s}( u)\|_{L^{\infty}} &\leq\mathfrak{a},\\ \textrm{and}\qquad \|\nabla^{J_s} \mathcal{A}^{H_0-h_s}(u)\|_{L^{2}(\mathbb{R}\times [0,1])}^{2}& \leq \mathfrak{e}.
\end{align*}
Moreover, by \eqref{DefE} the convergence of the integral
$$
\|\nabla \mathcal{A}^{H_0-h_s}(u)\|_{L^{2}(\mathbb{R}\times [0,1])}^{2} \leq \mathfrak{e}\|J\|_{\infty}^2,
$$
implies that for every $\varepsilon>0$ there exists $s\in \R$ small enough such that\linebreak $\|\nabla \mathcal{A}^{H_0-h_s}(u(s))\|_{L^{2}\times\mathbb{R}}<\varepsilon$. 
This ensures that for all $s \in \mathbb{R}$ the following value $\tau_\varepsilon(s)$ is well defined and finite
$$
\tau_\varepsilon(s): = \sup\left\lbrace \tau \leq s\ \Big|\ u(\tau) \in \mathcal{B}_{h_\tau}(\mathfrak{a},\mathfrak{y}, \varepsilon)\right\rbrace.
$$
Note that for all $\tau \in [\tau_\varepsilon, s]$ we have $\|\nabla \mathcal{A}^{H_0-h_\tau-c}(u(\tau))\|_{L^{2}\times\mathbb{R}}\geq \varepsilon$, thus we can estimate
\begin{gather} 
\varepsilon^2 |s-\tau_\varepsilon(s)| \leq\|\nabla \mathcal{A}^{H_0-h_s}(u)\|_{L^{2}(\mathbb{R}\times [0,1])}^{2}\leq\mathfrak{e}\|J\|_{\infty}^2,\nonumber\\
|s-\tau_\varepsilon(s)|\leq\frac{ \mathfrak{e}\|J\|_{\infty}^2}{\varepsilon^2}.\label{sBoundEpsi}
\end{gather}

On the other hand, we know that
\begin{gather*}
c:=\inf_{s\in[0,1]}\{h_s-dh_s(p\partial_p)\}>0,\\
\sup_{s\in[0,1]}\|h_s\|_{L^\infty} < +\infty , \qquad\textrm{and} \qquad \sup_{s\in[0,1]}\|\nabla h_s\|_{L^2} < +\infty,
\end{gather*}
therefore by Remark \ref{rem:smallDeriv} we know that the set
\begin{equation}\label{DefBGamma}
\mathcal{B}^\Gamma(\mathfrak{a},\mathfrak{y},\varepsilon):=\bigcup_{s\in [0,1]}\mathcal{B}_{h_s}(\mathfrak{a},\mathfrak{y},\varepsilon)\subseteq \mathscr{H}_{q_0,q_1}\times \R,
\end{equation}
is bounded in $L^\infty\times \R$- and $L^2\times \R$-norm. In fact, we can use the same bounds as in Lemmas \ref{lem:qBound} and \ref{lem:pBound} denoting 
\begin{align*}
\mathfrak{h} & :=\sup_{s\in[0,1]}\|h_s\|_{W^{1,\infty}},\\
\mathfrak{q} & := \min\{|q_0|,|q_1|\}+2 \left(3\varepsilon + \frac{\mathfrak{a}}{\sqrt{2c}}+\mathfrak{y h}\right),\\
\mathfrak{p} & := 2\mathfrak{q}+\sqrt{2 \left( \mathfrak{h}+c+\varepsilon\right)}.
\end{align*}

Now using \eqref{sBoundEpsi} together with Lemma \ref{lem:qBound} we obtain
\begin{align}
|q(s,t)| & \leq |q(\tau_\varepsilon(s),t)|+\int_{\tau_\varepsilon(s)}^s |\partial_s q(\tau,t)|d\tau,\nonumber\\
& \leq |q(\tau_\varepsilon(s),t)|+\int_{\tau_\varepsilon(s)}^s |\partial_s v(\tau,t)|d\tau,\nonumber\\
\|q(s)\|_{L^2}& \leq \|q \circ \tau_0(s)\|_{L^2}+\left( \int_0^1\left| \int_{\tau_0(s)}^s\partial_sv d\tau\right|^2dt\right)^{\frac{1}{2}}\nonumber\\
& \leq \|q \circ \tau_0(s)\|_{L^2}+\sqrt{|s-\tau_0(s)|}\left( \int_0^1\int_{\tau_0(s)}^s\left|\partial_sv\right|^2d\tau dt\right)^{\frac{1}{2}}\nonumber\\
& \leq \|v \circ \tau_0(s)\|_{L^2}+\frac{\sqrt{\mathfrak{e}}\|J\|_\infty}{\varepsilon}\ \|\nabla^{J_s}\A^{H_0-h_s}\|_{L^2(\R\times[0,1])}\nonumber\\
& \leq \mathfrak{q}+\frac{\mathfrak{e}\|J\|_\infty}{\varepsilon}. \label{qL2Bound}
\end{align}
Analogously, using Lemma \ref{lem:pBound} we obtain
\begin{align}
\|p(s)\|_{L^2}& \leq \mathfrak{p} +\frac{\mathfrak{e}\|J\|_\infty}{\varepsilon},\label{pL2Bound}\\
\|v(s)\|_{L^2}& \leq \mathfrak{q}+\mathfrak{p} +\frac{\mathfrak{e}\|J\|_\infty}{\varepsilon}\label{vL2Bound}.
\end{align}
\end{proof}

\subsection{The maximum principle}
In this section we will explain how to use Aleksandrov’s maximum principle to find $L^\infty$-bounds on the Floer trajectories outside $\B(a, \varepsilon)$, following the argument of Abbondandolo and Schwartz in \cite{Abbon2009}. The contents of this section are word to word from the last author's paper with Pasquotto \cite{Pasquotto2017}, but we include it here for the completeness of the argument.

\begin{thm}(\textbf{Aleksandrov's maximum principle})\\
Let $\Omega$ be a domain in $\mathbb{R}^{2}$ and let $\rho:\Omega \to \mathbb{R}$  be a function in $C^{2}(\Omega)\cap C^(\overline{\Omega})$ function satisfying the elliptic differential inequality
$$
\triangle \rho + \langle h, \nabla \rho \rangle \geq f,
$$
where $g\in L^2(\Omega,\R^2)$ and $f\in L_{\operatorname{loc}}^1(\Omega)$. Then there exists $C>0$ which depends only on $\operatorname{diam}\Omega$ and $\|g\|_{L^2(\Omega)}$, such that
$$
\sup_{\Omega} \rho \leq \sup_{\partial \Omega}\rho +C\left(\|g\|_{L^{2}(\Omega)}\right)\|f^{-}\|_{L^{2}(\Omega)},
$$
provided $g$ and the negative part of $f$ are in $L^{2}(\Omega)$.
\end{thm}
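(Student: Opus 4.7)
The plan is to prove a two-dimensional Aleksandrov-Bakelman-Pucci (ABP) estimate, with care devoted to the fact that the drift coefficient $g$ lies at the critical integrability exponent $L^n = L^2$.

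First I would normalize by subtracting $\sup_{\partial\Omega}\rho$ so that $\rho \le 0$ on $\partial\Omega$, set $M:=\sup_\Omega \rho$ and assume $M>0$ (else the inequality is trivial), and introduce the upper contact set
$$
\Lambda := \left\{x\in\Omega : \rho(y)\le \rho(x)+\nabla\rho(x)\cdot(y-x)\ \text{for all}\ y\in\Omega\right\}\subset \{\rho>0\}.
$$
On $\Lambda$ the Hessian $D^2\rho$ is negative semidefinite. The geometric core of ABP is the inclusion $B(0,M/d)\subset \nabla\rho(\Lambda)$, where $d:=\operatorname{diam}(\Omega)$: one slides an affine function of each prescribed slope $|p|<M/d$ downward until first contact with the graph of $\rho$; since $\rho\le 0$ on $\partial\Omega$ and $M>0$, this contact point must lie in the interior.

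Applying the area formula to the Lipschitz map $\nabla\rho:\Lambda\to \R^2$ and the AM-GM inequality $|\det D^2\rho|\le (\Delta\rho)^2/4$ (valid for a symmetric $2\times 2$ matrix with same-sign eigenvalues), together with the consequence $-\Delta\rho \le |g||\nabla\rho|+f^-$ of the hypothesis on $\Lambda$, yields
$$
\pi\,\frac{M^2}{d^2}\;\le\;|\nabla\rho(\Lambda)|\;\le\;\int_\Lambda |\det D^2\rho|\,dx\;\le\;\tfrac{1}{2}\int_\Lambda |g|^2|\nabla\rho|^2\,dx+\tfrac{1}{2}\|f^-\|_{L^2(\Omega)}^2.
$$
The second term is already of the desired form; the first is the principal obstacle, since $g\in L^2$ gives only $|g|^2\in L^1$ while $|\nabla\rho|^2$ has no a priori pointwise bound.

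To overcome this obstacle I would perform a quantitative exponential substitution: set $w:=\lambda^{-1}(e^{\lambda\rho}-1)$ for a parameter $\lambda>0$ to be chosen. A direct computation gives $\Delta w = e^{\lambda\rho}\Delta\rho + \lambda e^{\lambda\rho}|\nabla\rho|^2$, and Young's inequality $|g||\nabla\rho|\le |g|^2/(4\lambda) + \lambda|\nabla\rho|^2$ then yields the drift-free differential inequality $\Delta w \ge e^{\lambda\rho}\bigl(f-|g|^2/(4\lambda)\bigr)$, to which the plain-Laplace special case of ABP (already established above with $g=0$) can be applied. Combined with a localization-and-chaining argument that partitions $\Omega$ into finitely many subdomains on each of which $\|g\|_{L^2}$ falls below the smallness threshold needed to close the resulting estimate, this produces a bound of the required form, with constant $C$ depending only on $\operatorname{diam}\Omega$ and $\|g\|_{L^2(\Omega)}$ (essentially exponentially in $\|g\|_{L^2}^2$). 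Tracking this quantitative dependence through the substitution and the chaining is the only genuine technical difficulty; the rest of the argument — the normal-image bound, the area formula, and AM-GM on the contact set — is classical ABP machinery.
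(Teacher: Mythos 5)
The classical setup in your first half (normalization, upper contact set, the inclusion $B(0,M/d)\subset\nabla\rho(\Lambda)$, the area formula, and the AM--GM bound $|\det D^2\rho|\le\tfrac14(\Delta\rho)^2$ on the contact set) is correct, and you have correctly isolated the real difficulty: the term $\int_\Lambda|g|^2|\nabla\rho|^2$ cannot be closed, because there is no pointwise bound for $|\nabla\rho|$ on $\Lambda$. However, the proposed resolution does not work. The substitution $w=\lambda^{-1}(e^{\lambda\rho}-1)$ indeed yields $\Delta w\ge e^{\lambda\rho}\bigl(f-|g|^2/(4\lambda)\bigr)$, but this trades the critical drift $g\in L^2$ for a zeroth-order forcing term whose negative part is bounded only by $e^{\lambda\rho}\bigl(f^-+|g|^2/(4\lambda)\bigr)$. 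The drift-free ABP estimate you intend to apply to $w$ (the one you "already established above") requires the negative part of the forcing to lie in $L^2(\Omega)$ in dimension $n=2$; but $|g|^2$ is only in $L^1$, and $\||g|^2\|_{L^2}=\|g\|_{L^4}^2$ is not controlled by $\|g\|_{L^2}$. Localizing to subdomains where $\|g\|_{L^2}$ is small does not help, since smallness of the $L^2$-norm does not improve the integrability exponent of $|g|^2$. This is exactly the reason the exponential substitution is only used in the literature when the drift is bounded (or at least in $L^{2n}$), not at the critical exponent $L^n$.

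The standard way to close the argument at the critical exponent (Gilbarg--Trudinger, Lemma 9.3 and Theorem 9.1) is to keep the drift and instead insert the weight $(|\nabla\rho|^2+\mu^2)^{-1}$ into the normal-map/area-formula step: one bounds
$$
\pi\log\Bigl(1+\frac{M^2}{\mu^2d^2}\Bigr)=\int_{B(0,M/d)}\frac{dp}{|p|^2+\mu^2}\le\int_\Lambda\frac{|\det D^2\rho|}{|\nabla\rho|^2+\mu^2}\,dx\le\frac12\int_\Omega|g|^2\,dx+\frac{1}{2\mu^2}\int_\Omega(f^-)^2\,dx,
$$
and then chooses $\mu=\|f^-\|_{L^2(\Omega)}$. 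This yields $M\le C\,d\,\|f^-\|_{L^2}$ with $C$ depending exponentially on $\|g\|_{L^2}^2$, which is precisely the dependence asserted in the theorem. I would replace the exponential-substitution-plus-chaining step by this weighted contact-set integral; the rest of your argument then goes through.
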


In order to apply Aleksandrov's maximum principle and find $L^{\infty}$ bounds on the Floer trajectories, one first has to construct a function $F$ with compact level sets, whose composition with a Floer trajectory $u= (v,\eta)\in \mathscr{H}_{q_0,q_1}\times \R$ satisfies the elliptic differential inequality
$$
\triangle (F\circ v) + \langle g, \nabla (F\circ v) \rangle \geq f 
$$
outside of the set of infinitesimal action derivation $\mathcal{B}^\Gamma(\mathfrak{a},\mathfrak{y},\varepsilon)$, i.e. on every connected component $\Omega$ of
$$
\Omega\subseteq \left(\R \setminus u^{-1}\left(\mathcal{B}^\Gamma(\mathfrak{a},\mathfrak{y},\varepsilon)\right)\right)\times[0,1].
$$
Having such an inequality, one can apply the Aleksandrov maximum principle, which gives us
$$
\sup_{\Omega} (F\circ v) \leq \sup_{\partial \Omega}(F\circ v)+ C(\|g\|_{L^{2}(\Omega)})\|f^{-}\|_{L^{2}(\Omega)},
$$
provided $g$ and the negative part of $f$ are in $L^{2}(\Omega)$.

The core of this method is to find a function satisfying all the required properties. The classical approach is to use plurisubharmonic functions.
\begin{define}
Let $(M,\omega)$ be a symplectic manifold and $J$ a compatible almost complex structure. Then a $C^{2}$ function $F:M\to\mathbb{R}$ is called plurisubharmonic if
$$
-dd^{\mathbb{C}}F=\omega,
$$
where $d^{\mathbb{C}}F=dF\circ J$.
\end{define}

\begin{rem}\label{rem:pluri}
Let $(M, \omega)$ be a symplectic manifold and let $J$ be a compatible almost complex structure on $M$. Then a function $F:M\to\mathbb{R}$ is plurisubharmonic if and only if its gradient with respect to $g$ is a Liouville vector field.
\end{rem}

The reason one uses plurisubharmonic functions is because their composition with a $2$-dimensional curve $v:\Omega \to M,\Omega \subseteq \R^2$ satisfies
$$
-dd^\C (F \circ v)=\triangle (F \circ v) ds\wedge dt.
$$
In particular, if $v$ is a $J$-holomorphic curve then the elliptic inequality is trivially satisfied. Unfortunately, in the case of Floer trajectories proving the elliptic inequality is a little more complicated. One has to investigate how the plurisubharmonic function interacts with the Hamiltonian vector field, in particular one needs to understand the functions
$dF(X_H)$ and $d^{\mathbb{C}}F(X_H)$, which appear if we calculate $d^{\mathbb{C}}(F\circ u)$.

The proof of the following Lemma you can find in \cite{Pasquotto2017} as a part of the proof of Proposition 7.1:
\begin{lem}
Let $H$ be a Hamiltonian function on an exact symplectic manifold $(M, \omega=d\lambda)$ and let $J$ be a compatible almost complex structure on $M$. Let $u: \R \times [0,1]\to M \times \R$ be a Floer trajectory satisfying $\partial_s u =\nabla \A^H(u)$ (with constant $H$ and $J$). Then for a plurisubharmonic function $F: M \to \R$ we have
\begin{equation}\label{Laplace}
\begin{aligned}
\triangle (F \circ v) & = \|\partial_{s}v\|^{2}+\eta \left(dH +d(d^{\mathbb{C}}F(X_H))+d^\mathbb{C}(dF(X_H))\right)(\partial_{s}v)\\
& +\eta^2 d(dF(X_H))(X_H)+\partial_{s}\eta\ d^{\mathbb{C}}F(X_H).
\end{aligned}
\end{equation}

\end{lem}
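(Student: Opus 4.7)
The plan is to start from the universal identity $-dd^{\C}(F\circ v)=\triangle(F\circ v)\,ds\wedge dt$, which holds for any smooth real-valued function on $\R\times[0,1]$ equipped with its standard complex structure $j$ (with $j\partial_{s}=\partial_{t}$, $j\partial_{t}=-\partial_{s}$). This reduces the lemma to a direct calculation of $dd^{\C}(F\circ v)$ using the Floer equation, the definition $d^{\C}F=dF\circ J$, and the plurisubharmonicity condition $-dd^{\C}F=\omega$.

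First I would compute $d^{\C}(F\circ v)$. From $d(F\circ v)=dF(\partial_{s}v)\,ds+dF(\partial_{t}v)\,dt$ I get $d^{\C}(F\circ v)=dF(\partial_{t}v)\,ds-dF(\partial_{s}v)\,dt$. The Floer equation, rewritten as $\partial_{t}v=J\partial_{s}v+\eta X_{H}$ (and equivalently $\partial_{s}v=-J\partial_{t}v+\eta JX_{H}$), then gives $dF(\partial_{t}v)=d^{\C}F(\partial_{s}v)+\eta\,dF(X_{H})$ and $dF(\partial_{s}v)=-d^{\C}F(\partial_{t}v)+\eta\,d^{\C}F(X_{H})$. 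Assembling produces the key decomposition
\[
  d^{\C}(F\circ v) \;=\; v^{*}(d^{\C}F) \;+\; \eta\bigl[dF(X_{H})\,ds - d^{\C}F(X_{H})\,dt\bigr].
\]

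Next I would apply $d$ to both sides and treat the two summands separately. For the first, plurisubharmonicity gives $d\,v^{*}(d^{\C}F)=v^{*}(dd^{\C}F)=-v^{*}\omega$, and using $\omega(\,\cdot\,,J\,\cdot\,)=g(\,\cdot\,,\,\cdot\,)$ together with $\omega(Y,X_{H})=dH(Y)$ yields
\[
  v^{*}\omega(\partial_{s},\partial_{t}) \;=\; \omega(\partial_{s}v,\,J\partial_{s}v+\eta X_{H}) \;=\; \|\partial_{s}v\|^{2}+\eta\,dH(\partial_{s}v).
\]
For the second summand, the Leibniz rule produces one contribution from $d\eta=\partial_{s}\eta\,ds$, namely $-\partial_{s}\eta\cdot d^{\C}F(X_{H})\,ds\wedge dt$, plus a contribution from differentiating the bracket, which via $df\wedge ds=-\partial_{t}f\,ds\wedge dt$ and $df\wedge dt=\partial_{s}f\,ds\wedge dt$ equals $-\eta\bigl[d(d^{\C}F(X_{H}))(\partial_{s}v)+d(dF(X_{H}))(\partial_{t}v)\bigr]\,ds\wedge dt$.

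Finally, I would invoke the Floer equation one last time in the residual $\partial_{t}v$-term: substituting $\partial_{t}v=J\partial_{s}v+\eta X_{H}$ gives
\[
  d(dF(X_{H}))(\partial_{t}v) \;=\; d^{\C}(dF(X_{H}))(\partial_{s}v) + \eta\,d(dF(X_{H}))(X_{H}),
\]
where the first term is recognized using the definition $d^{\C}f=df\circ J$. Collecting all pieces with the overall sign flip coming from $-dd^{\C}$ produces exactly the displayed formula. The argument is entirely algebraic; the only mild obstacle is disciplined sign bookkeeping across the several conventions for $d^{\C}$, $\omega$, and the Hamiltonian vector field, but no nontrivial analytic input beyond the Floer equation and the plurisubharmonicity relation $-dd^{\C}F=\omega$ is required.
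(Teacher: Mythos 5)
Your computation is correct: all the sign conventions ($dH=-i_{X_H}\omega$, $\omega(\cdot,J\cdot)=g$, $d^{\C}=d(\cdot)\circ J$, $j\partial_s=\partial_t$) are handled consistently, and collecting the terms from $-dd^{\C}(F\circ v)$ reproduces \eqref{Laplace} exactly. The paper defers this proof to Pasquotto--Wi\'sniewska (Prop.~7.1), but its surrounding discussion (computing $d^{\C}(F\circ v)$ and tracking how $dF(X_H)$ and $d^{\C}F(X_H)$ enter) indicates precisely the route you take, so your argument is essentially the intended one.
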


\subsection{The $L^\infty$ bounds}
In this subsection we will apply the Maximum Principle explained in the previous subsection to the setting of the Floer trajectories from $\mathcal{M}^\Gamma(a,b)$ to establish the $L^\infty$-bounds.

\vspace*{.25cm}
\noindent \textit{Proof of Theorem \ref{thm:FloerBounds}:} 

Let $\Gamma:={(h_s,J_s)}_{s\in \R}$ be a smooth homotopy of Hamiltonians $h_s\in\mathcal{H}$ and almost complex structures $J_s\in C^\infty([0,1]\times \R, \mathcal{J}(\mathcal{V},\mathbb{J}))$ satisfying \eqref{inqGamma}. Here $\mathcal{V}\subseteq T^*\R^2$ is the open, but pre-compact subset defined in Theorem \ref{thm:FloerBounds} with the property that for all $s\in [0,1]$ $\supp dh_s\subseteq \mathcal{V}$ and $J_s\big|_{T^*\R^2\setminus \mathcal{J}}\equiv \mathbb{J}$.

Let $c>0$ be as in \eqref{Defc}.
If we fix $a,b \in \R$ then by Lemma \ref{lem:ActBound} we know that there exists $\mathfrak{y,a,e}>0$, which depend only on $a,b,c$ and $\|J\|_\infty$ such that
\begin{equation}\label{EAEtaBounds}
\begin{aligned}
\sup\left\lbrace|\eta(s)|\ |\ (v,\eta)\in \mathcal{M}^\Gamma(a,b), \ s\in \R\right\rbrace & \leq \mathfrak{y},\\
\sup\left\lbrace|\A^{H_0-h_s-c}(u)|\ |\ u\in \mathcal{M}^\Gamma(a,b)\right\rbrace & \leq \mathfrak{a},\\
\sup\left\lbrace\|\nabla^{J_s}\A^{H_0-h_s-c}(u)\|_{L^2(\R\times[0,1])\times\R}\ |\ u\in \mathcal{M}^\Gamma(a,b)\right\rbrace & \leq \mathfrak{e}.
\end{aligned}
\end{equation}

On the other hand, if we fix $\varepsilon>0$ then by Proposition \ref{prop:smallDerivSetBound} and Remark \ref{rem:smallDeriv} we know that the set $\mathcal{B}^\Gamma(\mathfrak{a},\mathfrak{y},\varepsilon)$, defined in \eqref{DefBGamma}, is bounded in the $L^\infty\times\R$ norm and the bounds depend only on $a,b,\varepsilon$ and $\Gamma$. Consequently there exists a compact subset $K_\varepsilon\subseteq T^*\R^2$, such that
\begin{equation}\label{DefKepsi}
 v([0,1])\subseteq K_\varepsilon\qquad\textrm{for all}\qquad (v,\eta)\in \B^\Gamma(\mathfrak{a},\mathfrak{y},\varepsilon).
\end{equation}
Without loss of generality, we assume that $\overline{\mathcal{V}} \subseteq K_\varepsilon$.

Let us fix a Floer trajectory $u=(v,\eta)\in \mathcal{M}^\Gamma(a,b)$ and denote a connected component 
\begin{equation}\label{Omega}
\Omega \subseteq \left(\mathbb{R} \times [0,1]\right)\setminus v^{-1}(K_\varepsilon).
\end{equation}
By assumption $\supp dh_s \subseteq \overline{V} \subseteq K_\varepsilon$, hence $dh_s \circ v |_{\Omega}\equiv 0$ for all $s\in [0,1]$. This means that instead of $H-h_s$ we can use the Hamiltonian $H_0-\mathbf{c}_s$ for some positive function $\mathbf{c}_s>0$ on $v(\Omega)$, which will simplify our computations. More precisely, for $(s,t)\in \Omega$ the Floer trajectory $(v,\eta)$ satisfies
\begin{equation}\label{FloerH0}
\partial_sv = \mathbb{J}(\partial_t v - \eta X_{H_0}).
\end{equation}
Naturally, $\mathfrak{c}_s\leq \sup_{s\in[0,1]}\|h_s\|_{L^\infty}$.

By the convergence of the integral
$$\|\nabla\A^{H_0-h_s}(u)\|_{L^2(\R\times[0,1])\times\R} 
\leq \|J\|_{L^\infty}\sqrt{\mathfrak{e}},$$
we know that $\lim_{s\to\pm\infty}u(s)\in \B^\Gamma(\mathfrak{a},\mathfrak{y},\varepsilon)$. Therefore, 
$$v(\partial\Omega)\subseteq K_\varepsilon\cup T^*_{q_0}\R^2\cup T^*_{q_1}\R^2$$
Consequently, we have $\sup_{\partial \Omega}|q(s,t)|\leq \sup_{K_\varepsilon}|q|$. Unfortunately, we do not have a uniform bound on $\sup_{\partial\Omega}|p|$. Therefore, we will need to treat the cases of the position and momenta coordinates separately. We introduce the following functions on $T^*\R^2$:
\begin{equation}\label{DefF}
F_1:= \frac{1}{2}|q|^2\qquad \textrm{and}\qquad F_2:=\frac{1}{2}|p|^2,
\end{equation}
Both of the functions $F_1$ and $F_2$ are plurisubharmonic, since their corresponding gradients $q\partial_q$ and $p\partial_p$ are Liouville vector fields (see Remark \ref{rem:pluri}).

In Lemma \ref{lem:LaplaceF} we will show that on $\Omega$ the functions $F_1\circ v$ and $F_2\circ v$ satisfy
\begin{align*}
\triangle (F_1 \circ v) &\geq -\frac{1}{2}\eta^2 F_1\circ v+ \partial_{s}\eta \left(F_2\circ v -H_0\circ v\right)=:f_1(s,t),\\
\triangle (F_2 \circ v)&\geq -\frac{1}{2}\eta^2F_1\circ v-\partial_s\eta\left(F_2\circ v+H_0\circ v\right)=:f_2(s,t).
\end{align*}
In Lemma \ref{lem:f1W11Bound} and Lemma \ref{lem:f2W11Bound} we will prove that $f_1$ and $f_2$, respectively, are bounded in the $W^{1,1}$-norm on $\Omega$
and the bounds depend only on $a,b,c,\varepsilon$ and $\Gamma$, but not on our choice of the Floer trajectory $u\in \mathcal{M}^\Gamma(a,b)$ or our choice of the connected component $\Omega\subseteq \left(\R\times[0,1] \right)\setminus v^{-1}(K_\varepsilon)$. By the continuity of the Sobolev embedding $W^{1,1}\hookrightarrow L^2$ we obtain $L^2$-bounds on $f_1$ and $f_2$.

Since we know that $\sup_{\partial \Omega}|q(s,t)|\leq \sup_{K_\varepsilon}|q|$ we can now apply the Alexandrov maximum principle to obtain
$$
\sup_{(s,t)\in\R\times[0,1]}F_1\circ v  \leq \sup_{K_\varepsilon}f_1\circ v+C\|f_1\|_{L^2(\Omega)},
$$
and the right-hand side of the inequality does not depend on our choice of the Floer trajectory $u\in \mathcal{M}^\Gamma(a,b)$ or our choice of the connected component $\Omega\subseteq \left(\R\times[0,1] \right)\setminus v^{-1}(K_\varepsilon)$. In other words, we obtain that the set
$$
\{\pi \circ v \ |\ u=(v,\eta)\in\mathcal{M}^\Gamma(a,b)\},
$$
is bounded in $\R^2$.

Now we would like to establish the $L^\infty$-bounds on the $p$-variable of the Floer trajectories. To establish uniform bounds on $F_2 \circ v$ for the Floer trajectory $u=(v,\eta)$ we extend the domain of the Floer trajectory $v:\R\times[0,1]\to T^*\R^2$ to a cylinder $\R\times \left([-1,1]/-1\sim 1\right)$ in the following way
\begin{equation}\label{barv}
\bar{v}(s,t):=\begin{cases}
v(s,-t) & \textrm{for}\quad t\leq 0,\\
v(s,t) & \textrm{for}\quad t\geq 0.
\end{cases}
\end{equation}
If we extend the domain $\Omega$ to the cylinder $\R\times \left([-1,1]/-1\sim 1\right)$ in the following way
\begin{align}
&\Theta:=\Omega\cup\{(s,-t)\ |\ (s,t)\in \Omega\},\label{Theta}\\
then \qquad &\bar{v}(\partial\Theta)\subseteq K_\varepsilon\qquad\textrm{and}\qquad  v(\Omega) =\bar{v}(\Theta). \nonumber
\end{align}
In particular, we have that $\sup_{\Omega}f\circ v=\sup_\Theta f\circ \bar{v}$ and $\|f\circ \bar{v}\|_{L^2(\Theta)}=2\|f\circ v\|_{L^2(\Omega)}$ for any smooth function $f\in T^*\R^2$. 

After proving in Lemma \ref{lem:F2smooth} that $F_2\circ \bar{v}$ is in $C^2(\Theta)$ we can apply the Aleksandrov maximum principle and obtain
\begin{align*}
\sup_{(s,t)\in\R\times[0,1]}F_2\circ v  & \leq \sup_{\partial\Theta}F_2\circ\bar{v}+C\|f_2\circ \bar{v}\|_{L^2(\Theta)}\\
& \leq \sup_{K_\varepsilon}f_2\circ v+2C\|f_2\circ v\|_{L^2(\Omega)}.
\end{align*}
Since the established bounds do not depend on the choice of $u\in\mathcal{M}^\Gamma(a,b)$ or the choice of the connected component $\Omega\subseteq \left(\R\times[0,1] \right)\setminus v^{-1}(K_\varepsilon)$, this concludes the proof that $\mathcal{M}^\Gamma(a,b)$ is bounded in the $L^\infty\times\R$-norm.
\hfill $\square$

\vspace*{.25cm}
To complete the proof of Theorem \ref{thm:FloerBounds} we need to prove the following technical lemmas:
\begin{lem}\label{lem:LaplaceF}
Consider the standard symplectic space $(T^*\R^2, \omega_0)$ with the Hamiltonian $H_0$ defined as in \eqref{DefH0} and functions $F_1$ and $F_2$ defined in \eqref{DefF}.
 Let $u:\R \to \mathscr{H}_{q_0,q_1}\times \R$, $u=(v,\eta)$ be a Floer trajectory corresponding to the action functional $\A^{H_0-c}$, i.e. satisfying the relation
\eqref{FloerH0}.
Then
\begin{align*}
\triangle (F_1 \circ v) & \geq -\frac{1}{2}\eta^2 F_1\circ v+ \partial_{s}\eta \left(F_2\circ v -H_0\circ v \right),\\
\triangle (F_2 \circ v) & \geq -\frac{1}{2}\eta^2 F_1\circ v-\partial_s\eta\left(F_2\circ v+H_0\circ v\right).
\end{align*}
\end{lem}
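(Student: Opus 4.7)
The plan is to apply the Laplacian identity~\eqref{Laplace} with $H=H_0$ and $F=F_i$ for $i=1,2$, compute each ingredient explicitly in coordinates, and then complete the square. Both $F_1$ and $F_2$ are plurisubharmonic for $(\omega_0,\mathbb{J})$ because their gradients $q\,\partial_q$ and $p\,\partial_p$ are Liouville vector fields for primitives of $\omega_0$ (Remark~\ref{rem:pluri}), so \eqref{Laplace} applies to each of them.

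Using $X_{H_0}^{q}=p+q^{\perp}$ and $X_{H_0}^{p}=p^{\perp}$ (where $p^{\perp}:=(p_2,-p_1)$ and $q^{\perp}:=(q_2,-q_1)$), together with the identity $H_0-F_2=p_1q_2-p_2q_1$, a direct calculation gives
\begin{gather*}
dF_1(X_{H_0})=q\cdot p,\qquad d^{\mathbb{C}}F_1(X_{H_0})=F_2-H_0,\\
dF_2(X_{H_0})=0,\qquad d^{\mathbb{C}}F_2(X_{H_0})=-(F_2+H_0),
\end{gather*}
so the $\partial_s\eta$-coefficient in~\eqref{Laplace} already equals $F_2-H_0$ for $F_1$ and $-(F_2+H_0)$ for $F_2$, matching the claimed right-hand sides. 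After the cancellations coming from $d\circ d=0$ the 1-form $dH_0+d\bigl(d^{\mathbb{C}}F_i(X_{H_0})\bigr)+d^{\mathbb{C}}\bigl(dF_i(X_{H_0})\bigr)$ reduces to $-q\,dq+2p\,dp$ for $i=1$ and to $-p\,dp$ for $i=2$, while $d\bigl(dF_i(X_{H_0})\bigr)(X_{H_0})$ evaluates to $2F_2$ for $i=1$ and to $0$ for $i=2$. Assembling, formula~\eqref{Laplace} takes the shape
\begin{align*}
\triangle(F_1\circ v)&=\|\partial_s v\|^2+\eta(-q\cdot\partial_s q+2p\cdot\partial_s p)+2\eta^2F_2+\partial_s\eta(F_2-H_0),\\
\triangle(F_2\circ v)&=\|\partial_s v\|^2-\eta\,p\cdot\partial_s p-\partial_s\eta(F_2+H_0).
\end{align*}

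For $F_1$ the inequality will then follow by completing the square separately in $\partial_s q$ and in $\partial_s p$: one checks the identity $\|\partial_s v\|^2+\eta(-q\cdot\partial_s q+2p\cdot\partial_s p)+2\eta^2F_2=\bigl|\partial_s q-\tfrac{\eta}{2}q\bigr|^2+|\partial_s p+\eta p|^2-\tfrac{1}{2}\eta^2F_1$, so dropping the two non-negative squares yields the first bound on the nose. For $F_2$ the analogous completion in $\partial_s p$ alone only produces $\bigl|\partial_s p-\tfrac{\eta}{2}p\bigr|^2-\tfrac{1}{2}\eta^2F_2$, which is weaker than the claimed lower bound by $\tfrac{1}{2}\eta^2(F_2-F_1)$. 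Bridging this gap is the hard part of the proof: the plan is to use the still-unexploited $|\partial_s q|^2$ contribution in $\|\partial_s v\|^2$, substituting the Floer equation $\partial_sv=-\mathbb{J}(\partial_tv-\eta X_{H_0})$ to express $|\partial_s q|^2$ as a square in $\partial_t p$ and $\eta p^\perp$, and then exploiting the magnetic cross-coupling $p_1q_2-p_2q_1$ of $H_0$ (which is precisely what ties $p$ to $q$ inside $X_{H_0}$) to trade the remaining $|p|^2$ for $|q|^2$ inside a manifestly non-negative quadratic combination.
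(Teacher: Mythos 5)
Your treatment of the first inequality is correct and coincides with the paper's argument: all the ingredients of \eqref{Laplace} are computed the same way, and the identity $\|\partial_sv\|^2+\eta\,(2p\cdot\partial_sp-q\cdot\partial_sq)+\eta^2|p|^2=\bigl|\partial_sq-\tfrac{\eta}{2}q\bigr|^2+\bigl|\partial_sp+\eta p\bigr|^2-\tfrac14\eta^2|q|^2$ yields $\triangle(F_1\circ v)\geq-\tfrac12\eta^2F_1\circ v+\partial_s\eta\,(F_2-H_0)\circ v$ exactly as in the paper.

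For the second inequality your computation and the paper's diverge, and yours is the correct one. Since $d^{\mathbb{C}}F_2(X_{H_0})=-F_2-H_0$ and $dF_2(X_{H_0})=0$, the $1$-form in \eqref{Laplace} is $dH_0+d\bigl(d^{\mathbb{C}}F_2(X_{H_0})\bigr)+d^{\mathbb{C}}\bigl(dF_2(X_{H_0})\bigr)=dH_0-dF_2-dH_0=-dF_2=-p\,dp$; the paper instead asserts it equals $-dF_1=-q\,dq$, and its proof of the second inequality rests entirely on that slip (it completes the square in $\partial_sq$ and gets the penalty $-\tfrac14\eta^2|q|^2=-\tfrac12\eta^2F_1$). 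With the correct $1$-form the square must be completed in $\partial_sp$, and the penalty is $-\tfrac14\eta^2|p|^2=-\tfrac12\eta^2F_2$, which is what you obtained. Your proposed ``bridge'' -- using the leftover $|\partial_sq|^2$ and the Floer equation to trade $|p|^2$ for $|q|^2$ -- is a genuine gap that you should not expect to close: the exact identity reads $\triangle(F_2\circ v)=|\partial_sq|^2+\bigl|\partial_sp-\tfrac{\eta}{2}p\bigr|^2-\tfrac14\eta^2|p|^2-\partial_s\eta\,(F_2+H_0)\circ v$, and at a point with $\partial_sq=0$ and $\partial_sp=\tfrac{\eta}{2}p$ the right-hand side falls strictly below the claimed bound whenever $|p|>|q|$; the Floer equation imposes no pointwise constraint excluding this, and on the components $\Omega$ where the lemma is applied $|p|$ is precisely the quantity one has no a priori control over. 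The honest conclusion is that the second inequality should read $\triangle(F_2\circ v)\geq-\tfrac12\eta^2F_2\circ v-\partial_s\eta\,(F_2+H_0)\circ v$; this corrected version is what your computation proves, and it suffices for the subsequent estimates in Lemmas \ref{lem:f2L1Bound} and \ref{lem:f2W11Bound}, since the $L^2(\Omega)$-bound \eqref{pOmegaBound} on $p$ is available there just as the bound on $q$ is. So: first inequality complete; for the second, stop at $-\tfrac12\eta^2F_2$, record the discrepancy with the stated lemma, and drop the bridging step.
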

\begin{proof}
Using \eqref{DefH0} we can calculate that
\begin{align}
X_{H_0} & =  (p_1+q_2)\partial_{q_1}+(p_2-q_1)\partial_{q_2}+p_2\partial_{p_1}-p_1\partial_{p_2},\nonumber\\
dF_1 (X_{H_0}) & = q_1p_1+q_2p_2,\nonumber\\
d^\mathbb{C}(dF_1(X_{H_0})) & = qd^\mathbb{C}p+pd^\mathbb{C}q=pdp-qdq=dF_2-dF_1,\nonumber\\
dF_2 (X_{H_0}) & = 0,\label{dF2XH}\\
d^\C F_1 (X_{H_0}) & = qdp(X_{H_0})=q_1p_2-q_2p_1 = F_2-H_0 ,\label{dCF1XH}\\
d^\C F_2 (X_{H_0}) & =-\left(|p|^2 +q_1p_2-q_2p_1\right) = -F_2-H_0,\label{dCF2XH}\\
d(dF_1 (X_{H_0}))(X_{H_0}) & = |p|^2=2F_2,\nonumber\\
\big(dH_0+d(d^\C F_1 (X_{H_0}))&+d^\C \left( dF_1(X_{H_0})\right)\big)  = 2dF_2-dF_1,\nonumber\\
\big(dH_0+d(d^\C F_2 (X_{H_0}))&+d^\C \left( dF_2(X_{H_0})\right)\big)  = -dF_1.\nonumber
\end{align}
Plugging these qualities into \eqref{Laplace} for a Floer trajectory $u=(v,\eta)$ satisfying the relation \eqref{FloerH0} we can calculate
\begin{align*}
\triangle (F_1 \circ v) & =  \|\partial_{s}v\|^{2}+\eta \left(dH_0 +d(d^{\mathbb{C}}F_1(X_{H_0}))+d^\mathbb{C}(dF_1(X_{H_0}))\right)(\partial_{s}v)\\
& +\eta^2 d(dF_1(X_{H_0}))(X_{H_0})+\partial_{s}\eta\ d^{\mathbb{C}}F_1(X_{H_0})\\
& =  \|\partial_{s}v\|^{2}+\eta \left(2pdp-qdq\right)(\partial_{s}v)+\eta^2|p|^2+ \partial_{s}\eta \left(\frac{1}{2}|p|^2-H_0\circ v \right)\\
& \geq  -\eta^2 \frac{1}{4}|q|^2+ \partial_{s}\eta  \left(\frac{1}{2}|p|^2-H_0 \right)\\
& = -\frac{1}{2}F_1\circ v+  \partial_{s}\eta \left(F_2\circ v-H_0\circ v \right),\\
\triangle (F_2 \circ v) & = \|\partial_{s}v\|^{2}+\eta \left(dH_0 +d(d^{\mathbb{C}}F_2(X_{H_0}))+d^\mathbb{C}(dF_2(X_{H_0}))\right)(\partial_{s}v)\\
& +\eta^2 d(dF_2(X_{H_0}))(X_{H_0})+\partial_{s}\eta\ d^{\mathbb{C}}F_2(X_{H_0})\\
& =  \|\partial_{s}v\|^{2}-\eta qdq(\partial_sv)-\partial_s\eta\left( \frac{1}{2}|p|^2+H_0\right)\\
& \geq -\frac{1}{4}\eta^2|q|^2-\partial_s\eta\left(\frac{1}{2}|p|^2+H_0\right)\\
& = -\frac{1}{2}\eta^2F_1\circ v-\partial_s\eta\left(F_2\circ v+H_0\circ v\right)
\end{align*}
\end{proof}

\begin{lem}\label{lem:F2smooth}
Consider the standard symplectic space $(T^*\R^2, \omega_0)$ with the Hamiltonian $H_0$ defined as in \eqref{DefH0}. Let $u:\R \to \mathscr{H}_{q_0,q_1}\times \R$, $u=(v,\eta)$ be a Floer trajectory associated to $\A^{H_0-\mathbf{c}_s}$, i.e. satisfying the relation \eqref{FloerH0} and let $\bar{v}:\R\times([-1,1]/-1\sim 1) \to T^*\R^2$ be the extension of $v$ defined in \eqref{barv}. Then the composition $F_2\circ\bar{v}$ with the function $F_2:T^*\R^2\to\R$, $F(q,p)=\frac{1}{2}|p|^2$ is $C^2$.
\end{lem}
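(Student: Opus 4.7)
The plan is to reduce the $C^2$-claim to a pair of boundary identities for $F_2\circ v$, and then verify these identities via the Floer equation together with the algebraic fact $dF_2(X_{H_0})=0$ that was already recorded in \eqref{dF2XH}.

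First I would observe that $F_2\circ\bar v(s,t)=g(s,|t|)$, where $g(s,\tau):=F_2(v(s,\tau))$ is smooth on $\R\times[0,1]$. For such a reflected function to lie in $C^2(\R\times([-1,1]/\sim))$, one only has to check matching across the two boundary slices $t=0$ and $t=\pm 1$. A one-variable calculation shows that $\partial_s^k\bar g$ and $\partial_t^2\bar g$ are automatically continuous across each slice (reflection preserves even-order $t$-derivatives), while continuity of $\partial_t\bar g$ and of $\partial_s\partial_t\bar g$ across the slice at $\tau_\star\in\{0,1\}$ are both equivalent to
\begin{equation*}
\partial_\tau g(s,\tau_\star)=0\qquad\forall\,s\in\R,
\end{equation*}
the mixed-partials condition being automatic from the first one because $s\mapsto \partial_\tau g(s,\tau_\star)$ is then identically zero. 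Hence $F_2\circ\bar v\in C^2$ if and only if $\partial_\tau g(s,0)\equiv 0$ and $\partial_\tau g(s,1)\equiv 0$.

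Next I would establish these boundary identities. Differentiating gives $\partial_\tau g(s,\tau_\star)=p(s,\tau_\star)\cdot\partial_t p(s,\tau_\star)$. At $\tau_\star=0$, the Lagrangian boundary condition $v(s,0)\in T^*_{q_0}\R^2$ forces $q(s,0)\equiv q_0$, hence $\partial_s q(s,0)=0$. Decomposing \eqref{FloerH0} into its $\partial_q$- and $\partial_p$-components, and using that $\mathbb{J}$ exchanges the vertical and horizontal subbundles, the vanishing of $\partial_s q(s,0)$ translates into
\begin{equation*}
\partial_t p(s,0)=\eta(s)\,X^p_{H_0}\bigl(v(s,0)\bigr),
\end{equation*}
where $X^p_{H_0}$ denotes the $\partial_p$-component of $X_{H_0}$. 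Substituting,
\begin{equation*}
\partial_\tau g(s,0)=\eta(s)\,p(s,0)\cdot X^p_{H_0}(v(s,0))=\eta(s)\,dF_2(X_{H_0})\bigl(v(s,0)\bigr)=0,
\end{equation*}
by \eqref{dF2XH}. The same argument, with $T^*_{q_0}\R^2$ replaced by $T^*_{q_1}\R^2$, yields $\partial_\tau g(s,1)\equiv 0$ at the periodic identification $t=\pm 1$. Combining with the reduction of the first step gives $F_2\circ\bar v\in C^2$.

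I do not expect a real obstacle: the whole argument collapses onto the single geometric fact $dF_2(X_{H_0})\equiv 0$ (already proved in \eqref{dF2XH}, and ultimately reflecting the fact that $H_0$ preserves the quadratic form $\tfrac12|p|^2$ along its flow up to rotation). The only bookkeeping point that requires attention is verifying that the periodic identification at $t=\pm 1$ is handled by the very same mechanism as the slice at $t=0$, which it is because the Lagrangian boundary condition at $t=1$ plays exactly the same role as at $t=0$.
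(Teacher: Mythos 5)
Your proposal is correct and follows essentially the same route as the paper: both reduce the $C^2$-claim to the vanishing of $dF_2(\partial_t v)$ along the slices $t=0,1$, and both derive that vanishing from the Floer equation, the Lagrangian boundary condition $\partial_s q(s,i)=0$, and the identity $dF_2(X_{H_0})=0$ from \eqref{dF2XH}. Your packaging via the parity of derivatives of $g(s,|t|)$ is a slightly cleaner way to dispose of the second-order matching than the paper's term-by-term computation, but the substance is the same.
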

\begin{proof}
First, observe that $\bar{v}$ is continuous. Moreover, $\bar{v}$ is smooth on\linebreak $\R\times\left((-1,0)\cup(0,1)\right)$. Therefore, $F_2\circ \bar{v}$ is also everywhere continuous and smooth on $\R\times\left((-1,0)\cup(0,1)\right)$. What is left to check is that $F_2\circ \bar{v}$ is $C^2$ on $\R\times \{-1,0,1\}$.
Note that
$$
\frac{d}{dt} (F_2\circ \bar{v})(s,t)=\begin{cases}
dF_2( \partial_tv)(s,t) & \textrm{for}\quad t\geq 0,\\
-dF_2(\partial_tv)(s,-t) & \textrm{for}\quad t\leq 0.
\end{cases}
$$
Hence, the function $F_2\circ\bar{v}$ is $C^1$ if and only if 
\begin{equation}
dF_2( \partial_tv)(s,0)=dF_2( \partial_tv)(s,1)=0\qquad \forall\ s\in \R.
\label{pdpPartialtv}
\end{equation}
By assumption $v: \R \to \mathscr{H}_{q_0,q_1}$, so
$v(s,0)\equiv q_0$ and $v(s,1)\equiv q_1$ for all $s\in\R$. Consequently,
\begin{equation}
dq_i(\partial_sv)(s,0)=dq_i(\partial_sv)(s,1)=0\qquad \textrm{for}\quad i=0,1 \quad \textrm{and}\quad \forall\ s\in \R.
\label{dqPartialsv}
\end{equation}
Now using the Floer equation \eqref{FloerH0} for the Hamiltonian $H_0$ and the relation \eqref{dF2XH} we can calculate for $t\geq 0$
\begin{align*}
dF_2( \partial_tv) & = d^\mathbb{C}F_2(\partial_sv)+\eta dF_2(X_{H_0})=-pdq(\partial_sv).
\end{align*}
Plugging \eqref{dqPartialsv} into the equation above we obtain \eqref{pdpPartialtv}, which proves that $F_2\circ \bar{v}$ is $C^1$ everywhere.

To prove that $F_2\circ \bar{v}$ is $C^2$ we first calculate:
\begin{align*}
\frac{d^2}{dt^2}F_2\circ v & = D^2F_2(\partial_t v,\partial_t v)+dF_2(\partial_{tt}v)=  \eta^2 D^2F_2(X_{H_0},X_{H_0})+dF_2(\partial_{tt}v)\\ &=\eta^2|p|^2+dF_2(\partial_{tt}v).
\end{align*}
Obviously $|p|^2$ does not depend on the sign of $\partial_tv$. On the other hand, the function $\partial_{tt}\bar{v}= \partial_{tt}v$ is continuous, since the two negative signs cancel each other. Therefore, the sum of these two functions is also continuous on the whole cylinder $\R\times([-1,1]/-1\sim 1)$.

Now $\frac{d^2}{dsdt}F_2\circ \bar{v}$ is well defined and continuous on the whole $\R\times((-1,0)\cup(0,1))$. On the other hand by \eqref{pdpPartialtv} we obtain that $\frac{d^2}{dsdt}F_2\circ v (s,0)=\linebreak\frac{d^2}{dsdt}F_2\circ v (s,1)=0$, thus $\frac{d^2}{dsdt}F_2\circ \bar{v}$ is a continuous function on the whole cylinder $\R\times([-1,1]/-1\sim 1)$.

Finally, we observe that $\frac{d^2}{ds^2}(F_2\circ v (s,t))=\frac{d^2}{ds^2}(F_2\circ v (s,-t))$ as it does not depend on the sign of the $t$-variable.
\end{proof}

\begin{lem}
Let 
$\Omega \subseteq \left(\mathbb{R} \times [0,1]\right)\setminus v^{-1}(K_\varepsilon)$
be a connected component as defined in \eqref{Omega}. Then the following values are well-defined and finite:
\begin{equation}\label{Defs0s1}
s_0 :=\inf\{s\ |\ (s,t)\in \Omega\}\qquad \textrm{and}\qquad
s_1 :=\sup\{s\ |\ (s,t)\in \Omega\}.
\end{equation}
Moreover, $s_0$ and $s_1$ satisfy
\begin{equation}\label{s0s1Bound}
|s_1-s_0|\leq \frac{\mathfrak{e}\|J\|^2_\infty}{\varepsilon^2},
\end{equation}

\end{lem}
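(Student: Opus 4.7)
The plan is to identify $\Omega$ with its projection onto the $s$-axis,
$$
I := \{s \in \R \mid \exists\, t \in [0,1],\ (s,t) \in \Omega\},
$$
and to bound $|s_1-s_0|$ by the length of $I$ using the integrated gradient (energy) bound from Lemma~\ref{lem:ActBound}. Since $v$ is continuous and $K_\varepsilon$ is compact, $v^{-1}(K_\varepsilon)$ is closed in $\R\times[0,1]$, so $\Omega$ is open and connected, hence its image $I$ under the open continuous projection is a connected open subset of $\R$, i.e.\ an open interval $(s_0,s_1)$.

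The finiteness of $s_0, s_1$ will come from the asymptotic convergence of Floer trajectories: $u(s)\to x_\pm\in \Crit \A^{H_0-h_\pm}_{q_0,q_1}$ uniformly in $t$ as $s\to\pm\infty$, and the continuous compactness hypothesis places $v_\pm([0,1])$ inside $K\subseteq \mathcal{V}\subseteq K_\varepsilon$. Thus for $|s|$ large enough one has $v(s,[0,1])\subseteq K_\varepsilon$, so $\{s\}\times[0,1]$ is disjoint from $\Omega$ and $I$ is bounded.

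For the length estimate, the key observation is that for every $s\in I$ there exists some $t\in[0,1]$ with $v(s,t)\notin K_\varepsilon$, which by the construction \eqref{DefKepsi} forces $u(s)\notin \mathcal{B}^\Gamma(\mathfrak{a},\mathfrak{y},\varepsilon)$. Since Lemma~\ref{lem:ActBound} already guarantees $|\A^{H_0-h_s}(u(s))|\le\mathfrak{a}$ and $|\eta(s)|\le\mathfrak{y}$ for every $u\in \mathcal{M}^\Gamma(a,b)$, the only remaining obstruction to membership in $\mathcal{B}^\Gamma$ is the gradient, hence
$$
\|\nabla\A^{H_0-h_s}(u(s))\|_{L^2\times\R}\ge\varepsilon \qquad \forall\, s\in I.
$$
Integrating this pointwise bound over $I$ and using the pointwise comparison $\|\nabla\A\|\le \|J\|_{L^\infty}\|\nabla^{J_s}\A\|$ together with the total energy bound \eqref{DefE}, I obtain
$$
\varepsilon^2 |I| \le \|\nabla\A^{H_0-h_s}(u)\|^2_{L^2(\R\times[0,1])} \le \|J\|_{L^\infty}^2\,\mathfrak{e},
$$
which yields $|s_1-s_0|\le |I|\le \mathfrak{e}\|J\|_{L^\infty}^2/\varepsilon^2$. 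The mechanism is exactly the one already used to derive \eqref{sBoundEpsi} in the proof of Proposition~\ref{prop:L2bounds}; the only subtle point is verifying that the failure of $u(s)\in\mathcal{B}^\Gamma$ for $s\in I$ comes from the gradient condition rather than the action or $\eta$ condition, and this is precisely what Lemma~\ref{lem:ActBound} delivers.
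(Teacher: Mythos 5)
Your argument is correct and follows essentially the same route as the paper: finiteness of $s_0,s_1$ from the asymptotic containment of $v(s,\cdot)$ in $K_\varepsilon$ for $|s|$ large, and the length bound by observing that $u(s)\notin\mathcal{B}^\Gamma(\mathfrak{a},\mathfrak{y},\varepsilon)$ for $s\in(s_0,s_1)$ forces $\|\nabla\A^{H_0-h_s}(u(s))\|_{L^2\times\R}\geq\varepsilon$ (since the action and $\eta$ bounds already hold by Lemma~\ref{lem:ActBound}), which integrated against the energy bound \eqref{DefE} gives \eqref{s0s1Bound}. This is exactly the mechanism behind \eqref{sBoundEpsi}, which the paper simply cites at this point.
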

\begin{proof}
By Lemma \ref{lem:ActBound} we obtain uniform bounds on the action, energy and the $\eta$-parameter for all $u\in\mathcal{M}^\Gamma(a,b)$. By the convergence of the integral
$$\|\nabla\A^{H_0-h_s}(u)\|_{L^2(\R\times[0,1])\times\R} 
\leq \|J\|_{L^\infty}\sqrt{\mathfrak{e}},$$
we know that $\lim_{s\to\pm\infty}u(s)\in \B^\Gamma(\mathfrak{a},\mathfrak{y},\varepsilon)$. For the compact subset $K_\varepsilon\subseteq T^*\R^2$ defined in \eqref{DefKepsi} we have $v([0,1])\subseteq K_\varepsilon$ for all $(v,\eta)\in\mathcal{M}^\Gamma(a,b)$. Since\linebreak $\Omega \subseteq \left(\mathbb{R} \times [0,1]\right)\setminus v^{-1}(K_\varepsilon)$, thus the values $s_0$ and $s_1$ as in \eqref{Defs0s1} are well defined and finite. Moreover, for all $s\in (s_0,s_1)$, $u(s)\notin \mathcal{B}^\Gamma(\mathfrak{a},\mathfrak{y}, \varepsilon)$, therefore by \eqref{sBoundEpsi} we can estimate
$$
|s_1-s_0|\leq \frac{\mathfrak{e}\|J\|^2_\infty}{\varepsilon^2},
$$
where $\mathfrak{e}=\|J\|_\infty(b-a+\frac{c}{3}\mathfrak{y})$ as in Lemma \ref{lem:ActBound}. 
\end{proof}

\begin{lem}\label{lem:f1L1Bound}
Consider the setting as in Theorem \ref{thm:FloerBounds}. Fix $a,b\in \R$ and let  $u\in \mathcal{M}^\Gamma(a,b)$ be a Floer trajectory. Let 
$\Omega \subseteq \left(\mathbb{R} \times [0,1]\right)\setminus v^{-1}(K_\varepsilon)$
be a connected component as defined in \eqref{Omega}. Then the function
$$
f_1(s,t):= -\frac{1}{2}\eta^2(s) F_1 \circ v(s,t) + \partial_{s}\eta(s)  \left(F_2-H_0\right)\circ v(s,t).
$$
is bounded in the $L^1(\Omega)$ norm and the bounds do not depend on $u$ or $\Omega$.
\end{lem}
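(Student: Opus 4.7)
The plan is to split $f_1$ into its two pieces and bound each in $L^1(\Omega)$ separately, using (i) the pointwise bound $\|\eta\|_{L^\infty}\le\mathfrak{y}$ and the $L^2$-bound $\|\partial_s\eta\|_{L^2(\R)}\le\sqrt{\mathfrak{e}}$ coming from Lemma \ref{lem:ActBound}, (ii) the $s$-uniform $L^2$-bounds on $q(s,\cdot)$ and $p(s,\cdot)$ from Proposition \ref{prop:L2bounds}, and (iii) the length bound $|s_1-s_0|\le \mathfrak{e}\|J\|_\infty^{2}/\varepsilon^{2}$ for the interval $[s_0,s_1]$ containing the $s$-projection of $\Omega$.

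First I would estimate the pure $F_1$ term. Since $F_1\circ v=\tfrac12|q|^2$, Proposition \ref{prop:L2bounds} gives a constant $Q>0$, depending only on $a,b,c,\varepsilon,\Gamma$, with $\|q(s,\cdot)\|_{L^2([0,1])}\le Q$ for every $s\in\R$. Hence
\begin{equation*}
\int_\Omega \tfrac12\eta(s)^2\, F_1\circ v(s,t)\,ds\,dt \;\le\; \tfrac14\mathfrak{y}^2\int_{s_0}^{s_1}\|q(s,\cdot)\|_{L^2}^2\,ds \;\le\; \tfrac14\mathfrak{y}^2 Q^2\,(s_1-s_0),
\end{equation*}
which is bounded in terms of $a,b,c,\varepsilon,\Gamma$ only.

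Next I would handle the term $\partial_s\eta\cdot(F_2-H_0)\circ v$. From the explicit form \eqref{DefH0} one has $F_2-H_0=-p_1q_2+p_2q_1$, so pointwise $|F_2-H_0|\le |p|\,|q|$. Using Cauchy--Schwarz in $t$ and then in $s$, together with the uniform $L^2$-bounds $\|p(s,\cdot)\|_{L^2}\le P$ and $\|q(s,\cdot)\|_{L^2}\le Q$ from Proposition \ref{prop:L2bounds}, I would get
\begin{equation*}
\int_\Omega |\partial_s\eta(s)|\,|(F_2-H_0)\circ v|\,ds\,dt \;\le\; PQ \int_{s_0}^{s_1}|\partial_s\eta(s)|\,ds \;\le\; PQ\,\sqrt{s_1-s_0}\;\|\partial_s\eta\|_{L^2(\R)},
\end{equation*}
and this is finite since $\|\partial_s\eta\|_{L^2(\R)}\le\sqrt{\mathfrak{e}}$ (as $\partial_s\eta$ is a coordinate of $\nabla\A$ along $u$) and $s_1-s_0\le \mathfrak{e}\|J\|_\infty^{2}/\varepsilon^{2}$.

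Combining these two bounds yields $\|f_1\|_{L^1(\Omega)}\le C(a,b,c,\varepsilon,\Gamma)$, independent of the chosen Floer trajectory $u\in\mathcal{M}^\Gamma(a,b)$ and of the particular connected component $\Omega$. The only subtle point is to notice that, although we do not yet control $v$ in $L^\infty$ on $\Omega$ (that is the very goal of the whole bootstrap), the $L^2$-in-$t$ bounds from Proposition \ref{prop:L2bounds} are $s$-uniform and therefore strong enough when combined with the length bound on $[s_0,s_1]$. The rest is Cauchy--Schwarz; no maximum principle is needed at this stage.
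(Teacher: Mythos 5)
Your argument is correct and follows essentially the same route as the paper: split $f_1$ into the $\eta^2F_1$ and $\partial_s\eta\,(F_2-H_0)$ pieces, and control each via Cauchy--Schwarz using $\|\eta\|_{L^\infty}\le\mathfrak{y}$, the $s$-uniform $L^2([0,1])$ bounds on $q$ and $p$ from Proposition~\ref{prop:L2bounds}, the bound $\|\partial_s\eta\|_{L^2(\R)}^2\le\mathfrak{e}$, and the length bound $|s_1-s_0|\le\mathfrak{e}\|J\|_\infty^2/\varepsilon^2$. The only (harmless, arguably cleaner) deviation is that you exploit the identity $F_2-H_0=q_1p_2-q_2p_1$ to get the pointwise bound $|F_2-H_0|\le|p|\,|q|$, whereas the paper splits $F_2-H_0$ into $\tfrac12|p|^2$ and $H_0$ and handles the latter through the Floer equation $\partial_s\eta=-\int_0^1(H_0-h_s)\circ v\,dt$.
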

\begin{proof}
Let $s_0$ and $s_1$ be as in \eqref{Defs0s1}.
Denote $\overline{\Omega}:= [s_0,s_1]\times[0,1]$. Naturally,\linebreak $\Omega\subseteq \overline{\Omega}$. Before we estimate $\|f_1\|_{L^1(\Omega)}$ we first use the Cauchy-Schwartz inequality together with \eqref{DefE}, \eqref{qL2Bound}, \eqref{pL2Bound} and \eqref{s0s1Bound} to do the following estimates:
\begin{align}
\|\partial_s\eta\|_{L^1(\Omega)}& \leq \|\partial_s\eta\|_{L^1(\overline{\Omega})}\leq \sqrt{|s_1-s_0|}\|\nabla^{J_s} \A^{H_0-h_s}(u(s))\|_{L^2\times\R}\nonumber\\
&\leq \frac{\mathfrak{e}\|J\|_\infty}{\varepsilon}\label{partialEtaBound}\\
\|\partial_s\eta(H_0-\mathbf{c}_s)\|_{L^1(\Omega)}&\leq
\|\partial_s\eta (H_0-h_s)\circ v\|_{L^1(\overline{\Omega})}=\|\partial_s\eta\|_{L^2(\overline{\Omega})}^2\nonumber\\
& \leq \|\nabla^{J_s} \A^{H_0-h_s}(u(s))\|_{L^2\times\R}^2\leq \mathfrak{e}\label{H0Bound}\\
\|q\|_{L^2(\Omega)} & \leq \|q \|_{L^2(\overline{\Omega})}\leq \sqrt{|s_1-s_0|} \sup_{s\in\R}\|q(s)\|_{L^2([0,1])}\nonumber\\
& \leq \frac{\sqrt{\mathfrak{e}}\|J\|_\infty}{\varepsilon}\left(\mathfrak{q}+\frac{\mathfrak{e}\|J\|_\infty}{\varepsilon}\right)\label{qOmegaBound}\\
\|p\|_{L^2(\Omega)} & \leq \frac{\sqrt{\mathfrak{e}}\|J\|_\infty}{\varepsilon}\left(\mathfrak{p}+\frac{\mathfrak{e}\|J\|_\infty}{\varepsilon}\right)\label{pOmegaBound}\\
\|\partial_s\eta\ p^2\|_{L^1(\Omega)} & \leq  \|\partial_s\eta\|_{L^1(\overline{\Omega})}\sup_{s\in\R}\|p(s)\|_{L^2([0,1])}^2 \leq\frac{\mathfrak{e}\|J\|_\infty}{\varepsilon}\left(\mathfrak{p}+\frac{\mathfrak{e}\|J\|_\infty}{\varepsilon}\right)^2 \label{partialEtaPBound}
\end{align}
Using the bounds obtained above and the inequality \eqref{eqEta} we can calculate:
\begin{align*}
\|f_1\|_{L^1(\Omega)}& \leq \frac{1}{4}\mathfrak{y}^2 \|q\|_{L^2(\Omega)}^2+\frac{1}{2}\|\partial_s\eta\ p^2\|_{L^1(\Omega)}+\|\partial_s\eta(H_0-\mathbf{c}_s)\|_{L^1(\Omega)}+\|\mathbf{c}_s\partial_s\eta\|_{L^1(\Omega)}\\
&\leq \frac{\mathfrak{e}\mathfrak{y}^2\|J\|_\infty^2}{4\varepsilon^2}\left(\mathfrak{q}+\frac{\mathfrak{e}\|J\|_\infty}{\varepsilon}\right)^2+\frac{\mathfrak{e}\|J\|_\infty^2}{2\varepsilon^2}\left( \mathfrak{p}+\frac{\mathfrak{e}\|J\|_\infty}{\varepsilon}\right)^2+\mathfrak{e}\left(\frac{\mathfrak{h}\|J\|_\infty}{\varepsilon}+1\right).
\end{align*}
\end{proof}

\begin{lem}\label{lem:f1W11Bound}
Consider the setting as in Lemma \ref{lem:f1L1Bound}. Then the function $f_1$ is bounded in the $W^{1,1}$-norm and the bounds do not depend on $u$ or $\Omega$.
\end{lem}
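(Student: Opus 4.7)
Plan. Lemma~\ref{lem:f1L1Bound} already supplies $\|f_1\|_{L^1(\Omega)}$, so what remains is to estimate $\|\partial_s f_1\|_{L^1(\Omega)}$ and $\|\partial_t f_1\|_{L^1(\Omega)}$ by constants depending only on $a,b,\varepsilon$ and $\Gamma$. Applying the product rule to
\begin{equation*}
f_1(s,t)=-\tfrac{1}{2}\eta(s)^2\,F_1\circ v(s,t)+\partial_s\eta(s)\,(F_2-H_0)\circ v(s,t)
\end{equation*}
yields
\begin{align*}
\partial_s f_1 &= -\eta\,\partial_s\eta\,F_1(v)-\tfrac{1}{2}\eta^2\,dF_1(\partial_s v)+\partial_s^2\eta\,(F_2-H_0)(v)+\partial_s\eta\,d(F_2-H_0)(\partial_s v),\\
\partial_t f_1 &= -\tfrac{1}{2}\eta^2\,dF_1(\partial_t v)+\partial_s\eta\,d(F_2-H_0)(\partial_t v).
\end{align*}
On $\Omega$ the Floer equation \eqref{FloerH0} gives $\partial_t v=\eta X_{H_0}(v)-\mathbb{J}\partial_s v$, hence $|\partial_t v|\leq\sqrt{2}\,|\eta|(|p|+|q|)+|\partial_s v|$; together with the pointwise estimates $|dF_1|,|d(F_2-H_0)|\leq 2(|p|+|q|)$ this reduces every contribution other than the $\partial_s^2\eta$-term to a pairing of an $L^\infty$-bounded scalar with a product of $L^2(\overline\Omega)$-bounded quantities, where $\overline\Omega=[s_0,s_1]\times[0,1]$.

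The ingredients for a Cauchy--Schwarz estimate are already on hand. The scalar factors $\eta$ and $\partial_s\eta$ lie in $L^\infty$: the former by \eqref{EAEtaBounds}, and the latter by combining the identity $\partial_s\eta=-\int_0^1(H_0-h_s)\circ v\,dt$ with the slice-wise bounds $\sup_s\|p(s)\|_{L^2([0,1])}\leq\mathfrak{p}$ and $\sup_s\|q(s)\|_{L^2([0,1])}\leq\mathfrak{q}$ from Lemmas~\ref{lem:qBound} and~\ref{lem:pBound}. The $L^2(\overline\Omega)$-control on $v$ and $\partial_s v$ follows from \eqref{qOmegaBound}, \eqref{pOmegaBound} and \eqref{DefE}, while $|s_1-s_0|$ is bounded via \eqref{s0s1Bound}. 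Thus each of the four nonsingular contributions can be estimated by a product of an $L^\infty$ constant and two $L^2(\overline\Omega)$ norms, giving an $L^1(\Omega)$ bound depending only on the intrinsic constants of the homotopy.

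The main obstacle is the remaining term $\partial_s^2\eta\,(F_2-H_0)(v)$, which requires control of $\partial_s^2\eta$. I would differentiate the relation $\partial_s\eta(s)=-\int_0^1(H_0-h_s)\circ v(s,t)\,dt$ in $s$ along the Floer trajectory to get
\begin{equation*}
\partial_s^2\eta(s)=-\int_0^1 d(H_0-h_s)(\partial_s v(s,t))\,dt+\int_0^1 \partial_s h_s\circ v(s,t)\,dt.
\end{equation*}
Since $|d(H_0-h_s)|\leq \sqrt{2}(|p|+|q|)+\|dh_s\|_{L^\infty}$, Cauchy--Schwarz on $\overline\Omega$ then yields
\begin{equation*}
\|\partial_s^2\eta\|_{L^1([s_0,s_1])}\leq C\bigl(\|v\|_{L^2(\overline\Omega)}+\sqrt{|s_1-s_0|}\bigr)\|\partial_s v\|_{L^2(\overline\Omega)}+\|\partial_s h_s\|_{L^\infty}|s_1-s_0|,
\end{equation*}
which is controlled by the same intrinsic constants. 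Using $|(F_2-H_0)(v)|=|p_1 q_2-p_2 q_1|\leq|p||q|$ together with the slice-wise bounds of Lemmas~\ref{lem:qBound} and~\ref{lem:pBound}, one finally concludes
\begin{equation*}
\|\partial_s^2\eta\,(F_2-H_0)(v)\|_{L^1(\Omega)}\leq \|\partial_s^2\eta\|_{L^1([s_0,s_1])}\sup_s\|p(s)\|_{L^2}\|q(s)\|_{L^2},
\end{equation*}
which closes the estimate and completes the $W^{1,1}(\Omega)$ bound on $f_1$, with constants independent of $u\in\mathcal{M}^\Gamma(a,b)$ and of the connected component $\Omega$.
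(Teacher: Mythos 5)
Your proof is correct and follows essentially the same route as the paper's: differentiate $f_1$, use the Floer equation together with $d^{\mathbb{C}}F_1(X_{H_0})=F_2-H_0$ and $dF_2(X_{H_0})=0$ to handle the $t$-derivative, control $\partial_{ss}\eta$ by differentiating $\partial_s\eta=-\int_0^1(H_0-h_s)\circ v\,dt$, and close every term with Cauchy--Schwarz against the uniform $L^2$-bounds and the bound on $|s_1-s_0|$. The only slip is a citation: the slice-wise bounds on $\sup_s\|q(s)\|_{L^2([0,1])}$ and $\sup_s\|p(s)\|_{L^2([0,1])}$ along the whole trajectory come from \eqref{qL2Bound}--\eqref{pL2Bound} in Proposition \ref{prop:L2bounds} (with the extra summand $\mathfrak{e}\|J\|_\infty/\varepsilon$), not directly from Lemmas \ref{lem:qBound} and \ref{lem:pBound}, which apply only on the set $\mathcal{B}_h(\mathfrak{a},\mathfrak{y},\varepsilon)$.
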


\begin{proof}
To prove that $f_1\in W^{1,1}(\Omega)$ we first need to calculate its derivatives:
\begin{align*}
\partial_sf_1 & = - \eta\partial_s\eta\ F_1+\partial_{ss}\eta\ d^\mathbb{C}F_1(X_{H_0})+\left(\partial_s\eta\ d(F_2-H_0)-\frac{1}{2}\eta^2 dF_1\right)(\partial_sv)\\
\partial_tf_1 & = \left(\partial_s\eta\ d(F_2-H_0)-\frac{1}{2}\eta^2 dF_1\right)(\mathbb{J}\partial_sv+\eta X_{H_0})\\
& = \left(\partial_s\eta\ d^\mathbb{C}(F_2-H_0)-\frac{1}{2}\eta^2 d^\mathbb{C} F_1\right)(\partial_sv)-\frac{1}{2}\eta^3dF_1(X_{H_0}),
\end{align*}
where the last equality comes from the fact that $dF_2(X_{H_0})(q,p)=0$.
Using \eqref{qL2Bound}, \eqref{vL2Bound} and \eqref{s0s1Bound} we can estimate:
\begin{align}
\|dF_1(X_{H_0})\|_{L^1(\Omega)}& = \|q_1p_1+q_2p_2\|_{L^1(\Omega)}\leq\frac{1}{2}\| v\|_{L^2(\Omega)}^2\leq \frac{1}{2}|s_1-s_0|\sup_{s\in\R}\|v(s)\|_{L^2([0,1])}^2\nonumber\\
& \leq \frac{\mathfrak{e}\|J\|_\infty^2}{2\varepsilon^2}\left( \mathfrak{q}+\mathfrak{p}+\frac{\mathfrak{e}\|J\|_\infty}{\varepsilon}\right)^2\label{eq0}\\
\|dF_1(\partial_sv)\|_{L^1(\Omega)}& \leq \|\nabla F_1\|_{L^2(\Omega)}\|\partial_sv\|_{L^1(\Omega)}\leq \|q\|_{L^2(\overline{\Omega})}\|\nabla^{J_s}\A^{H_0-h_s}(u)\|_{L^2\times\R}\nonumber\\
& \leq \frac{\mathfrak{e}\|J\|_\infty}{\varepsilon}\left(\mathfrak{q}+\frac{\mathfrak{e}\|J\|_\infty}{\varepsilon}\right)\label{eq1}
\end{align}
\begin{align}
\|&\partial_s\eta\ d(F_2 -H_0)(\partial_sv)\|_{L^1(\Omega)}  \leq  \int_{s_0}^{s_1}|\partial_s\eta|\int_0^1|\nabla (F_2-H_0)||\partial_sv|dtds\nonumber\\
 &= \int_{s_0}^{s_1}|\partial_s\eta|\int_0^1|v||\partial_sv|dtds \leq \sup_{s\in\R}\|v(s)\|_{L^2([0,1])}\int_{s_0}^{s_1}|\partial_s\eta|\|\partial_sv(s)\|_{L^2([0,1])}ds\nonumber\\
 & \leq\sup_{s\in\R}\|v(s)\|_{L^2([0,1])}\|\nabla^{J_s}\A^{H_0-h_s}(u)\|_{L^2\times\R}^2 \leq \mathfrak{e}\left( \mathfrak{q}+\mathfrak{p}+\frac{\mathfrak{e}\|J\|_\infty}{\varepsilon}\right)\label{eq2}
\end{align}
Similarly, we have
\begin{align}
\|d^\C F_1(\partial_sv)\|_{L^1(\Omega)} & \leq  \frac{\mathfrak{e}\|J\|_\infty}{\varepsilon}\left(\mathfrak{q}+\frac{\mathfrak{e}\|J\|_\infty}{\varepsilon}\right),\label{eq3}\\
\|\partial_s\eta\ d^\C(F_2-H_0)(\partial_sv)\|_{L^1(\Omega)} &\leq \mathfrak{e}\left( \mathfrak{q}+\mathfrak{p}+\frac{\mathfrak{e}\|J\|_\infty}{\varepsilon}\right).
\end{align}
Using the estimates above and \eqref{eqEta} we can combine them to obtain a uniform $L^1$-bound on $\partial_tf_1$:
\begin{align*}
\|\partial_tf_1\|_{L^1(\Omega)} &\leq \|\partial_s\eta d^\C( F_2-H_0)(\partial_sv)\|_{L^1(\Omega)}+\frac{1}{2}\|\eta^2 d^\C F_1(\partial_sv)\|_{L^1(\Omega)}+\frac{1}{2}\|\eta^3 dF_1(X_{H_0})\|_{L^1(\Omega)}\\
& \leq  \mathfrak{e}\mathfrak{p}+ \mathfrak{e}\left(1+\frac{\mathfrak{ey}^2\|J\|_\infty}{2\varepsilon}\right)\left(\mathfrak{q}+\frac{\mathfrak{e}\|J\|_\infty}{\varepsilon}\right)+\frac{\mathfrak{ey}^3\|J\|_\infty^2}{4\varepsilon^2}\left( \mathfrak{q}+\mathfrak{p}+\frac{\mathfrak{e}\|J\|_\infty}{\varepsilon}\right)^2
\end{align*}
This gives us a bound on $\|\partial_tf_1\|_{L^1(\Omega)}$, which does not depend on the choice of the Floer trajectory $u$ or the connected component $\Omega$.

To estimate $\|\partial_sf_1\|_{L^1(\Omega)}$ we first use \eqref{qL2Bound} and \eqref{partialEtaBound} to estimate
\begin{align}
\|\partial_s\eta\ F_1\circ v\|_{L^1(\Omega)} & \leq \|\partial_s\eta\ F_1\circ v\|_{L^1(\overline{\Omega})} = \frac{1}{2}\int_{s_0}^{s_1}|\partial_s\eta|\|q(s)\|_{L^2([0,1])}^2ds\nonumber\\
& \leq \frac{1}{2}\|\partial_s\eta\|_{L^1(\overline{\Omega})}\sup_{s\in\R}\|q(s)\|_{L^2([0,1])}^2\nonumber\\
&\leq \frac{\mathfrak{e}\|J\|_\infty}{2\varepsilon}\left(\mathfrak{q}+\frac{\mathfrak{e}\|J\|_\infty}{\varepsilon}\right)^2\label{partialsEtaF1}
\end{align}
Secondly, we analyze $\partial_{ss}\eta$:
$$
\partial_{ss}\eta = -\int_0^1d(H_0-h_s)(\partial_sv)dt-\int_0^1 \partial_sh_s (v)dt.
$$
Using \eqref{inqGamma}, \eqref{pL2Bound} and \eqref{vL2Bound} we can estimate:
\begin{align}
\|\partial_{ss}\eta\|_{L^1(\Omega)} & = \int_{\Omega}\bigg|\int_0^1\left(d(H_0-h_s)(\partial_sv)+\partial_sh_s(v)\right)dt\bigg|dt ds\nonumber\\
& \leq \|d(H_0-h_s)(\partial_sv)\|_{L^1(\overline{\Omega})}+\|\partial_sh_s\|_{L^\infty}\nonumber\\
&\leq \left(\|\nabla H_0\|_{L^2(\overline{\Omega})}+\|\nabla h_s\|_{L^2(\overline{\Omega})}\right)\|\partial_sv\|_{L^2(\overline{\Omega})}\|\partial_sh_s\|_{L^\infty}\nonumber\\
& \leq \left(\|v+p\|_{L^2(\overline{\Omega})}+\|\nabla h_s\|_{L^2(\overline{\Omega})}\right)\|\nabla^{J_s}\A^{H_0-h_s}(u)\|_{L^2\times\R}+\|\partial_sh_s\|_{L^\infty}\nonumber\\
&\leq \sqrt{\mathfrak{e}|s_1-s_0|}\left(\sup_{s\in\R}\|v(s)\|_{L^2([0,1])}+\sup_{s\in\R}\|p(s)\|_{L^2([0,1])}+\sup_{s\in\R}\|\nabla h_s\|_{L^\infty}\right)+\|\partial_sh_s\|_{L^\infty}\nonumber\\
&\leq \frac{\mathfrak{e}\|J\|_\infty}{\varepsilon}\left(\mathfrak{h}+ \mathfrak{q}+2\mathfrak{p}+\frac{2\mathfrak{e}\|J\|_\infty}{\varepsilon}\right)+\frac{c}{3}\label{partialSS}
\end{align}
Now we will use the Floer equations, the Cauchy-Schwartz inequality together with \eqref{DefE}, \eqref{sBoundEpsi}, \eqref{qL2Bound}, \eqref{pL2Bound} and \eqref{dCF1XH} to calculate the following:
\begin{align}
\|\partial_{ss}\eta\ d^\mathbb{C}F_1(X_{H_0})\|_{L^1(\Omega)} & \leq \|\partial_{ss}\eta\ d^\mathbb{C}F_1(X_{H_0})\|_{L^1(\overline{\Omega})} =\int_{s_0}^{s_1}|\partial_{ss}\eta|\int_0^1|q_1p_2-q_2p_1|dtds\nonumber\\
& \leq \frac{1}{2}\int_{s_0}^{s_1}|\partial_{ss}\eta|(\|q(s)\|_{L^2([0,1])}^2+\|p(s)\|_{L^2([0,1])}^2)ds\nonumber\\
&\leq \frac{1}{2}\|\partial_{ss}\eta\|_{L^1(\overline{\Omega})}\sup_{s\in\R}\|v(s)\|_{L^2([0,1])}^2\nonumber\\
& \leq \left(\frac{\mathfrak{e}\|J\|_\infty}{\varepsilon}\left( \frac{1}{2}(\mathfrak{h}+\mathfrak{q})+\mathfrak{p}+\frac{\mathfrak{e}\|J\|_\infty}{\varepsilon}\right)+\frac{c}{3}\right)\left(\mathfrak{q}+\mathfrak{p}+\frac{\mathfrak{e}\|J\|_\infty}{\varepsilon}\right)^2\label{EtaSSdCF1XH0}
\end{align}
Using the estimates above together with \eqref{eqEta}, \eqref{eq1}, \eqref{eq2} and \eqref{partialsEtaF1} we can estimate:
\begin{align*}
\|\partial_sf_1\|_{L^1(\Omega)} & \leq \mathfrak{y}\|\partial_s\eta\ F_1\circ v\|_{L^1(\Omega)}+\|\partial_{ss}\eta d^\mathbb{C}F_1(X_{H_0})\|_{L^1(\Omega)}+\frac{1}{2}\mathfrak{y}^2\|d F_1(\partial_sv)\|_{L^1(\Omega)}\\
& +\|\partial_s\eta\ d(F_2-H)(\partial_sv)\|_{L^1(\Omega)} \\
& \leq \frac{\mathfrak{e y}\|J\|_\infty}{2\varepsilon}\left(\mathfrak{q}+\frac{\mathfrak{e}\|J\|_\infty}{\varepsilon}\right)^2 + \frac{\mathfrak{e y}^2\|J\|_\infty}{2\varepsilon}\left(\mathfrak{q}+\frac{\mathfrak{e}\|J\|_\infty}{\varepsilon}\right)\\
& + \left(\frac{\mathfrak{e}\|J\|_\infty}{\varepsilon}\left( \frac{1}{2}(\mathfrak{h}+\mathfrak{q})+\mathfrak{p}+\frac{\mathfrak{e}\|J\|_\infty}{\varepsilon}\right)+\frac{c}{3}\right)\left(\mathfrak{q}+\mathfrak{p}+\frac{\mathfrak{e}\|J\|_\infty}{\varepsilon}\right)^2\\
& +\mathfrak{e}\left( \mathfrak{q}+\mathfrak{p}+\frac{\mathfrak{e}\|J\|_\infty}{\varepsilon}\right)
\end{align*}
This gives us a bound on $\|\partial_sf_1\|_{L^1(\Omega)}$, which does not depend on the choice of the Floer trajectory $u$ or the connected component $\Omega$.
\end{proof}

\begin{lem}\label{lem:f2L1Bound}
Consider the setting as in Theorem \ref{thm:FloerBounds}. Fix $a,b\in \R$ and let  $u\in \mathcal{M}^\Gamma(a,b)$ be a Floer trajectory. Let 
$\Omega \subseteq \left(\mathbb{R} \times [0,1]\right)\setminus v^{-1}(K_\varepsilon)$
be a connected component as defined in \eqref{Omega}. Then the function
$$
f_2(s,t):= -\frac{1}{2}\eta^2(s)F_1\circ v(s,t)-\partial_s\eta(s)\left(F_2+H_0\right)\circ v(s,t).
$$
is bounded in the $L^1(\Omega)$ norm and the bounds do not depend on $u$ or $\Omega$.
\end{lem}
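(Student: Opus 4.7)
The plan is to mirror the proof of Lemma~\ref{lem:f1L1Bound} almost verbatim. Up to signs, $f_2$ differs from $f_1$ only in having $+H_0$ instead of $-H_0$ in the coefficient of $\partial_s\eta$, so every component estimate needed here has already appeared in the proof of Lemma~\ref{lem:f1L1Bound}.

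The first step is to split $\|f_2\|_{L^1(\Omega)}$ by the triangle inequality into
\begin{equation*}
\|f_2\|_{L^1(\Omega)}\leq \tfrac{1}{2}\|\eta^2\, F_1\circ v\|_{L^1(\Omega)} + \|\partial_s\eta\, F_2\circ v\|_{L^1(\Omega)} + \|\partial_s\eta\, H_0\circ v\|_{L^1(\Omega)}.
\end{equation*}
For the first summand I would use the uniform bound $\|\eta\|_{L^\infty}\leq \mathfrak{y}$ from \eqref{eqEta} together with the $L^2$-bound \eqref{qOmegaBound} on $q$ over $\overline{\Omega}\supseteq \Omega$, giving a bound by $\frac{1}{2}\mathfrak{y}^2\|q\|_{L^2(\Omega)}^2$. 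The second summand equals $\frac{1}{2}\|\partial_s\eta\,|p|^2\|_{L^1(\Omega)}$, which is exactly what was already controlled in \eqref{partialEtaPBound}.

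The only mildly new ingredient is the third summand. Here I would exploit the fact that on $\Omega$ we have $h_s\circ v\equiv \mathbf{c}_s$ (so $(H_0-h_s)\circ v = (H_0-\mathbf{c}_s)\circ v$), and write
\begin{equation*}
\|\partial_s\eta\, H_0\circ v\|_{L^1(\Omega)} \leq \|\partial_s\eta\,(H_0-\mathbf{c}_s)\circ v\|_{L^1(\Omega)} + \|\mathbf{c}_s\,\partial_s\eta\|_{L^1(\Omega)}.
\end{equation*}
The first of these is bounded by $\mathfrak{e}$ via \eqref{H0Bound}, while the second is at most $\mathfrak{h}\,\|\partial_s\eta\|_{L^1(\Omega)}$ and thus controlled by \eqref{partialEtaBound}. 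Summing the three bounds yields an estimate on $\|f_2\|_{L^1(\Omega)}$ depending only on $a,b,c,\varepsilon$ and $\Gamma$, as required. I do not anticipate any substantial obstacle: the argument is entirely parallel to Lemma~\ref{lem:f1L1Bound}, with the sign change playing no role in the absolute values, and the $H_0$-term handled by the same trick used there via the Floer equation for $\partial_s\eta$.
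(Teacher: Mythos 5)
Your proposal is correct and follows essentially the same route as the paper: the same triangle-inequality decomposition of $f_2$ into the $\eta^2 F_1$, $\partial_s\eta\, F_2$ and $\partial_s\eta\, H_0$ terms, with the $H_0$-term split as $(H_0-\mathbf{c}_s)+\mathbf{c}_s$ and each piece controlled by exactly the estimates \eqref{eqEta}, \eqref{qOmegaBound}, \eqref{partialEtaPBound}, \eqref{H0Bound} and \eqref{partialEtaBound} already established in the proof of Lemma~\ref{lem:f1L1Bound}.
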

\begin{proof}
To estimate $\|f_2\|_{L^1(\Omega)}$ we use the bounds obtained in \eqref{eqEta}, \eqref{partialEtaBound}, \eqref{H0Bound}, \eqref{qOmegaBound} and \eqref{partialEtaPBound} 
to calculate:
\begin{align*}
\|f_2\|_{L^1(\Omega)} & \leq \frac{\mathfrak{y}^2}{4}\|q\|^2_{L^2(\Omega)} +
\frac{1}{2}\|\partial_s\eta\ p^2\|_{L^2(\Omega)}+\|\partial_s\eta (H_0-\mathfrak{c}_s)\|_{L^2(\Omega)}+\|\mathfrak{c}_s\partial_s\eta\|_{L^1(\Omega)}\\
&\leq \frac{\mathfrak{ey}^2\|J\|_\infty^2}{4\varepsilon^2}\left(\mathfrak{q}+\frac{\mathfrak{e}\|J\|_\infty}{\varepsilon}\right)^2+ \frac{\mathfrak{e}}{2\varepsilon}\left(\mathfrak{p}+\frac{\mathfrak{e}\|J\|_\infty}{\varepsilon}\right)^2+\mathfrak{e}\left(\frac{c}{\varepsilon}+\|J\|_\infty^2\right).
\end{align*}
\end{proof}

\begin{lem}\label{lem:f2W11Bound}
Consider the setting as in Lemma \ref{lem:f2L1Bound}. Then the function $f_2$ is bounded in the $W^{1,1}$-norm and the bounds do not depend on $u$ or $\Omega$.
\end{lem}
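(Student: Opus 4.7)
The plan is to mirror the argument of Lemma \ref{lem:f1W11Bound}: compute $\partial_s f_2$ and $\partial_t f_2$ using the Floer equation \eqref{FloerH0}, then bound each resulting summand in $L^1(\Omega)$ using the estimates already collected in Lemmas \ref{lem:qBound}, \ref{lem:pBound}, \ref{lem:f1L1Bound}, and in the course of the proof of Lemma \ref{lem:f1W11Bound}.

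First I differentiate directly:
\begin{align*}
\partial_s f_2 &= -\eta\,\partial_s\eta\, F_1 - \tfrac12 \eta^2\, dF_1(\partial_s v) - \partial_{ss}\eta\,(F_2+H_0) - \partial_s\eta\, d(F_2+H_0)(\partial_s v).
\end{align*}
For $\partial_t f_2$ I solve the Floer equation \eqref{FloerH0} as $\partial_t v = -\mathbb{J}\partial_s v + \eta X_{H_0}$ and use the identities $dF_2(X_{H_0})=0$ from \eqref{dF2XH} and $dH_0(X_{H_0})=0$, so that the term proportional to $\eta\partial_s\eta\,d(F_2+H_0)(X_{H_0})$ disappears. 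Using $dF(-\mathbb{J}\xi) = -d^{\mathbb{C}}F(\xi)$ we obtain
\begin{equation*}
\partial_t f_2 = \tfrac12 \eta^2 \, d^{\mathbb{C}}F_1(\partial_s v) - \tfrac12 \eta^3\, dF_1(X_{H_0}) + \partial_s\eta\, d^{\mathbb{C}}(F_2+H_0)(\partial_s v).
\end{equation*}

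Next I observe that every summand above is either identical in structure to a summand already treated in the proof of Lemma \ref{lem:f1W11Bound}, or is controlled by the exact same ingredients. Concretely, the term $\eta\partial_s\eta\, F_1$ is bounded in $L^1(\Omega)$ by $\mathfrak{y}$ times \eqref{partialsEtaF1}; the term $\eta^2\,dF_1(\partial_s v)$ is bounded via \eqref{eq1} (or \eqref{eq3} for its $d^{\mathbb{C}}$-version); the term $\partial_{ss}\eta\,F_2$ is handled exactly as \eqref{EtaSSdCF1XH0}, noting that $F_2 = \tfrac12|p|^2$ and so one just uses $\|p\|_{L^2}^2$ in place of $\|v\|_{L^2}^2$; the term $\partial_{ss}\eta\,H_0$ is controlled identically, since $H_0$ is a quadratic combination of $q$ and $p$; and the term $\partial_s\eta\, d(F_2+H_0)(\partial_s v)$ (as well as its $d^{\mathbb{C}}$-analogue) is of the same form as \eqref{eq2}, since the gradients of $F_2$ and $H_0$ are linear in $(q,p)$. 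The remaining term $\eta^3\, dF_1(X_{H_0})$ is bounded by $\mathfrak{y}^3$ times \eqref{eq0}.

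Combining these estimates with the uniform bounds $\|\eta\|_\infty\leq\mathfrak{y}$, \eqref{partialEtaBound}, \eqref{partialSS}, \eqref{qL2Bound}, \eqref{pL2Bound}, \eqref{vL2Bound}, one obtains $L^1(\Omega)$ bounds on $\partial_s f_2$ and $\partial_t f_2$ expressible as polynomials in the constants $\mathfrak{a},\mathfrak{y},\mathfrak{e},\mathfrak{h},\mathfrak{q},\mathfrak{p},c,\varepsilon^{-1},\|J\|_\infty$, all of which depend only on $a,b$ and on the homotopy data $\Gamma$, but not on the Floer trajectory $u\in\mathcal{M}^\Gamma(a,b)$ or on the connected component $\Omega$. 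Combined with the $L^1(\Omega)$ bound from Lemma \ref{lem:f2L1Bound}, this yields the desired $W^{1,1}(\Omega)$ bound on $f_2$. Since the computation is entirely parallel to Lemma \ref{lem:f1W11Bound}, there is no substantive obstacle: the only bookkeeping point is the use of $dF_2(X_{H_0})=dH_0(X_{H_0})=0$ to kill the potentially problematic drift term in $\partial_t f_2$.
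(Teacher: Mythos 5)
Your proposal is correct and follows essentially the same route as the paper: compute $\partial_s f_2$ and $\partial_t f_2$ via the Floer equation, use $dF_2(X_{H_0})=dH_0(X_{H_0})=0$ to kill the drift term, and bound each remaining summand in $L^1(\Omega)$ by the estimates \eqref{partialsEtaF1}, \eqref{eq0}--\eqref{eq3}, \eqref{partialSS} and \eqref{EtaSSdCF1XH0} already established for $f_1$. The only cosmetic difference is that the paper reroutes $\partial_{ss}\eta\,(F_2+H_0)$ and $d(F_2+H_0)(\partial_s v)$ through the identity $d^{\mathbb C}F_1(X_{H_0})=F_2-H_0=2F_2+d^{\mathbb C}F_2(X_{H_0})$ rather than bounding the $F_2$ and $H_0$ pieces separately as you do, which changes nothing of substance.
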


\begin{proof}
To prove that $f_2\in W^{1,1}(\Omega)$ we first need to calculate its derivatives:
\begin{align*}
\partial_sf_2 & = - \eta\partial_s\eta F_1 + \partial_{ss}\eta\ d^\mathbb{C}F_2(X_{H_0})-\left(\partial_s\eta\ d(F_2+H_0)+\frac{1}{2}\eta^2 dF_1\right)(\partial_sv)\\
\partial_tf_2 & = -\left(\partial_s\eta\ d(F_2+H_0)+\frac{1}{2}\eta^2 dF_1\right)(\mathbb{J}\partial_sv+\eta X_{H_0})\\
& = -\left(\partial_s\eta\ d^\mathbb{C}(F_2+H_0)+\frac{1}{2}\eta^2 d^\mathbb{C} F_1\right)(\partial_sv)-\frac{1}{2}\eta^3 dF_1(X_{H_0}),
\end{align*}
where the last equality comes from \eqref{dF2XH}.

Let $s_0$ and $s_1$ be as in \eqref{Defs0s1}.
Denote $\overline{\Omega}:= [s_0,s_1]\times[0,1]$. Naturally, $\Omega\subseteq \overline{\Omega}$.
Before we estimate $\|\partial_t f_2\|_{L^1(\Omega)}$ we will first
recall \eqref{dCF1XH} and \eqref{dCF2XH} which give us the following relation
\begin{equation}
d^\mathbb{C}F_1(X_{H_0})=F_2-H_0=2F_2+d^\mathbb{C}F_2(X_{H_0}).
\label{F2-H0}
\end{equation}
Further on, we use \eqref{DefE}, \eqref{pL2Bound}, \eqref{pOmegaBound} and \eqref{eq2} to calculate the following bounds:
\begin{align*}
\|\partial_s\eta\ dF_2(\partial_sv)\|_{L^1(\Omega)}& \leq \|\partial_s\eta\ dF_2(\partial_sv)\|_{L^1(\overline{\Omega})}\\
& \leq  \int_{s_0}^{s_1}|\partial_s\eta|\int_0^1|p(s,t)||\partial_sv(p,s)|dtds\\
&\leq \int_{s_0}^{s_1}|\partial_s\eta|\|p(s)\|_{L^2([0,1])}\|\partial_s v(s)\||_{L^2([0,1])}ds\\
&\leq \sup_{s\in \R}\|p(s)\|_{L^2([0,1])}\|\partial_s\eta\|_{L^2(\R)}\|\partial_sv\|_{L^2(\R\times[0,1])}\\
&\leq \mathfrak{e}\left(\mathfrak{p}+\frac{\mathfrak{e}\|J\|_\infty}{\varepsilon}\right).
\end{align*}
This, together with \eqref{F2-H0} gives us:
\begin{align}
\|\partial_s\eta\ d(F_2+H_0)(\partial_sv)\|_{L^1(\Omega)} & \leq 2\|\partial_s\eta\ dF_2(\partial_sv)\|_{L^1(\Omega)}+\|\partial_s\eta\ d(F_2-H_0)(\partial_sv)\|_{L^1(\Omega)}\nonumber\\
&\leq \mathfrak{e}\left( \mathfrak{q}+3\mathfrak{p}+2\frac{\mathfrak{e}\|J\|_\infty}{\varepsilon}\right)\label{eq4}
\end{align}
Similarly, we have
$$
\|\partial_s\eta\ d^\mathbb{C}(F_2+H_0)(\partial_sv)\|_{L^1(\Omega)}\leq \mathfrak{e}\left( \mathfrak{q}+3\mathfrak{p}+2\frac{\mathfrak{e}\|J\|_\infty}{\varepsilon}\right).
$$
Using the bounds obtained above together with \eqref{eqEta}, \eqref{eq0} and \eqref{eq3} we can estimate:
\begin{align*}
\|\partial_tf_2\|_{L^1(\Omega)} & \leq \|\partial_s\eta\ d^\mathbb{C}(F_2+H_0)(\partial_s v)\|_{L^1(\Omega)}+\frac{1}{2}\mathfrak{y}^2 \left(\|d^\mathbb{C}  F_1(\partial_sv)\|_{L^1(\Omega)}+\mathfrak{y} \|dF_1(X_{H_0})\|_{L^1(\Omega)}\right)\\
&\leq  \mathfrak{e}\left( \mathfrak{q}+3\mathfrak{p}+2\frac{\mathfrak{e}\|J\|_\infty}{\varepsilon}\right)+\frac{\mathfrak{ey}^2\|J\|_\infty}{2\varepsilon}\left(\mathfrak{q}+\frac{\mathfrak{e}\|J\|_\infty}{\varepsilon}\right)\\
&+\frac{\mathfrak{ey}^3\|J\|_\infty^2}{4\varepsilon^2}\left( \mathfrak{q}+\mathfrak{p}+\frac{\mathfrak{e}\|J\|_\infty}{\varepsilon}\right)^2.
\end{align*}

Before we estimate $\|\partial_sf_2\|_{L^2}$ we first use \eqref{pL2Bound}, \eqref{partialSS} and \eqref{EtaSSdCF1XH0}
to calculate the following bounds:
\begin{align*}
\|\partial_{ss}\eta\ F_2\circ v \|_{L^1(\Omega)} & \leq \|\partial_{ss}\eta\ F_2\circ v \|_{L^1(\overline{\Omega)}}=\frac{1}{2}\int_{s_0}^{s_1}|\partial_{ss}\eta|\int_0^1|p(s,t)|^2dtds\\
&\leq \frac{1}{2}\|\partial_{ss}\eta\|_{L^1(\overline{\Omega})}\sup_{s\in\R}\|p(s)\|_{L^2([0,1])}^2\\
& \leq  \left(\frac{\mathfrak{e}\|J\|_\infty}{\varepsilon}\left(\frac{1}{2}(\mathfrak{h}+ \mathfrak{q})+\mathfrak{p}+\frac{\mathfrak{e}\|J\|_\infty}{\varepsilon}\right)+\frac{c}{3}\right)\left(\mathfrak{p}+\frac{\mathfrak{e}\|J\|_\infty}{\varepsilon}\right)^2\\
\|\partial_{ss}\eta\ d^\mathbb{C}F_2(X_{H_0})\|_{L^1(\Omega)} & = \|\partial_{ss}\eta\left(d^\mathbb{C}F_1(X_{H_0})-2F_2\right)\|_{L^1(\Omega)}\\
& \leq \|\partial_{ss}\eta\ d^\mathbb{C}F_1(X_{H_0})\|_{L^1(\Omega)}+2\|\partial_{ss}\eta\ F_2\circ v\|_{L^1(\Omega)}\\
& \leq \left(\frac{\mathfrak{e}\|J\|_\infty}{\varepsilon}\left( \frac{1}{2}(\mathfrak{h}+\mathfrak{q})+\mathfrak{p}+\frac{\mathfrak{e}\|J\|_\infty}{\varepsilon}\right)+\frac{c}{3}\right)\\
&\cdot \left(\left(\mathfrak{q}+\mathfrak{p}+\frac{\mathfrak{e}\|J\|_\infty}{\varepsilon}\right)^2+2\left(\mathfrak{p}+\frac{\mathfrak{e}\|J\|_\infty}{\varepsilon}\right)^2\right)
\end{align*}
Finally, using the estimates above together with \eqref{eqEta}, \eqref{partialEtaPBound}, \eqref{eq2}, \eqref{partialsEtaF1}  and \eqref{eq4} we can calculate.
\begin{align*}
\|\partial_sf_2\|_{L^1(\Omega)} & \leq \mathfrak{y}\|\partial_s\eta\ F_1\|_{L^1(\Omega)}+\|\partial_s\eta\ d(F_2+H_0)(\partial_sv)\|_{L^1(\Omega)}\\
& +\frac{\mathfrak{y}^2}{2} \|dF_1(\partial_sv)\|_{L^1(\Omega)}+\|\partial_{ss}\eta\left(F_2+H_0\right)\|_{L^1(\Omega)}\\
 & \leq \frac{\mathfrak{ey}\|J\|_\infty}{2\varepsilon}\left(\mathfrak{q}+\frac{\mathfrak{e}\|J\|_\infty}{\varepsilon}\right)^2+\mathfrak{e}\left( \mathfrak{q}+3\mathfrak{p}+2\frac{\mathfrak{e}\|J\|_\infty}{\varepsilon}\right)\\
 &+\frac{\mathfrak{ey}^2}{2}\left( \mathfrak{q}+\mathfrak{p}+\frac{\mathfrak{e}\|J\|_\infty}{\varepsilon}\right)\\
& +\left(\frac{\mathfrak{e}\|J\|_\infty}{\varepsilon}\left( \frac{1}{2}(\mathfrak{h}+\mathfrak{q})+\mathfrak{p}+\frac{\mathfrak{e}\|J\|_\infty}{\varepsilon}\right)+\frac{c}{3}\right)\\
&\cdot \left(\left(\mathfrak{q}+\mathfrak{p}+\frac{\mathfrak{e}\|J\|_\infty}{\varepsilon}\right)^2+2\left(\mathfrak{p}+\frac{\mathfrak{e}\|J\|_\infty}{\varepsilon}\right)^2\right)
\end{align*}
This concludes the proof that $f_2$ is bounded in the $W^{1,1}$-norm and the bounds do not depend on the choice of the Floer trajectory $u\in \mathcal{M}^\Gamma(a,b)$ or the connected component $\Omega\subseteq \R\times[0,1]\setminus v^{-1}(K_\varepsilon)$, but only on the constants $a,b, \varepsilon$ and the smooth homotopy $\Gamma$.
\end{proof}

\subsection{The Lagrangian Rabinowitz Floer homology of the Copernican Hamiltonian}

Having proven the $L^\infty$-bounds on the Floer trajectories we are finally ready to prove the main theorem of this paper:

\vspace*{.25cm}
\textit{Proof of Theorem \ref{thm:LRFH}:}
Let $H_0:T^*\R^2\to\R$ be the Copernican Hamiltonian defined in \eqref{DefH0} and let $\mathcal{H}$ be the set of perturbations as defined in \eqref{DefHset}.
Fix a pair $q_0,q_1\in \R^2$. 
Let $\{K_{n,m}\}_{n,m\in\mathbb{N}}$ be the sequence of compact sets
as in Lemma \ref{lem:H0Chord}, such that 
\begin{equation}\label{Knm}
\begin{aligned}
K_{n,m}\subseteq K_{n+1,m}\quad &\textrm{and} \quad \bigcup_{n\in\mathbb{N}}K_{n,m}=T^*\R^2\quad &&\forall\ m\in \mathbb{N},\\
K_{n,m}\subseteq K_{n,m+1} \quad &\textrm{and} \quad T_{q_0}^*\R^2, T_{q_1}^*\R^2\subseteq \bigcup_{m\in\mathbb{N}}K_{n,m}\quad  &&\forall\ n\in\mathbb{N}.
\end{aligned}
\end{equation}

Fix a Hamiltonian $h_0\in \mathcal{H}$ and denote $c(h_0):=\inf (h-dh(p\partial_p))$. Naturally, $c(h_0)>0$.
Let $K_{n,m}$ be any of the compact sets in the sequence, such that $\supp dh_0\subseteq K_{n,m}$ and $\|h_0\|_{L^\infty}<m$. Denote
$$
\mathcal{H}_{h_0}(K_{n,m}):=\left\lbrace\begin{array}{c|c}
& dh \in C_0^\infty(K_{n,m}), \quad \|h\|_{L^\infty}<\frac{c(h_0)}{50},\\
h \in C^\infty(T^*\R^2) &  h_0+h \in \mathcal{H}, \quad \|h_0+h\|_{L^\infty}<m,\\
& \inf((h+h_0)-d(h+h_0)(p\partial_p))>\frac{1}{2} c(h_0).
\end{array}\right\rbrace.
$$
This way $\mathcal{H}_{h_0}(K_{n,m})$ is an open neighbourhood of $0$ in $\{h\in C^\infty(T^*\R^2)\ |\ dh\in C_0^\infty(K_{n,m})\}$. By Lemma \ref{lem:H0Chord} the critical set of the action functional $\A^{H-h_0}_{q_0,q_1}$ is continuously compact in $(K_{n,m}, \mathcal{H}_{h_0}(K_{n,m}))$. On the other hand, by construction if $h\in \mathcal{H}_{h_0}(K_{n,m})$, then $h_0+h\in \mathcal{H}$, hence by Lemma \ref{lem:nonEmpty} every Hamiltonian $H_0-h_0-h$ satisfies the condition \eqref{nonEmpty}. This proves that the compact set $K_{n,m}$ and the set of perturbations $\mathcal{H}_{h_0}(K_{n,m})$ satisfy assumptions 1. and 2. of Theorem \ref{thm:DefLRFH} for the Hamiltonian $H_0-h_0$.

Now, if we take any open, pre-compact set $\mathcal{V}\subseteq T^*\R^2$, such that $K_{n,m}\subseteq \mathcal{V}$, any two almost complex structures $J_0,J_1\in \mathcal{J}(\mathcal{V},\mathbb{J})$ and any $h\in\mathcal{H}_{h_0}(K_{n,m})$, then every smooth homotopy $\Gamma:=\{(h_0+h,J_s)\}_{s\in\R}$, with $J_s\in C^\infty([0,1]\times\R,\mathcal{J}(\mathcal{V},\mathbb{J}))$ such that
\begin{equation}\label{AC}
J_s=\begin{cases}
 J_0 & \textrm{for}\quad s\leq 0,\\
 J_1 & \textrm{for}\quad s\geq 1.
\end{cases}
\end{equation}
will automatically satisfy condition \eqref{inqGamma}. Consequently, by Theorem \ref{thm:FloerBounds} for every pair $(a,b)\in \R^2$ the corresponding space $\mathcal{M}^\Gamma(a,b)$ of Floer trajectories is bounded in $L^\infty$-norm. Therefore, we can apply Theorem \ref{thm:DefLRFH} and conclude that there exists a residual set {$\mathcal{H}^{\reg}_{h_0}(K_{n,m})\subseteq \mathcal{H}_{h_0}(K_{n,m})$}, such that\linebreak for every $h\in \mathcal{H}^{\reg}_{h_0}(K_{n,m})$ the Lagrangian Rabinowitz Floer homology\linebreak $\LRFH_*(\A^{H_0-h_0-h}_{q_0,q_1})$ is well defined and independent of the choice of the almost complex structure.

Now if we fix $h_1,h_2\in \mathcal{H}^{\reg}_{h_0}(K_{n,m})$ then we can construct a homotopy $\Gamma$ satisfying \eqref{inqGamma}. Indeed, let $J_0\in \mathcal{J}^{\reg}_{h_0}$ and $J_1\in \mathcal{J}^{\reg}_{h_2}$ will be two almost complex structures close enough, so that there exists a smooth homotopy $\{J_s\}_{s\in\R}$ with $J_s\in C^\infty([0,1]\times\R,\mathcal{J}(\mathcal{V},\mathbb{J}))$ satisfying \eqref{AC} and $\sup_{s\in\R}\|J_s\|_{L^\infty}<\frac{1}{\sqrt{6c(h_0)}}$. Moreover, let $\chi\in C^\infty(\R)$ be a smooth function satisfying $\|\chi'\|_{L^\infty} \leq 2$ and
\begin{equation}\label{chi}
\chi(s)=\begin{cases}
 0 & \textrm{for}\quad s\leq 0,\\
 1 & \textrm{for}\quad s\geq 1.
\end{cases}
\end{equation}
Define $h_s:=h_0+h_1(1-\chi(s))+\chi(s)h_2$. Then the smooth homotopy $\Gamma=\{(h_s,J_s)\}_{s\in\R}$ satisfies
\begin{align*}
\|\partial_sh_s\|\left(\frac{4}{c(h_0)}+\|J\|_{L^\infty}^2\right) &\leq \|\chi'\|_{L^\infty}\|h_1-h_2\|_{L^\infty}\left(\frac{4}{c(h_0)}+\frac{1}{6c(h_0)}\right)\\
&< 2\cdot \frac{c(h_0)}{25}\cdot \frac{25}{6c(h_0)}=\frac{1}{3}.
\end{align*}
In other words, $\Gamma$ satisfies \eqref{inqGamma} and thus by Theorem \eqref{thm:FloerBounds} for every pair $(a,b)\in \R^2$ the corresponding space $\mathcal{M}^\Gamma(a,b)$ of Floer trajectories is bounded in $L^\infty$-norm. Therefore, we can apply Theorem \ref{thm:DefLRFH} and conclude that for every $h\in \mathcal{H}_{h_0}(K_{n,m})$ the Lagrangian Rabinowitz Floer homology is well-defined and isomorphic to $\LRFH_*(\A^{H_0-h_0}_{q_0,q_1})$.

Taking all the sets of the form $\mathcal{H}_{h_0}(K_{n,m})$
with $h_0\in \mathcal{H}$ and $n,m\in\mathbb{N}$ gives us an open cover of $\mathcal{H}$. Since $\mathcal{H}$ is path connected and the Lagrangian Rabinowitz Floer homology is constant on every open set $\mathcal{H}_{h_0}(K_{n,m})$ a basic topological argument gives us that for any pair $h_0,h_1\in \mathcal{H}$ the respective homologies $\LRFH_*(\A^{H_0-h_0}_{q_0,q_1})$ and $\LRFH_*(\A^{H_0-h_1}_{q_0,q_1})$ are isomorphic.
\hfill $\square$

\section{Positive Lagrangian Rabinowitz Floer homology}

The aim of this section is to prove Theorem \ref{thm:posLRFH}. We will start by showing that the positive Lagrangian Rabinowitz homology of the Copernican Hamiltonian is well-defined and invariant of perturbations. Further on we will prove that for fixed endpoints of the chords and high enough energy, the Copernican Hamiltonian has exactly one Reeb chord with positive action. Consequently, the corresponding positive Lagrangian Rabinowitz Floer homology has only one generator. Finally, we will show that the generator's Maslov index is in fact $0$.

\subsection{Definition and invariance of perturbations}
First, we will prove that if the perturbation is small enough then there exists a homotopy, such that any associated Floer trajectory starting at a critical point with a positive action has to end at a critical point with positive action.

\begin{lem}\label{lem:posGamma}
Let $H_0$ be the Hamiltonian defined in \eqref{DefH0} and let $\mathcal{H}$ be the set of perturbations defined in \eqref{DefHset}. Fix two points $q_0,q_1 \in \R^2$ and a perturbation\linebreak $h_0\in \mathcal{H}$. Denote $c(h_0):=\inf(h_0-dh_0(p\partial_p))$. Let $K\subseteq T^*\R^2$ be a compact subset, such that $\supp dh_0 \subseteq K$ and for which there exists $\mathcal{O}_{h_0}(K)$ an open neighbourhood of $0$ in $\{h\in C^\infty(T^*\R^2)\ |\ dh \in C_0^\infty(K)\}$, satisfying:
\begin{enumerate}
\item $\{h_0+h\ |\ h\in \mathcal{O}_{h_0}(K)\}\subseteq \mathcal{H}$;
\item The critical set of $ \A^{H_0-h_0}$ is continuously compact in $(K, \mathcal{O}_{h_0}(K))$;
\item $\inf\left\lbrace\begin{array}{c|c}
\inf((h_0+h)-d(h_0+h)(p\partial_p)) & h\in\mathcal{O}_{h_0}(K)\end{array} \right\rbrace >\frac{1}{2}c(h_0)$;
\item $\delta(h_0):=\inf\left\lbrace\begin{array}{c|c}
\A^{H_0-h_0-h}_{q_0,q_1}(x) & x\in \Crit^+ \A^{H_0-h_0-h}_{q_0,q_1}, \quad h\in\mathcal{O}_{h_0}(K)\end{array} \right\rbrace >0$.
\end{enumerate}
 
Fix $h \in \mathcal{O}_{h_0}(K)$ and denote $h_1:=h_0+h$. Let $\mathcal{V}\subseteq T^*\R^2$ be an open, but precompact subset, such that $K\subseteq \mathcal{V}$.
 Let $\Gamma:=\{(h_s, J_s)\}_{s\in \R}$ be a smooth homotopy of Hamiltonians $h_s\in h_0+\mathcal{O}_{h_0}(K)$ and $2$-parameter families of almost complex structures $J_s \in C^\infty([0,1]\times \R,\mathcal{J}(\mathcal{V},\mathcal{J}))$ constant in $s$ outside $[0,1]$ satisfying \eqref{AC} and such that
\begin{equation}\label{inq2Gamma}
\|\partial_{s}h_{s}\|_{L^{\infty}}\leq \frac{1}{3}\min\left\lbrace\left(\frac{4}{c(h_0)}+\|J\|_{L^{\infty}}^2\right)^{-1},\ \frac{c(h_0)}{2}\left(1+\frac{\sqrt{c(h_0)}}{2\delta(h_0)}\right)^{-1}\right\rbrace.
\end{equation}
Then for every $x\in \Crit^+\A^{H_0-h_0}_{q_0,q_1}$ and every $y\in \Crit\A^{H_0-h_1}_{q_0,q_1}$, such that $\F_\Gamma(x,y)\neq \emptyset$ we have $\A^{H-h_1}_{q_0,q_1}(y)>0$.

\end{lem}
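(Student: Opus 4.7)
The plan is to combine the standard action-identity along a Floer trajectory with the endpoint bound on $\eta$ provided by Lemma~\ref{lem:inqEtaAct}. First I observe that the first entry of the minimum in \eqref{inq2Gamma}, together with the uniform lower bound $\inf_{s}(h_s-dh_s(p\partial_p))\geq c(h_0)/2$ coming from hypothesis~(3), implies the hypothesis \eqref{inqGamma} of Theorem~\ref{thm:FloerBounds}. Hence Theorem~\ref{thm:FloerBounds} and Lemma~\ref{lem:ActBound} apply and supply uniform $L^\infty$-bounds on any Floer trajectory $u=(v,\eta)\in\F_\Gamma(x,y)$. Since $0\in\mathcal{O}_{h_0}(K)$, hypothesis~(4) applied with $h=0$ further gives $\A^{H_0-h_0}_{q_0,q_1}(x)\geq \delta(h_0)>0$.

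Differentiating $s\mapsto \A^{H_0-h_s}_{q_0,q_1}(u(s))$ along the Floer equation $\partial_s u=\nabla\A^{H_0-h_s}_{q_0,q_1}(u)$ and integrating over $\R$, and using that $\partial_s h_s\equiv 0$ outside $[0,1]$, yields the standard identity
\begin{equation*}
\A^{H_0-h_1}_{q_0,q_1}(y)-\A^{H_0-h_0}_{q_0,q_1}(x) = \int_\R \|\nabla^{J_s}\A^{H_0-h_s}_{q_0,q_1}(u(s))\|^2\,ds + \int_0^1\eta(s)\int_0^1 \partial_s h_s(v(s,t))\,dt\,ds.
\end{equation*}
The first integral is nonnegative, so
\begin{equation*}
\A^{H_0-h_1}_{q_0,q_1}(y)\geq \A^{H_0-h_0}_{q_0,q_1}(x)-\|\partial_s h_s\|_{L^\infty}\|\eta\|_{L^\infty},
\end{equation*}
and the task reduces to verifying $\|\partial_s h_s\|_{L^\infty}\|\eta\|_{L^\infty}<\A^{H_0-h_0}_{q_0,q_1}(x)$, which in view of $\A^{H_0-h_0}_{q_0,q_1}(x)\geq \delta(h_0)$ will give $\A^{H_0-h_1}_{q_0,q_1}(y)>0$.

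To bound $\|\eta\|_{L^\infty}$, I apply Lemma~\ref{lem:inqEtaAct} at the critical endpoint $x$ (where $\nabla\A$ vanishes), using the lower bound $c(h_0)/2$ on $c$ from hypothesis~(3), to obtain $|\eta(-\infty)|\leq \tfrac{2}{c(h_0)}\A^{H_0-h_0}_{q_0,q_1}(x)+\tfrac{1}{\sqrt{c(h_0)}}$. A bootstrap in the spirit of Lemma~\ref{lem:ActBound}, tracking the drift of $\eta(s)$ by the energy integral and $\|\partial_s h_s\|_{L^\infty}$, promotes this endpoint estimate to a uniform bound of the same form, with a multiplicative constant absorbed by the factor $1/3$ in \eqref{inq2Gamma}. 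Combining this with $\A^{H_0-h_0}_{q_0,q_1}(x)\geq \delta(h_0)$, a direct algebraic check (rearranging $\|\partial_s h_s\|_{L^\infty}\bigl(\tfrac{2}{c(h_0)}+\tfrac{1}{\delta(h_0)\sqrt{c(h_0)}}\bigr)<1$) shows that the second entry of the minimum in \eqref{inq2Gamma} is engineered precisely to yield
\begin{equation*}
\|\partial_s h_s\|_{L^\infty}\left(\tfrac{2}{c(h_0)}\A^{H_0-h_0}_{q_0,q_1}(x)+\tfrac{1}{\sqrt{c(h_0)}}\right)<\A^{H_0-h_0}_{q_0,q_1}(x),
\end{equation*}
completing the proof. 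The main technical obstacle is the bootstrap: one must upgrade the pointwise bound on $|\eta(-\infty)|$ from Lemma~\ref{lem:inqEtaAct} to a uniform $L^\infty$-bound on $\eta$ along the entire trajectory, preserving the linear dependence on $\A^{H_0-h_0}_{q_0,q_1}(x)$ with the correct coefficients $2/c(h_0)$ and $1/\sqrt{c(h_0)}$, so that the two entries of the minimum in \eqref{inq2Gamma} align exactly with the two obstructions.
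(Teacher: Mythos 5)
Your overall strategy --- the action identity along the trajectory, the reduction to $\|\partial_s h_s\|_{L^\infty}\|\eta\|_{L^\infty}<\A^{H_0-h_0}_{q_0,q_1}(x)$, and the use of \eqref{inq2Gamma} to close the estimate --- is the right skeleton and matches the paper. But the step you yourself flag as ``the main technical obstacle'' is a genuine gap, not a technicality, and as stated your argument does not close. The only uniform bound on $\|\eta\|_{L^\infty}$ available from the machinery of Lemma~\ref{lem:ActBound} is \eqref{eqEta}, which depends on $\max\{|a|,|b|\}$ and on $b-a$, where $b:=\A^{H_0-h_1}_{q_0,q_1}(y)$ is the action of the \emph{terminal} critical point. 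Your bootstrap starting from $|\eta(-\infty)|\lesssim \frac{2}{c(h_0)}a+\frac{1}{\sqrt{c(h_0)}}$ cannot ``preserve the linear dependence on $a$'': the drift $|\eta(s)-\eta(\tau_0(s))|$ is controlled by the energy \eqref{inqEnergy}, which again involves $b-a$, so if $b$ is very negative the resulting bound on $\|\eta\|_{L^\infty}$ grows with $|b|$, not with $a$, and the inequality $\|\partial_s h_s\|_{L^\infty}\|\eta\|_{L^\infty}<a$ is no longer forced by \eqref{inq2Gamma}. This is exactly the circularity that must be broken: one needs information about $b$ to bound $\eta$, but one is trying to prove something about $b$.

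The paper breaks it by a case analysis (in the spirit of Lemma~\ref{lem:Novikov} and \cite[Cor.~3.8]{CieliebakFrauenfelder2009}): if $|b|\leq a$, then \eqref{eqEta} gives $\|\eta\|_{L^\infty}\leq \frac{3a}{c(h_0)}\bigl(1+\frac{\sqrt{c(h_0)}}{2\delta(h_0)}\bigr)$ and the second entry of the minimum in \eqref{inq2Gamma} yields $b\geq \frac12 a\geq\frac12\delta(h_0)>0$; if instead $b<-a\leq-\delta(h_0)$, the same estimate with $\max\{|a|,|b|\}=-b$ gives $\|\eta\|_{L^\infty}\leq \frac{-3b}{c(h_0)}\bigl(1+\frac{\sqrt{c(h_0)}}{2\delta(h_0)}\bigr)$, and feeding this back into the action identity produces $a\leq\frac12 b<-\frac12 a$, contradicting $a\geq\delta(h_0)>0$; the remaining case $b>a$ is trivial. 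You need to add this dichotomy (or an equivalent device ruling out large negative $b$); without it the proof is incomplete. A further minor point: your endpoint coefficients $\frac{2}{c(h_0)}$ and $\frac{1}{\sqrt{c(h_0)}}$ should be replaced by those coming from Lemma~\ref{lem:inqEtaAct} applied with the constant $c\geq\frac12 c(h_0)$ from hypothesis~(3) and then multiplied through by the factor $\frac32$ of \eqref{eqEta}, which is what the $\frac13$ and the precise shape of the second entry of the minimum in \eqref{inq2Gamma} are calibrated against.
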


\begin{proof}
The following proof is an adjustment of the proof of \cite[Cor. 3.8]{CieliebakFrauenfelder2009} to our setting and it
is similar to the proof of Lemma \ref{lem:Novikov} where we have proven the Novikov finiteness condition.

Fix $x\in \Crit^+\A^{H_0-h_0}_{q_0,q_1}$ and $y\in \Crit\A^{H_0-h_1}_{q_0,q_1}$ and abbreviate:
$$
a:= \A^{H_0-h_0}_{q_0,q_1}(x) \qquad\textrm{and}\qquad \A^{H_0-h_1}_{q_0,q_1}(y).
$$
By assumption $a\geq \delta(h_0)$. Let $u=(v,\eta)\in \F_\Gamma(x,y)$. Since $h_0,h_1\in \mathcal{H}_c$ and $\Gamma$ satisfies \eqref{inqGamma}, thus our setting satisfies the assumptions of Lemma \ref{lem:ActBound}. Suppose $|b|\leq a$. Then $b-a\leq 0$ and \eqref{eqEta} gives
\begin{align}
\|\eta\|_{L^\infty} &\leq \frac{3}{2} \left(\frac{2}{c(h_0)}\max\{|a|,|b|\}+\frac{1}{\sqrt{c(h_0)}}+\|J\|_{L^{\infty}}^2 (b-a)\right)\nonumber\\
&\leq\frac{3}{c(h_0)}\left(a+\frac{\sqrt{c(h_0)}}{2}\right)\leq \frac{3a}{c(h_0)}\left(1+\frac{\sqrt{c(h_0)}}{2\delta(h_0)}\right).\label{etaBDdelta}
\end{align}
On the other hand, by equation \eqref{inqEnergy} we have
$$
\|\nabla^{J_{s}} \mathcal{A}^{H_0-h_{s}}(u)\|_{L^{2}(\mathbb{R}\times [0,1])}^{2} \leq \|J\|_{L^{\infty}} (b-a + \|\eta\|_{L^{\infty}}\|\partial_{s}h_{s}\|_{L^{\infty}}).
$$
Combined with \eqref{inq2Gamma} and \eqref{etaBDdelta}, we obtain the following estimate:
$$
b \geq a-  \|\eta\|_{L^{\infty}}\|\partial_{s}h_{s}\|_{L^{\infty}} \geq a \left( 1-\frac{3}{c(h_0)}\left(1+\frac{\sqrt{c(h_0)}}{2\delta(h_0)}\right)\|\partial_{s}h_{s}\|_{L^{\infty}}\right)\geq \frac{1}{2}a.
$$
In particular, $\A^{H_0-h_1}_{q_0,q_1}(y)=b\geq \frac{1}{2}\delta(h_0)$. By assumption $h_1 \in \mathcal{O}_{h_0}(K)$, hence by Lemma \ref{lem:posAction} we infer that $\A^{H_0-h_1}_{q_0,q_1}(y)\geq\delta(h_0)$. This proves the claim under the assumption $|b|\leq a$.

Now suppose $b< -a\leq -\delta(h_0)$. Then $b-a\leq 0$ and by \eqref{eqEta} we have
\begin{align*}
\|\eta\|_{L^\infty} &\leq \frac{3}{2} \left( \frac{2}{c(h_0)}\max\{|a|,|b|\}+\frac{1}{\sqrt{c(h_0)}}+\|J\|_{L^{\infty}}^2 (b-a)\right)\nonumber\\
&\leq \frac{3}{c(h_0)}\left(\frac{\sqrt{c(h_0)}}{2}-b\right)<\frac{-3b}{c(h_0)}\left(1+\frac{\sqrt{c(h_0)}}{2\delta(h_0)}\right)
\end{align*}
Combining it with \eqref{inqEnergy} and \eqref{inq2Gamma}, we obtain the following inequality:
$$
a \leq b + \|\eta\|_{L^{\infty}}\|\partial_{s}h_{s}\|_{L^{\infty}} \leq b\left(1-\frac{3b}{c(h_0)}\left(1+\frac{\sqrt{c(h_0)}}{2\delta(h_0)}\right)\|\partial_{s}h_{s}\|_{L^{\infty}}\right)\leq \frac{1}{2}b<-\frac{1}{2}a,
$$
which contradicts the assumption $a\geq \delta(h_0)>0$. That excludes the case $b<-a$ and proves the lemma.
\end{proof}

We are now ready to prove the first part of Theorem \ref{thm:posLRFH}, i.e.
to prove that the positive Lagrangian Rabinowitz homology of the Copernican Hamiltonian is well-defined and invariant under compact perturbations.
 We will do that by showing that all conditions of Corollary \ref{cor:posLRFH} are satisfied in our setting.

\begin{prop}\label{prop:posLRFH}
Let $H_0$ be the Copernican Hamiltonian defined in \eqref{DefH0} and let $\mathcal{H}$ be the corresponding set of compact perturbations as in \eqref{DefHset}. Then for all $q_0,q_1\in \R^2$ and all $h\in \mathcal{H}$ the positive Lagrangian Rabinowitz Floer homology $\LRFH_*^+(\A^{H_0-h}_{q_0,q_1})$ is well defined and isomorphic to\linebreak $\LRFH_*^+(\A^{H_0-c}_{q_0,q_1})$ for any $c>0$.
\end{prop}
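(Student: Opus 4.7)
The plan is to verify the hypotheses of Corollary \ref{cor:posLRFH} locally around each $h_0\in\mathcal{H}$, and then propagate invariance globally via path-connectedness of $\mathcal{H}$, exactly in the spirit of the final step of the proof of Theorem \ref{thm:LRFH}. The ordinary Lagrangian Rabinowitz Floer homology is already well defined and invariant in this setting, so the task is to upgrade these statements to the positive part.

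First I would fix $h_0\in\mathcal{H}$ and a compact set $K=K_{n,m}$ from Lemma \ref{lem:H0Chord} containing $\supp dh_0$ with $\|h_0\|_{L^\infty}<m$. The proof of Theorem \ref{thm:LRFH} already supplies an open neighbourhood $\mathcal{H}_{h_0}(K_{n,m})$ of $0$ for which the three assumptions of Theorem \ref{thm:DefLRFH} hold via Theorem \ref{thm:FloerBounds}. Next, I observe that the zero level set is of exact contact type with Liouville vector field $Y=p\partial_p$: combining the identity \eqref{H0pdp} with $H_0=h_0+h$ on the zero level set yields
\begin{equation*}
d(H_0-h_0-h)(Y)\Big|_{(H_0-h_0-h)^{-1}(0)} = \tfrac12|p|^2 + (h_0+h) - d(h_0+h)(p\partial_p) > 0,
\end{equation*}
where positivity uses the definition \eqref{DefHset} of $\mathcal{H}$ together with the uniform lower bound $\tfrac12 c(h_0)$ on $(h_0+h)-d(h_0+h)(p\partial_p)$ built into $\mathcal{H}_{h_0}(K_{n,m})$. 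Lemma \ref{lem:posAction} then provides an open subneighbourhood $\mathcal{O}_{h_0}(K)\subseteq\mathcal{H}_{h_0}(K_{n,m})$ and a constant $\delta(h_0)>0$ such that $\A^{H_0-h_0-h}_{q_0,q_1}(x)\geq\delta(h_0)$ for all $h\in\mathcal{O}_{h_0}(K)$ and $x\in\Crit^+\A^{H_0-h_0-h}_{q_0,q_1}$.

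For any pair $h_1,h_2\in\mathcal{O}_{h_0}(K)$ I would then build the linear homotopy $h_s:=h_0+(1-\chi(s))h_1+\chi(s)h_2$ with cutoff $\chi$ as in \eqref{chi}, together with a path $\{J_s\}$ of compatible almost complex structures with $\|J\|_{L^\infty}$ small, exactly as in the proof of Theorem \ref{thm:LRFH}. After possibly shrinking $\mathcal{O}_{h_0}(K)$ to enforce $\|h_1-h_2\|_{L^\infty}$ small, this homotopy satisfies both \eqref{inqGamma} (so that Theorem \ref{thm:FloerBounds} produces $L^\infty$-bounds on $\mathcal{M}^\Gamma(a,b)$ and hypothesis 3 of Theorem \ref{thm:DefLRFH} is met) and the strictly stronger bound \eqref{inq2Gamma} required by Lemma \ref{lem:posGamma}. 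The latter guarantees that any Floer trajectory starting at a positive critical point of $\A^{H_0-h_0-h_1}_{q_0,q_1}$ must end at a positive critical point of $\A^{H_0-h_0-h_2}_{q_0,q_1}$, which is precisely the remaining hypothesis of Corollary \ref{cor:posLRFH}. Applying it yields $\LRFH_*^+(\A^{H_0-h_0-h_1}_{q_0,q_1})\cong\LRFH_*^+(\A^{H_0-h_0-h_2}_{q_0,q_1})$.

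For global invariance, I would use that $\mathcal{H}$ is convex (the three conditions in \eqref{DefHset} are each preserved by convex combinations) and therefore path-connected. Given $h\in\mathcal{H}$ and a constant $c>0$, the straight-line path from $h$ to $c$ lies in $\mathcal{H}$, and the open cover $\{h_0+\mathcal{O}_{h_0}(K_{n_0,m_0})\}_{h_0\in\mathcal{H}}$ yields a finite subcover of this path. Chaining the local isomorphisms from the previous paragraph then gives $\LRFH_*^+(\A^{H_0-h}_{q_0,q_1})\cong\LRFH_*^+(\A^{H_0-c}_{q_0,q_1})$. The main obstacle is the simultaneous verification of \eqref{inqGamma} and the stronger \eqref{inq2Gamma} along the connecting homotopy, which forces the neighbourhoods in the cover to be shrunk quantitatively in terms of $c(h_0)$ and $\delta(h_0)$; the uniform positive lower bound $\tfrac12 c(h_0)$ on $h-dh(p\partial_p)$ built into $\mathcal{H}_{h_0}(K_{n,m})$ is what makes this shrinking compatible with nonzero relative size of the neighbourhoods and hence with a finite subcover.
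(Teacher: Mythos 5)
Your proposal is correct and follows essentially the same route as the paper's proof: verify the hypotheses of Corollary \ref{cor:posLRFH} on a small neighbourhood $\mathcal{O}_{h_0}(K_{n,m})$ of each $h_0$ using Lemmas \ref{lem:posAction} and \ref{lem:posGamma} together with Theorem \ref{thm:FloerBounds}, then propagate by path-connectedness of $\mathcal{H}$. Your explicit verification of the exact contact type condition via \eqref{H0pdp} and the observation that $\mathcal{H}$ is convex are welcome details that the paper leaves implicit.
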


\begin{proof}
Fix two points $q_0,q_1 \in \R^2$ 
and let $\{K_{n,m}\}_{n,m\in\mathbb{N}}$ be the sequence\linebreak of compact sets
as in Lemma \ref{lem:H0Chord} satisfying \eqref{Knm}.
Fix $h_0\in \mathcal{H}^{\reg}$ and denote $c(h_0):=\inf(h_0-dh_0(p\partial_p))>0$.
Let $K_{n,m}\subseteq T^*\R^2$ be any compact set from the sequence, such that $\supp dh_0 \subseteq K_{n,m}$ and $\|h_0\|_{L^\infty}<m$. By Lemma \ref{lem:posAction} and \ref{lem:H0Chord} there exists an open neighbourhood $\mathcal{O}_{h_0}(K_{n,m})$ of $0$ in $\{h\in C^\infty(T^*\R^2)\ |\ dh\in C_0^\infty(K_{n,m})\}$, such that
\begin{enumerate}
\item $\{h_0+h\ |\ h\in \mathcal{O}_{h_0}(K_{n,m})\}\subseteq \mathcal{H}$;
\item The critical set of $ \A^{H_0-h_0}$ is continuously compact in $(K_{n,m}, \mathcal{O}_{h_0}(K_{n,m}))$;
\item For all $h\in\mathcal{O}_{h_0}(K_{n,m})$ we have
$\inf((h_0+h)-d(h_0+h)(p\partial_p)) >\frac{1}{2}c(h_0)$;
\item There exists $\delta(h_0)>0$, such that for all $h\in\mathcal{O}_{h_0}(K_{n,m})$ and all\linebreak $x\in \Crit^+ \A^{H_0-h_0-h}_{q_0,q_1}$ we have
$\A^{H_0-h_0-h}_{q_0,q_1}(x)\geq \delta(h_0)$;
\item For all $h\in \mathcal{O}_{h_0}(K_{n,m})$ we have
$$
\|h_0-h\|_{L^\infty}<\frac{1}{2}c(h_0)\inf \left\lbrace\frac{1}{13}, \frac{1}{6}\left(1+\frac{\sqrt{c(h_0)}}{2\delta(h_0)}\right)^{-1}\right\rbrace.
$$
\end{enumerate}

We will show that for every $h\in \mathcal{O}_{h_0}(K_{n,m})$ the positive Lagrangian Rabinowitz Floer homology $\LRFH_*^+(\A^{H_0-h_0-h}_{q_0,q_1})$ is well defined and isomorphic to $\LRFH_*^+(\A^{H_0-h_0}_{q_0,q_1})$.

Fix $h_1 \in\{h_0+h\ |\ h\in\mathcal{O}_{h_0}(K_{n,m})\}\cap \mathcal{H}^{\reg}$. Let $J_0\in \mathcal{J}^{\reg}_{h_0}$ and $J_1\in \mathcal{J}^{\reg}_{h_1}$ will be two almost complex structures close enough, so that there exists a smooth homotopy $\{J_s\}_{s\in\R}$ with $J_s\in C^\infty([0,1]\times\R,\mathcal{J,\mathbb{J}}))$ satisfying \eqref{AC} and $\sup_{s\in\R}\|J_s\|_{L^\infty}<\frac{1}{\sqrt{3c(h_0)}}$. Moreover, let $\chi\in C^\infty(\R)$ be a smooth function satisfying $\|\chi'\|_{L^\infty} \leq 2$ and \eqref{chi}. Define $h_s:=h_0(1-\chi(s))+\chi(s)h_1$. Then the smooth homotopy $\Gamma=\{(h_s,J_s)\}_{s\in\R}$ satisfies \eqref{inq2Gamma}.

In particular, the homotopy $\Gamma$ satisfies \eqref{inqGamma}, thus by Lemma \ref{lem:Novikov} it satisfies the Novikov finiteness condition. Moreover, by Theorem \ref{thm:FloerBounds} for any pair\linebreak $a,b\in \R$ the space of Floer trajectories $\mathcal{M}^\Gamma(a,b)$ is bounded in $L^\infty$-norm. Finally, by Lemma \ref{lem:posGamma} we know that for every $x\in \Crit^+\A^{H_0-h_0}_{q_0,q_1}$ and every $y\in \Crit\A^{H_0-h_1}_{q_0,q_1}$, such that $\F_\Gamma(x,y)\neq \emptyset$ we have $\A^{H-h_1}_{q_0,q_1}(y)>0$. This means that for every $h_1 \in\{h_0+h\ |\ h\in\mathcal{O}_{h_0}(K_{n,m})\}\cap \mathcal{H}^{\reg}$ there exists a homotopy $\Gamma$ satisfying all three conditions of Corollary \ref{cor:posLRFH}. Hence for every $h \in\mathcal{O}_{h_0}(K_{n,m})$ the positive Lagrangian Rabinowitz Floer homology $\LRFH_*^+(\A^{H_0-h_0-h}_{q_0,q_1})$ is well defined and isomorphic to $\LRFH_*^+(\A^{H_0-h_0}_{q_0,q_1})$.

Taking all the sets of the form $\mathcal{O}_{h_r}(K_{n,m})$
with $h_r\in \mathcal{H}^{\reg}$ and $n,m\in\mathbb{N}$ gives us an open cover of $\mathcal{H}$.
Since $\mathcal{H}$ is path connected and the positive Lagrangian Rabinowitz Floer homology is constant on every open set $\mathcal{O}_{h_r}(K_{n,m})$ a basic topological argument gives us that for any pair $h_0,h_1\in \mathcal{H}$ the respective homologies $\LRFH_*^+(\A^{H_0-h_0}_{q_0,q_1})$ and $\LRFH_*^+(\A^{H_0-h_1}_{q_0,q_1})$ are isomorphic.
\end{proof}

\subsection{The critical set of the Copernican Hamiltonian}
In this subsection we will analyse the Reeb chords corresponding to the Copernican Hamiltonian $H_0$ defined in \eqref{DefH0}.
More precisely, we will show that the positive Lagrangian Rabinowitz action functional $\A^{H_0-c}_{q_0,q_1}$
has an odd number of Reeb chords with positive $\eta>0$ under specific conditions relating the energy level set of the Hamiltonian $H_0^{-1}(c)>0$ and the endpoints of Reeb chords $q_0,q_1\in \R^2$, $q_0\neq q_1$.

\begin{prop}\label{prop:CritPM}
Let $H_0$ be the Hamiltonian defined in \eqref{DefH0} and let $\A^{H_0-c}_{q_0,q_1}:\mathscr{H}_{q_0,q_1}\times\R \to \R$ be the Lagranian Rabinowitz action functional corresponding to the energy $c>0$ and a pair $(q_0,q_1)\in \R^4, q_0\neq q_1$.
Then $\Crit\A^{H_0}_{q_0,q_1}$ has the following properties:
\begin{enumerate}
\item If $q_0\neq q_1$ and $|q_0||q_1|\leq 2c$ then $\#\Crit^+\A^{H_0-c}_{q_0,q_1}=\#\Crit^-\A^{H_0-c}_{q_0,q_1}=1$.
\item For a fixed $c>0$ there exists a residual set $\mathcal{Q}^c\subseteq \R^4\setminus \Delta$, where\linebreak $\Delta:=\left\lbrace (q,q)\ |\ q\in \R^2\right\rbrace$, such that for all pairs $(q_0,q_1)\in \mathcal{Q}^c$ both\linebreak $\#\Crit^+\A^{H_0-c}_{q_0,q_1}$ and $\#\Crit^-\A^{H_0-c}_{q_0,q_1}$ are odd numbers.
\item For a fixed pair $(q_0,q_1)\in \R^4,\ q_0\neq q_1$ there exists a residual set\linebreak $\mathcal{I}_{q_0,q_1}\subseteq \R_+$, such that for all $c\in \mathcal{I}_{q_0,q_1}$ both $\#\Crit^+\A^{H_0-c}_{q_0,q_1}$ and\linebreak $\#\Crit^-\A^{H_0-c}_{q_0,q_1}$ are odd numbers.
\end{enumerate}
\end{prop}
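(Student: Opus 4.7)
The plan is to exploit the fact that $H_0$ has an explicitly integrable flow, reducing the counting of critical points of $\A^{H_0-c}_{q_0,q_1}$ to counting zeros of a single scalar real-analytic equation. Writing $J = \bigl(\begin{smallmatrix}0 & 1\\-1 & 0\end{smallmatrix}\bigr)$, Hamilton's equations $\dot p = Jp$, $\dot q = p + Jq$ integrate to $p(t) = e^{Jt}p(0)$ and $q(t) = e^{Jt}(q(0) + t\,p(0))$. For $\eta \neq 0$, a critical point of $\A^{H_0-c}_{q_0,q_1}$ is determined by an initial momentum $p_0$ with $H_0(q_0, p_0) = c$ and $\pi_q \phi^\eta_{H_0}(q_0, p_0) = q_1$. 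I would solve the endpoint condition to get $p_0 = (e^{-J\eta} q_1 - q_0)/\eta$, then substitute into $H_0 = c$ and multiply by $2\eta^2$; setting $r_i = |q_i|$ and $\phi = \arg q_1 - \arg q_0$, this yields the equation
\[
f(\eta) := r_0^2 + r_1^2 - 2 r_0 r_1 \cos(\phi+\eta) - 2\eta\, r_0 r_1 \sin(\phi+\eta) - 2\eta^2 c \;=\; 0,
\]
setting up a bijection between $\Crit^\pm \A^{H_0-c}_{q_0,q_1}$ and the positive/negative zeros of $f$. Next I would record $f(0) = |q_0-q_1|^2 > 0$, $\lim_{\eta \to \pm\infty} f(\eta) = -\infty$, and
\[
f'(\eta) = -2\eta\bigl(r_0 r_1 \cos(\phi+\eta) + 2c\bigr).
\]

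For part (1), under the hypothesis $r_0 r_1 \leq 2c$ the parenthesis in $f'$ is nonnegative, so $f'$ has the opposite sign of $\eta$; combined with real-analyticity (which excludes intervals of constancy), $f$ is then strictly monotone on each of $(0,\infty)$ and $(-\infty, 0)$ and yields exactly one zero on each half-line.

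For parts (2) and (3), the dominant $-2\eta^2 c$ term confines the zeros of $f$ to a bounded interval, so by real-analyticity there are only finitely many. Each simple zero corresponds to a sign change, so if every zero of $f$ on $(0,\infty)$ (respectively $(-\infty, 0)$) is simple, their count equals the number of sign changes of $f$, which is odd because $f$ passes from positive at $\eta = 0$ to $-\infty$ at $\pm\infty$. What remains is to show that simplicity holds generically. For (3) I would observe that $\partial_c f = -2\eta^2 \neq 0$ for $\eta \neq 0$, so $\{f = 0\}$ is a smooth $1$-submanifold of $(\R\setminus\{0\}) \times \R_+$; Sard applied to the projection to $c$ then shows that the set of $c$ at which $f(\cdot,c)$ has a degenerate zero is meager, yielding the residual $\mathcal{I}_{q_0, q_1}$. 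For (2) I would compute $\nabla_{q_1} f = 2\bigl(q_1 - e^{J\eta}(I - \eta J) q_0\bigr)$ and substitute the vanishing locus $q_1 = e^{J\eta}(I - \eta J) q_0$ back into $f$ to get $f = -\eta^2(|q_0|^2 + 2c) < 0$; hence $\nabla_{q_1} f$ never vanishes on $\{f = 0\}$, so $\{f = 0\}$ is a smooth $4$-submanifold of $(\R \setminus \{0\}) \times (\R^4 \setminus \Delta)$, and Sard on the projection to $\R^4 \setminus \Delta$ produces the residual $\mathcal{Q}^c$.

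\textbf{Main obstacle.} The substantive step is the explicit derivation of $f$ and of $\nabla_{q_1} f$, together with the clean observation that the exceptional locus $\nabla_{q_1} f = 0$ is automatically disjoint from $\{f = 0\}$ thanks to the positivity of $|q_0|^2 + 2c$. Once these closed-form formulas are in hand, the monotonicity argument for (1) and the sign-change-plus-Sard parity arguments for (2) and (3) proceed in an essentially elementary fashion; the main risk is sign-bookkeeping in the $J\leftrightarrow$ multiplication-by-$(-i)$ correspondence when deriving $f$.
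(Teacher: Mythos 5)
Your proposal is correct and follows essentially the same route as the paper: reduction via the explicit linear flow to a scalar equation $f(\eta)=0$, monotonicity of $f$ on each half-line for part (1), and a Sard/regular-value argument plus the sign-change parity count for parts (2) and (3). The only (cosmetic) difference is in part (2), where the paper verifies smoothness of the zero set via the Euler-type identity $\bar f=-c\eta^2+\tfrac12(q_0\partial_{q_0}\bar f+q_1\partial_{q_1}\bar f)$, while you substitute the locus $\nabla_{q_1}f=0$ directly into $f$ to get $-\eta^2(|q_0|^2+2c)<0$; both computations are valid.
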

\begin{proof}
For clarity of the argument, we will divide the proof of this proposition in a sequence of lemmas. First recall that by equation~\eqref{Crit+} we have
$$
\Crit^\pm\A^{H}_{q_0,q_1}=\left\lbrace (v,\eta)\in \Crit\A^{H}_{q_0,q_1}\ |\ \pm \eta >0\right\rbrace.
$$
In Lemma \ref{lem:bijection} we will show that for each $c>0$ and each pair $(q_0,q_1)\in \R^4$ there exists a smooth function $f\in C^\infty(\R)$, such that $(v,\eta)\in \Crit\A^{H_0-c}_{q_0,q_1}$ if and only if $f(\eta)=0$. We also show that for $q_0\neq q_1$ the critical set of $\A^{H_0-c}_{q_0,q_1}$ is discrete. Denote by $Z(f)$ the set of roots of the function $f$, $Z(f):=\{\eta\in\R |\ f(\eta)=0\}$.
In Lemma \ref{lem:2c>q0q1} we will consider pairs $(q_0,q_1)\in \R^4$, such that $|q_0||q_1|\leq 2c$ and show that for $q_0\neq q_1$ the corresponding function $f$ satisfies $\#\{\eta>0\ |\ f(\eta)=0\}=\#\{\eta<0\ |\ f(\eta)=0\}=1$, whereas for $q_0=q_1$ we have $Z(f)=\{0\}$.

Denote $\Delta:=\{(q,q)\ |\ q\in \R^2\}\subseteq \R^4$. In Lemma \ref{lem:Res_q0q1} we prove that for a fixed $c>0$ there exists a residual set $\mathcal{Q}^c\subseteq \R^4\setminus \Delta$, such that for all pairs $(q_0,q_1)\in \mathcal{Q}^c$ the corresponding function $f$ satisfies:
 \begin{equation}\label{f'neq0}
 f'(\eta)\neq 0\qquad \forall\ \eta \in Z(f), \qquad \textrm{where}\quad Z(f):=\{\eta \in \R\ |\ f(\eta)=0\}. 
 \end{equation}
 
 In Lemma \ref{lem:Res_c} we show that for a fixed pair $(q_0,q_1)\in \R^4, q_0\neq q_1$ there exists a residual set $\mathcal{I}_{q_0,q_1}\subseteq \R_+$, such that for all $c\in \mathcal{I}_{q_0,q_1}$ the corresponding function $f$ satisfies property \eqref{f'neq0}.
 
 In Lemma \ref{lem:Odd} we prove that if $c>0$ and $(q_0, q_1)\in \R^4, q_0\neq q_1$ are such that the corresponding function $f$ satisfies \eqref{f'neq0}, then $f$ has an odd number of roots on each positive and negative half line. This together with the bijection established in Lemma \ref{lem:bijection} concludes the proof.
\end{proof}
Before we prove the first lemma we will start by recalling the properties of the group of matrices corresponding to rotations: if we define
\begin{equation}
R(t) :=\left(\begin{array}{c c}
\cos(t) & \sin (t) \\
-\sin(t)& \cos(t) \\
\end{array}\right).\label{DefR}
\end{equation}
then $\{R(t)\}_{t\in\R}$ form a group with the following properties:
\begin{align}
 R(0)=\Id\quad \textrm{and}\quad R\left(\frac{\pi}{2}\right) = & \left(\begin{array}{c c}
0 & 1 \\
-1 & 0 \\
\end{array}\right),\nonumber\\
R(t)R(\tau)=R(t+\tau)\qquad & \forall\ t,\tau\in\R,\label{Radd}\\
R^{-1}(t)=R^T(t)=R(-t)\qquad & \forall\ t\in\R.\label{Rinv}
\end{align}
Moreover, we have
\begin{align}
R'(t) &= \left(\begin{array}{c c}
-\sin(t) & \cos (t) \\
-\cos(t)& -\sin(t) \\
\end{array}\right)=\left(\begin{array}{c c}
0 & 1 \\
-1 & 0 \\
\end{array}\right)
\left(\begin{array}{c c}
\cos(t) & \sin (t) \\
-\sin(t)& \cos(t) \\
\end{array}\right)\nonumber\\ 
&= R\left(\frac{\pi}{2}\right)R(t)= R\left( t +\frac{\pi}{2}\right).\label{DerR}
\end{align}

In the following lemma we will show that the critical set of $\A^{H_0-c}_{q_0,q_1}$ consists of isolated points and, in case $q_0=q_1$ it is a circle. 

\begin{lem}\label{lem:bijection}
Let $H_0$ be the Hamiltonian defined in \eqref{DefH0}. For every $c>0$ and every pair $q_0,q_1\in \R^2$ we define
\begin{equation}\label{Deff}
f(\eta):=-c\eta^2+\eta q_1^TR\left(\eta+\frac{\pi}{2}\right)q_0+\frac{1}{2}\left(|q_0|^2+|q_1|^2\right)-q_1^TR(\eta)q_0.
\end{equation}
If we denote 
\begin{equation}\label{DefZ(f)}
Z(f):=\{\eta\in \R \ |\ f(\eta)=0\},
\end{equation}
then the map $Z(f)\setminus\{0\}\ni \eta \longmapsto ( v_\eta, \eta),$
$$
\textrm{with}\qquad v_\eta(t)  := \left(\begin{array}{c c}
(1-t) R(t\eta) & t R(\eta(t-1))\\
 -\frac{1}{\eta}R(t\eta) & \frac{1}{\eta} R(\eta(t-1))
\end{array}\right)
\left(\begin{array}{c}
q_0 \\ q_1
\end{array}\right).
$$
defines a bijection between the set $Z(f)\setminus\{0\}$ and $\Crit\A^{H_0-c}_{q_0,q_1}\setminus (\{0\}\times \mathscr{H}_{q_0,q_1})$.
\end{lem}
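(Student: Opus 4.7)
The plan is to reduce the critical equation for $\A^{H_0-c}_{q_0,q_1}$ with $\eta\neq 0$ to an explicit scalar equation in $\eta$ by integrating the Hamiltonian flow of $H_0$ in closed form. A pair $(v,\eta)\in\Crit\A^{H_0-c}_{q_0,q_1}$ with $\eta\neq 0$ satisfies $\partial_t v=\eta X_{H_0}(v)$ together with $H_0(v(\cdot))\equiv c$; by energy conservation this reduces to finding initial data $(q_0,p_0^*)\in T^*_{q_0}\R^2$ with $\phi^{\eta}_{H_0}(q_0,p_0^*)\in T^*_{q_1}\R^2$ and $H_0(q_0,p_0^*)=c$. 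So the strategy is: (i) compute $\phi^t_{H_0}$ in closed form; (ii) use the boundary condition at $t=1$ to solve for $p_0^*$ as a function of $\eta$; (iii) substitute into the energy condition to obtain the scalar equation, which will turn out to be $f(\eta)=0$.

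For step (i), I will write $H_0=\tfrac12|p|^2+p^T J q$ with $J:=R(\pi/2)$, so Hamilton's equations read $\dot p=Jp$ and $\dot q=p+Jq$. Since $R(t)=\exp(tJ)$, the first yields $p(t)=R(t)p_0$. Passing to the rotating coordinate $\tilde q(t):=R(-t)q(t)$ removes the Coriolis term from the $q$-equation and gives $\dot{\tilde q}=R(-t)p(t)=p_0$, hence $q(t)=R(t)(q_0+tp_0)$. After rescaling to the time-$\eta$ flow, the boundary condition $\pi(v(1))=q_1$ becomes $R(\eta)(q_0+\eta p_0^*)=q_1$, forcing
\[
  p_0^* \;=\; \frac{1}{\eta}\bigl(R(-\eta)q_1-q_0\bigr).
\]
Substituting this expression back into the closed-form flow recovers exactly the matrix formula for $v_\eta$ in the statement.

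For step (iii), I will plug this $p_0^*$ into $H_0(q_0,p_0^*)=c$ and multiply by $2\eta^2$. Using the identities $R(-\eta)^T=R(\eta)$, $R(\eta)J=R(\eta+\pi/2)$, and the antisymmetry $q_0^T J q_0=0$, the terms $\eta^2|p_0^*|^2$ and $2\eta^2 (p_0^*)^T J q_0$ expand to $|q_0|^2+|q_1|^2-2\,q_1^T R(\eta)q_0$ and $2\eta\, q_1^T R(\eta+\pi/2)q_0$ respectively; rearranging and dividing by $2$ gives exactly $f(\eta)=0$.

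The bijection is then formal. Injectivity follows because $\eta$ appears as the second coordinate of the image. For surjectivity, given $\eta\in Z(f)\setminus\{0\}$, I define $p_0^*$ and $v_\eta$ by the formulas above; reading the computation of (iii) in reverse gives $H_0(v_\eta(0))=c$, energy conservation extends this to all $t$, and the boundary conditions hold by construction, so $(v_\eta,\eta)\in\Crit\A^{H_0-c}_{q_0,q_1}$. The case $\eta=0$ is excluded automatically: the formula for $p_0^*$ is singular there, and critical points with $\eta=0$ are precisely the constant paths in $T^*_{q_0}\R^2\cap T^*_{q_1}\R^2\cap H_0^{-1}(c)$ which are exactly what is subtracted on the right-hand side. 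No step is a serious obstacle; the only point requiring care is the bookkeeping of the rotation identities coming from the Coriolis term in $H_0$.
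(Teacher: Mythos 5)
Your proposal is correct and follows essentially the same route as the paper: compute the flow of $X_{H_0}$ in closed form (the paper does this via the matrix exponential $\Exp(t\mathbb{J}A)$, you via integrating $\dot p=Jp$ and passing to a rotating frame, which is a cosmetic difference), solve the boundary condition at $t=1$ for $p_0^*$, and substitute into the energy condition to obtain $f(\eta)=0$. The rotation identities and the final quadratic-form computation match the paper's, so no gap.
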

\begin{proof}
By definition a pair $(v,\eta)\in \Crit\A^{H_0-c}_{q_0,q_1}$ if and only if it satisfies the following three conditions:
\begin{enumerate}
\item The curve $v$ starts in the Lagrangian $T^*_{q_0}\R^2$ and ends in $T^*_{q_1}\R^2$;
\item The curve $v$ is a Reeb chord and $\eta$ is its period;
\item The curve $v$ lies on the level set $H_0^{-1}(c)$.
\end{enumerate}
Note that if $q_0\neq q_1$ we have $\Crit\A^{H_0-c}_{q_0,q_1}\cap\left(\mathscr{H}_{q_0,q_1}\times\{0\}\right)=\emptyset$. In case  $q_0= q_1$ we always have a submanifold of constant solutions 
$$
\Crit\A^{H_0-c}_{q_0,q_1}\cap\left(\mathscr{H}_{q_0,q_0}\times\{0\}\right)=\{q_0\}\times\{p_0\in T^*_{q_0}\R^2\ |\ H_0(q_0,p_0)=c\}.
$$
One can easily calculate that for a fixed $q_0\in \R^2$ the set $\{ p \in T^*\R^2\ |\  H_0(q_0,p)=c\}$ is a circle with origin at $-\mathbb{J}q_0$ and radius $\sqrt{|q_0|^2+2c}$:
$$
H_0(q,p)=c \quad \iff \quad |p+ \mathbb{J}q_0|^2=2c+|q_0|^2.
$$

From now on we will assume $\eta\neq 0$.

Using \eqref{DefH0} and the notation from \eqref{DefR} we can express $H_0$ and $X_{H_0}$ in the following way:
\begin{align}
H_0(q,p) & = \frac{1}{2}(q,p) A (q,p)^T, \nonumber\\
X_{H_0}(q,p) &= \mathbb{J}A (q,p)^T\nonumber\\
\textrm{where}\qquad A & =\left(\begin{array}{c c}
0 &  -R\left(\frac{\pi}{2}\right)\\
R\left(\frac{\pi}{2}\right) & \Id
\end{array}\right).\label{DefMatrixA}
\end{align}
In particular, the flow $\varphi^t$ of the Hamiltonian vector field $X_{H_0}$ can be easily calculated to be
\begin{equation}
\varphi^t(q,p)=\Exp(t\mathbb{J}A)\left(\begin{array}{c}
q \\ p
\end{array}\right) =
\left(\begin{array}{c c}
 R(t) & t R(t)\\
 0 & R(t)
\end{array}\right)
\left(\begin{array}{c}
q \\ p
\end{array}\right)\label{flow}
\end{equation}

By definition, a critical point $(v,\eta)\in \Crit\A^{H_0-c}_{q_0,q_1}$ satisfies $\varphi^\eta \circ v (0)=v(1)$. Moreover, we have $\pi \circ v(0)=q_0$ and $\pi\circ v(1)=q_1$. 
If we denote $v(0)=(q_0,p_0)$ and $v(1)=(q_1,p_1)$ and
combine it with the matrix formula for the Hamiltonian flow, we obtain
$$
\left(\begin{array}{c}
q_1 \\ p_1
\end{array}\right)=
\left(\begin{array}{c c}
R(\eta) & \eta R(\eta)\\
0 & R(\eta)
\end{array}\right)
\left(\begin{array}{c}
q_0 \\ p_0
\end{array}\right).
$$
Assuming, that $\eta\neq 0$ we can use \eqref{Rinv} to solve for $p_0$ and $p_1$:
$$
\left(\begin{array}{c}
p_0 \\ p_1
\end{array}\right)
 =\frac{1}{\eta}
\left(\begin{array}{c c}
- \Id & R(-\eta)\\
-R(\eta) & \Id
\end{array}\right)
\left(\begin{array}{c}
q_0 \\ q_1
\end{array}\right).
$$
Using the equation above we obtain the following formula for $v(0)$:
\begin{equation}\label{q0p0}
\left(\begin{array}{c}
q_0 \\ p_0
\end{array}\right) =
\left(\begin{array}{c c}
 \Id &0\\
-\frac{1}{\eta} & \frac{1}{\eta}R(-\eta)
\end{array}\right)
\left(\begin{array}{c}
q_0 \\ q_1
\end{array}\right)
\end{equation}
Now, by combining \eqref{flow} and \eqref{q0p0} we can express a Reeb chord $v$ in terms of $q_0$ and $q_1$ in the following way:
\begin{align*}
v_\eta(t) & := \left(\begin{array}{c c}
 R(t\eta) & t \eta R(t\eta)\\
 0 & R(t\eta)
\end{array}\right)
\left(\begin{array}{c c}
\Id & 0\\
-\frac{1}{\eta} & \frac{1}{\eta}R(-\eta)
\end{array}\right)
\left(\begin{array}{c}
q_0 \\ q_1
\end{array}\right)\\
& = \left(\begin{array}{c c}
(1-t) R(t\eta) & t R(\eta(t-1))\\
 -\frac{1}{\eta}R(t\eta) & \frac{1}{\eta} R(\eta(t-1))
\end{array}\right)
\left(\begin{array}{c}
q_0 \\ q_1
\end{array}\right).
\end{align*}
Such defined $v_\eta$ is a solution of the Hamiltonian flow and it belongs to $\mathscr{H}_{q_0,q_1}$.
To assure that $v_\eta(t)\in H_0^{-1}(c)$ for all $t\in [0,1]$ we use \eqref{Radd}, \eqref{Rinv}, \eqref{DefMatrixA} and \eqref{q0p0} to express the condition $(q_0,p_0)\in H_0^{-1}(c)$ in the following way:
\begin{align*}
c & =\frac{1}{2}
\left(q_0,\ p_0\right)
\left(\begin{array}{c c}
 \Id & -\frac{1}{\eta}\\
0 & \frac{1}{\eta}R(\eta)
\end{array}\right)
\left(\begin{array}{c c}
0 &  -R\left(\frac{\pi}{2}\right)\\
R\left(\frac{\pi}{2}\right) & \Id
\end{array}\right)
\left(\begin{array}{c c}
 \Id &0\\
-\frac{1}{\eta} & \frac{1}{\eta}R(-\eta)
\end{array}\right)
\left(\begin{array}{c}
q_0 \\ q_1
\end{array}\right)\\
& =\frac{1}{2} \left(q_0,\ p_0\right)
\left(
\frac{1}{\eta^2}
\left(\begin{array}{c c}
 \Id &-R(-\eta)\\
-R(\eta) & \Id
\end{array}\right)
+\frac{1}{\eta}
\left(\begin{array}{c c}
 0 & -R\left(\frac{\pi}{2}-\eta\right)\\
R\left(\frac{\pi}{2}+\eta\right) & 0
\end{array}\right)
\right)
\left(\begin{array}{c}
q_0 \\ q_1
\end{array}\right)\\
& = \frac{1}{\eta^2}\left(\frac{1}{2}\left( |q_0|^2+|q_1|^2\right)-q_1R(\eta)q_0^T\right)
+\frac{1}{\eta}q_1R\left(\eta+\frac{\pi}{2}\right)q_0^T.
\end{align*}
Now if we define $f\in C^\infty(R)$ as in \eqref{Deff}
then $(v_\eta,\eta)\in\Crit\A^{H_0-c}_{q_0,q_1}$ if and only if $f(\eta)=0$. 

Observe that
$f(0)=\frac{1}{2}|q_1-q_0|^2$, thus $0\in Z(f)$ if and only if $q_0=q_1$. In that case, $\left(\A^{H_0-c}_{q_0,q_1}\right)^{-1}(0)$ is a circle.
\end{proof}
\begin{lem}\label{lem:2c>q0q1}
In case $|q_0||q_1|\leq 2c$ the critical set of $\A^{H_0-c}_{q_0,q_1}$ satisfies one of the following properties:
\begin{itemize}[leftmargin=2cm]
\item[either] $q_0 \neq q_1$ and then $\#\Crit^+\A^{H_0-c}_{q_0,q_1}=\#\Crit^-\A^{H_0-c}_{q_0,q_1}=1$,
\item[or] $q_0=q_1$ and then $\#\Crit^+\A^{H_0-c}_{q_0,q_1}=\#\Crit^-\A^{H_0-c}_{q_0,q_1}=0$.
\end{itemize}
\end{lem}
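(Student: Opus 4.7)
The plan is to invoke the bijection from Lemma~\ref{lem:bijection}, which reduces the statement to counting the positive and negative roots of the real-analytic one-variable function $f$ defined in \eqref{Deff}. The strategy will then be to show that under the hypothesis $|q_0||q_1|\leq 2c$ the function $f$ is strictly monotone on each of the half-lines $[0,\infty)$ and $(-\infty,0]$, and to combine the value $f(0)$ with the asymptotics at $\pm\infty$ to conclude.

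First I will introduce $A:=|q_0||q_1|$ and pick an angle $\theta\in\R$ depending only on $q_0,q_1$ (with $\theta=0$ by convention when $A=0$) such that $q_1^TR(\eta)q_0 = A\cos(\eta-\theta)$. Using \eqref{DerR}, the identity $R(\eta+\pi/2)=R(\pi/2)R(\eta)$ yields $q_1^TR(\eta+\pi/2)q_0 = -A\sin(\eta-\theta)$, so that
\begin{equation*}
f(\eta) = -c\eta^2 - A\eta\sin(\eta-\theta) + \tfrac12(|q_0|^2+|q_1|^2) - A\cos(\eta-\theta).
\end{equation*}
Differentiating, the two terms involving $\sin(\eta-\theta)$ cancel, giving the clean factorisation
\begin{equation*}
f'(\eta) = -\eta\bigl(2c + A\cos(\eta-\theta)\bigr).
\end{equation*}

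The decisive step is next: the hypothesis $A\leq 2c$ forces the bracket $2c+A\cos(\eta-\theta)\geq 2c-A\geq 0$, with equality only on an isolated set of $\eta$. Hence $f$ is strictly decreasing on $[0,\infty)$ and strictly increasing on $(-\infty,0]$, attaining its unique global maximum at $\eta=0$. A direct computation gives $f(0)=\tfrac12|q_0-q_1|^2$, and the dominating term $-c\eta^2$ forces $f(\eta)\to -\infty$ as $\eta\to\pm\infty$.

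The case split is then immediate. If $q_0=q_1$, then $f(0)=0$ is a strict global maximum, so $f<0$ on $\R\setminus\{0\}$; combined with the bijection of Lemma~\ref{lem:bijection} and the definition~\eqref{Crit+} (which excludes $\eta=0$), this yields $\#\Crit^\pm\A^{H_0-c}_{q_0,q_1}=0$. If $q_0\neq q_1$, then $f(0)>0$, and the intermediate value theorem together with the strict monotonicity on each side of $0$ produces exactly one root of $f$ in $(0,\infty)$ and exactly one in $(-\infty,0)$, which by Lemma~\ref{lem:bijection} translates into $\#\Crit^+\A^{H_0-c}_{q_0,q_1}=\#\Crit^-\A^{H_0-c}_{q_0,q_1}=1$. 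The only point requiring attention will be the cancellation producing the factorised form of $f'$ and ensuring the bracket $2c+A\cos(\eta-\theta)$ has a definite sign; this is precisely where the assumption $|q_0||q_1|\leq 2c$ is used essentially, and no deeper obstacle is anticipated.
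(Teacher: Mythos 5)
Your proof is correct and follows essentially the same route as the paper: reduce to the root count of $f$ via Lemma~\ref{lem:bijection}, obtain the factorisation $f'(\eta)=-\eta\,(2c+q_1^TR(\eta)q_0)$, and combine $f(0)=\tfrac12|q_1-q_0|^2$ with the sign of the bracket under $|q_0||q_1|\leq 2c$ and the asymptotics $f(\eta)\to-\infty$. The only difference is your cosmetic rewriting $q_1^TR(\eta)q_0=A\cos(\eta-\theta)$, which yields the same derivative identity the paper gets directly from $R'(\eta)=R(\eta+\pi/2)$, and your explicit handling of the borderline case $|q_0||q_1|=2c$ via isolated zeros of the bracket matches the paper's remark.
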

\begin{proof}
By Lemma \ref{lem:bijection} we know that we can restrict ourselves to the analysis of function $f$ defined in \eqref{Deff}.
Note that $f(0)=\frac{1}{2}|q_1-q_0|^2$, thus 
\begin{align*}
\textrm{for}\quad q_0 & \neq q_1\quad \textrm{we have}\quad f(0)>0,\\
\textrm{and for} \quad q_0 & =q_1 \quad \textrm{we have}\quad f(0)=0.
\end{align*}
In both cases, we know that $\lim_{\eta \to \pm \infty}f(\eta)=-\infty$ due to the dominance of the summand $-c\eta^2$ for $|\eta|$ large enough. In case $q_0\neq q_1$ we can use the intermediate value theorem to prove that $f$ has a root both on the positive and the negative half line. What is left to show is that $f$ has only one root on each half-line.

Using \eqref{DerR} we can now calculate the derivative of $f$:
\begin{align*}
f'(\eta) & = -2c\eta +q_1^T\left(R\left(\eta+\frac{\pi}{2}\right)-R'\left(\eta\right)\right)q_0+\eta q_1^TR'\left(\eta+\frac{\pi}{2}\right)q_0\\
& = -\eta (2 c  +q_1^TR(\eta)q_0).
\end{align*}
Thus in case $|q_0||q_1|\leq 2c$ we have
$$
2 c  +q_1^TR(\eta)q_0) \geq 2c -|q_0||q_1|\geq 0,
$$
and consequently $f'(\eta)\leq 0$ for $\eta>0$ and $f'(\eta)\geq 0$ for $\eta<0$. Moreover, in case $|q_0||q_1|=2c$ we have $f'(\eta)=0$ if and only if $\eta=0$ or $q_1=-\frac{2c}{|q_1|^2}R(\eta)q_0$. In particular, in case $|q_0||q_1|=2c$ the critical points of $f$ are isolated. Consequently, $f$ obtains its maximum at $0$ and is non-decreasing on the negative half line and non-increasing on the positive half line. This proves that in case $q_0\neq q_1$ there exists only one root on each positive and negative half line. On the other hand, since $f$ obtains its maximum at $0$, thus in case $q_0=q_1$ we know that $f(\eta)<0$ for all $\eta\neq 0$, which concludes the proof.
\end{proof}

\begin{lem}\label{lem:Res_q0q1}
 For a fixed $c>0$ there exists a residual set $\mathcal{Q}^c\subseteq \R^4\setminus \Delta$, such that for all pairs $(q_0,q_1)\in \mathcal{Q}^c$ the corresponding function $f$ as defined in \eqref{Deff} satisfies property \eqref{f'neq0}.
 \end{lem}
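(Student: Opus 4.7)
The plan is to apply parametric transversality to the ``resultant'' map. Define
$$
Z\colon (\R^4\setminus\Delta)\times(\R\setminus\{0\})\longrightarrow\R^2,\qquad Z(q_0,q_1,\eta):=\bigl(f(\eta),\,g(\eta)\bigr),\qquad g(\eta):=2c+q_1^TR(\eta)q_0.
$$
Since $f'(\eta)=-\eta\,g(\eta)$ by the computation in the proof of Lemma~\ref{lem:2c>q0q1}, and since $f(0)=\tfrac12|q_0-q_1|^2>0$ on $\R^4\setminus\Delta$, the pairs $(q_0,q_1)$ violating \eqref{f'neq0} are exactly $\pi(M)$, where $M:=Z^{-1}(0)$ and $\pi$ is the projection to the base. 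Thus it suffices to show that $0$ is a regular value of $Z$: then $M$ is a smooth $3$-submanifold of a $5$-dimensional open set, and $\pi(M)$, being the $\sigma$-compact image of a lower-dimensional manifold, has Lebesgue measure zero and is therefore a countable union of closed nowhere dense subsets of $\R^4\setminus\Delta$, i.e.\ meager.

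The core step is the regular-value check. At any $(q_0,q_1,\eta)\in M$ one has $\eta\neq 0$ and $q_1^TR(\eta)q_0=-2c$, so in particular $q_0,q_1\neq 0$. I would then compute
\begin{align*}
\partial_\eta Z &= \bigl(0,\ q_1\times R(\eta)q_0\bigr) \ \text{at zeros of }Z, \\
\partial_{q_1}f &= \eta R(\eta{+}\tfrac{\pi}{2})q_0+q_1-R(\eta)q_0, & \partial_{q_1}g &= R(\eta)q_0,\\
\partial_{q_0}f &= \eta R(-\eta{-}\tfrac{\pi}{2})q_1+q_0-R(-\eta)q_1, & \partial_{q_0}g &= R(-\eta)q_1,
\end{align*}
where $v\times w$ denotes the $2$D scalar cross product. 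When $q_1\times R(\eta)q_0\neq 0$ the $\eta$-column of the Jacobian is $(0,\ast)$ with $\ast\neq 0$, so rank $2$ follows once some $(q_0,q_1)$-partial of $f$ is nonzero; if both $\partial_{q_0}f$ and $\partial_{q_1}f$ vanished, substituting one into the other and using $R(\pi/2)R(-\pi/2)=I$ together with $R(-\pi/2)=-R(\pi/2)$ would collapse to $(1+\eta^2)q_1=q_1$, contradicting $\eta\neq 0$. When instead $q_1\times R(\eta)q_0=0$, one has $q_1=\lambda R(\eta)q_0$ with $\lambda=-2c/|q_0|^2$, and proportionality of the two rows of $Z$ in the $(q_0,q_1)$-block reduces to the linear system $(\lambda-1-\mu)q_0+\eta R(\pi/2)q_0=0$ in $q_0=(a,b)$; its compatibility condition $\eta(a^2+b^2)=0$ fails since $\eta\neq 0$ and $q_0\neq 0$.

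Having established $Z\pitchfork 0$, the residual set $\mathcal{Q}^c:=(\R^4\setminus\Delta)\setminus\pi(M)$ satisfies the conclusion of the lemma. The main obstacle is the degenerate case $q_1\parallel R(\eta)q_0$, in which the $\eta$-direction of the Jacobian carries no information and the ``project out the last coordinate'' trick fails; one has to extract transversality from the $(q_0,q_1)$-block alone by combining the constraint $g(\eta)=0$ (which pins down the proportionality constant $\lambda$) with the algebraic structure of planar rotations.
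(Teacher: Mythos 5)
Your argument is correct, but it implements the transversality/Sard strategy differently from the paper. The paper extends $f$ to a single function $\bar f(q_0,q_1,\eta):=f(\eta)$ on $\R^5$, uses the Euler-type identity $\bar f=-c\eta^2+\tfrac12\bigl(q_0\partial_{q_0}\bar f+q_1\partial_{q_1}\bar f\bigr)$ to see that $D\bar f\neq0$ on $\bar f^{-1}(0)\setminus(\Delta\times\{0\})$, so that this zero set is a smooth $4$-manifold, and then applies Morse--Sard to the \emph{equidimensional} projection $P\colon\bar f^{-1}(0)\setminus\Delta\to\R^4$, whose regular values are exactly $\mathcal{Q}^c$. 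You instead adjoin the second equation $g(\eta)=2c+q_1^TR(\eta)q_0=0$ (equivalent to $f'(\eta)=0$ away from $\eta=0$), cut out the bad locus directly as $Z^{-1}(0)$ with $Z=(f,g)$, and verify that $0$ is a regular value of $Z$, so the bad locus is a $3$-manifold whose projection to $\R^4$ is of measure zero and meager for trivial dimensional reasons. What the paper's route buys is a one-line regularity check (only $D\bar f\neq 0$ is needed, and the identity delivers it with no case analysis); what your route buys is that Sard is only invoked in its trivial form, at the cost of the rank-$2$ verification with its two cases: $q_1\not\parallel R(\eta)q_0$, where the $\eta$-column $(0,\ast)$ plus the collapse $(1+\eta^2)q_1=q_1$ (using that planar rotations commute and $R(\pi/2)+R(-\pi/2)=0$) gives independence, and $q_1\parallel R(\eta)q_0$, where the $(q_0,q_1)$-block alone forces $\eta|q_0|^2=0$. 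Both of your computations check out, and you correctly use $f(0)=\tfrac12|q_1-q_0|^2>0$ off $\Delta$ to identify the failure of \eqref{f'neq0} with $\pi(Z^{-1}(0))$.
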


\begin{proof}
To prove the statement of the lemma, we will first extend the function $f\in C^\infty(\R)$ to $\bar{f}\in C^\infty(\R^5)$ by taking $\bar{f}(q_0,q_1,\eta):=f(\eta)$ where $f$ depends on $q_0$ and $q_1$ in the way described in \eqref{Deff}. 

Recall, that $f(0)=\frac{1}{2}|q_1-q_0|^2$, thus 
$$
\bar{f}^{-1}(0)\cap \left(\R^4\times\{0\}\right)=\Delta\times\{0\}.
$$
We will show now that $\bar{f}^{-1}(0)\setminus (\Delta\times\{0\})$ is a smooth manifold. Using \eqref{Deff} we can calculate $D\bar{f}$:
\begin{align*}
\partial_\eta \bar{f} & = -\eta \left(2 c  +q_1^TR(\eta)q_0\right),\\
\partial_{q_0}\bar{f} & = \eta q_1^T R\left(\eta+\frac{\pi}{2}\right)- q_1^T R(\eta)+q_0,\\
\partial_{q_1}\bar{f} & = \eta q_0^T R\left(-\eta-\frac{\pi}{2}\right)- q_0^T R(-\eta)+q_1.
\end{align*}
A straightforward computation shows that $\bar{f}$ satisfies the following relation:
$$
\bar{f}(q_0,q_1,\eta)=-c\eta^2+\frac{1}{2}\left( q_0\partial_{q_0}\bar{f}+ q_1\partial_{q_1}\bar{f}\right).
$$
Consequently, for all $(q_0,q_1,\eta)\in \bar{f}^{-1}(0)$ we have $|\eta||\partial_{q_0}\bar{f}||\partial_{q_1}\bar{f}|\neq 0$. In particular, for all $(q_0,q_1,\eta)\in \bar{f}^{-1}(0)\setminus (\Delta\times\{0\})$ the derivative $D\bar{f}\neq 0$ and thus, by the inverse function theorem, $\bar{f}^{-1}(0)\setminus (\Delta\times\{0\})$ is a smooth manifold.

Note, that if we consider the function $f\in C^\infty(\R)$ as in \eqref{Deff} corresponding to a fixed pair $(q_0,q_1)\in \R^4$, then
$$
\left( \bar{f}^{-1}(0)\setminus \Delta\right) \cap \left( \{(q_0,q_1)\}\times\R\right)=Z(f)\setminus \{0\}.
$$
Additionally, $f'(\eta)=\partial_\eta \bar{f}(q_0,q_1,\eta)$ for all $\eta\in \R$. Therefore, if we denote
\begin{align*}
\mathcal{Q}^c := & \left\lbrace  (q_0,q_1)\in \R^4\ |\ \forall\ \eta\in Z(f)\setminus\{0\}\quad f'(\eta)\neq 0\right\rbrace,\\
\textrm{then}\qquad \mathcal{Q}^c =& \left\lbrace (q_0,q_1)\in \R^4\ |\ \forall\ (q_0,q_1,\eta)\in \bar{f}^{-1}(0)\setminus \Delta\quad \partial_\eta \bar{f}\neq 0 \right\rbrace.
\end{align*}
Consequently, the set $\mathcal{Q}^c$ satisfies the assumptions of the theorem. 
On the other hand, since $\bar{f}^{-1}(0)\setminus \Delta$ is a smooth manifold, the set of regular values of the projection $P: \bar{f}^{-1}(0)\setminus \Delta \to \R^4$ is in fact equal to $\mathcal{Q}^c$. By the Morse-Sard Theorem $\mathcal{Q}^c$ is residual in $\R^4$.
\end{proof}

\begin{lem}\label{lem:Res_c}
For a fixed pair $(q_0,q_1)\in \R^4, q_0\neq q_1$ there exists a residual set $\mathcal{I}_{q_0,q_1}\subseteq \R_+$, such that for all $c\in \mathcal{I}_{q_0,q_1}$ the corresponding function $f$ as defined in \eqref{Deff} satisfies property \eqref{f'neq0}.
\end{lem}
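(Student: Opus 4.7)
The plan is to adapt the strategy of Lemma~\ref{lem:Res_q0q1}, now treating $c$ (rather than $(q_0,q_1)$) as the free parameter. First, I would extend $f$ from \eqref{Deff} to a smooth function $\bar f:\R\times\R_+\to\R$ by
\begin{equation*}
\bar f(\eta,c):=-c\eta^2+\eta\, q_1^T R\bigl(\eta+\tfrac{\pi}{2}\bigr)q_0+\tfrac{1}{2}(|q_0|^2+|q_1|^2)-q_1^T R(\eta)q_0,
\end{equation*}
with partial derivatives $\partial_c\bar f=-\eta^2$ and $\partial_\eta\bar f=f'(\eta)$. Since $q_0\neq q_1$, we have $\bar f(0,c)=\tfrac{1}{2}|q_1-q_0|^2>0$ for every $c>0$, so $\bar f^{-1}(0)\cap(\{0\}\times\R_+)=\emptyset$ and hence $\partial_c\bar f\neq 0$ everywhere on $\bar f^{-1}(0)$. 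By the implicit function theorem, $\bar f^{-1}(0)$ is a smooth embedded $1$-submanifold of $\R\times\R_+$, locally the graph of a function $c=c(\eta)$.

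Next, I would examine the projection $P:\bar f^{-1}(0)\to\R_+$, $(\eta,c)\mapsto c$. The tangent space at $(\eta,c)\in\bar f^{-1}(0)$ is spanned by the vector $(\partial_c\bar f,-\partial_\eta\bar f)=(-\eta^2,-f'(\eta))\in\ker D\bar f$, so $dP$ applied to this generator equals $-f'(\eta)$. Consequently $c\in\R_+$ is a regular value of $P$ if and only if $f'(\eta)\neq 0$ for every $\eta\in Z(f)$, which is precisely condition~\eqref{f'neq0}.

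Finally, Morse-Sard applied to the smooth map $P$ between $1$-dimensional manifolds yields that $P(\Crit P)$ has Lebesgue measure zero in $\R_+$. Since $\bar f^{-1}(0)$ is second countable and therefore $\sigma$-compact, both $\Crit P$ and $P(\Crit P)$ are $\sigma$-compact; a closed subset of $\R$ of measure zero is nowhere dense, so $P(\Crit P)$ is meager and
\begin{equation*}
\mathcal{I}_{q_0,q_1}:=\R_+\setminus P(\Crit P)
\end{equation*}
is residual in $\R_+$, as required. No substantive obstacle arises in this plan: the computation is entirely parallel to Lemma~\ref{lem:Res_q0q1}, and the only subtle point is upgrading Sard's ``measure zero'' conclusion to ``residual,'' which is handled by the $\sigma$-compactness of the smooth $1$-manifold $\bar f^{-1}(0)$.
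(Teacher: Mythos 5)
Your proof is correct and follows essentially the same route as the paper: extend $f$ to a smooth function of $(\eta,c)$, use $\partial_c\bar f=-\eta^2\neq 0$ on the zero set (since $q_0\neq q_1$ excludes $\eta=0$ from $Z(f)$) to conclude via the implicit function theorem that the zero set is a smooth curve, and then apply the Morse--Sard theorem to the projection onto the $c$-coordinate, whose regular values are exactly the $c$ satisfying \eqref{f'neq0}. Your additional step upgrading Sard's measure-zero conclusion to residuality via $\sigma$-compactness of the critical set is a welcome precision that the paper's proof leaves implicit.
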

\begin{proof}
In this proof we will follow the same arguments as in the proof of Lemma \ref{lem:Res_q0q1}. First, we extend the function $f\in C^\infty(\R)$ as in \eqref{Deff} to a function $\tilde{f}:\R_+\times\R \to \R$ by setting $\tilde{f}(c,\eta):=f(\eta)$. Second, we observe that $\partial_c \tilde{f}(c,\eta)= -\eta^2 < 0$ for all $\eta\neq 0$. By assumption we take $q_0\neq q_1$, hence $0\notin\tilde{f}^{-1}(0)$ and thus $\partial_c \tilde{f}(c, \eta)<0$ for all $(c,\eta) \in \tilde{f}^{-1}(0)$. This allows us to use the inverse function theorem to conclude that $\tilde{f}^{-1}(0)$ is a smooth manifold. Furthermore, we observe that the set of regular values of the projection $P:\tilde{f}^{-1}(0) \to \R^+$ is equal to
\begin{align*}
\mathcal{I}_{q_0,q_1} := & \left\lbrace c\in \R_+\ |\ \forall\ (c,\eta)\in \tilde{f}^{-1}(0)\quad \partial_\eta \tilde{f}\neq 0 \right\rbrace\\
 = &\left\lbrace c\in \R_+\ |\ \forall\ \eta\in Z(f)\quad f'\neq 0 \right\rbrace.
\end{align*}
Finally, by the Morse-Sard Theorem, we conclude that $\mathcal{I}_{q_0,q_1}$ is residual in $\R_+$.
\end{proof}

In this final lemma, we show how the condition \eqref{f'neq0} implies that the function $f$ has an odd number of zeros on each the positive and the negative half line.
\begin{lem}\label{lem:Odd}
If $c>0$ and $(q_0, q_1)\in \R^4, q_0\neq q_1$ are such that the corresponding function $f$ satisfies \eqref{f'neq0}, then $f$ has an odd number of roots on each positive and negative half line.
\end{lem}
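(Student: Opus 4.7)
The plan is to combine an intermediate value theorem argument with the transversality condition \eqref{f'neq0}.

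First, I would record the two boundary observations about $f$. At $\eta=0$, formula \eqref{Deff} gives $f(0)=\frac{1}{2}|q_0-q_1|^2$, which is strictly positive since $q_0\neq q_1$. For the behavior at infinity, note that since $R(\eta)$ is a rotation, $|R(\eta)|=1$, so
\begin{equation*}
\bigl|\eta q_1^TR(\eta+\tfrac{\pi}{2})q_0\bigr| + \bigl|q_1^TR(\eta)q_0\bigr| \leq |\eta||q_0||q_1| + |q_0||q_1|,
\end{equation*}
which is of order $O(|\eta|)$ as $|\eta|\to\infty$. The quadratic term $-c\eta^2$ with $c>0$ therefore dominates, giving $\lim_{\eta\to\pm\infty}f(\eta)=-\infty$.

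Next, I would exploit \eqref{f'neq0}: every zero $\eta_0\in Z(f)$ satisfies $f'(\eta_0)\neq 0$, so by the inverse function theorem $f$ changes sign at each of its zeros. In particular, on any open interval where $f$ has only simple zeros, the number of zeros agrees with the number of sign changes of $f$.

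Finally, I would restrict to the positive half-line $(0,+\infty)$. Since $f(0)>0$ by continuity $f$ remains positive in a right-neighborhood of $0$, while $f(\eta)\to-\infty$ as $\eta\to+\infty$. A continuous function that is eventually positive near one endpoint of an interval and eventually negative near the other must have an odd number of sign changes, and by the previous paragraph these are in bijection with the elements of $Z(f)\cap(0,+\infty)$. Hence $\#(Z(f)\cap(0,+\infty))$ is odd. An identical argument on $(-\infty,0)$ yields that $\#(Z(f)\cap(-\infty,0))$ is odd. By Lemma \ref{lem:bijection}, these counts coincide with $\#\Crit^+\A^{H_0-c}_{q_0,q_1}$ and $\#\Crit^-\A^{H_0-c}_{q_0,q_1}$ respectively, completing the proof. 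There is no substantive obstacle: the statement reduces to the elementary principle that a transverse function changing sign between the endpoints of an interval must vanish an odd number of times in between.
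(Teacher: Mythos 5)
Your proof is correct and follows essentially the same route as the paper's: both use $f(0)=\tfrac{1}{2}|q_1-q_0|^2>0$, the dominance of $-c\eta^2$ forcing $f(\eta)\to-\infty$, and the transversality condition \eqref{f'neq0} to reduce the count of roots to a parity-of-sign-changes argument (the paper phrases it via the alternating signs of $f'$ at consecutive roots, you via sign changes of $f$ itself, which is the same thing). The one point you leave implicit, and which the paper spells out, is that $Z(f)$ is bounded and discrete and hence finite, so that the number of sign changes is well defined; this follows immediately from your own estimate that $-c\eta^2$ dominates together with \eqref{f'neq0}, so it is only a minor omission.
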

\begin{proof}
First we will show that if $t_0,t_1\in \R, t_0<t_1$ are two roots of a smooth function $f\in C^\infty(\R)$, such that $f'(t_0), f'(t_1)\neq 0$ and $f(t)\neq 0$ for all\linebreak $t\in (t_0,t_1)$, then $f'(t_0)f'(t_1)<0$. Suppose the opposite is true and $f'(t_0)f'(t_1)>0$. Without loss of generality, we can assume that $f'(t_0),f'(t_1)>0$. Then there would exist $\delta>0$, such that for all $t\in (t_0, t_0+\delta),\ f(t)>0$ and for all $t\in (t_1-\delta,t_1),\ f(t)<0$. Consequently, by the intermediate value theorem there would have to exist $t\in (t_0+\delta,t_1-\delta)$, such that $f(t)=0$. But that brings us a contradiction.

Let now $f$ be the function defined in \eqref{Deff}. We will show that the set of roots of $f$ is bounded. Observe that $f$ can be bounded from above by
$$
f(\eta) \leq -c \eta^2 + |\eta||q_0||q_1|+\frac{1}{2}(|q_0|+|q_1|)^2.
$$
Consequently, we have
$f(\eta) < 0$ for $\eta \in (-\infty, -\delta_0)\cup (\delta_0, +\infty)$, where
\begin{equation}\label{delta}
\delta_0:= \frac{1}{2c}\left(\sqrt{|q_0|^2|q_1|^2+2c(|q_0|+|q_1|)^2}+|q_0||q_1|\right).
\end{equation}
 In particular, the set of roots of $f$ denoted by $Z(f)$ is a subset of $[-\delta_0,\delta_0]$. Since $Z(f)$ is bounded and discrete the number $k:=\#\{ \eta \in Z(f)\ |\ \eta>0\}\in \mathbb{N}$ is well defined. Moreover, 
we can enumerate the elements of the set $\{ \eta \in Z(f)\ |\ \eta>0\}=\{\eta_i\}_{i=1}^{k}$, such that $\eta_i< \eta_{i+1}$ for $i=1,\dots k-1$ and
\begin{equation}\label{AssEta}
\begin{aligned}
(\eta_i, \eta_{i+1})\cap Z(f) & = \emptyset && \textrm{for} && i=1, \dots k-1,\\
(0, \eta_1)\cap Z(f) & =\emptyset&& \textrm{and} && (\eta_{k}, +\infty)\cap Z(f)=\emptyset.
\end{aligned}
\end{equation}
Our aim is to prove that $k$ is an odd number.

We will show now that if $f$ satisfies \eqref{f'neq0} then $f'(\eta_1)<0$. Recall that $f(0)=\frac{1}{2}|q_1-q_0|^2$, which in the case $q_0\neq q_1$ means $f(0)>0$. Suppose now that $f'(\eta_1)>0$. That would imply that there exists $\delta>0$, such that\linebreak $f(\eta)<f(\eta_1)=0$ for $\eta \in (\eta_1-\delta, \eta_1)$. Consequently, by the intermediate value theorem, there would have to exist $\eta \in (0, \eta_1-\delta)$, such that $f(\eta)=0$. But that would contradict our assumption \eqref{AssEta}. Thus $f'(\eta_1)<0$.

The last step would be to show that if $f$ satisfies \eqref{f'neq0} then $f'(\eta_{k})<0$.  Recall that $\lim_{\eta \to +\infty}f(\eta)=-\infty$.
Suppose now that $f'(\eta_{k})>0$. That would imply that there exists $\delta>0$, such that for all $\eta \in (\eta_{k}, \eta_{k}+\delta)$, $f(\eta)>0$. Consequently, by the intermediate value theorem, there would have to exist $\eta \in (\eta_{k}+\delta,+\infty)$, such that $f(\eta)=0$. But that contradicts assumption \eqref{AssEta}.

Let us recall what we have learned: if $f$ satisfies \eqref{f'neq0} then $f'(\eta_1)<0$, $f'(\eta_i)$ has an opposite sign than $f'(\eta_{i+1})$ for all $i=1, \dots k-1$ and finally $f'(\eta_{k})<0$. This implies that $k$ is an odd number.

Using the same arguments we prove that $\#\{ \eta \in Z(f)\ |\ \eta<0\}$ is an odd number as well.
\end{proof}

Despite the fact that the positive Lagrangian Rabinowitz Floer homology has only one generator, the number of positive critical points of the Rabinowitz action functional corresponding to a fixed pair of endpoints $q_0,q_1 \in \R^2$ does not necessarily have to be $1$. In fact, its cardinality depends on the value of the energy $c>0$.
As shown in Lemma 4.4, the positive critical set of the Rabinowitz action functional is in bijection with the set of positive zeroes of the corresponding function $f$ defined in (4.8). In Figure \ref{fig:fFunction} we depict the functions $f$ corresponding to the chord endpoints $q_0=(1,0)$, $q_1=(0,1)$ and three different energies $c= \frac{1}{5}$, $c=\frac{1}{10}$, and $c=\frac{1}{20}$, respectively. We see that the number of positive zeroes of the functions increases as the energy $c$ decreases.

\begin{figure}
\centering
\includegraphics[width=.75\linewidth]{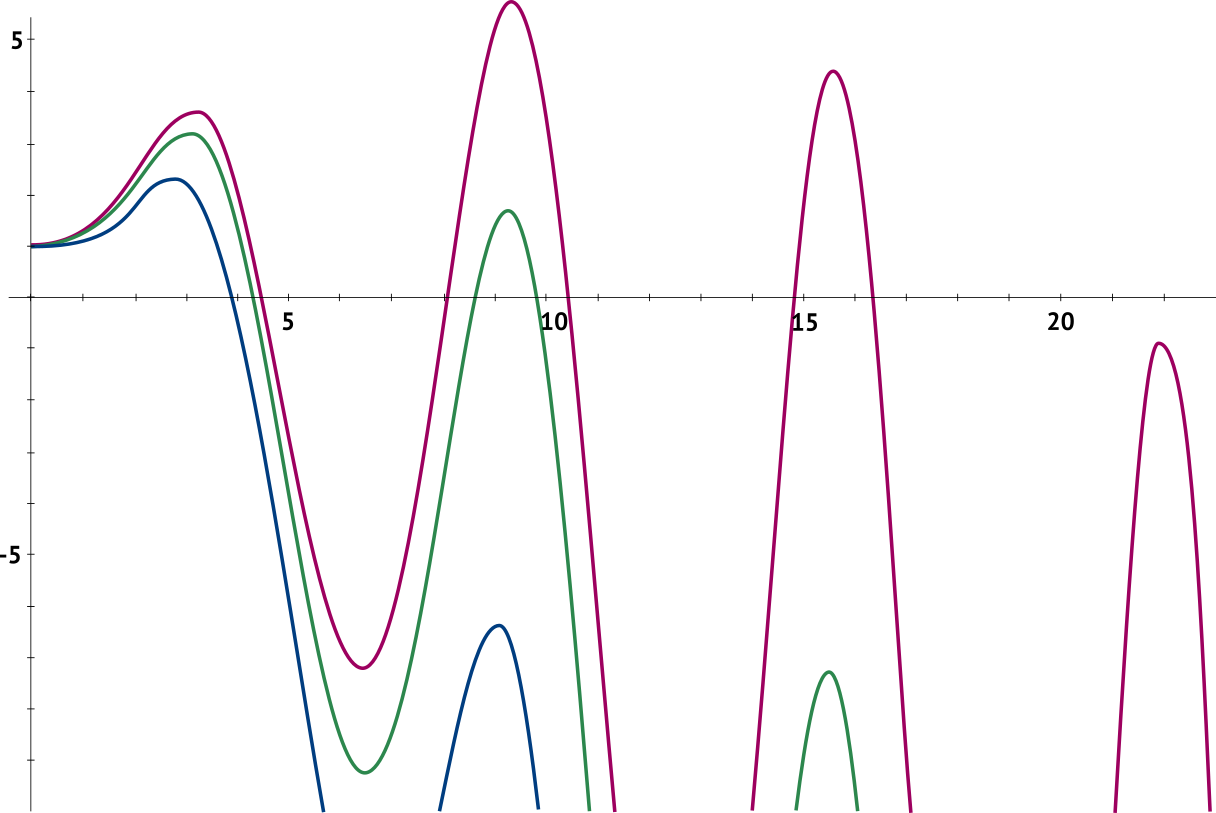}
\caption{The three functions $f$ corresponding to energy $c=\frac{1}{5}$ (blue), $c=\frac{1}{10}$ (green), and $c=\frac{1}{20}$ (magenta) crossing zero in exactly $1$, $3$, and $5$ points, respectively.}
\label{fig:fFunction}
\end{figure}

The following proposition shows that the number of positive critical points of the Rabinowitz action functional tends to $\infty$ as the energy $c$ approaches $0$.

\begin{prop}
For fixed $q_0,q_1 \in \R^2$ we have
$$
\lim_{c\searrow 0}\# \left(\Crit^+ \A_{q_0,q_1}^{H_0-c}\right)=+\infty.
$$
\end{prop}

\begin{proof}
By Lemma 4.4 we know that for any fixed $q_0,q_1 \in \R^2$ and $c>0$ we have $ \#\left(\Crit^+ \A_{q_0,q_1}^{H_0-c}\right)= \# \{ \eta>0\ |\ f(\eta)=0\}$, where $f$ is the function defined in (4.8) which depends on $q_0$, $q_1$, and $c$.
We express $q_0$ and $q_1$ in polar coordinates as
\begin{align*}
q_0 & = r_0  \left(\begin{array}{c}\cos (\alpha) \\ \sin (\alpha)\end{array}\right) = r_0 R^T(\alpha)\left(\begin{array}{c} 1 \\ 0\end{array}\right),\\
q_1 & = r_1 \left(\begin{array}{c}\cos (\alpha+\theta) \\ \sin (\alpha+\theta)\end{array}\right) = r_1 R^T(\alpha+\theta)\left(\begin{array}{c} 1 \\ 0\end{array}\right).
\end{align*}
Plugging this into the function $f$ from (4.8), we obtain
\begin{align*}
f(\eta) 
& = -c \eta^2+\frac{1}{2}\left(r_0^2+r_1^2\right)+ r_0 r_1 (1,0)\left(\eta R\left(\eta+\theta+\frac{\pi}{2}\right)-R(\eta+\theta)\right)\left(\begin{array}{c} 1 \\ 0\end{array}\right)\\
  & = -c \eta^2-\eta\ r_0 r_1 \sin\left(\eta +\theta
  \right) +\frac{1}{2}\left(r_0^2+r_1^2\right)-r_0 r_1 \cos (\eta +\theta).
\end{align*}
Note that the function depends only on the relative angle $\theta$ between the two endpoints. This is not very surprising as the Hamiltonian $H_0$ is invariant under rotations around the origin.
Observe that
\begin{align*}
f(\eta) & = - c\eta^2 +\frac{1}{2}(r_0-r_1)^2 && \textrm{for}\qquad \eta = 2\pi n - \theta, && n \in \mathbb{N},\\
f(\eta) & = - c\eta^2 +\frac{1}{2}(r_0+r_1)^2 && \textrm{for}\qquad \eta = \pi (2n+1) - \theta, && n \in \mathbb{N}.
\end{align*}
Consequently, we have
\begin{align*}
f(\eta) & <0 && \textrm{for}\qquad \eta = 2\pi n - \theta > \frac{1}{\sqrt{2c}}|r_0-r_1|, && n\in\mathbb{N},\\
f(\eta) & >0 && \textrm{for}\qquad \eta = \pi (2n+1) - \theta < \frac{1}{\sqrt{2c}}(r_0+r_1), && n\in\mathbb{N}.
\end{align*}
Note that if we assume
$$
\frac{\theta}{\pi} +\frac{1}{\pi\sqrt{2c}}|r_0-r_1| < n < \frac{\theta}{\pi} +\frac{1}{\pi\sqrt{2c}}(r_0+r_1)-1,$$
then
$$
\frac{\theta}{\pi} +\frac{1}{\pi\sqrt{2c}}|r_0-r_1| < n, n+1 <  \frac{\theta}{\pi} +\frac{1}{\pi\sqrt{2c}}(r_0+r_1).
$$
Now exactly one of $n,n+1$ is even and the other is odd, so
$$
f(\pi n - \theta)f(\pi (n+1) - \theta)<0,
$$
and thus by the intermediate value theorem there exists $\eta\in (\pi n-\theta, \pi(n+1)-\theta)$ such that $f(\eta)=0$.

Therefore, we can estimate
\begin{align*}
\#\left(\Crit^+ \A_{q_0,q_1}^{H_0-c}\right) & =\# \left\lbrace \eta>0\ |\ f(\eta)=0\right\rbrace \\
&\geq \# \left( \frac{\theta}{\pi} + \frac{|r_0-r_1|}{\pi\sqrt{2c}}, \frac{\theta}{\pi} +\frac{(r_0+r_1)}{\pi\sqrt{2c}}-1\right)\cap \mathbb{N}.
\end{align*}
The length of the interval on the right hand side of the inequality is equal to $\frac{\sqrt{2}}{\pi\sqrt{c}}\min\{r_0,r_1\}-1$. Since $\lim_{c\to 0}\left(\frac{\sqrt{2}}{\pi\sqrt{c}}\min\{r_0,r_1\}-1\right)=+\infty$, we conclude
$$
 \lim_{c\searrow 0} \#\left(\Crit^+ \A_{q_0,q_1}^{H_0-c}\right) \geq \lim_{c\searrow 0}\# 
\left( \frac{\theta}{\pi} + \frac{|r_0-r_1|}{\pi\sqrt{2c}}, \frac{\theta}{\pi} +\frac{(r_0+r_1)}{\pi\sqrt{2c}}-1\right)\cap\mathbb{N} = +\infty.
$$
\end{proof}

In Figures \ref{fig:1Orbit3Energies}, \ref{fig:3Orbits} and \ref{fig:5Orbits} we present the plots of the Reeb chords from $q_0=(0,1)$ to $q_1=(1,0)$ for various energies. More precisely, the graphs depict the projections of the Reeb chords onto the plane of positions. The plots have been obtained using the formula from Lemma \ref{lem:bijection}.

Let us analyse how the number of Reeb chords depends on the energy $c\in \left\lbrace 1, \frac{1}{2}, \frac{1}{5}, \frac{1}{10}, \frac{1}{20}\right\rbrace$.
By Lemma 4.5 we have $\Crit^+\A_{q_0,q_1}^{H_0-c}=1$ for $c \geq \frac{1}{2}$.
On the other hand, from Lemma 4.4 we know that $\Crit^+\A_{q_0,q_1}^{H_0-c}$ is in bijection with the zeroes of the corresponding function $f$ in (4.8).
By \eqref{delta} we know that
$$
\left\lbrace \eta > 0\ |\ f(\eta)=0\right\rbrace \subseteq [0, \delta_0],\qquad
\textrm{where}\qquad
\delta_0=\frac{1}{2c}\left( \sqrt{1+8c}+1\right).
$$
Calculating the interval for various energies we obtain
\begin{align*}
\textrm{For}\quad c &=\frac{1}{5},\quad & \delta_0 &=\frac{5}{2}\left( \sqrt{1+\frac{8}{5}}+1\right)=\frac{5}{2}\left( \sqrt{\frac{13}{5}}+1\right)<\frac{15}{2}=7\frac{1}{2},\\
\textrm{For}\quad c&=\frac{1}{10},& \delta_0 &=5\left( \sqrt{1+\frac{4}{5}}+1\right)=5\left( \frac{3}{\sqrt{5}}+1\right)< 5\left( \frac{3}{2}+1\right)=12\frac{1}{2},\\
\textrm{For}\quad c&=\frac{1}{20},& \delta_0 &=10\left( \sqrt{1+\frac{2}{5}}+1\right)=10\left( \sqrt{\frac{7}{5}}+1\right)<10\left(\frac{6}{5}+1\right)=22.
\end{align*}

Consequently, Figure \ref{fig:fFunction}. depicts the three functions $f$ corresponding to energies $c\in \left\lbrace \frac{1}{5}, \frac{1}{10}, \frac{1}{20}\right\rbrace$ on the whole interval $[0,\delta_0]$. Therefore we can deduce that the plot in Figure \ref{fig:fFunction}. depicts all the zeros of the three functions. Hence
$$
\#\Crit^+\A_{q_0,q_1}^{H_0-\frac{1}{5}}=1, \qquad \#\Crit^+\A_{q_0,q_1}^{H_0-\frac{1}{10}}=3\quad \textrm{and}\quad \#\Crit^+\A_{q_0,q_1}^{H_0-\frac{1}{20}}=5.
$$
From the discussion above we know that 
$$
\#\Crit^+\A_{q_0,q_1}^{H_0-1}=\#\Crit^+\A_{q_0,q_1}^{H_0-\frac{1}{2}}=\#\Crit^+\A_{q_0,q_1}^{H_0-\frac{1}{5}}=1.
$$
In other words, on each of the level sets corresponding to $c=1$, $c=\frac{1}{2}$ and $c=\frac{1}{5}$ there is exactly one Reeb chord from $q_0=(0,1)$ to $q_1=(1,0)$. These Reeb chords are presented in Figure \ref{fig:1Orbit3Energies}.

\begin{figure}
\begin{minipage}[c]{0.43\linewidth}
\includegraphics[width=\linewidth]{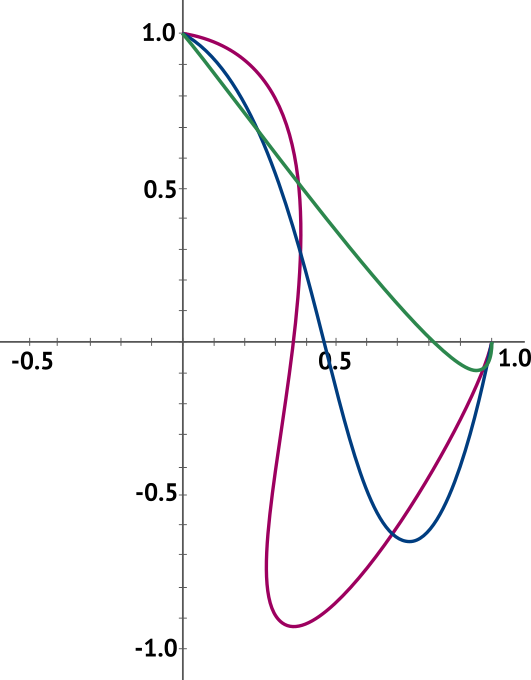}

\caption{The unique Reeb chords of energy $c=1$ (green), $c=\frac{1}{2}$ (blue), and $c=\frac{1}{5}$ (magenta).}
\label{fig:1Orbit3Energies}
\end{minipage}
\hfill
\begin{minipage}[c]{0.54\linewidth}
\vspace*{-.45cm}
\includegraphics[width=\linewidth]{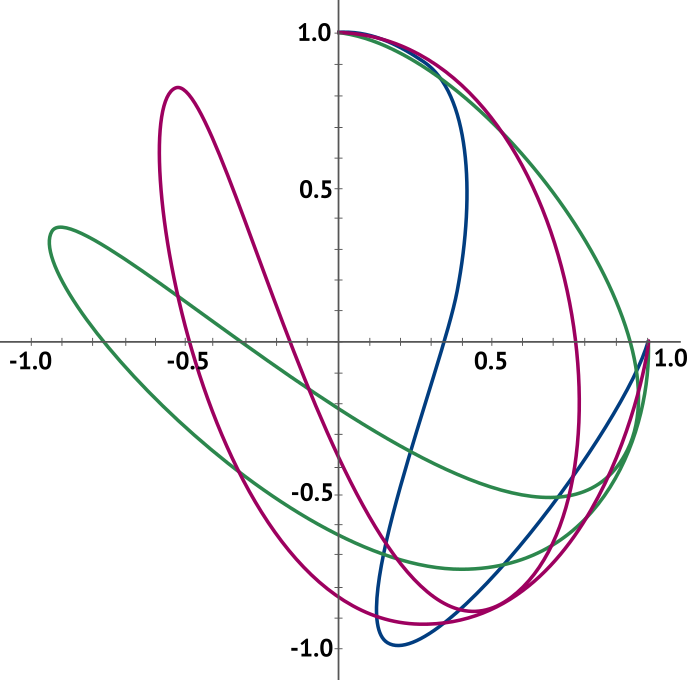}
\caption{The three Reeb chords of energy $c=\frac{1}{10}$.}
\label{fig:3Orbits}
\end{minipage}%
\end{figure}

Figure \ref{fig:3Orbits} depicts the three Reeb chords from $q_0=(0,1)$ to $q_1=(1,0)$ of energy $c=\frac{1}{10}$, and Figure \ref{fig:5Orbits} shows the five Reeb chords of energy $c=\frac{1}{20}$.

\begin{figure}
\centering
\includegraphics[width=.6\linewidth]{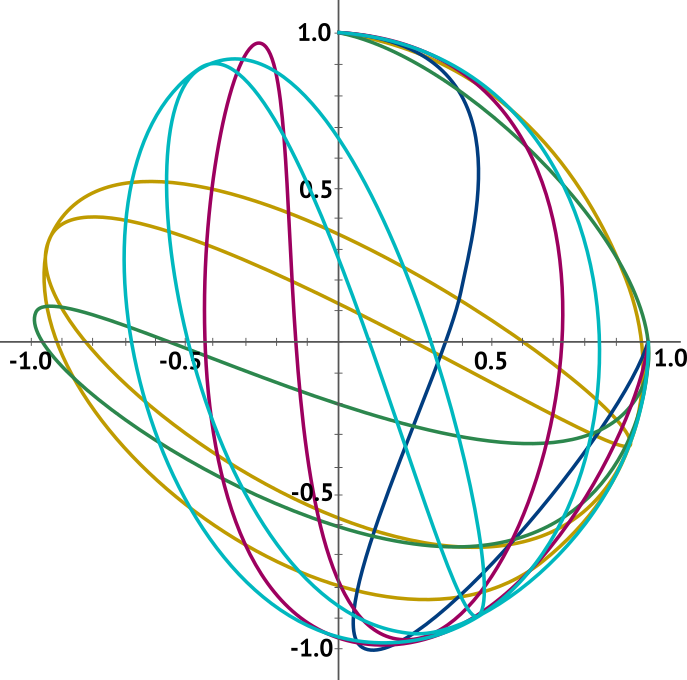}
\caption{The five Reeb chords of energy $c=\frac{1}{20}$.}
\label{fig:5Orbits}
\end{figure}
\subsection{Calculating the Maslov index}

In this subsection we will prove Theorem \ref{thm:posLRFH}. Let $H_0$ be the Copernican Hamiltonian as in \eqref{DefH0} and let $\mathcal{H}$ be the set of perturbations as defined in \eqref{DefHset}.
By Proposition \ref{prop:posLRFH} we know that for all $h\in \mathcal{H}$ and all pairs $q_0,q_1\in \R^2$, $q_0\neq q_1$ the positive Lagrangian Rabinowitz Floer homology of the triple $\LRFH_*^+(\A^{H_0-h}_{q_0,q_1})$ is well defined and isomorphic to $\LRFH_*^+(\A^{H_0-c}_{q_0,q_1})$ for any $c>0$. On the other hand, by Proposition \ref{prop:CritPM} we know that if $c\geq \frac{1}{2}|q_0||q_1|$ then $\Crit^+\A^{H_0-c}_{q_0,q_1}$ has only one element. Consequently, $\LRFH_*^+(\A^{H_0-c}_{q_0,q_1})$ has only one generator and its boundary operator is $0$. Therefore, in order to calculate the positive Lagrangian Rabinowitz Floer homology explicitly what is left to do is to calculate the Maslov index of $(v,\eta)\in \Crit^+\A^{H_0-c}_{q_0,q_1}$.

\begin{lem}\label{lem:ChordsBijection}
For $c>0$ define a Hamiltonian $H_c:T^*\R^2\to \R$ as
\begin{equation}\label{DefHDelta}
H_\delta(q,p):=\frac{|p|^2}{2}+\delta(p_1q_2-p_2q_1).
\end{equation}
Let $H_0$ be the Copernican Hamiltonian as in \eqref{DefH0} and let $\varphi_\delta$ be a diffeomorphism of $T^*\R^2$ defined $\varphi_\delta(q,p):=\left(\sqrt{\delta}q,\frac{1}{\sqrt{\delta}}p\right)$. Then
$$
(v,\eta)\in \Crit \A_{q_0,q_1}^{H_\delta-1}\qquad \iff \qquad \left(\varphi_\delta\circ v, \delta\eta\right)\in \Crit \A_{\varphi_\delta(q_0),\varphi_\delta(q_1)}^{H_0-\frac{1}{\delta}}.
$$
\end{lem}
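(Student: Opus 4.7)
My plan is to establish the action-level identity
\[
\A^{H_0 - 1/\delta}_{\varphi_\delta(q_0),\, \varphi_\delta(q_1)}\bigl(\varphi_\delta \circ v,\, \delta\eta\bigr) \;=\; \A^{H_\delta - 1}_{q_0,\, q_1}(v,\eta),
\]
after which the claimed bijection of critical sets is immediate: $(v,\eta) \mapsto (\varphi_\delta \circ v, \delta\eta)$ is a diffeomorphism between $\mathscr{H}_{q_0,q_1} \times \R$ and $\mathscr{H}_{\varphi_\delta(q_0),\varphi_\delta(q_1)} \times \R$, because $\varphi_\delta$ is fibre-preserving, sending $T^*_{q_i}\R^2$ onto $T^*_{\varphi_\delta(q_i)}\R^2$. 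Differentiating the identity then yields the desired correspondence of critical points.

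To get the identity I would first verify two elementary facts about $\varphi_\delta$. Writing $(Q,P) = \varphi_\delta(q,p) = (\sqrt{\delta}\,q,\, p/\sqrt{\delta})$, a direct computation gives
\[
\varphi_\delta^*\lambda \;=\; P\,dQ \;=\; \tfrac{p}{\sqrt{\delta}}\cdot\sqrt{\delta}\,dq \;=\; p\,dq \;=\; \lambda,
\]
so $\varphi_\delta$ is a symplectomorphism preserving the Liouville form. A direct substitution in \eqref{DefH0} yields
\[
H_0\circ\varphi_\delta(q,p) \;=\; \tfrac{1}{2\delta}|p|^2 + p_1q_2 - p_2q_1 \;=\; \tfrac{1}{\delta}H_\delta(q,p).
\]

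Using these, the kinetic term in the action transforms as $\int_0^1\lambda(\partial_t(\varphi_\delta\circ v))\,dt = \int_0^1(\varphi_\delta^*\lambda)(\partial_t v)\,dt = \int_0^1\lambda(\partial_t v)\,dt$, and the potential term as $\int_0^1 (H_0 - 1/\delta)(\varphi_\delta\circ v)\,dt = \tfrac{1}{\delta}\int_0^1(H_\delta - 1)(v)\,dt$. The rescaling of the Lagrange multiplier by $\delta$ exactly cancels the $1/\delta$ factor in front of the potential term, yielding the identity above.

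There is no substantive obstacle; the proof is bookkeeping. The only subtle point is keeping track of how the two occurrences of $\delta$ — one rescaling the energy level of $H_0$ from $0$ to $1/\delta$, the other rescaling the Lagrange multiplier $\eta \mapsto \delta\eta$ — are compatible, which is precisely what the identity $H_0\circ\varphi_\delta = \tfrac{1}{\delta}H_\delta$ encodes. If one preferred, the same reasoning could be run directly on the three defining equations for a critical point (boundary conditions, energy constraint, Hamiltonian ODE), using the transformation rule $d\varphi_\delta\cdot X_{H_\delta} = \delta\, X_{H_0}\circ\varphi_\delta$ obtained from the two facts above.
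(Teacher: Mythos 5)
Your proof is correct. It rests on the same two computations as the paper's --- that $\varphi_\delta$ is symplectic and that $H_\delta=\delta\, H_0\circ\varphi_\delta$ --- but draws the conclusion differently: the paper verifies the three defining conditions of a critical point one by one, using the induced transformation rule for the Hamiltonian vector field (exactly the alternative you sketch in your last sentence), whereas you establish the stronger statement that the two action functionals agree under the pullback $(v,\eta)\mapsto(\varphi_\delta\circ v,\delta\eta)$ and then invoke the chain rule. Note that your route needs the slightly stronger fact $\varphi_\delta^*\lambda=\lambda$ (not merely $\varphi_\delta^*\omega=\omega$), which you correctly verify; the paper's route only needs symplecticity. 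In exchange, your action identity buys something the paper's proof does not state explicitly but later uses: corresponding critical points have \emph{equal} actions (equivalently, $\eta\mapsto\delta\eta$ preserves the sign of the multiplier), so the bijection restricts to $\Crit^+$ and $\Crit^-$, which is what Lemma~\ref{lem:Convergence} and the Maslov-index computation actually rely on. Both arguments are complete; yours is marginally more informative.
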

\begin{proof}
First observe that $\varphi$ preserves the standard symplectic form, so it is in fact a symplectomorphism. Moreover, the Hamiltonians satisfy the following relation $H_\delta=\delta H_0\circ \varphi_\delta$, thus 
$$
H_0^{-1}\left(\frac{1}{\delta}\right)=\varphi_\delta (H_\delta^{-1}(1) )\qquad\textrm{and}\qquad X_{H_\delta}=\delta D\varphi_\delta^{-1}X_{H_0}.
$$
Now $(v,\eta)\in \Crit \A_{q_0,q_1}^{H_\delta-1}$ if and only if $v(i)=q_i$ for $i=1,2$, $v([0,1])\subseteq H_\delta^{-1}(1)$ and $\partial_t v=\eta X_{H_\delta}(v)$. Naturally, the first two conditions are trivially equivalent. It suffices to verify that
$$
\frac{d}{dt}(\varphi_\delta\circ v) = D\varphi_\delta (\partial_t v) = \eta D\varphi_\delta (X_{H_\delta}(v))=\delta X_{H_0}(\varphi_\delta \circ v).
$$
\end{proof}
\begin{lem}\label{lem:Convergence}
Let $\{H_\delta\}_{\delta>0}$ be the family of Hamiltonians defined in \eqref{DefHDelta}. Let $H_{\bullet}:T^*\R^2\to\R$ be the kinetic Hamiltonian $H_{\bullet}(q,p):=\frac{1}{2}|p|^2$.
Fix $q_0,q_1\in \R^2$, $q_0\neq q_1$. Then for $\delta<\sqrt{\frac{2}{|q_0||q_1|}}$ (in case $q_0=0$ or $q_1=0$ for any $\delta>0$) we have $\#\Crit^+\A^{H_\delta-1}_{q_0,q_1}=1$ and the family of Reeb chords $(v_\delta,\eta_\delta)\in \Crit^+\A^{H_\delta-1}_{q_0,q_1}$ satisfies
$$
\lim_{\delta\to 0}(v_\delta,\eta_\delta)=(v_0,\eta_0)\in \Crit^+\A^{H_{\bullet}-1}_{q_0,q_1}.
$$ 
\end{lem}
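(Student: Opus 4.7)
The plan is to reduce via Lemma~\ref{lem:ChordsBijection} to the analysis of $\Crit^+\A^{H_0-1/\delta}_{\sqrt{\delta}q_0,\sqrt{\delta}q_1}$, apply Proposition~\ref{prop:CritPM}(1) for uniqueness, and then combine the explicit critical-point formula of Lemma~\ref{lem:bijection} with an implicit function theorem argument to obtain convergence.

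First, Lemma~\ref{lem:ChordsBijection} provides a bijection $(v,\eta)\mapsto(\varphi_\delta\circ v,\delta\eta)$ between $\Crit^+\A^{H_\delta-1}_{q_0,q_1}$ and $\Crit^+\A^{H_0-1/\delta}_{\sqrt{\delta}q_0,\sqrt{\delta}q_1}$. The rescaled endpoints satisfy $|\sqrt{\delta}q_0|\cdot|\sqrt{\delta}q_1|=\delta|q_0||q_1|$ while the rescaled energy is $1/\delta$, so the hypothesis $|q_0'||q_1'|\le 2c$ of Proposition~\ref{prop:CritPM}(1) translates to $\delta\le\sqrt{2/(|q_0||q_1|)}$, which is vacuous when $q_0$ or $q_1$ vanishes. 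Thus for $\delta$ in the stated range, Proposition~\ref{prop:CritPM}(1) yields $\#\Crit^+\A^{H_0-1/\delta}_{\sqrt{\delta}q_0,\sqrt{\delta}q_1}=1$, and hence $\#\Crit^+\A^{H_\delta-1}_{q_0,q_1}=1$.

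For convergence, combining Lemmas~\ref{lem:ChordsBijection} and~\ref{lem:bijection} identifies $\eta_\delta$ with the unique positive root of
\begin{equation*}
g_\delta(\eta) := -\eta^2 + \delta\eta\,q_1^T R(\delta\eta+\tfrac{\pi}{2})q_0+\tfrac{1}{2}(|q_0|^2+|q_1|^2)-q_1^TR(\delta\eta)q_0,
\end{equation*}
obtained from the function $f_\delta$ associated by Lemma~\ref{lem:bijection} to the rescaled data by the substitution $\eta'=\delta\eta$ followed by division by $\delta$. As $\delta\to 0$, the family $g_\delta$ converges in $C^\infty_{\mathrm{loc}}$ to $g_0(\eta)=-\eta^2+\tfrac{1}{2}|q_1-q_0|^2$, whose unique positive root $\eta_0=|q_1-q_0|/\sqrt{2}$ is simple since $g_0'(\eta_0)=-\sqrt{2}|q_1-q_0|\neq 0$. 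The implicit function theorem then yields $\eta_\delta\to\eta_0$. Inserting this convergence into the explicit chord formula from Lemma~\ref{lem:bijection}, pulled back by $\varphi_\delta^{-1}$, gives
\begin{equation*}
v_\delta(t)=\bigl((1-t)R(t\delta\eta_\delta)q_0+tR(\delta\eta_\delta(t-1))q_1,\ \eta_\delta^{-1}[-R(t\delta\eta_\delta)q_0+R(\delta\eta_\delta(t-1))q_1]\bigr),
\end{equation*}
which converges in $C^\infty$ to $v_0(t)=((1-t)q_0+tq_1,\eta_0^{-1}(q_1-q_0))$, the unique element of $\Crit^+\A^{H_{\bullet}-1}_{q_0,q_1}$ identified in Example~\ref{ex:Maslov}.

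The main technical point is identifying the correct rescaling: the coefficient $1/\delta$ of $(\eta')^2$ in $f_\delta$ blows up as $\delta\to 0$ while the root $\eta'_\delta=\delta\eta_\delta$ itself collapses to $0$, so a naive $\delta\to 0$ limit of $f_\delta$ is ill-posed. The scaling $\eta'=\delta\eta$ is precisely what produces a well-defined, nondegenerate limiting equation $g_0=0$ to which the implicit function theorem can be applied, transferring smooth dependence and uniqueness back to the unrescaled problem.
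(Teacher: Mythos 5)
Your proposal is correct and follows essentially the same route as the paper: the bijection of Lemma~\ref{lem:ChordsBijection}, uniqueness via Proposition~\ref{prop:CritPM}(1), the rescaled root equation $g_\delta(\eta)=0$, and convergence of the explicit chord formula. The only (harmless) difference is that you obtain $\eta_\delta\to\eta_0$ from $C^\infty_{\mathrm{loc}}$-convergence of $g_\delta$ and the implicit function theorem at the simple root $\eta_0$, whereas the paper sandwiches $g_\delta$ between two explicit parabolas via a Taylor estimate; both arguments are valid.
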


\begin{proof}
Using Lemma \ref{lem:ChordsBijection} we know that for every $\delta>0$ there is a bijection between $\Crit^+\A_{q_0,q_1}^{H_\delta-1}$ and $\Crit^+ \A_{\varphi_\delta(q_0),\varphi_\delta(q_1)}^{H_0-\frac{1}{\delta}}$. On the other hand, by Proposition \ref{prop:CritPM} we know that for $|\varphi_\delta(q_0)||\varphi_\delta(q_1)|=\delta|q_0||q_1|\leq \frac{2}{\delta}$ we have $1=\# \Crit^+ \A_{\varphi_\delta(q_0),\varphi_\delta(q_1)}^{H_0-\frac{1}{\delta}}=\# \Crit^+ \A_{q_0,q_1}^{H_\delta-1}$. Consequently, for every $\delta<\sqrt{\frac{2}{|q_0||q_1|}}$ (or in the case either $q_0$ or $q_1$ are zero for any $\delta>0$) there is a unique solution $(v_\delta,\eta_\delta)\in  \Crit^+ \A_{q_0,q_1}^{H_\delta-1}$.

Having established that $(v_\delta,\eta_\delta)\in \Crit^+\A^{H_\delta-1}_{q_0,q_1}$ is uniquely defined for small enough $\delta$, we can use the function $f$ from Lemma \ref{lem:bijection} to estimate $\eta_\delta$. We simply need to map $(c,q_0,q_1,\eta)\mapsto (\frac{1}{\delta}, \sqrt{\delta}q_0, \sqrt{\delta}q_1, \delta \eta)$ to obtain that $\eta_\delta$ is the unique positive root of the function:
\begin{align}
g_\delta(\eta)&:=  -\eta^2+\delta \eta q_1^TR\left(\delta\eta+\frac{\pi}{2}\right)q_0 +\frac{1}{2}\left(|q_0|^2+|q_1|^2\right)-q_1^TR(\delta\eta)q_0\nonumber\\
&= \frac{1}{2}|q_1-q_0|^2-\eta^2+q_1^T\left( \Id-R(\delta\eta)-\delta \eta R'(\delta \eta) \right)q_0\label{g_delta}
\end{align}
This way we know that $(v_\delta,\eta_\delta)\in \Crit^+\A^{H_\delta-1}_{q_0,q_1}$ if and only if $g_\delta(\eta_\delta)=0$.

By \eqref{g_delta} we can use the Taylor expansion to obtain the following estimate
\begin{gather*}
\frac{1}{2}|q_1-q_0|^2-\eta^2(1+\delta^2|q_0||q_1|)\leq g_\delta(\eta) \leq \frac{1}{2}|q_1-q_0|^2-\eta^2(1-\delta^2|q_0||q_1|),\\
g_\delta(\eta_\delta)=0\quad \Rightarrow \quad \frac{|q_1-q_0|}{\sqrt{2(1+\delta^2|q_0||q_1|)}}\leq \eta_\delta \leq\frac{|q_1-q_0|}{\sqrt{2(1-\delta^2|q_0||q_1|)}},
\end{gather*}
which directly gives us $\lim_{\delta\to 0}\eta_\delta=\frac{1}{\sqrt{2}}|q_1-q_0|$.

To show the convergence of $v_\delta$ we again use Lemma \ref{lem:bijection} to present the equation for $v_\delta$ explicitly. By Lemma \ref{lem:ChordsBijection} is suffices to just use the mapping $(q_0,q_1,\eta)\mapsto (\sqrt{\delta}q_0, \sqrt{\delta}q_1, \delta \eta)$ in the formula for $v_\delta$ from Lemma \ref{lem:bijection} to obtain:
\begin{align*}
\varphi_\delta\circ v_\delta(t) & =\left(\begin{array}{c c}
(1-t) R(t\delta\eta_\delta) & t R(\delta\eta_\delta(t-1))\\
 -\frac{1}{\delta\eta_\delta}R(t\delta\eta_\delta) & \frac{1}{\delta\eta_\delta} R(\delta\eta_\delta(t-1))
\end{array}\right)
\left(\begin{array}{c}
 \sqrt{\delta} q_0 \\  \sqrt{\delta} q_1
\end{array}\right),\\
v_\delta(t) &= \left(\begin{array}{c c}
(1-t) R(t\delta\eta_\delta) & t R(\delta\eta_\delta(t-1))\\
 -\frac{1}{\eta_\delta}R(t\delta\eta_\delta) & \frac{1}{\eta_\delta} R(\delta\eta_\delta(t-1))
\end{array}\right)
\left(\begin{array}{c}
q_0 \\ q_1
\end{array}\right).
\end{align*}
Consider the functions $t \mapsto R(t\delta\eta_\delta)$ and $t\mapsto R((1-t)\delta\eta_\delta)$ on the interval $[0,1]$. Since $\lim_{\delta\to 0}\delta\eta_\delta =0$, we have the uniform convergence $\lim_{\delta\to 0}R(t\delta\eta_\delta)=\lim_{\delta\to 0}R((1-t)\delta\eta_\delta)=\Id$. Consequently,
\begin{equation}\label{straight}
\lim_{\delta\to 0}v_\delta=v_0(t) \quad \textrm{with}\quad v_0(t):=  \left(\begin{array}{c c}
(1-t)  & t \\
 -\frac{\sqrt{2}}{|q_1-q_0|} & \frac{\sqrt{2}}{|q_1-q_0|} 
\end{array}\right)
\left(\begin{array}{c}
q_0 \\ q_1 \end{array}\right).
\end{equation}
A straightforward calculation shows that $(v_0,\eta_0)$ with $\eta_0:=\frac{1}{\sqrt{2}}|q_1-q_0|$ is the unique element of $\Crit^+\A^{H_{\bullet}-1}_{q_0,q_1}$.
\end{proof}

\vspace*{.25cm}
\noindent \textit{Proof of Theorem \ref{thm:posLRFH}:}
By Proposition \ref{prop:posLRFH} we know that for all $h\in \mathcal{H}$ and all $q_0,q_1\in \R^2$ with $q_0\neq q_1$ the positive Lagrangian Rabinowitz Floer homology $\LRFH_*^+(\A^{H_0-h}_{q_0,q_1})$ is well defined and isomorphic to $\LRFH_*^+(\A^{H_0-c}_{q_0,q_1})$ for any $c>0$. On the other hand, by Proposition \ref{prop:CritPM} we know that if $c\geq |q_0||q_1|/2$ then $\Crit^+\A^{H_0-c}_{q_0,q_1}$ has only one element. Consequently, $\LRFH_*^+(\A^{H_0-c}_{q_0,q_1})$ has only one generator and its boundary operator is $0$. It remains to calculate the Maslov index of this generator. 

Let $\{H_\delta\}_{\delta>0}$ be the family of Hamiltonians defined in \eqref{DefHDelta} and let $\varphi_\delta$ be the family of symplectomorphisms defined by $\varphi_\delta(q,p):=(\frac{1}{\sqrt{\delta}}q,\sqrt{\delta}p)$.
By Lemma \ref{lem:ChordsBijection}, for every $c>0$ we have
$$
(v,\eta)\in \Crit^+\A^{H_0-c}_{q_0,q_1}\qquad\iff\qquad(\varphi_\frac{1}{c}^{-1} \circ v, c\eta)\in \Crit^+\A^{H_{1/c}-1}_{\frac{1}{\sqrt{c}}q_0, \frac{1}{\sqrt{c}}q_1}.
$$
Since the Maslov index is invariant under symplectomorphisms, we get 
$$
\mu^{\rm tr}\left( v\left(\frac{\cdot}{\eta}\right)\right)=\mu^{\rm tr}\left(\varphi_\frac{1}{c}^{-1} \circ v\left(\frac{\cdot}{c\eta}\right)\right).
$$
Therefore, it suffices to calculate $\mu^{\rm tr}\left( v_\delta\left(\frac{\cdot}{\eta_\delta}\right)\right)$ of the unique $(v_\delta, \eta_\delta)\in \Crit^+\A^{H_\delta-1}_{q_0,q_1}$ for $\delta>0$ small enough.

Denote by $H_{\bullet}$ the kinetic Hamiltonian $H_{\bullet}(q,p):=\frac{1}{2}|p|^2$ and let $(v_0,\eta_0)$ be the unique element of $\Crit^+\A^{H_{\bullet}-1}_{q_0,q_1}$ as in \eqref{straight}.
By Lemma \ref{lem:Convergence} we know that $\lim_{\delta\to 0}(v_\delta, \eta_\delta)=(v_0,\eta_0)$. Continuity of the Maslov index and Example~\ref{ex:Maslov} implies  
$$
\mu^{\rm tr}\left( v_\delta\left(\frac{\cdot}{\eta_\delta}\right)\right)=\mu^{\rm tr}\left( v_0\left(\frac{\cdot}{\eta_0}\right)\right) = \frac12
$$
for small enough $\delta>0$. This concludes the proof of Theorem \ref{thm:posLRFH}. 
\hfill $\square$

\section{Non-compactly supported potential}

The aim of this section will be to prove Proposition \ref{prop:CritH=CritH1}. Let $H_0: T^*\R^2 \to \R$ be the quadratic Hamiltonian defined in \eqref{DefH0} and let $V:\R^2 \to \R$ be the potential function as in Proposition \ref{prop:CritH=CritH1}.  
\begin{equation}\label{DefH}
\textrm{Define}\qquad H:= H_0 -V.
\end{equation}
Then
\begin{equation}\label{X_H}
\begin{aligned}
X_H & = (p_1+q_2)\partial_{q_1}+(p_2-q_1)\partial_{q_2}+\left(p_2 +\frac{\partial V}{\partial_{q_1}}\right)\partial_{p_1}-\left(p_1-\frac{\partial V}{\partial_{q_2}}\right)\partial_{p_2},\\
X_H & = p_r\partial_r+\left(1+\frac{p_\theta}{r^2}\right)\partial_\theta+\left( \frac{p_\theta^2}{r^3}+ \frac{\partial V}{\partial r}\right)\partial_{p_r}+\frac{\partial V}{\partial \theta}\partial_{p_\theta}.
\end{aligned}
\end{equation}
This first step to prove Proposition \ref{prop:CritH=CritH1} is to show that for positive energy $c>0$ all periodic orbits of $X_H$ on $H^{-1}(c)$ are contained in a compact subset of $T^*\R^2$:

\begin{prop}\label{prop:BoundChord}
Let $H: T^*\R^2 \to \R$ be the Hamiltonian defined in \eqref{DefH}. Fix $c>0$ and $q_0,q_1 \in \R^2$ and let $\A^{H-c}_{q_0,q_1}:\mathscr{H_1}_{q_0,q_1}\times \R \to \R$ be the corresponding action functional. Then the critical set $\Crit \A^{H-c}_{q_0,q_1}$ is bounded in $L^\infty$.
\end{prop}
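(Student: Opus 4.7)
Fix $(v,\eta)\in \Crit\A^{H-c}_{q_0,q_1}$. When $\eta=0$ we must have $q_0=q_1$ and $v$ is a constant path in the compact fibre level set $T^*_{q_0}\R^2\cap H^{-1}(c)$, which is a circle, so $v([0,1])$ is bounded; from now on assume $\eta\neq 0$. The plan is to mimic the Poisson-bracket argument of Lemma \ref{lem:H0Chord}, but now the contribution of $V$ has to be controlled using the decay hypotheses on $V$ and $\partial_r V$. First I would bound $r\circ v=|q\circ v|$ by excluding large-$r$ interior maxima, and then deduce a bound on $|p\circ v|$ from $H(v)\equiv c$.

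Working in polar coordinates $(r,\theta,p_r,p_\theta)$ one has $H=\tfrac{1}{2}p_r^2+\tfrac{p_\theta^2}{2r^2}+p_\theta-V$, and a direct computation gives
\[
  \{H,r\}=p_r,\qquad \{H,\{H,r\}\}=\frac{p_\theta^2}{r^3}+\partial_rV.
\]
Assume toward a contradiction that $r\circ v$ takes a value larger than some $R>\max\{r_0,|q_0|,|q_1|\}$; the maximum is then attained at an interior $t_0\in(0,1)$, and since $\eta\neq 0$ the first- and second-derivative tests yield $p_r(v(t_0))=0$ and
\[
  \Bigl(\frac{p_\theta^2}{r^3}+\partial_rV\Bigr)(v(t_0))\le 0.
\]
Combined with the lower bound $\partial_rV\ge -a/r^{\alpha+1}$ on $\{r>r_0\}$ this gives $p_\theta(v(t_0))^2\le a\,r^{2-\alpha}$. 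On the other hand $H=c$ together with $p_r=0$ at $v(t_0)$ rewrites as $(p_\theta+r^2)^2=r^4+2r^2(c+V)$, so $p_\theta$ lies on one of the branches $p_\theta=-r^2\pm r\sqrt{r^2+2(c+V)}$.

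On the $(-)$-branch one has $|p_\theta|\ge 2r^2$, hence $p_\theta^2\ge 4r^4$, incompatible with $p_\theta^2\le a\,r^{2-\alpha}$ for $r$ large. On the $(+)$-branch
\[
  p_\theta=\frac{2r(c+V)}{r+\sqrt{r^2+2(c+V)}}\xrightarrow[r\to\infty]{}c,
\]
so $p_\theta^2\to c^2$. If $\alpha>2$, then $a\,r^{2-\alpha}\to 0$ already gives the contradiction. In the borderline case $\alpha=2$ the constraint is $p_\theta^2\le a$, and here the sharp hypothesis $a<c^2/4$ is exactly what is needed: it forces $p_\theta^2>c^2/4>a$ for all sufficiently large $r$. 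Combining the two branches one produces a uniform $R$, depending only on $c$, $a$, $\alpha$, $q_0$, $q_1$ and $V$, such that $\max_t|q\circ v(t)|\le R':=\max\{R,|q_0|,|q_1|\}$.

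The bound on $|p\circ v|$ is then straightforward: in Cartesian coordinates $H(v)=c$ reads $\tfrac12|p|^2+p_1q_2-p_2q_1=c+V$, which gives $(|p|-|q|)^2\le |q|^2+2(c+V)$; together with $|q|\le R'$ and the smoothness of $V$ on the compact disc $\{|q|\le R'\}$ this yields a uniform bound on $|p|$, and hence $v([0,1])$ is contained in a fixed bounded subset of $T^*\R^2$. The main obstacle is the borderline case $\alpha=2$, where the decay of $V$ only barely dominates $p_\theta^2$ at infinity on the $(+)$-branch, and the quantitative threshold $a<c^2/4$ is exactly what makes the asymptotic comparison strict; without it, this Poisson-bracket method would not give any a priori bound on $r$.
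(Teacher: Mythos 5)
Your proposal is correct and follows essentially the same route as the paper: both exclude large interior maxima of $r\circ v$ via the Poisson brackets $\{H,r\}=p_r$ and $\{H,\{H,r\}\}=\frac{p_\theta^2}{r^3}+\partial_r V$, play the lower bound on $|p_\theta|$ forced by $H=c$, $p_r=0$ against the upper bound forced by the decay of $\partial_r V$, and then read off the momentum bound from the energy constraint on the resulting compact $q$-region. Your two-branch discussion of the quadratic in $p_\theta$ is just an unpacking of the paper's single inequality $|p_\theta|/r\ge\sqrt{r^2+2c}-r$, so the arguments coincide in substance.
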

\begin{proof}
By an argument similar to the one in the proof of Lemma \ref{lem:H0Chord} it suffices to show that the set
\begin{equation}\label{H1_r}
H^{-1}(c)\cap \{\{H, r\}=0\}\cap \{\{H,\{H, r\}\}\leq 0\}
\end{equation}
is compact. By \eqref{X_H} we have that
\begin{align*}
\{H,r\} & = p_r,\\
\{H,\{H,r\}\} & = \frac{p_\theta^2}{r^3}+ \frac{\partial V}{\partial r}.
\end{align*}
For $(r, \theta, p_r, p_\theta) \in H^{-1}(c)\cap  \{\{H, r\}=0\}$ we have
\begin{gather}
 \frac{p_\theta^2}{2r^2}+p_\theta =V(r,\theta)+c\geq c,\nonumber\\
 \left(\frac{p_\theta}{r}+r\right)^2 \geq r^2+2c,\nonumber\\
 \frac{|p_\theta|}{r}\geq \sqrt{r^2+2c}-r.\label{p_theta_r}
\end{gather}
Consequently, by our assumption on the potential function $V$, if $\alpha>2$ and $a>0$ then for $(r, \theta, p_r, p_\theta) \in H^{-1}(c)\cap  \{\{H, r\}=0\}$, such that 
\begin{gather*}
r \geq r_1:=\max\left\lbrace r_0,\frac{1}{2}\sqrt{c}, \left( \frac{a\alpha}{c^2}\right)^{\frac{1}{\alpha-2}}\right\rbrace\\ \textrm{we have} \quad
\frac{|p_\theta|}{r}  \geq \sqrt{r^2+2c}-r > \frac{c}{2 r},\\
\{H,\{H, r\}\} \geq \frac{c^2}{4r^3}-\frac{a\alpha}{r^{\alpha+1}}>0.
\end{gather*}

On the other hand, if $\alpha = 2$ and $a\in \left(0 ,\frac{c^2}{4}\right)$ then for $(r, \theta, p_r, p_\theta) \in H^{-1}(c)\cap  \{\{H, r\}=0\}$, such that
\begin{gather*}
r \geq r_1:=\max\left\lbrace r_0,\sqrt{\frac{a}{c-\sqrt{2a}}}\right\rbrace\\ \textrm{we have} \quad
\frac{|p_\theta|}{r}  \geq \sqrt{r^2+2c}-r > \frac{\sqrt{2a}}{ r},\\
\{H,\{H, r\}\} \geq \frac{2a}{r^3}-\frac{2a}{r^{3}}>0.
\end{gather*}
Consequently, in both cases the set
$$
 H^{-1}(c)\cap  \{\{H, r\}=0\}\cap \left\lbrace \{H,\{H, r\}\} \leq 0 \right\rbrace
$$
is bounded in $r$ and for every $(v, \eta) \in \Crit \A^{H-c}_{q_0,q_1}$ we have
$$
\sup r \circ v \leq R_0 := \max\{|q_0|, |q_1|, r_1\}.
$$
By an argument similar to the one presented in the Lemma \ref{lem:H0Chord} we obtain that $p_r\circ v$ and $p_\theta\circ v$ are also uniformly bounded in the following way:
\begin{align*}
|p_r| & \leq \sqrt{R_0^2+ 2(\sup_{r\leq R_0}V+c)},\\
|p_\theta| & \leq R_0 \left( R_0+\sqrt{R_0^2+ 2(\sup_{r\leq R_0}V+c)}\right).
\end{align*}
\end{proof}
\begin{rem}
Note that the assertions of Proposition \ref{prop:BoundChord} hold true also for the potential $V$ satisfying $V(r,\theta) \leq \frac{a}{r^2}$ and $\frac{\partial V}{\partial r} \geq -\frac{2 a}{r^3}$ for $r >r_0$ and $a \in \left[ \frac{c^2}{4}, \frac{c^2}{2} \right)$. However, we were unable to prove Proposition \ref{prop:CritH=CritH1} for this class of potentials and that is why we restrict ourselves to $a < \frac{c^2}{4}$.
\end{rem}

Now we would like change the Hamiltonian $H$, by multiplying the potential function $V$ with a compactly supported function $\varphi: T^*\R^2 \to [0,1]$, in the following way
\begin{equation}\label{DefH1}
H_1(q,p):=H_0(q,p)+\varphi(q,p)V(q).
\end{equation}
The associated Hamiltonian vector field is
\begin{equation}\label{X_H1}
\begin{aligned}
X_{H_1} & = \left(p_r-V\frac{\partial \varphi}{\partial p_r}\right)\partial_r+\left(1+\frac{p_\theta}{r^2}-V\frac{\partial \varphi}{\partial p_\theta}\right)\partial_\theta\\
& + \left( \frac{p_\theta^2}{r^3}+ \varphi\frac{\partial V}{\partial r}+V\frac{\partial \varphi}{\partial r}\right)\partial_{p_r}+\left(\varphi\frac{\partial V}{\partial \theta}+V\frac{\partial \varphi}{\partial \theta}\right)\partial_{p_\theta}.
\end{aligned}
\end{equation}

If we choose $\varphi$, such that for all the Reeb chords $(v,\eta) \in \Crit \A^{H-c}_{q_0,q_1}$ we would have $v([0,1]) \subseteq \varphi^{-1}(1)$, then $\Crit(\A^{H-c}_{q_0,q_1}) \subseteq \Crit(\A^{H_1-c}_{q_0,q_1})$. In fact, we would like to choose $\varphi$, such that $\Crit(\A^{H_1-c}_{q_0,q_1})= \Crit(\A^{H-c}_{q_0,q_1})$.

Let $\chi:\R\to [0,1]$ be a smooth function, such that $-2<\chi'< 0$ on $[0,1]$ and
\begin{align}
&&\chi(x) & =\begin{cases}
1 & \textrm{for} \quad x\leq 0,\\
0 & \textrm{for} \quad x\geq 1.
\end{cases}\nonumber\\
&\textrm{Define}
&\chi_0(r) &:=\chi\left(\beta(r-R_1) \right),\nonumber\\
&&\chi_1(r,\theta, p_r,p_\theta)& :=\chi( H_0(r,\theta,p_r,p_\theta)-\sup V-c),\nonumber\\
&&\varphi(r,\theta,p_r,p_\theta) & := \chi_0(r)\chi_1(r,\theta,p_r,p_\theta).\label{defPhi}
\end{align}
\begin{align}
&\textrm{where} & R_1 &:= \begin{cases}
\max\left\lbrace |q_0|, |q_1|, r_0, \frac{1}{2}\sqrt{c},\left( \frac{8a\alpha}{c^2}\right)^\frac{1}{\alpha-2}\right\rbrace & \textrm{for}\quad \alpha>2,\\
\max\left\lbrace |q_0|, |q_1|, r_0, \frac{c+2\sqrt{a}}{2\sqrt{c-2\sqrt{a}}}\right\rbrace & \textrm{for}\quad \alpha=2.
\end{cases}\label{R1}\\
&\textrm{and} & \beta &:=\begin{cases}
\frac{2-\alpha}{2 R_1}  & \textrm{for}\quad \alpha>2,\\
\frac{(c+2\sqrt{a})^2-16a}{8a R_1} & \textrm{for}\quad \alpha=2.
\end{cases}\label{beta}
\end{align}
Note that in both for $\alpha>2$ and for $\alpha=2$ we have $\beta>0$. In the later case, it follows from the assumption that $a< \frac{c^2}{4}$.

This way we have
\begin{equation}
\chi_0(r) = \begin{cases}
1 & \textrm{for}\ r\leq R_1,\\
0 & \textrm{for}\ r\geq R_1+\frac{1}{\beta}.
\end{cases}\label{chi0}
\end{equation}
We will show that for a function $\varphi$ defined as in \eqref{defPhi} we have 
$$
\Crit (\A^{H_0-\varphi V-c}_{q_0,q_1}) = \Crit(\A^{H_0-V-c}_{q_0,q_1}).
$$
The first step would be to show that $\varphi$ has compact support:

\begin{lem}
The function $\varphi: T^*\R^2\to \R$ defined as in \eqref{defPhi} has compact support.
\end{lem}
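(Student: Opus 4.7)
The plan is a short direct calculation. Since $\varphi=\chi_0\chi_1$, its support lies in $\supp\chi_0\cap\supp\chi_1$, so it suffices to bound each of the four coordinates on this intersection in Cartesian form on $T^*\R^2\cong\R^4$.

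First I would control the position. By the defining property of $\chi$ we have $\chi_0(r)=0$ for $r\geq R_1+1/\beta$, so
$$
  \supp\chi_0\subseteq\{(q,p)\in T^*\R^2\ |\ |q|\leq R_1+1/\beta\},
$$
giving a uniform bound on $q=(q_1,q_2)$ on the support of $\varphi$.

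Next I would control the momentum by exploiting $\chi_1$. Since $\chi_1(r,\theta,p_r,p_\theta)=0$ for $H_0\geq \sup V+c+1$, on $\supp\chi_1$ we have $H_0\leq M:=\sup V+c+1$. Completing the square as in the proof of Lemma~\ref{lem:H0Chord}, the identity
$$
  2H_0 = p_r^2+\Bigl(\tfrac{p_\theta}{r}+r\Bigr)^2-r^2
$$
gives $p_r^2+\bigl(\tfrac{p_\theta}{r}+r\bigr)^2\leq r^2+2M$ on $\supp\chi_1$. Combining with the bound $|q|\leq R_1+1/\beta$ from the previous step, on $\supp\varphi$ one obtains $|p_r|\leq\sqrt{(R_1+1/\beta)^2+2M}$ and $|p_\theta|\leq (R_1+1/\beta)\bigl(\sqrt{(R_1+1/\beta)^2+2M}+(R_1+1/\beta)\bigr)$. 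Passing back to Cartesian coordinates $(p_1,p_2)$ this yields a uniform bound on $|p|$.

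Since $\supp\varphi$ is closed (as $\varphi$ is smooth) and now shown to be bounded in $\R^4$, it is compact. There is no real obstacle here; the only thing to be slightly careful about is that the polar-coordinate description is only a chart away from $r=0$, but $\chi_0$ equals $1$ near $r=0$ and both $H_0$ and $\varphi$ are globally defined smooth functions on $T^*\R^2$, so the Cartesian bounds above are what actually matter.
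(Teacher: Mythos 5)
Your proof is correct and follows essentially the same route as the paper: both bound $r$ on $\supp\chi_0$, bound $H_0$ on $\supp\chi_1$, and complete the square via $2H_0=p_r^2+\bigl(\tfrac{p_\theta}{r}+r\bigr)^2-r^2$ to bound the momenta, concluding that $\supp\varphi$ is a closed subset of a compact set. No issues.
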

\begin{proof}
By \eqref{defPhi} and \eqref{chi0} we have
\begin{align*}
\operatorname{supp}\varphi & \subseteq \{H_0 \leq \sup V+c + 1\}\cap  \left\lbrace r \leq R_1+\frac{1}{\beta}\right\rbrace\\
& \subseteq  \left\lbrace \frac{1}{2}p_r^2+\frac{1}{2}\left( \frac{p_\theta}{r}+r\right)^2 \leq \frac{1}{2}r^2+\sup V+c + 1\right\rbrace\cap  \left\lbrace r \leq R_1+\frac{1}{\beta}\right\rbrace\\
& \subseteq  \left\lbrace \frac{1}{2}p_r^2+\frac{1}{2}\left( \frac{p_\theta}{r}+r\right)^2 \leq \frac{1}{2} \left( R_1+\frac{1}{\beta}\right)^2+\sup V + c+ 1\right\rbrace\cap \left\lbrace r \leq R_1+\frac{1}{\beta}\right\rbrace.
\end{align*}
Since both of the sets on the right-hand side are compact, their intersection is compact, so $\operatorname{supp}\varphi$ is compact as a closed subset of a compact set.
\end{proof}

\begin{lem}
Let $V:\R^2\to \R$ be a potential function as in Proposition \ref{prop:CritH=CritH1} and let $\varphi$ be the corresponding function defined in \eqref{defPhi}. Then $\varphi V+c \in \mathcal{H}$.
\end{lem}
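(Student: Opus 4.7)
The plan is to verify the three defining conditions of $\mathcal{H}$ in \eqref{DefHset} for the function $h := \varphi V + c$: (i) smoothness with compactly supported differential, (ii) positivity $h > 0$, and (iii) positivity of $h - dh(p\partial_p)$.

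Conditions (i) and (ii) are essentially immediate. The function $\varphi V + c$ is smooth because $\varphi$, $V$ and $c$ are, and its differential $d(\varphi V + c) = d(\varphi V)$ is supported inside $\supp\varphi$, which was shown to be compact in the previous lemma. Positivity follows because $V\geq 0$ and $\varphi\geq 0$ give $\varphi V\geq 0$, hence $\varphi V + c\geq c > 0$.

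The substantial content is condition (iii): $(\varphi V + c) - d(\varphi V)(p\partial_p) > 0$ pointwise on $T^*\R^2$. Since $V=V(q)$ depends only on position, $d(\varphi V)(p\partial_p) = V\cdot d\varphi(p\partial_p)$. Because $\chi_0(r)$ is a function of $r=|q|$ alone, $d\chi_0(p\partial_p)=0$, so only the $\chi_1(H_0)$-factor of $\varphi = \chi_0(r)\chi_1(H_0)$ contributes:
\begin{equation*}
d\varphi(p\partial_p) \;=\; \chi_0(r)\,\chi_1'(H_0)\,dH_0(p\partial_p) \;=\; \chi_0(r)\,\chi_1'(H_0)\left(\tfrac{1}{2}|p|^2 + H_0\right),
\end{equation*}
using the identity $dH_0(p\partial_p) = \tfrac{1}{2}|p|^2 + H_0$ already recorded in \eqref{H0pdp}.

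The decisive observation is that this quantity is non-positive. Indeed, $\chi_1'(H_0)$ vanishes unless $H_0 \in (\sup V + c,\, \sup V + c + 1)$; on that range one has $H_0 > \sup V + c \geq c > 0$, and therefore $\tfrac{1}{2}|p|^2 + H_0 > 0$. Combined with $\chi_0 \geq 0$ and $\chi_1' \leq 0$ from the construction of $\chi$, this forces $d\varphi(p\partial_p) \leq 0$ everywhere on $T^*\R^2$. Multiplying by $V\geq 0$ yields $d(\varphi V)(p\partial_p) \leq 0$, so
\begin{equation*}
(\varphi V + c) - d(\varphi V)(p\partial_p) \;\geq\; \varphi V + c \;\geq\; c \;>\; 0.
\end{equation*}
There is no genuine obstacle: the product structure of $\varphi$ (a function of $r$ times a function of $H_0$) was designed precisely so that $d\varphi(p\partial_p)$ collapses to a single term whose sign is controlled by the monotonicity of $\chi$ and the positivity of $\tfrac12|p|^2 + H_0$ on the support of $\chi_1'$.
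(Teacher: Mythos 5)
Your proof is correct and follows essentially the same route as the paper: both reduce condition (iii) to the sign of $\chi_0\,\chi_1'(H_0)\,dH_0(p\partial_p)$ using that $dV(p\partial_p)=d\chi_0(p\partial_p)=0$, and both conclude via the identity $dH_0(p\partial_p)=\tfrac12|p|^2+H_0$ together with $H_0\geq\sup V+c>0$ on the support of $\chi_1'$.
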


\begin{proof}
First observe that
$$
\supp d(\varphi V)=\supp ( V d\varphi+ \varphi dV)\subseteq \supp \varphi,
$$
hence $d(\varphi V)\in C_c^\infty(T^*\R^2)$. On the other hand,
 by definition:
\begin{align*}
\varphi(p,q) V(q) & +c - d(\varphi V)(p\partial_p) = V(q)\chi_0(q)\left(\chi_1(q,p)-d\chi_1(p\partial_p)\right)+c\\
& = V(q) \chi_0(q) \left( \chi_1(q,p)- \chi'(H_0(q,p) - \sup V-c) dH_0(p\partial_p)\right)+c.
\end{align*}
Since by assumption we the functions $V$, $\chi_0$, $\chi_1$ and $-\chi'$ are non-negative it suffices to prove that $dH_0(p\partial_p)\geq 0$ on $\chi_1^{-1}((0,1))$. Note that on $\chi_1^{-1}((0,1))$ we have $H_0 > \sup V+c$.  Combining that with \eqref{H0pdp} we can calculate that on $\chi_1^{-1}((0,1))$ we have:
$$ 
dH_0(p\partial_p)= \frac{1}{2}\|p\|^2+H_0\geq \frac{1}{2}\|p\|^2 + \sup V+c > 0.
$$
\end{proof}

Now we have proven that $\varphi$ as in \eqref{defPhi} is an eligible candidate, we will continue with the proof of Proposition \ref{prop:CritH=CritH1}. However, we will first start with the proof of a series of lemmas:

\begin{lem}\label{lem:vinPhi(1)}
Let $H$ be the Hamiltonian defined in \eqref{DefH} and let $\varphi$ be the function defined in \eqref{defPhi}.
Then
 $$
v([0,1])\subseteq \varphi^{-1}(1)\quad \textrm{for all}\quad (v,\eta)\in \Crit\A^{H-c}_{q_0,q_1}.
$$
\end{lem}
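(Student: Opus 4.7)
The plan is to reduce the lemma to two separate claims about any chord $(v,\eta)\in\Crit\A^{H-c}_{q_0,q_1}$, namely (a) $\chi_1\circ v\equiv 1$ and (b) $\chi_0\circ(r\circ v)\equiv 1$, because the product structure $\varphi=\chi_0\chi_1$ then immediately gives $\varphi\circ v\equiv 1$.

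Claim (a) follows at once from the constraint that critical chords lie on the level set $H^{-1}(c)$. Indeed, for any $t\in[0,1]$ we have $(H_0-V)(v(t))=c$, so $H_0(v(t))=V(\pi v(t))+c\le \sup V + c$. Hence $H_0(v(t))-\sup V-c\le 0$, and since $\chi\equiv 1$ on $(-\infty,0]$ this yields $\chi_1(v(t))=\chi(H_0(v(t))-\sup V-c)=1$ for all $t$.

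Claim (b) is the substantive step, but it is essentially a recycling of the maximum-principle argument already carried out in the proof of Proposition \ref{prop:BoundChord}. The goal is to show $\max_{t\in[0,1]}r\circ v(t)\le R_1$, with $R_1$ as in \eqref{R1}. Either the maximum is attained at an endpoint, in which case it is bounded by $\max\{|q_0|,|q_1|\}\le R_1$ by the very definition of $R_1$, or at an interior point $t_0\in(0,1)$, at which
\begin{gather*}
\{H,r\}\circ v(t_0)=p_r(v(t_0))=0,\\
\{H,\{H,r\}\}\circ v(t_0)=\frac{p_\theta^2}{r^3}(v(t_0))+\partial_r V(v(t_0))\le 0.
\end{gather*}
Combining $p_r(v(t_0))=0$ with $v(t_0)\in H^{-1}(c)$ yields, exactly as in \eqref{p_theta_r}, the lower bound $|p_\theta|/r\ge\sqrt{r^2+2c}-r>c/(2r)$ valid whenever $r>\tfrac12\sqrt{c}$. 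Plugging this and the decay hypothesis $\partial_r V\ge -a/r^{\alpha+1}$ into the second Poisson bracket, one obtains $\{H,\{H,r\}\}>0$ at every point where $r$ exceeds the threshold appearing in the definition of $R_1$ in \eqref{R1} (the factor $8a\alpha$ for $\alpha>2$, respectively the constant $(c+2\sqrt a)/(2\sqrt{c-2\sqrt a})$ for $\alpha=2$, is chosen precisely to strictly dominate the critical threshold from Proposition \ref{prop:BoundChord}). Hence an interior maximum of $r\circ v$ is automatically bounded by $R_1$. Consequently $r\circ v\le R_1$ throughout $[0,1]$, and by \eqref{chi0} we get $\chi_0(r\circ v(t))=1$ for all $t$.

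The only place where some care is needed is ensuring that the constants defining $R_1$ are indeed large enough to strictly exceed the $r$-threshold produced by the Poisson-bracket computation (and not merely equal to it), so that $\chi_0$ evaluates to $1$ (and not to some intermediate value of the cut-off); this amounts to checking that in both cases $\alpha>2$ and $\alpha=2$ the explicit expression for $R_1$ majorizes the corresponding critical radius from the proof of Proposition \ref{prop:BoundChord}, which is a direct comparison of elementary expressions. Once (a) and (b) are established, $\varphi\circ v=\chi_0(r\circ v)\cdot\chi_1\circ v=1$, completing the proof.
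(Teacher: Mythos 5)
Your proposal is correct and follows essentially the same route as the paper: the bound $\chi_1\circ v\equiv 1$ comes from $H^{-1}(c)\subseteq H_0^{-1}((-\infty,\sup V+c])$ exactly as in the paper, and your claim (b) is precisely the content of Proposition \ref{prop:BoundChord}, which the paper simply cites to get $r\circ v\le R_0$ and then uses the (implicit) comparison $R_0\le R_1$ rather than re-running the maximum-principle computation. The "care needed" point you flag is the same comparison of thresholds the paper leaves to the reader, and it does check out in both cases $\alpha>2$ and $\alpha=2$.
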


\begin{proof}
By definition of $H$ we have
$$
H_ 0 = H+ V \leq H + \sup V,
$$
so $H^{-1}(c) \subseteq H_0^{-1}((-\infty, \sup V+c])$. On the other hand,
by Proposition \ref{prop:BoundChord} for every $(v,\eta)\in \Crit\A^{H-c}_{q_0,q_1}$ we have
\begin{align*}
v([0,1]) & \subseteq  \{ r \leq R_0\}\cap  H^{-1}(c) \subseteq \{r \leq R_1\}\cap  H_0^{-1}((-\infty, \sup V+c])\\
& = \chi_0^{-1}(1)\cap \chi_1^{-1}(1)=\varphi^{-1}(1).
\end{align*}
\end{proof}

\begin{lem}\label{lem:subsetvarphi(1)}
Let $H_1$ be the Hamiltonian defined in \eqref{DefH1} and let $\varphi$ be the function defined in \eqref{defPhi}. Then
$$
H_1^{-1}(c)\cap \{\{H_1,r\}=0\}\cap \{\{H_1,\{H_1,r\}\}\leq 0\} \subseteq \varphi^{-1}(1).
$$
\end{lem}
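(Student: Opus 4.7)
The plan is to follow the Poisson-bracket strategy of Lemma \ref{lem:H0Chord}: for a point $P = (r,\theta,p_r,p_\theta)$ in the set on the left-hand side I will force $r \leq R_1$, which immediately gives $\chi_0(r) = 1$ and hence (together with the observation below) $\varphi(P) = 1$.

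The key initial simplification is that $H_1^{-1}(c)$ forces $\chi_1$ to be locally constant. Indeed, on $H_1^{-1}(c)$ the defining equation $H_0 - \varphi V = c$ together with $0 \leq \varphi \leq 1$ and $V\leq \sup V$ gives $H_0 = c + \varphi V \leq c + \sup V$, so $\chi_1(H_0 - \sup V - c) \equiv 1$ and $\chi_1'$ vanishes identically. Hence $\varphi = \chi_0(r)$ on $H_1^{-1}(c)$ and depends only on $r$. A direct calculation then yields
\begin{align*}
\{H_1, r\} &= p_r, \\
\{H_1, \{H_1,r\}\} = \{H_1,p_r\} &= \frac{p_\theta^2}{r^3} + \chi_0(r)\,\frac{\partial V}{\partial r} + V\,\chi_0'(r).
\end{align*}
Combining $p_r = 0$ (from $\{H_1, r\} = 0$) with the energy equation $p_\theta^2/(2r^2) + p_\theta = c + \chi_0 V \geq c$ produces the standard bound $|p_\theta|/r \geq \sqrt{r^2 + 2c} - r$, which in turn gives $p_\theta^2/r^3 > c^2/(4r^3)$ whenever $r > \sqrt{c}/2$ (as in Proposition \ref{prop:BoundChord}).

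The core step is to show that for $r > R_1$ (where $\chi_0 < 1$) one actually has $\{H_1,p_r\} > 0$, contradicting $\{H_1,\{H_1,r\}\} \leq 0$. I would estimate the two negative terms using the hypotheses on $V$: $\chi_0\,\partial_r V \geq -a/r^{\alpha+1}$ (valid for $r>r_0$, which holds since $R_1\geq r_0$) and $V\chi_0' \geq -2\beta a/r^\alpha$ (using $V \leq a/r^\alpha$ and $|\chi_0'|\leq 2\beta$ since $|\chi'|\leq 2$). The required inequality is therefore
$$
\frac{c^2}{4r^3} > \frac{a}{r^{\alpha+1}} + \frac{2\beta a}{r^\alpha} \qquad \text{for all } r \in (R_1,\, R_1 + 1/\beta),
$$
and the values of $R_1$ and $\beta$ in \eqref{R1}--\eqref{beta} are tuned precisely for this.

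The main obstacle will be verifying this dominance \emph{throughout} the transition interval, not just at its endpoints: for $\alpha \in (2,3)$ the term $2\beta a/r^\alpha$ decays slower than $c^2/(4r^3)$, so naively comparing at $r = R_1$ is insufficient. I would resolve this by multiplying through by $r^3$ and checking the resulting inequality $c^2/4 > a/r^{\alpha-2} + 2\beta a r^{3-\alpha}$ separately on the compact interval $[R_1, R_1 + 1/\beta]$, substituting the explicit value $\beta = (\alpha-2)/(2R_1)$ (taking the sign in \eqref{beta} as a typo) and using $R_1^{\alpha-2} \geq 8a\alpha/c^2$ to bound the right-hand side by a fraction of $c^2/4$; the case $\alpha = 2$ is handled separately via the explicit $\beta$ and the strict hypothesis $a < c^2/4$, which provides the necessary positive gap. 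Once this inequality is established, any point in the left-hand set must satisfy $r \leq R_1$, whence $\chi_0(r)=1$, $\chi_1 = 1$, and finally $\varphi(P) = 1$.
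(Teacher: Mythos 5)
Your strategy is the same as the paper's: reduce to $\chi_1\equiv 1$ on $H_1^{-1}(c)$, deduce $p_r=0$ from the first bracket, and show $\{H_1,\{H_1,r\}\}>0$ on the transition region $R_1<r<R_1+\tfrac1\beta$ by playing the centrifugal term against the potential terms. Your initial simplification is legitimate (on $H_1^{-1}(c)$ the argument of $\chi$ in $\chi_1$ is $\leq 0$, where $\chi'$ and $\chi''$ vanish by smoothness, so the brackets at such points are those of $H_0-\chi_0(r)V$; note you need the vanishing of the \emph{derivatives} of $\chi_1$ at the point, not merely $\varphi=\chi_0$ on the level set — the paper avoids this by keeping the $\chi_1'$ terms and discarding them by sign). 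For $\alpha>2$ your sup-over-the-interval estimate does close: with $\beta=\tfrac{\alpha-2}{2R_1}$ and $R_1^{\alpha-2}\geq 8a\alpha/c^2$ one gets $a r^{2-\alpha}\leq c^2/(8\alpha)$ and $2\beta a r^{3-\alpha}\leq \tfrac{c^2}{8}\bigl(\tfrac{\alpha-2}{\alpha}\bigr)^{\alpha-2}\leq\tfrac{c^2}{8}$ on $[R_1,R_1\tfrac{\alpha}{\alpha-2}]$, so the right-hand side is at most $3c^2/16<c^2/4$. The paper instead factors the same difference as $\tfrac{a(\alpha-2)}{R_1r^{\alpha+1}}\bigl(R_1\tfrac{\alpha}{\alpha-2}-r\bigr)>0$, but the content is identical. (You are right that the sign of $\beta$ in \eqref{beta} for $\alpha>2$, and the sign in \eqref{DefH1}, are typos.)

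The genuine gap is the case $\alpha=2$. There the inequality your method requires, $\tfrac{c^2}{4}>a+2\beta a r$ for $r$ up to $R_1+\tfrac1\beta$, is false: at $r=R_1+\tfrac1\beta$ one computes $2\beta a r=2a\beta R_1+2a=\tfrac{(c+2\sqrt a)^2-16a}{4}+2a=\tfrac{c^2}{4}+c\sqrt a-a$, so $\tfrac{c^2}{4}-a-2\beta a r=-c\sqrt a<0$. Thus the crude bound $p_\theta^2/r^3>c^2/(4r^3)$ cannot work, and ``the explicit $\beta$ and $a<c^2/4$'' do not by themselves supply the needed gap. The missing idea is to sharpen the momentum estimate using the specific lower bound $R_1\geq\tfrac{c+2\sqrt a}{2\sqrt{c-2\sqrt a}}$: for such $r$ one has $c-2\sqrt a\geq\tfrac{(c+2\sqrt a)^2}{4r^2}$, hence $r^2+2c\geq\bigl(r+\tfrac{c+2\sqrt a}{2r}\bigr)^2$ and $|p_\theta|/r\geq\sqrt{r^2+2c}-r\geq\tfrac{c+2\sqrt a}{2r}$, i.e.\ $p_\theta^2/r^3\geq\tfrac{(c+2\sqrt a)^2}{4r^3}$. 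With this improved bound the relevant expression becomes $\tfrac{2a}{r^3}\bigl(\beta R_1+1-\beta r\bigr)$, which is positive precisely for $r<R_1+\tfrac1\beta$; this is exactly how the paper closes the $\alpha=2$ case, and without it your proof of that case does not go through.
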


\begin{proof}
First observe that $H_1\big|_{\varphi^{-1}(0)}=H_0$, so by \eqref{supp_h} we have
$$
H_1^{-1}(c)\cap \{\{H_1,r\}=0\}\cap \{\{H_1,\{H_1,r\}\}\leq 0\}\cap \varphi^{-1}(0)=\emptyset.
$$
On the other hand, if $x \in \chi_1^{-1}([0,1))$, then $H_0(x)>\sup V+c$ and
$$
H_1(x)=H_0(x)-\varphi (x) V(x) \geq H_0(x) - \sup V>c.
$$
Hence $\chi_1^{-1}((0,1])\cap H_1^{-1}(c)=\emptyset$. Therefore we can restrict ourselves to the analysis of the set $\chi_0^{-1}((0,1))$, i.e. when $ R_1 < r  < R_1+\frac{1}{\beta}$.

Let us calculate
\begin{align*}
\frac{\partial\varphi}{\partial r} & = \beta\chi_0'\chi_1-\frac{p_\theta^2}{r^3}\chi_0\chi_1', && \frac{\partial \varphi}{\partial \theta}=0,\\
\frac{\partial \varphi}{\partial p_r} & = p_r\chi_0\chi_1', && \frac{\partial\varphi}{\partial p_\theta} = \left(\frac{p_\theta}{r^2}+1\right)\chi_0 \chi_1'.
\end{align*}
In particular, by \eqref{X_H} we have
$$
\{H_1,r\} = dr(X_{H_1})= p_r- V\frac{\partial \varphi}{\partial p_r}=p_r (1- V \chi_0 \chi_1').
$$
By assumption $V, \chi \geq 0$ and $\chi'\leq 0$, thus $\{H_1,r\}=0$ implies $p_r=0$.

Consider now $(r, \theta, p_r, p_\theta) \in H_1^{-1}(0)\cap \left(\{H_1, r\}\right)^{-1}(0)$. Then
\begin{align*}
\{H_1,\{H_1,r\}\} & = \{H_1, p_r\}- \frac{\partial \varphi}{\partial p_r}\{H_1, V\}-V\left\lbrace H_1, \frac{\partial \varphi}{\partial p_r}\right\rbrace\\
& = \{H_1, p_r\}-V \frac{\partial^2\varphi}{\partial p_r^2} \{H_1, p_r\}\\
& =  \left(\frac{p_\theta^2}{r^3}+ \varphi\frac{\partial V}{\partial r}+V\frac{\partial \varphi}{\partial r}\right)\left(1-V\frac{\partial^2\varphi}{\partial p_r^2}\right)\\
& =  \left(\frac{p_\theta^2}{r^3}+ \chi_0 \chi_1\frac{\partial V}{\partial r}+V\left( \beta\chi_0'\chi_1-\frac{p_\theta^2}{r^3}\chi_0\chi_1'\right)\right)\left(1-\chi_0\chi_1'V\right)
\end{align*}
Again, since by assumption $V, \chi \geq 0$ and $\chi'\leq 0$, thus $1-\chi_0\chi_1'V\geq 1$ and $\{H_1,\{H_1,r\}\}$ has the same sign as 
\begin{equation}\label{eq5}
\frac{p_\theta^2}{r^3}+ \chi_0 \chi_1\frac{\partial V}{\partial r}+V\left( \beta\chi_0'\chi_1-\frac{p_\theta^2}{r^3}\chi_0\chi_1'\right)
\end{equation}
Furthermore, since by assumption $V, \chi_0, p_\theta \geq 0$ and $\chi_1' \leq 0$, consequently $-\frac{p_\theta^2}{r^3}\chi_0\chi_1'V \geq 0$ and we can estimate \eqref{eq5} from below by
\begin{equation}\label{eq6}
\frac{p_\theta^2}{r^3}+ \chi_0 \chi_1\frac{\partial V}{\partial r}+ V\beta \chi_0'\chi_1.
\end{equation}
We will show that \eqref{eq6} is positive for both cases $\alpha>2$ and $\alpha=2$.

\noindent\textbf{Case $\alpha>2$:}\\
By \eqref{R1} we have that $r > R_1 \geq \left(\frac{8a\alpha}{c^2}\right)^{\frac{1}{\alpha-2}}$, so $\frac{c^2}{4r^3}>\frac{2a\alpha}{r^{\alpha+1}}$. On the other hand, $r < R_1+\frac{1}{\beta}=R_1\frac{\alpha}{\alpha-2}$. Combining that with the assumptions on the potential $V$ and the fact that $-2\leq \chi'\leq 0$ and $0\leq \chi \leq 1$ we obtain 
\begin{align*}
\frac{p_\theta^2}{r^3}+ \chi_0 \chi_1\frac{\partial V}{\partial r}+\frac{\alpha-2}{2R_1}V \chi_0'\chi_1  & \geq \frac{c^2}{4r^3}-\frac{a\alpha}{r^{\alpha+1}}-\frac{\alpha-2}{R_1}\frac{a}{ r^\alpha}\\
 &> \frac{a\alpha}{r^{\alpha+1}}-\frac{\alpha -2}{R_1}\frac{a}{ r^\alpha}\\
 & = \frac{a(\alpha -2)}{R_1 r^{\alpha+1}}\left( R_1 \frac{\alpha}{\alpha -2} -r\right)>0.
\end{align*}

\noindent\textbf{Case $\alpha=2$:}\\
By \eqref{p_theta_r}, \eqref{R1} and \eqref{chi0} we have
\begin{gather*}
r  \geq \frac{c+2\sqrt{a}}{2\sqrt{c-2\sqrt{a}}}\qquad\textrm{and}\qquad c-2\sqrt{a} \geq \frac{(c+2\sqrt{a})^2}{4r^2},\\
r^2 + 2c \geq r^2+c+2\sqrt{a}+\frac{(c+2\sqrt{a})^2}{4r^2}=\left(r+ \frac{c+2\sqrt{a}}{2 r}\right)^2,\\
\frac{|p_\theta|}{r} \geq \sqrt{r^2+2c}-r\geq \frac{c+2\sqrt{a}}{2r}.
\end{gather*}
Therefore, by \eqref{beta} and \eqref{chi0} we obtain
\begin{align*}
\frac{p_\theta^2}{r^3} & + \chi_0 \chi_1\frac{\partial V}{\partial r}+V\beta \chi_0'\chi_1  \geq \frac{(c+2\sqrt{a})^2}{4r^3}-\frac{2a}{r^3}-\frac{2\beta a}{r^2}\\
& = \frac{2a}{r^3}\left( \frac{\left(c+2\sqrt{a}\right)^2}{8a}-1-\beta r\right)=\frac{2a}{r^3}\left( R_1 \beta + 1 -\beta r\right)>0.
\end{align*}

Consequently, in both cases we have
$$
H_1^{-1}(c)\cap \{\{H_1,r\}=0\}\cap \{\{H_1,\{H_1,r\}\}\leq 0\}\cap \chi_0^{-1}((0,1))=\emptyset,
$$
which proves the claim.
\end{proof}

\noindent\textit{Proof of Proposition \ref{prop:CritH=CritH1}:}
By Lemma \ref{lem:vinPhi(1)} we know that
$$
v([0,1])\subseteq \varphi^{-1}(1)\quad \textrm{for all}\quad (v,\eta)\in \Crit\A^{H-c}_{q_0,q_1}.
$$
But by definition $H_1\big|_{\varphi^{-1}(1)}=H\big|_{\varphi^{-1}(1)}$. Thus $\Crit\A^{H-c}_{q_0,q_1} \subseteq \Crit\A^{H_1-c}_{q_0,q_1}$.

On the other hand, by an argument as in the proof of Proposition \ref{prop:BoundChord} we know that for all $(v,\eta)\in \Crit\A^{H_1-c}_{q_0,q_1}$ if $\max_{t\in [0,1]}r \circ v = r\circ v(t_0)$, then
$$
v(t_0) \in \{q_0, q_1\}\cup \left( H_1^{-1}(c)\cap \{\{H_1,r\}=0\}\cap \{\{H_1,\{H_1,r\}\}\leq 0\}\right).
$$
Moreover, by Lemma \ref{lem:subsetvarphi(1)} we know that
\begin{align*}
v(t_0) \in \{q_0, q_1\} & \cup \left( H_1^{-1}(c)\cap \{\{H_1,r\}=0\}\cap \{\{H_1,\{H_1,r\}\}\leq 0\}\right)\\
& \subseteq H_1^{-1}(c)\cap \varphi^{-1}(1) \subseteq H^{-1}(c)\cap \{ r \leq R_1\}.
\end{align*}
Consequently, for all $(v,\eta)\in \Crit\A^{H_1-c}_{q_0,q_1}$ we have $\max r\circ v \leq R_1$. In other words, for all $(v,\eta)\in \Crit\A^{H_1-c}_{q_0,q_1}$ we have
$$
v([0,1]) \subseteq H_1^{-1}(c)\cap \{ r \leq R_1\}.
$$
On the other hand, the Hamiltonians satisfy 
$$
H_1^{-1}(c)=\{H_0 = \varphi V+c\}\subseteq H_0^{-1}((-\infty, \sup V+c]).$$
Consequently, for all $(v,\eta)\in \Crit\A^{H_1-c}_{q_0,q_1}$ we have
$$
v([0,1]) \subseteq H_1^{-1}(c)\cap \{ r \leq R_1\} \subseteq H_0^{-1}((-\infty, \sup V+c])\cap \{ r \leq R_1\} = \varphi^{-1}(1).
$$
But on $\varphi^{-1}(1)$ the two Hamiltonians $H_1$ and $H$ coincide. Therefore, $\Crit \A^{H-c}_{q_0,q_1} = \Crit \A^{H_1-c}_{q_0,q_1}$.

\hfill $\square$

\printbibliography

\end{document}